



\documentclass[12pt,a4paper]{article}

\usepackage[british]{babel}


\usepackage[T1]{fontenc}


\usepackage[utf8]{inputenc}

\usepackage{csquotes}


\usepackage[backend=biber, maxbibnames=10, maxcitenames=4, maxalphanames=4, bibencoding=utf8, style=alphabetic, isbn=false, url=false, doi=true]{biblatex}
\addbibresource{bibliography.bib}
\renewbibmacro{in:}{}
\renewbibmacro*{doi+eprint+url}{%
	\printfield{doi}%
	\newunit\newblock%
	\iftoggle{bbx:eprint}{%
		\usebibmacro{eprint}%
	}{}%
	\newunit\newblock%
	\iffieldundef{doi}{%
		\usebibmacro{url+urldate}}%
	{}%
}


\usepackage{setspace}


\usepackage{lmodern}


\usepackage[indentafter]{titlesec}
\titleformat{name=\section}{}{\thetitle.}{0.8em}{\centering\scshape}
\titleformat{name=\subsection}[runin]{}{\thetitle.}{0.5em}{\bfseries}[.]
\titleformat{name=\subsubsection}[runin]{}{\thetitle.}{0.5em}{\itshape}[.]
\titleformat{name=\paragraph,numberless}[runin]{}{}{0em}{}[.]
\titlespacing{\paragraph}{0em}{0em}{0.5em}
\titleformat{name=\subparagraph,numberless}[runin]{}{}{0em}{}[.]
\titlespacing{\subparagraph}{0em}{0em}{0.5em}


\usepackage{url}


\usepackage{mathtools}


\DeclarePairedDelimiter\abs{\lvert}{\rvert}%
\DeclarePairedDelimiter\norm{\lVert}{\rVert}%

\makeatletter
\let\oldabs\abs
\def\abs{\@ifstar{\oldabs}{\oldabs*}}
\let\oldnorm\norm
\def\norm{\@ifstar{\oldnorm}{\oldnorm*}}
\makeatother


\usepackage{amssymb}


\usepackage{amsthm}


\usepackage{mathrsfs}


\usepackage{dsfont}


\usepackage{enumitem}
\setlist[enumerate]{noitemsep, partopsep=0pt, topsep=0pt, parsep=0pt, itemsep=0pt}
\setlist[itemize]{noitemsep, partopsep=0pt, topsep=0pt, parsep=0pt, itemsep=0pt}


\usepackage{interval}

\intervalconfig{soft open fences}



\usepackage[normalem]{ulem}

\usepackage[stretch=10]{microtype}


\flushbottom


\emergencystretch=1em


\usepackage[affil-it, noblocks]{authblk}





\usepackage{xfrac}





\usepackage{xcolor}


\usepackage[hidelinks,draft=false]{hyperref}
\hypersetup{
  colorlinks   = true, 
  urlcolor     = blue, 
  linkcolor    = blue, 
  citecolor   = red 
}


\usepackage[nameinlink,noabbrev,capitalize]{cleveref}
\crefname{equation}{}{}


\usepackage[plain]{fullpage}


\newlist{theoenum}{enumerate}{1} 
\setlist[theoenum]{label=\normalfont(\roman*), ref=\theproposition~\normalfont(\roman*), noitemsep, partopsep=0pt, topsep=0pt, parsep=0pt, itemsep=0pt}
\crefalias{theoenumi}{theorem}

\usepackage{subcaption}

\pdfsuppresswarningpagegroup=1



\usepackage{tocloft}


\usepackage{titlefoot}				

\theoremstyle{plain}
\newtheorem{theorem}{Theorem}
\newtheorem*{theorem*}{Theorem}
\newtheorem{proposition}[theorem]{Proposition}

\newtheorem{lemma}[theorem]{Lemma}
\newtheorem*{lemma*}{Lemma}

\theoremstyle{definition}
\newtheorem{definition}[theorem]{Definition}

\theoremstyle{remark}

\newtheorem{remark}[theorem]{Remark}
\newtheorem*{remark*}{Remark}

\newtheoremstyle{break}
{}
{}
{\itshape}
{}
{\bfseries}
{.}
{\newline}
{}

\theoremstyle{break}

\newenvironment{manualtheorem}[1]{%
	\manualtheoreminner
}{\endmanualtheoreminner}			



\newcommand*\diff{\mathop{}\!\mathrm{d}}
\newcommand{\R}{\mathds{R}}

\newcommand{\Z}{\mathds{Z}}

\renewcommand{\exp}{\mathrm{exp}}
\renewcommand{\phi}{\varphi}
\renewcommand{\epsilon}{\varepsilon}

\DeclareMathOperator{\T}{\mathrm{T}}

\DeclareMathOperator{\sgn}{sgn}


\newcommand{\J}{\mathcal{J}}
\newcommand{\m}{\mathfrak{m}}

\newcommand{\K}{\mathcal{K}}

\usepackage{todonotes}

\date{}
\title{\textbf{\uppercase{\large{
Measure contraction property, curvature exponent and geodesic dimension of sub-Finsler \texorpdfstring{$\ell^{\MakeLowercase{p}}$}{ellp}-Heisenberg groups
}}}}
\author{Samuël Borza\thanks{\href{mailto:sborza@sissa.it}{sborza@sissa.it}} }
\affil[]{Scuola Internazionale Superiore di Studi Avanzati (SISSA), Trieste}

\author{Kenshiro Tashiro\thanks{\href{mailto: kenshiro.k.tashiro@jyu.fi}{kenshiro-tashiro@oist.jp}} }
\affil[]{Okinawa Institute of Science and Technology}					



\begin{document}

\begin{spacing}{0.6}
\maketitle	
\end{spacing}

\providecommand{\keywords}[1]
{
	\unmarkedfntext{\textbf{\textit{Keywords---}} #1}
}

\providecommand{\msc}[1]
{
	\unmarkedfntext{\textbf{\textit{MSC (2020)---}} #1}
}

\vspace{-0.5cm}\begin{abstract}
    We initiate the study of synthetic curvature-dimension bounds in sub-Finsler geometry. More specifically, we investigate the measure contraction property $\mathsf{MCP}(K, N)$, and the geodesic dimension on the Heisenberg group equipped with an $\ell^p$-sub-Finsler norm.
    We show that for $p\in(2,\infty]$,
    the $\ell^p$-Heisenberg group fails to satisfy any of the measure contraction properties.
    On the other hand,
    if $p\in(1,2)$,
    then it satisfies the measure contraction property $\mathsf{MCP}(K, N)$ if and only if $K \leq 0$ and $N \geq N_p$, where the curvature exponent $N_p$ is strictly greater than $2q+1$ ($q$ being the H\"older conjugate of $p$).
    We also prove that the geodesic dimension of the $\ell^p$-Heisenberg group is $\min(2q+2,5)$ for $p\in[1,\infty)$.
    As a consequence,
    we provide the first example of a metric measure space where there is a gap between the curvature exponent and the geodesic dimension.
\end{abstract}
\keywords{Sub-Finsler geometry, Heisenberg group, Optimal control, Generalized trigonometric \hspace*{8.1em}functions}

\msc{53C17, 26A33, 49N60, 49Q22}				

\begin{spacing}{0.9}
\setlength{\cftbeforesecskip}{0pt}
\vspace{-0.4cm}\tableofcontents
\end{spacing}

\section{Introduction}

Sub-Finsler geometry is a broad generalization of Riemannian geometry that encompasses Finsler geometry and sub-Riemannian geometry. The Heisenberg group, the prototype example of sub-Riemannian geometry, can be endowed with a sub-Finsler metric modelled on the $\ell^p$-norm of the Euclidian space. This $\ell^p$-Heisenberg group is the focus of this work. We will study it from the point of view of the analysis of metric measures spaces, investigating the validity of a synthetic curvature-dimension condition and computing a notion of dimension relevant in metric geometry.

Synthetic definitions of lower curvature bounds based on the theory of optimal transport have been introduced to take into account non-smooth metric spaces. Roughly speaking, the optimal way to transport a mass, i.e. a probability measure, to another one in a metric measure space is characterised by a Wasserstein geodesic on the set of probability measures. The seminal contributions by Lott--Villani and Sturm showed in \cite{lott--villani2009} and \cite{sturm2006-1,sturm2006} independently that in Riemannian geometry, a lower bound $K$ on the Ricci curvature and an upper bound $N$ on the topological dimension are equivalent to a type of $K$-convexity for the
$N$-entropy along the Wasserstein geodesics. This alternative characterisation of curvature-dimension bounds can be expressed solely in terms of the Riemannian distance and volume, without explicitly referencing a differential structure. This observation leads to the introduction of the \textit{curvature dimension condition} $\mathsf{CD}(K, N)$ on general metric measure spaces.

The equivalence between a curvature-dimension condition and an entropic convexity along Wasserstein geodesics can be extended to Finsler manifolds with sufficient regularity. In this setting, Ohta showed in \cite{ohta2009} that $\mathsf{CD}(K,N)$ is equivalent to having a lower bound $K$ of the $N$-weighted Ricci curvature. Here, the $N$-weighted Ricci curvature is a tensor that depends on a given smooth volume form since, in Finsler geometry, there is a priori no canonical smooth measure.

In sub-Riemannian geometry, the curvature-dimension condition is known to fail. The first result in this direction was given by Juillet in \cite{juillet2009} which proved that the Heisenberg group equipped with its sub-Riemannian structure and the Lebesgue measure does not satisfy $\mathsf{CD}(K, N)$ for any $K \in \mathds{R}$ and any $N \geq 1$. The intuition behind this is that if we view the Heisenberg group as a Gromov--Hausdorff limit of a sequence of Riemannian manifolds, its Ricci curvature will diverge, indicating that they are not so called \textit{Ricci limit spaces}. This result has then been generalised to all sub-Riemannian manifolds in \cite{juillet2020} (for strict sub-Riemannian structures) and in \cite{rizzi2023failure} (for strictly positive smooth measures).

Many examples of sub-Riemannian manifolds, however, are known to satisfy a relaxation of the curvature-dimension condition, called the \textit{measure contraction property} introduced by Ohta in \cite{ohta2007} (see \cref{MCPnegligible}). Given a metric measure space $(X, \diff, \mathfrak{m})$ and a Borel subset $\Omega \subseteq X$ with $0 < \mathfrak{m}(\Omega) < +\infty$, the $t$-geodesic homothety $\Omega_t$ from a point $x \in X$ is the set of all $t$-intermediate points $\gamma(t)$ of all minimising constant speed geodesics $\gamma : \interval{0}{1} \to X$ joining $x$ to $\Omega$. The measure contraction property $\mathsf{MCP}(K, N)$, where $K$ is still meant to represent a synthetic ``lower bound on the Ricci curvature'' and $N$ an ``upper bound on the dimension'', consists of a type of convexity of the map $\mathfrak{m}(\Omega_{\boldsymbol{\cdot}}) : \interval{0}{1} \to \mathds{R}$. When $K = 0$, and if $(X, \diff, \mathfrak{m})$ has negligible cut locus (see \cref{def:negligiblecutlocus}), then the $\mathsf{MCP}(0, N)$ is equivalent to $\mathfrak{m}(\Omega_t) \geq t^N \mathfrak{m}(\Omega)$. Juillet demonstrated in \cite{juillet2009} that the three-dimensional Heisenberg group satisfies the measure contraction property $\mathsf{MCP}(0,5)$. Moreover, the pair $(K, N) = (0, 5)$ is optimal in the sense that the Heisenberg group does not satisfy $\mathsf{MCP}(K, N)$ whenever $K > 0$ or $N < 5$. The measure contraction property has also been found to hold in ideal Carnot groups \cite{Rifford2013}, corank 1 Carnot groups \cite{rizzi2016}, in generalised H-type Carnot groups \cite{sharpcarnot2018}, or in the Grushin plane \cite{barilari-rizzi2019}. The optimal constant $N$, i.e. the infimum one, such that $\mathsf{MCP}(0, N)$ holds is called the \textit{curvature exponent}. In the present work, we investigate the measure contraction property and the curvature exponent in sub-Finsler geometry for the first time. It is worth noting that in Finsler geometry, the $\mathsf{MCP}(K, N)$ condition is implied by having a lower bound $K$ on the $N$-Ricci curvature, as shown in \cite[Theorem 1.2]{ohta2009}.

The \textit{geodesic dimension}, on the other hand, is determined by the asymptotic rate of growth of the volume of measurable set under geodesic homotheties. When a given metric measure space has sufficient regularity, it is the optimal $N$ such that $\mathfrak{m}(\Omega_t) \sim t^N$ as $t \to 0^+$.
Since the geodesic dimension considers the asymptotic behavior of $\Omega_t$ as $t\to 0^+$,
it explains more local features than the measure contraction property.
It is related with the curvature exponent:
it was shown in \cite{rizzi2016} that the geodesic dimension is always a lower bound for the curvature exponent (see \cref{theo:NcurvNgeoNH}).
Moreover, in many cases, see \cite{Rifford2013,rizzi2016,sharpcarnot2018,barilari-rizzi2019}, the curvature exponent is equal to the geodesic dimension.

In this work, we initiate the study of synthetic curvature-dimension bounds and geodesic dimension in sub-Finsler geometry. We examine the $\ell^p$-Heisenberg group for any $p \in \interval{1}{\infty}$, which consists of the Heisenberg group equipped with an $\ell^p$-sub-Finsler norm and the Lebesgue measure. It is a generalisation of the sub-Riemannian Heisenberg group, which is the $\ell^2$-Heisenberg group in our notation. The curvature exponent and the geodesic dimension of the usual sub-Riemannian Heisenberg group are known to be $5$, see \cite{juillet2009}. The choice to focus on this family of sub-Finsler spaces is motivated by the fact that they exhibit a variety of behaviors that will be seen to be determining factors in relation with the measure contraction property, the curvature exponent, and the geodesic dimension. The $\ell^1$- and $\ell^\infty$-Heisenberg groups have \textit{branching} geodesics, and have large (non-negligible) cut locus. The $\ell^p$-Heisenberg group have $C^2$ geodesics if $p < 2$, while they are only $C^1$ if $p > 2$.

In the strictly convex case, i.e., when $p \in \ointerval{1}{\infty}$, the main techniques are the following. By Pontryagin's Maximum Principle, we are able to write an exponential map $\exp^t_x$, that is, the projection of the Hamiltonian flow at time $t$ from $x$. It is then used to estimate the volume of the $t$-geodesic homothety
\[
\mathfrak{m}(\Omega_t) = \int_{\exp^{-1}(\Omega)} \mathrm{Jac} \circ \exp^t_x \diff \mathcal{L}^3,
\]
where the integrand is the Jacobian determinant of the exponential map $\exp^t_x$ at time $t$. The definition of the $\mathsf{MCP}(K, N)$ is then shown to be equivalent to a differential inequality on $\mathrm{Jac} \circ \exp^t$ (see \cref{proposition:mcp}). In Finsler or sub-Finsler geometry, when it can be well-defined, the exponential map is typically not smooth. In our case, it will be smooth except for a negligible set, and studying what happens near non smooth points will be crucial.

The geometry underlying the structure of the $\ell^p$-Heisenberg group is described in \cref{sec2} using special trigonometric functions known as Shelupsky's $p$-trigonometric functions. They can be defined geometrically, as in \cite{lok}, or seen to be solutions to a kind of $1$-dimensional $q$-Laplace equation, where $q := (1-1/p)^{-1}$ is the Hölder conjugate of $p$, which is studied in \cite{paredes-uchiyama,edmunds2012,girg-kotrla2014} and the references therein. We analyse the geometric and geodesic structure of the $\ell^p$-Heisenberg group in \cref{sec:geometry}. As a byproduct of our work, we establish the cotangent injectivity radius of the $\ell^p$-Heisenberg in \cref{cotinjdom}. The Jacobian determinant is written out explicitly in \cref{regularityF}, and we study it comprehensively in \cref{sec:jacobian}.

Regarding the curvature exponent of the $\ell^p$-Heisenberg group,
we prove following results.

\begin{manualtheorem}{A}[Curvature exponent for $p\in(1,\infty) \setminus \{2\} $, \cref{thm:MCPp>2} and \cref{thm:MCPp<2}]
\label{maintheoremA}

\begin{minipage}[t]{\linewidth}
\vspace{-0.3cm}\begin{enumerate}[label=\normalfont\arabic*)]
    \item When $p \in \ointerval{1}{2}$, the curvature exponent $N_p$ of the sub-Finsler Heisenberg group equipped with the Lebesgue measure is finite and greater than $2q+1$.
    In other words,
    it satisfies $\mathsf{MCP}(K,N)$ if and only if $K\leq 0$ and $N\geq N_p>2q+1$.
    \item When $p \in \ointerval{2}{\infty}$, the sub-Finsler $\ell^p$-Heisenberg group equipped with the Lebesgue measure does not satisfy the $\mathsf{MCP}(K, N)$ for any $K \in \mathds{R}$ and any $N \geq 1$.
    \end{enumerate}
\end{minipage}
\end{manualtheorem}

Note that the case $p = 2$ was studied in \cite{juillet2009}: the $\ell^2$-Heisenberg group satisfies $\mathsf{MCP}(K, N)$ if and only if $K \leq 0$ and $N \geq 5$. Notice that we do not know the exact value of the curvature exponent $N_p$ when $p \in \ointerval{1}{2}$ and the value $2 q + 1$ is a non-optimal lower bound on $N_p$. The failure of the measure contraction property for $p > 2$ is linked to the loss of regularity of the exponential map. Geometrically, when $p > 2$, geodesics from a given point have $C^1$-corners, and the $t$-homothety $\Omega_t$ infinitesimally shrinks to a small subset when passing through these corners.
This is also observed from the fact that the derivative of the Jacobian determinant diverges to the infinity at the $C^1$-corner, see \cref{regularitydF}.
When $p < 2$, the exponential map is $C^2$ which is enough regularity to ensure that the measure contraction property holds.
However,
its dual $q$-norm fails to be strongly convex,
and the curvature of a geodesic attains $0$ at non-smooth corners,
seen as a curve in the Euclidean space.
The lower bound $2q+1$ for the curvature exponent is observed along the horizontal line geodesic tangent to such non-smooth corners.
Indeed, we can observe that the $t$-homothety $\Omega_t$ along such a straight horizontal geodesic is small as $t$ goes to $0$.
This observation reflects the fact that the Jacobian determinant and its differental vanishes on such straight horizontal lines,
see \cref{asymptoticFat00} and \cref{regularitydF2}.
However, this lower bound $2q+1$ is not the curvature exponent.
Indeed, we will see that an exponent greater than $2 q + 1$ is always required along such horizontal lines if $p\neq 2$ (see the end of the proof of \cref{thm:MCPp<2} and \cref{fig:graphsdPP}).
Furthermore,
it appears numerically that the optimal exponent is not realized on horizontal lines, see \cref{fig:logderneg}.
If confirmed, this would represent a significant contrast to the sub-Riemannian context, where the curvature exponent has been consistently observed along horizontal lines. We leave open the question of which geodesics and at what value the curvature exponent is truly realized in the $\ell^p$-Heisenberg group.

As for the geodesics dimension, we have the following.

\begin{manualtheorem}{B}[Geodesic dimension for $p\in(1,\infty)$, \cref{thm:geodim}]

\begin{minipage}[t]{\linewidth}
\vspace{-0.3cm}\begin{enumerate}[label=\normalfont\arabic*)]
    \item When $p \in \linterval{1}{3}$, the geodesic dimension of the sub-Finsler $\ell^p$-Heisenberg group equipped with the Lebesgue measure is $5$.
    \item When $p \in \rinterval{3}{\infty}$, the geodesic dimension of the sub-Finsler $\ell^p$-Heisenberg group equipped with the Lebesgue measure is $2 q + 2$.
    \end{enumerate}
\end{minipage}
\end{manualtheorem}

While the measure contraction property relies on the differentiability properties of the Jacobian determinant of the exponential map, the geodesic dimension is related to its discontinuous singularity. If $p>2$, then the Jacobian determinant is discontinuous and diverges to infinity on the horizontal line tangent to the non-smooth corner (see \cref{asymptoticFat00}). The expansion of $\mathfrak{m}(\Omega_t)$ will be shown to have two competing leading terms as $t \to 0^+$: one of order $t^5$ outside this horizontal line,
or one of order $t^{2q+2}$ along it. The dominant term will be determined by whether $p$ is greater or smaller than 3. To derive the dominant term,
the full series expansion of the $p$-trigonometric functions from \cite{paredes-uchiyama} will be needed.

Finally, in \cref{sec:l1linfty}, we will consider the cases of $p = 1$ and $p = \infty$. Note that the $\ell^1$- and $\ell^\infty$-Heisenberg group are isometric metric measure spaces, so we only need to consider one of them.

\begin{manualtheorem}{C}[$\mathsf{MCP}$ and geodesic dimension for $p=1,\infty$, \cref{thm:ell1mcp} and \cref{thm:ell1geodim}]
    The $\ell^1$-Heisenberg group (resp. the $\ell^\infty$-Heisenberg group) equipped with the Lebesgue measure does not satisfy $\mathsf{MCP}(K,N)$ for all $K\in\R$ and $N\geq 1$. Its geodesic dimension is $4$, that is, the same as its Hausdorff dimension.
\end{manualtheorem}

Because of the fact that the norms $\ell^1$ or $\ell^\infty$ are convex but not strictly convex, the corresponding sub-Finsler structures on the Heisenberg group is highly branching, and has non-negligible cut locus.
Therefore, an exponential map can not be well-defined as a diffeomorphism onto a full-measure subset of the Heisenberg group.
However, since the norm is polygonal,
its geodesics can be explicitly written by using a quadratic polynomial (see \cite{breuillard--ledonne2013,duc} for a treatment of sub-Finsler Heisenberg groups with polygonal norms).
By using an explicit formula for the geodesics,
we can quantitatively prove that the branching property is the main reason why the measure contraction property is not satisfied: there exists a set $\Omega$ of positive measure such that $\mathfrak{m}(\Omega_t)$ vanishes for sufficiently small $t > 0$. Moreover, it has a large cut locus, which can be written out with quadratic polynomials. 
As will be seen in the proof of \cref{thm:ell1geodim}, this will force the geodesic dimension of the space to be as small as it possibly can,
and this is why the geodesic dimension attains its minimum value $4$,
which is the Hausdorff dimension of the Heisenberg group. Indeed, Theorem 6 of \cite{rizzi2016} shows that the Hausdorff dimension is a lower bound on the curvature exponent.

The results of this work, summarised in \cref{fig:summary}, are both unexpected and new. They deviate from what would typically be expected in sub-Riemannian geometry, where the curvature exponent and the geodesic dimension are consistently observed to be equal, as mentioned above. In the case of strictly sub-Finsler $\ell^p$-Heisenberg groups (i.e., when $p \neq 2$), we show that there is always a gap between the geodesic dimension and the curvature exponent. Moreover, it neatly differs from Finsler geometry as well. For instance, in Finsler spaces $(\mathds{R}^n, \|\cdot\|)$ equipped with any norm $\|\cdot\|$, the curvature exponent and geodesic dimension both equal $n$.

We anticipate that numerous specificities observed in the case of the $\ell^p$-Heisenberg group, such the impact of non-smoothness or branchingness, will constitute shared characteristics of sub-Finsler geometry. We would also like to emphasize the fact that, although the $\ell^p$-Heisenberg group appears to be a natural extension of sub-Riemannian geodesics, we have discovered that it is far from being a straightforward generalization. In addition to being considerably more technical, there are profound underlying differences (albeit the fact that the $p$-trigonometry functions look notationally similar the usual sine and cosine functions).

\begin{figure}
    \centering
    {\scalebox{0.50}{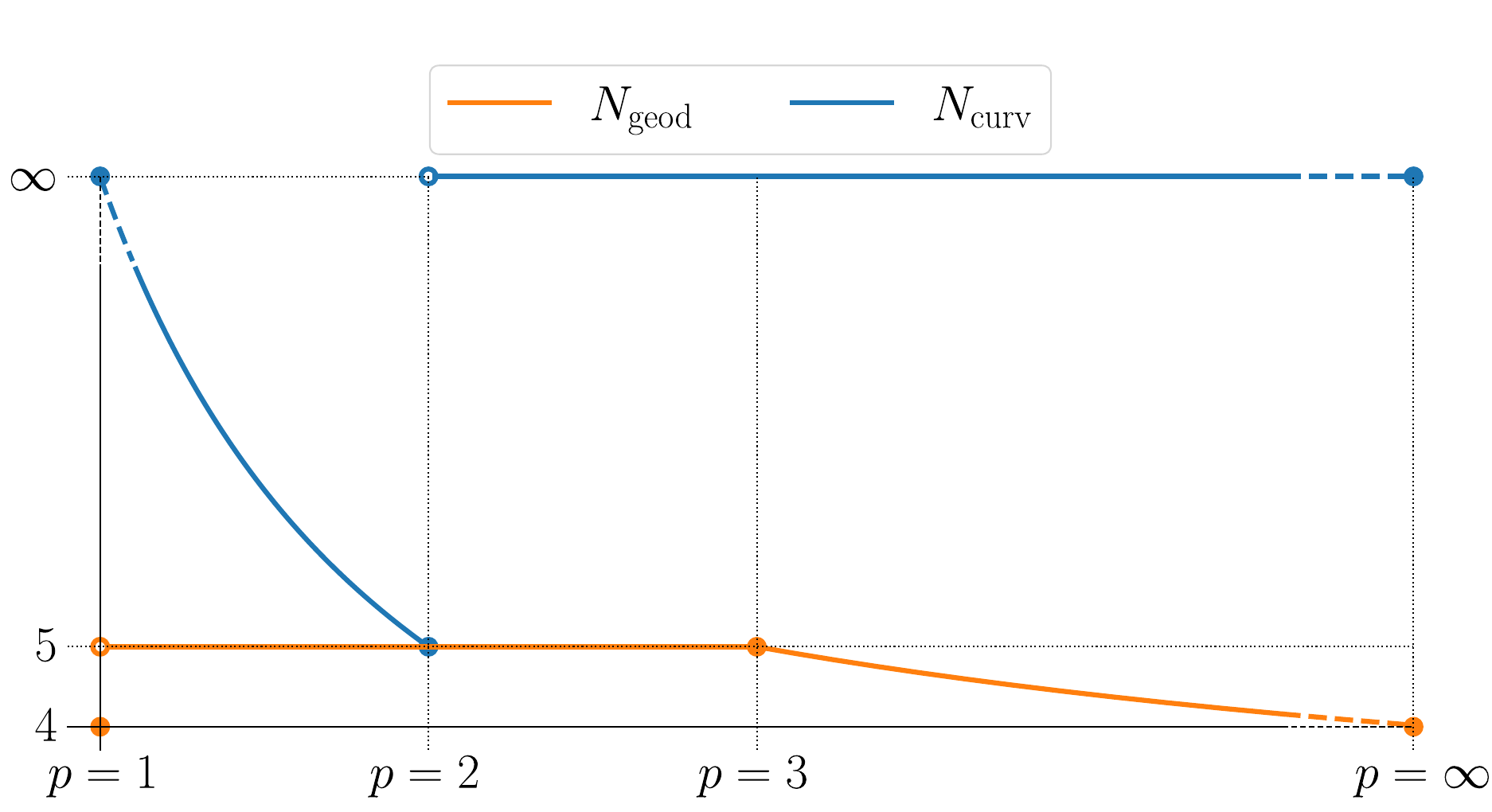}}
    \caption{A summary of the main theorems of this work on the curvature exponent and the geodesic dimension of the $\ell^p$-Heisenberg groups.}
    \label{fig:summary}
\end{figure}

\section*{Acknowledgements}
The authors would like to thank Luca Rizzi for introducing them to this problem and for the numerous stimulating discussions and comments.
The authors would also like to thank Andrei Agrachev, Enrico Le Donne and his postdocs/PhD students for many helpful discussions, and Peter Lindqvist as well as Koichi Uchiyama for valuable comments about the generalised trigonometric functions. We thank the anonymous reviewers for their careful reading of the manuscript. Their comments and suggestions helped improve and clarify this work.

This project has received funding from the European Research Council (ERC) under the European Union’s Horizon 2020 research and innovation programme (grant agreement No. 945655),
and the Academy of Finland
(grant 322898 ‘Sub-Riemannian Geometry via Metric-geometry and Lie-group theory’).
The second author is also supported by the  Japan Society for the Promotion of Science (JSPS) KAKENHI.		

\section{Preliminaries}\label{sec2}

\subsection{Measure contraction property and geodesic dimension}

\label{sec:MCP}

On a Riemannian manifold $(M, \diff_g, \mathrm{vol}_g)$, the condition 
\begin{equation}
    \label{RCurvBounds}
    \mathrm{Ric} \geq K, \text{ and} \dim(M) \leq N
\end{equation}
is often a natural assumption taken to prove significant theorems such as Bonnet-Myers theorem, Bishop-Gromov inequality, and Lévy-Gromov's isoperimetric inequality. The measure contraction property $\mathsf{MCP}(K, N)$ is one of the popular synthetic notions of curvature-dimension bounds generalising \cref{RCurvBounds} to abstract metric measure spaces. For reasons that we will soon see, this is the condition that we investigate in the present work for the Heisenberg group equipped with a sub-Finsler structure.

Defining the measure contraction property in full generality would require some understanding of optimal transport. In order to make this work more concise, we will therefore only provide an equivalent definition for spaces that have a negligible cut locus. For a more detailed explanation of the measure contraction property, we recommend that the reader refer to \cite{ohta2007} and \cite{sturm2006}.

For the rest of this section, $(X,\diff,\mathfrak{m})$ will denote a geodesic metric measure space.

\begin{definition}
    \label{def:negligiblecutlocus}
    We say that $(X,\diff,\mathfrak{m})$ has negligible cut locus if for any $x \in X$, there exists a negligible set $\mathcal{C}(x)$ and a measurable map $\Phi_x : X \setminus \mathcal{C}(x) \times \interval{0}{1} \to M$, such that the curve $\gamma : \interval{0}{1} \to M : t \mapsto \Phi_x(y, t)$ is the unique minimising geodesic between $x$ and $y$.
\end{definition}

For a bounded Borel set $\Omega \subseteq X$ such that $0 < \mathfrak{m}(\Omega) < +\infty$ and $t \in \interval{0}{1}$, the $t$-intermediate set $\Omega_t$ from a point $x \in X$ is the set
\[
\Omega_{t}:=\left\{\gamma(t) \mid \gamma : \interval{0}{1} \to X \text{ constant speed minimising geodesic}, \gamma(0) = x, \gamma(1) \in \Omega\right\}.
\]
It is not difficult to see that if $(X, \diff, \mathfrak{m})$ has negligible cut locus, then the $t$-intermediate set can be expressed, up to a set of measure zero, as
\[
\Omega_{t}=\left\{\Phi_x(y, t) \mid y \in \Omega \setminus \mathcal{C}(x)\right\}.
\]



\begin{definition}
    \label{def:branching}
    We say that $(X, \diff, \mathfrak{m})$ is \textit{non-branching} if two minimising constant speed geodesics $\gamma_1, \gamma_2 : \interval{0}{1} \to X$ are identically equal whenever $\gamma_1|_{\interval{a}{b}} = \gamma_2|_{\interval{a}{b}}$ for some $\interval{a}{b} \subseteq \interval{0}{1}$.
\end{definition}



For any $K \in \mathds{R}$, define the function
\[
\mathsf{s}_K(t):=
    \begin{cases}
    \sin(\sqrt{K}t)/\sqrt{K}, & \text{if}\ K > 0 \\
      t, & \text{if}\ K = 0 \\
      \sinh(\sqrt{-K}t)/\sqrt{-K}, & \text{if}\ K < 0
    \end{cases}.
\]

\begin{definition}[{\cite[Lemma 2.3]{ohta2007}}]
    \label{MCPnegligible}
    Let $K \in \mathds{R}$ and $N > 1$, or $K \neq 0$ and $N = 1$. A geodesic metric measure space $(X,\diff,\mathfrak{m})$ with negligible cut locus satisfies the \textit{$(K, N)$-measure contraction property}, or $\mathsf{MCP}(K, N)$, if for all $x \in X$, for all measurable set $\Omega \subseteq X$ with $0 < \mathfrak{m}(\Omega) < 0$ (and $\Omega \subseteq B(x, \pi \sqrt{(N-1)/K})$, and for all $t \in \interval{0}{1}$, it holds
\begin{equation}
    \label{def:MCP}
    \mathfrak{m}(\Omega_{t}) \geq \int_{\Omega} \left[\frac{\mathsf{s}_K(t \diff(x,y)/\sqrt{N-1})}{\mathsf{s}_K(\diff(x,y)/\sqrt{N-1})}\right]^{N-1} \diff\mathfrak{m}(y),
\end{equation}
where by convention $0/0 = 1$, and the term in square bracket is 1 if $K \leq 0$ and $N = 1$.
\end{definition}

\begin{remark}
    The condition $\Omega \subseteq B(x, \pi \sqrt{(N-1)/K})$ if $K > 0$ is reminiscent of the computation of the distortion coefficients for the $N$-dimensional model space with constant curvature $K$ (see for example \cite[Definition 14.19 and Theorem 14.20]{villanioldandnew}).
\end{remark}

The general definition of the measure contraction property can be found in \cite[Definition 2.1]{ohta2007}. We mention a few of its features, that are valid in general, not only for spaces with negligible cut locus. As proven in \cite[Corollary 3.3]{ohta2007}, a Riemannian manifold $(M, \diff_g, \mathrm{vol}_g)$ with $\mathrm{Ric} \geq K$ and $\dim(M) \leq N$ satisfies the $\mathsf{MCP}(K,N)$. Conversely, if $(M, \diff_g, \mathrm{vol}_g)$ satisfies the $\mathsf{MCP}(K,N)$ then $\dim(M) \leq N$ and if the $\mathsf{MCP}(K,N)$ holds with $N = \dim(M)$ then $\mathrm{Ric} \geq K$. The $\mathsf{MCP}(K, N)$ for Riemannian manifolds will generally not imply that $\mathrm{Ric} \geq K$, as remarked in \cite[Remark 5.6]{sturm2006}. If $(X, \diff, \mathfrak{m})$ satisfies the $\mathsf{MCP}(K, N)$, then it is shown in \cite[Lemma 2.4]{ohta2007} that it also satisfies the $\mathsf{MCP}(K', N')$ for $K' \leq K$ and $N' \geq N$. If the metric measure space satisfies $\mathsf{MCP}(K, N)$ with $K > 0$, then it must be compact. This is Bonnet-Myers' theorem, see \cite[Theorem 4.3]{ohta2007}.

The class of $\mathsf{MCP}(0, N)$-spaces is notable since it is designed to correspond to spaces with nonnegative curvature. Since sub-Finsler Carnot groups appear as metric tangents to sub-Finsler manifolds (see \cite[Theorem 3.5]{primerledonne}), it is expected that they exhibit properties of spaces with nonnegative curvature. \cref{maintheoremA} seems to indicate that the $\mathsf{MCP}$ partially fails to capture that in the sub-Finsler setting. However it is worth noting when the $\mathsf{MCP}$ actually holds because several important properties are satisfied for an $\mathsf{MCP}$-space. For instance, sharp Poincaré and $p$-Poincaré inequalities hold under measure contraction property (see \cite{ppoincaréHan} and \cite{poincarréHanMilman}), and the $L^1$-localisation technique is also available (see \cite[Section 3.8]{QCDMilman}).

From \cref{MCPnegligible}, it can be seen that when the space has negligible cut locus, the $\mathsf{MCP}(0, N)$ holds if and only if $\mathfrak{m}(\Omega_t) \geq t^N \mathfrak{m}(\Omega)$ for all $t \in \interval{0}{1}$, for all measurable $\Omega \subseteq X$ with $0 < \mathfrak{m}(\Omega) < +\infty$,  and all $x \in X$. This equivalence does not hold in general, that is when the space does not have a negligible cut locus. Yet, one implication remains true.

\begin{proposition}[{Same proof as in \cite[Proposition 2.1]{sturm2006}}]
\label{MCPimplies1sideBM}
    If $(X, \mathrm{d}, \mathfrak{m})$ is a metric measure space (not necessarily with a negligible cut locus) satisfying the $\mathsf{MCP}(0, N)$, then $\mathfrak{m}(\Omega_t) \geq t^N \mathfrak{m}(\Omega)$ for all measurable set $\Omega \subseteq X$ with $0 < \mathfrak{m}(\Omega) < +\infty$ and all $x \in X$.
\end{proposition}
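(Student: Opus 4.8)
The plan is to argue directly from the general definition of the measure contraction property in \cite[Definition 2.1]{ohta2007}, rather than from the simplified characterisation of \cref{MCPnegligible}: since a negligible cut locus is explicitly \emph{not} assumed here, we cannot use the parametrising map $\Phi_x$, and the proof must instead exploit the dynamical transport formulation. Recall that in this generality $\mathsf{MCP}(0,N)$ furnishes, for each $x \in X$ and each measurable $\Omega$ with $0 < \mathfrak{m}(\Omega) < +\infty$, a probability measure $\nu$ on the space $\mathrm{Geo}(X)$ of constant speed minimising geodesics $\gamma : \interval{0}{1} \to X$ such that $(e_0)_*\nu = \delta_x$, $(e_1)_*\nu = \mathfrak{m}(\Omega)^{-1}\mathfrak{m}|_\Omega$, and, writing $e_t(\gamma) := \gamma(t)$, the contraction inequality
\[
(e_t)_*\left( t \left[\frac{\mathsf{s}_0\left(t\,\diff(\gamma(0),\gamma(1))/\sqrt{N-1}\right)}{\mathsf{s}_0\left(\diff(\gamma(0),\gamma(1))/\sqrt{N-1}\right)}\right]^{N-1} \mathfrak{m}(\Omega)\,\nu(\diff\gamma)\right) \leq \mathfrak{m}
\]
holds as an inequality of Borel measures for every $t \in \interval{0}{1}$. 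The first step is to specialise to $K = 0$: since $\mathsf{s}_0(s) = s$, the bracketed weight collapses to $t^{N-1}$, and the left-hand side becomes the single measure $\mu := t^N \mathfrak{m}(\Omega)\,(e_t)_*\nu$, so that $\mu \leq \mathfrak{m}$.

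Next I would identify where $\mu$ lives. The marginal conditions force $\gamma(0) = x$ and $\gamma(1) \in \Omega$ for $\nu$-almost every $\gamma$, and for any such geodesic $\gamma(t) \in \Omega_t$ by the very definition of the $t$-intermediate set. Hence every Borel set disjoint from $\Omega_t$ is $(e_t)_*\nu$-null; equivalently, $(e_t)_*\nu$ assigns full mass $1$ to each Borel superset of $\Omega_t$, and $\mu$ is concentrated on $\Omega_t$ with total mass $\mu(X) = t^N \mathfrak{m}(\Omega)$. The conclusion then follows by evaluating $\mu \leq \mathfrak{m}$ on $\Omega_t$: for any Borel $B \supseteq \Omega_t$ we get $t^N \mathfrak{m}(\Omega) = \mu(B) \leq \mathfrak{m}(B)$, and taking the infimum over such $B$ yields $t^N \mathfrak{m}(\Omega) \leq \mathfrak{m}(\Omega_t)$, which is the claim.

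The bulk of the argument is bookkeeping once the dynamical formulation is in place; the only genuinely delicate point is measure-theoretic, and it is precisely the point where the absence of a negligible cut locus bites. One must make sense of $\mathfrak{m}(\Omega_t)$ at all, since without a map $\Phi_x$ there is no obvious reason for $\Omega_t$ to be Borel. I would resolve this in the standard setting of a Polish space carrying a (completable) Borel measure by noting that $\Omega_t = e_t\big(\{\gamma : \gamma(0)=x,\ \gamma(1)\in\Omega\}\big)$ is the continuous image of a Borel set of geodesics, hence analytic and therefore universally $\mathfrak{m}$-measurable; the passage from the measure inequality to the set inequality via Borel supersets above is then legitimate in terms of outer measure. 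This is exactly the care exercised in \cite[Proposition 2.1]{sturm2006}, whose argument transfers verbatim.
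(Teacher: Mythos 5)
Your proposal is correct and takes essentially the same approach as the paper, whose ``proof'' is precisely a citation of Sturm's Proposition~2.1: one invokes the dynamical (transference-plan) formulation of $\mathsf{MCP}(0,N)$, notes that for $K=0$ the weight collapses so that $t^N\,\mathfrak{m}(\Omega)\,(e_t)_*\nu \leq \mathfrak{m}$ as measures, observes that $(e_t)_*\nu$ gives full mass to every Borel superset of $\Omega_t$, and concludes by outer approximation. Your additional care about measurability of $\Omega_t$ (analytic, hence universally measurable) is exactly the point handled in the cited argument, and your passing remark on the $0/0=1$ convention is harmless since on constant geodesics the weight $t$ only exceeds $t^N$, preserving the inequality.
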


We can now introduce the curvature exponent, which was first coined in \cite{Rifford2013}.

\begin{definition}
The \textit{curvature exponent} $N_{\mathrm{curv}}$ of a metric measure space $(X, \diff, \mathfrak{m})$ is the number defined by
\[
N_{\mathrm{curv}}:=\inf\left\{N>1\mid \textsf{MCP}(0,N)~\text{is satisfied}\right\}.
\]
\end{definition}

Besides the measure contraction property and the curvature exponent, we will also investigate the so-called geodesic dimension. This quantity was introduced in \cite{curv2018} for sub-Riemannian manifolds, and extended to metric measure spaces in \cite{rizzi2016} (see also \cite[Section 4.5]{unified2022}).

\begin{definition}
\label{def:geod_dim}
For $x \in X$ and $s > 0$,
define the number
\begin{equation}
    \label{Cs(x)}
    C_s(x) := \sup\left\{\limsup_{t\to 0^+}\frac{1}{t^s}\frac{\mathfrak{m}(\Omega_t)}{\mathfrak{m}(\Omega)} \Big| \ \Omega \text{ Borel, bounded, } \mathfrak{m}(\Omega) \in \ointerval{0}{\infty}\right\},
\end{equation}
where $\Omega_t$ denotes the $t$-intermediate set of $\Omega$ from $x$.
The geodesic dimension at $x$ is the number
\[
N_{\mathrm{geo}}(x) := \inf\{s>0 \mid C_s(x)=+\infty\}=\sup\{s>0\mid C_s(x)=0\} \in \interval{0}{\infty}.
\]
The geodesic dimension of $(X, \diff, \mathfrak{m})$, denoted by $N_{\mathrm{geo}}$, is given by
\[
N_{\mathrm{geo}} := \sup\left\{ N_{\mathrm{geo}}(x) \mid x \in X \right\}.
\]
\end{definition}

The rationale behind the definition of the geodesic dimension is that, as explained in \cite{curv2018}, for structures regular enough, say for sub-Riemannian manifolds $(M, \diff_{\mathrm{CC}}, \mu)$ equipped with the Carnot-Carathéodory distance $\diff_{\mathrm{CC}}$ and a smooth measure $\mu$, the geodesic dimension is the number such that
\[
\mu(\Omega_t) \sim t^{N_{\mathrm{geo}}(x)}, \text{  
 as } t \to 0^+,
\]
for every measurable set $\Omega \subseteq M$ and all $x \in M$. Here, we write $f(t) \sim g(t)$ (as $t \to 0^+$) if there exists $C \neq 0$ such that $f(t) = g(t)(C+o(1))$ (as $t \to 0^+$). Roughly speaking, the geodesic dimension is thus more local in essence than the curvature exponent. 

The relationship between curvature exponent, geodesic dimension, and Hausdorff dimension is made clearer by the following statement.

\begin{theorem}[{\cite[Theorem 6]{rizzi2016} and \cite[Theorem 4.19]{unified2022}}]
\label{theo:NcurvNgeoNH}
For a measure metric space $(X, \diff, \mathfrak{m})$, it holds
\[
N_{\mathrm{curv}} \geq N_{\mathrm{geo}} \geq \dim_\mathcal{H}(X, \diff),
\]
where $\dim_{\mathcal{H}}(X, \diff)$ denotes the Hausdorff dimension of $(X, \diff)$.
\end{theorem}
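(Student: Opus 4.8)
The statement is a conjunction of two unrelated inequalities, and I would prove each on its own.

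\textbf{The inequality $N_{\mathrm{curv}} \geq N_{\mathrm{geo}}$.} Here the plan is to feed \cref{MCPimplies1sideBM} directly into the definition of $C_s(x)$. Fix any $N > 1$ for which $\mathsf{MCP}(0, N)$ holds. Then $\mathfrak{m}(\Omega_t) \geq t^N \mathfrak{m}(\Omega)$ for every bounded Borel $\Omega$ of positive finite measure and every $x \in X$. Consequently, for any exponent $s > N$ and any such fixed $\Omega$,
\[
\frac{1}{t^s}\frac{\mathfrak{m}(\Omega_t)}{\mathfrak{m}(\Omega)} \geq t^{N - s} \xrightarrow[t \to 0^+]{} +\infty,
\]
so the $\limsup$ in \cref{Cs(x)} is already $+\infty$ for this single $\Omega$, whence $C_s(x) = +\infty$. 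As this holds for every $s > N$ and every $x$, we obtain $N_{\mathrm{geo}}(x) \leq N$ for all $x$, hence $N_{\mathrm{geo}} \leq N$; taking the infimum over admissible $N$ gives $N_{\mathrm{geo}} \leq N_{\mathrm{curv}}$. This direction is essentially immediate.

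\textbf{The inequality $N_{\mathrm{geo}} \geq \dim_{\mathcal{H}}(X, \diff)$.} The geometric input is that a constant-speed minimising geodesic $\gamma$ issuing from $x$ satisfies $\diff(x, \gamma(t)) = t\, \diff(x, \gamma(1))$; therefore, if $\Omega$ is contained in the ball $B(x, R)$, its $t$-intermediate set lies in $B(x, tR)$, giving $\mathfrak{m}(\Omega_t) \leq \mathfrak{m}(B(x, tR))$. The decay of $\mathfrak{m}(\Omega_t)$ as $t \to 0^+$ is thus dominated by the decay of the measure of small balls at $x$. I would fix $s < \dim_{\mathcal{H}}(X, \diff)$ and show $C_s(x) = 0$: if $s$ is strictly below the lower local dimension $\liminf_{r \to 0^+} \log \mathfrak{m}(B(x,r)) / \log r$, then $\mathfrak{m}(B(x,r)) = o(r^s)$, so $t^{-s}\,\mathfrak{m}(B(x, tR)) \to 0$ and, $\mathfrak{m}(\Omega)$ being a fixed positive constant, the quantity inside \cref{Cs(x)} tends to $0$ for each fixed $\Omega$. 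Hence $C_s(x) = 0$, so $N_{\mathrm{geo}}(x)$ is at least the lower local dimension of $\mathfrak{m}$ at $x$; passing to the supremum over $x$ bounds $N_{\mathrm{geo}}$ below by $\sup_x \liminf_{r \to 0^+} \log \mathfrak{m}(B(x,r)) / \log r$.

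\textbf{Main obstacle.} The crux is this last comparison: identifying the supremum of the lower local dimensions of $\mathfrak{m}$ with the purely metric quantity $\dim_{\mathcal{H}}(X, \diff)$. In complete generality $\mathfrak{m}$ need not be comparable to the Hausdorff measure — it could concentrate on a lower-dimensional subset — so the local dimension of $\mathfrak{m}$ need not reach $\dim_{\mathcal{H}}(X, \diff)$, and some non-degeneracy of $\mathfrak{m}$ (full support, or Ahlfors-type regularity, as enjoyed by the Lebesgue measure in the Heisenberg setting) is required. The tool that closes the gap is the mass distribution principle: an upper estimate $\mathfrak{m}(B(x,r)) \leq C r^{s}$ holding on a set of positive $\mathfrak{m}$-measure forces that set, and hence $X$, to have Hausdorff dimension at least $s$. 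Combining this Frostman-type lower bound with the local-dimension estimate of the previous step pins $N_{\mathrm{geo}}$ from below by $\dim_{\mathcal{H}}(X, \diff)$, and it is here that the genuine measure-theoretic work lies.
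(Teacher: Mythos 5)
First, a point of reference: the paper itself does not prove \cref{theo:NcurvNgeoNH} at all --- it is imported verbatim from \cite[Theorem 6]{rizzi2016} and \cite[Theorem 4.19]{unified2022} --- so your proposal can only be compared with those arguments, not with anything in this text. Your proof of the first inequality $N_{\mathrm{curv}} \geq N_{\mathrm{geo}}$ is correct and is the standard one: \cref{MCPimplies1sideBM} gives $\mathfrak{m}(\Omega_t) \geq t^N \mathfrak{m}(\Omega)$ under $\mathsf{MCP}(0,N)$, hence $C_s(x) = +\infty$ for every $s > N$ and every $x$, hence $N_{\mathrm{geo}} \leq N$, and one takes the infimum over admissible $N$.

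The genuine gap is in the second inequality, exactly at the step you call the main obstacle, and the tool you name does not close it. After your (correct) reduction $N_{\mathrm{geo}}(x) \geq \underline{d}(x) := \liminf_{r \to 0^+} \log \mathfrak{m}(B(x,r))/\log r$, what remains to be proved is the \emph{upper} bound
\[
\dim_{\mathcal{H}}(X, \diff) \ \leq \ \sup_{x \in X} \underline{d}(x),
\]
that is, a bound on the Hausdorff dimension from above in terms of the local dimensions of $\mathfrak{m}$. The mass distribution principle you invoke goes the opposite way: from upper ball estimates $\mathfrak{m}(B(x,r)) \leq C r^s$ it produces \emph{lower} bounds $\dim_{\mathcal{H}} \geq s$, and no chaining of such a lower bound with $N_{\mathrm{geo}} \geq \sup_x \underline{d}(x)$ can yield $N_{\mathrm{geo}} \geq \dim_{\mathcal{H}}$. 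The tool that actually closes the argument is the converse covering estimate (Vitali $5r$-covering lemma, Billingsley-type): if $s > \sup_x \underline{d}(x)$, then every $x \in X$ admits arbitrarily small radii $r$ with $\mathfrak{m}(B(x,r)) \geq r^s$; for a bounded set $A$ these balls (with radii $\leq \delta$) form a fine cover, from which one extracts a disjoint subfamily $\{B(x_i, r_i)\}$, countable because each ball has measure at least $r_i^s > 0$ inside a region of finite measure, with $A \subseteq \bigcup_i B(x_i, 5 r_i)$. Then
\[
\mathcal{H}^s_{10\delta}(A) \ \leq \ 10^s \sum_i r_i^s \ \leq \ 10^s \sum_i \mathfrak{m}\bigl(B(x_i, r_i)\bigr) \ \leq \ 10^s \, \mathfrak{m}(A_\delta) \ < \ \infty,
\]
where $A_\delta$ is a $\delta$-neighbourhood of $A$; letting $\delta \to 0^+$ gives $\dim_{\mathcal{H}}(A) \leq s$, hence $\dim_{\mathcal{H}}(X) \leq s$.

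Relatedly, your hedge that the comparison ``need not hold in complete generality'' and requires full support or Ahlfors-type regularity of $\mathfrak{m}$ is misplaced: no such hypothesis appears in the theorem, and none is needed. If $x \notin \mathrm{supp}\, \mathfrak{m}$, then $\mathfrak{m}(B(x,r)) = 0$ for small $r$, so by your own contraction step every bounded $\Omega$ satisfies $\mathfrak{m}(\Omega_t) = 0$ for small $t$; thus $C_s(x) = 0$ for every $s > 0$ and $N_{\mathrm{geo}}(x) = +\infty$ by \cref{def:geod_dim}. A measure concentrated on a lower-dimensional set therefore makes the inequality trivially true, not false. The genuine measure-theoretic work you anticipated lies precisely in the covering argument above, not in Frostman's lemma.
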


The curvature exponent is often found to be equal to the geodesic dimension. This is the case for every $n$-dimensional Riemannian manifold with $\mathrm{Ric} \geq 0$, for which one finds $N_{\mathrm{curv}} = N_{\mathrm{geo}} = n$.
It is also known that this equality also holds for a large class of Carnot groups equipped with a left-invariant sub-Riemannian metric, see \cite{rizzi2016} and \cite{sharpcarnot2018}. To the best of our knowledge, the present work in fact provides the very first example of sub-Finsler Carnot groups which has a curvature exponent strictly greater than its geodesic dimension.

\subsection{Shelupsky's  \texorpdfstring{$p$}{p}-trigonometric functions}\label{sec2-3}

In this section, we introduce special functions, known as Shelupsky's $p$-trigonometric functions, that will be used to describe the geometry of the $\ell^p$-Heisenberg group. They were first studied in \cite{shelupsky1959} (see also \cite{lok2}) and we also gather here some important properties about them. We firstly follow the geometric definition from \cite{lok}, before describing them differentially as in \cite{shelupsky1959}.

For $p \in \interval{1}{\infty}$, the $\ell^p$-norm on the Euclidean plane is denoted by $\|\cdot\|_p$,
while $\mathds{B}_p$ $(\text{resp.}~\mathds{S}_p)$ is the unit ball (resp. unit sphere) of $(\R^2,\|\cdot\|_p)$, centered at the origin.
The area $\pi_p$ of $\mathds{B}_p$ is given by
\[
\pi_p = 4\frac{\Gamma(1+\frac{1}{p})^2}{\Gamma(1+\frac{2}{p})},
\]
where $\Gamma$ is the Gamma function.

\begin{definition}[{\cite[Definition 1]{lok}}]
\label{def:sinpcosp}
For $\theta\in[0,2\pi_p)$, a point $P_\theta$ on $\mathds{S}_p$ is chosen as the unique one such that the area of the sector of $\mathds{B}_p$ comprised between the $x$-axis and the straight line from the origin to $P_\theta$ is $\theta/2$. By definition, the $p$-trigonometric functions $\cos_p(\theta)$ and $\sin_p(\theta)$ are the coordinates of $P_\theta$, that is $(\cos_p(\theta), \sin_p(\theta)) := P_\theta $. The domain of the $p$-trigonometric functions is finally extended to the whole real line $\mathds{R}$ by $2 \pi_p$-periodicity. 
\end{definition}

\begin{figure}
    \centering
	\captionsetup[subfigure]{justification=centering}
	\subcaptionbox{$(\cos_p\theta,\sin_p\theta)$}{\scalebox{0.465}{\includegraphics[width=\linewidth]{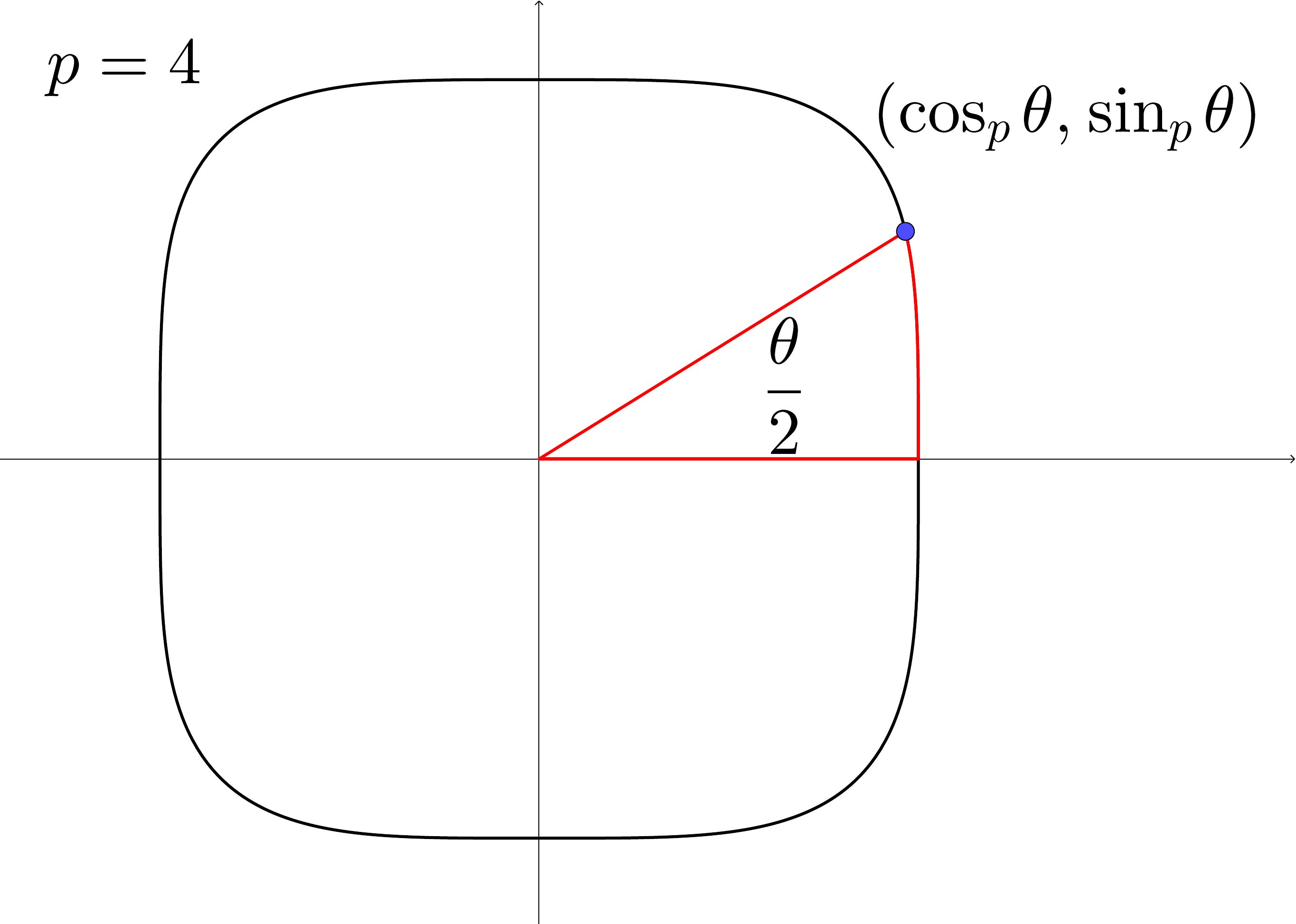}}}\hspace{1em}%
	\subcaptionbox{$(\cos_q\theta^\circ,\sin_q\theta^\circ)$}{\scalebox{0.465}{\includegraphics[width=\linewidth]{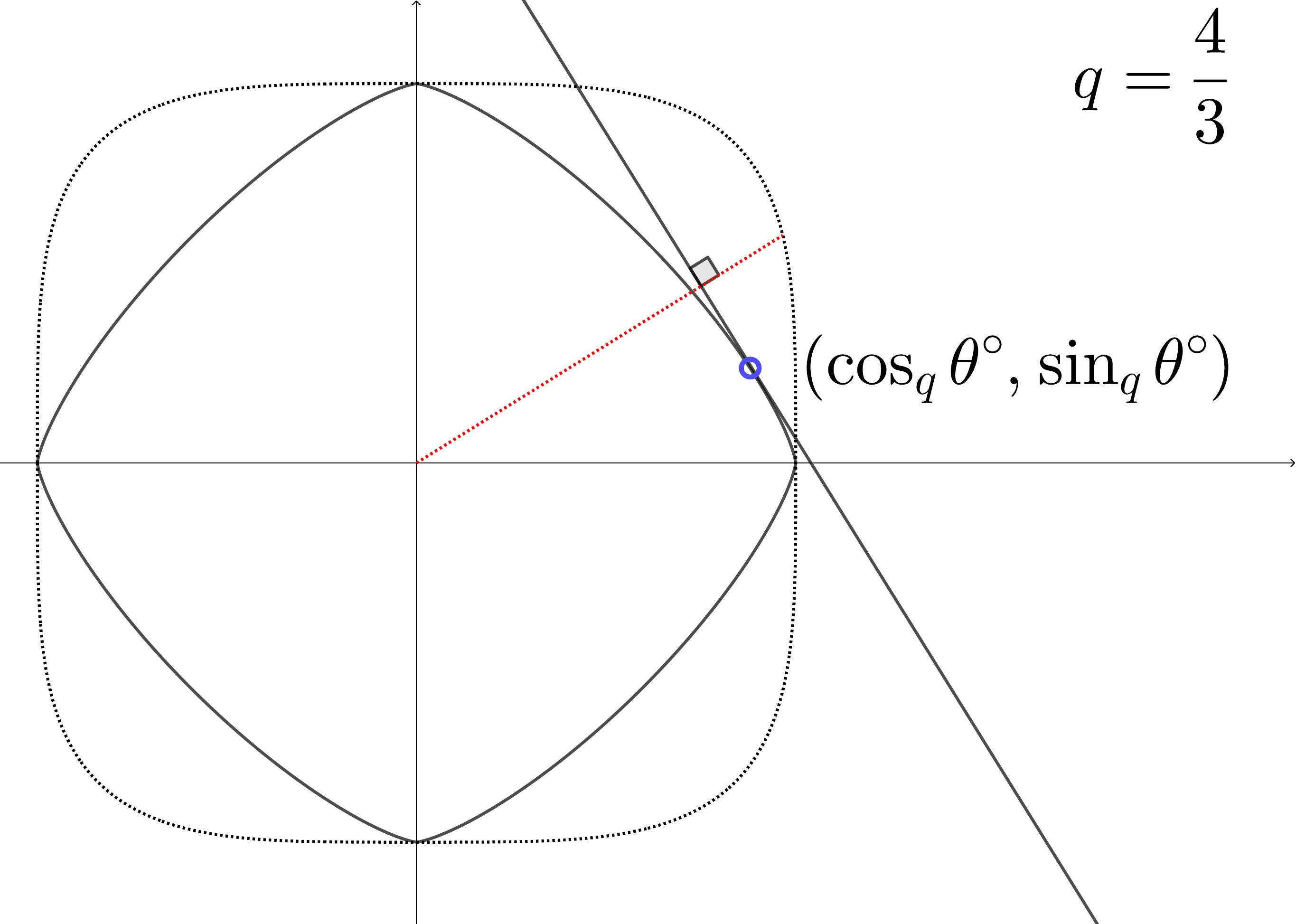}}}\hspace{1em}%
	\caption{Geometric definition of the $p$-trigonometric functions.}
    \label{geometricdeftrigp}
\end{figure}

From the definition above, one can easily see that the $2$-trigonometric functions coincide with the usual trigonometric functions. Furthermore, it holds $\sin_p(0) = 0, \cos_p(0) = 1$, and we clearly have the following $p$-trigonometric identity
\begin{equation}
\label{lemtri}
\abs{\cos_p(\theta)}^p + \abs{\sin_p(\theta)}^p = 1, \text{ for all }\theta \in \mathds{R}.
\end{equation}
By the symmetries of $\mathds{B}_p$, we also have
\[
\sin_p(\theta + \pi_p) = - \sin_p(\theta), \ \cos_p(\theta + \pi_p) = - \cos_p(\theta),
\]
as well as
\[
\sin_p(\theta + \pi_p/2) = \cos_p(\theta), \ \cos_p(\theta + \pi_p/2) = - \sin_p(\theta).
\]
The geometric definition of the $p$-trigonometric functions is illustrated in \cref{geometricdeftrigp} and their graphs are represented in \cref{fig:graphtrigp}.

\begin{figure}
    \centering
	\captionsetup[subfigure]{justification=centering}
	\subcaptionbox{$p<2$}{\scalebox{0.465}{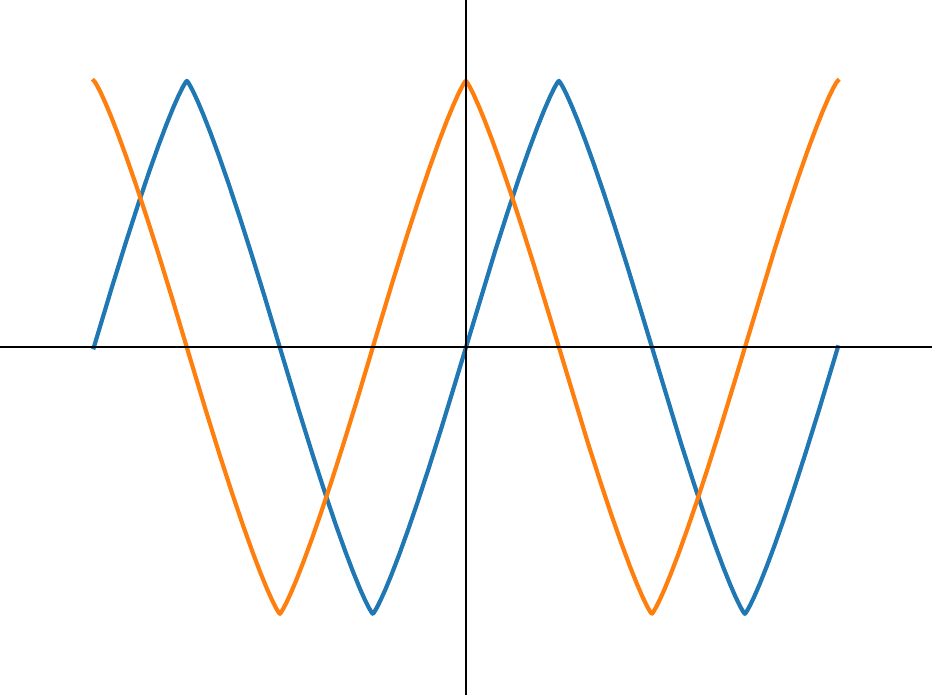}}\hspace{1em}%
	\subcaptionbox{$p>2$}{\scalebox{0.465}{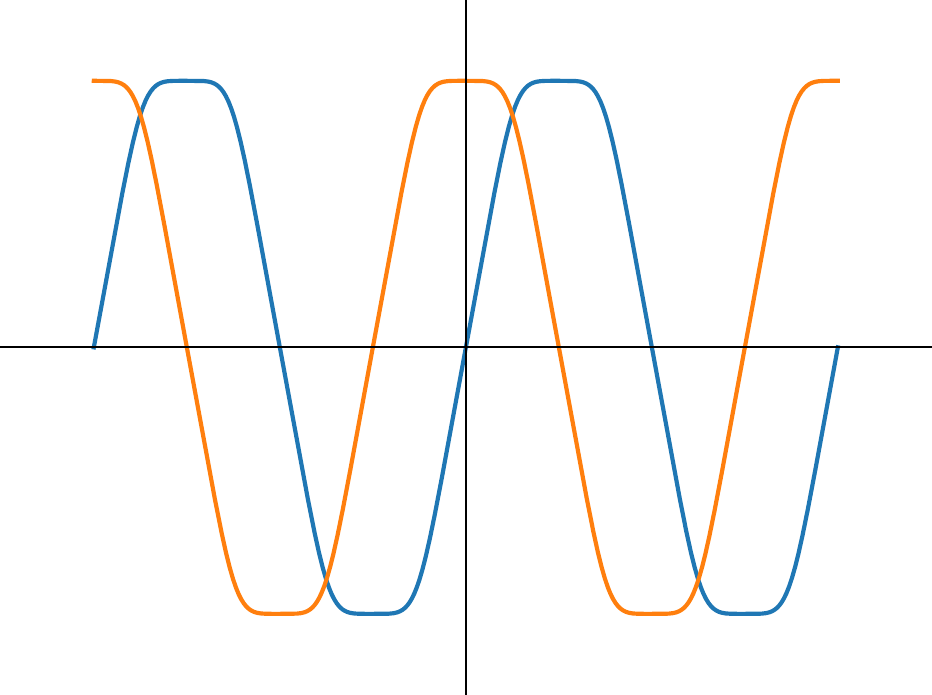}}\hspace{1em}%
	\caption{Graphs of the $p$-trigonometric functions $\sin_p(\theta)$ and $\cos_p(\theta)$}
    \label{fig:graphtrigp}
\end{figure}

\begin{remark}
    Instead of defining trigonometric functions with respect to the $p$-unit ball, one can replace $\mathds{B}_p$ in \cref{def:sinpcosp} by any compact convex set $\Omega$ such that $0 \in \mathrm{int}(\Omega)$, and obtain the corresponding sine and cosine functions, denoted by $\sin_\Omega$ and $\cos_\Omega$ in \cite{lok}. The polar set $\Omega^\circ$ of $\Omega$,
\[
\Omega^\circ := \left\{ (z, w) \mid xz+yw \leq 1, \text{ for all } (x, y) \in \Omega \right\},
\]
can be used to see that
\begin{equation}
    \label{eq:ineqtrigOmega}
    \cos_\Omega(\theta) \cos_{\Omega^\circ}(\phi) + \sin_\Omega(\theta) \sin_{\Omega^\circ}(\phi) \leq 1, \text{ for all } \theta, \phi \in \mathds{R}.
\end{equation}
\end{remark}
  
For the remainder of this work, unless stated otherwise, $q \in \interval{1}{\infty}$ will always be the Hölder conjugate of $p$, i.e., the number that satisfies the equation
\[
\frac{1}{p}+\frac{1}{q}=1.
\]
Observing that the polar set of $\mathds{B}_p$ is $\mathds{B}_q$, we can see that Shelupsky's trigonometric functions satisfy the following duality relation, similarly to \cref{eq:ineqtrigOmega}:
\begin{equation}
\label{eq:ineqtrigpq}
\cos_p(\theta) \cos_{q}(\phi) + \sin_p(\theta) \sin_{q}(\phi) \leq 1, \text{ for all } \theta, \phi \in \mathds{R}.
\end{equation}

It can be verified that for a given $\theta \in \rinterval{0}{2 \pi_p}$,
there is at least one $\theta^\circ \in \rinterval{0}{2 \pi_q}$ such that the equality holds in \cref{eq:ineqtrigpq}. This defines a monotonic multivalued map $\theta \mapsto \theta^\circ$ that we extend from $\rinterval{0}{2 \pi_p}$ to $\mathds{R}$ by periodicity. However, in general, it is difficult to compute $\theta^\circ$ explicitly. Nonetheless, it can also be verified that for $p \in \ointerval{1}{\infty}$, there is a unique $\theta^\circ$ for each given $\theta$, and that the maps $\theta \mapsto \theta^\circ$ is stricly monotonic and continuous, see \cite[Theorem 5, Figure 5]{lok2}. If $p = 1$, then we have $\theta^\circ = \frac{\pi_\infty}{4}$ when $\theta \in \ointerval{0}{\frac{\pi_1}{2}}$ and $\theta^\circ = \interval{\frac{\pi_\infty}{4}}{\frac{3\pi_\infty}{4}}$ when $\theta = \frac{\pi_1}{2}$. Similarly, if $p = \infty$, we have $\theta^\circ = \frac{\pi_1}{2}$ when $\theta \in \ointerval{\frac{\pi_\infty}{4}}{\frac{\pi_\infty}{4}}$ and $\theta^\circ = \interval{0}{\frac{\pi_2}{2}}$ when $\theta = \frac{\pi_\infty}{4}$.  

The theorem below provides a characterisation of the $p$-trigonometric functions, for $p \in \ointerval{1}{\infty}$, as the solution to a first-order system of differential equations.

\begin{theorem}[{\cite[Theorem 2]{lok} and \cite[Remark 2]{lok2}}]
\label{lemtridiff}
  For $p \in \ointerval{1}{\infty}$, the following system of differential equations holds:
  \begin{align*}
  \frac{\diff}{\diff\theta}\sin_p\theta &= \cos_q\theta^\circ = \abs{\cos_p \theta}^{p - 1} \sgn(\cos_p \theta),\\
  \frac{\diff}{\diff\theta}\cos_p\theta &= -\sin_q\theta^\circ = - \abs{\sin_p \theta}^{p - 1} \sgn(\sin_p \theta) .
  \end{align*}
\end{theorem}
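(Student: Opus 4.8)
The plan is to recover the two identities from the geometric definition of $\sin_p$ and $\cos_p$ via the classical sector-area formula, and then to identify the resulting right-hand sides with $\cos_q\theta^\circ$ and $\sin_q\theta^\circ$ using the equality case of the duality inequality \eqref{eq:ineqtrigpq}. Write $x(\theta):=\cos_p\theta$ and $y(\theta):=\sin_p\theta$. Since $\mathds{S}_p$ is a $C^1$ curve for $p\in\ointerval{1}{\infty}$ and the areal velocity is continuous and strictly positive, the area parametrisation is $C^1$, so $x,y$ are differentiable; I denote $\theta$-derivatives by $x',y'$. By \cref{def:sinpcosp} the area of the sector bounded by the radii to $(1,0)$ and to $P_\theta$ and by the arc of $\mathds{S}_p$ equals $\theta/2$, and the standard (shoelace) sector-area formula gives, identically in $\theta$,
\[
\frac{1}{2}\int_0^\theta \bigl(x(\tau)\,y'(\tau)-y(\tau)\,x'(\tau)\bigr)\,\diff\tau = \frac{\theta}{2}.
\]
Differentiating in $\theta$ yields the first relation $x\,y'-y\,x'=1$.

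Next I would differentiate the $p$-trigonometric identity $\abs{x}^p+\abs{y}^p=1$ from \eqref{lemtri} to obtain the second relation $a\,x'+b\,y'=0$, where I abbreviate $a:=\abs{\cos_p\theta}^{p-1}\sgn(\cos_p\theta)$ and $b:=\abs{\sin_p\theta}^{p-1}\sgn(\sin_p\theta)$. These are two linear equations in $(x',y')$, and they are solved using the algebraic identity $xa+yb=\abs{x}^p+\abs{y}^p=1$: substituting $x'=-(b/a)\,y'$ into $x\,y'-y\,x'=1$ gives $y'\,(xa+yb)/a=1$, hence $y'=a$ and therefore $x'=-b$ (the case $a=0$, i.e.\ $\cos_p\theta=0$, follows by the symmetric argument or by continuity). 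This is precisely the rightmost equality of the statement, namely $\frac{\diff}{\diff\theta}\sin_p\theta=\abs{\cos_p\theta}^{p-1}\sgn(\cos_p\theta)$ and $\frac{\diff}{\diff\theta}\cos_p\theta=-\abs{\sin_p\theta}^{p-1}\sgn(\sin_p\theta)$.

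It then remains to identify $a$ with $\cos_q\theta^\circ$ and $b$ with $\sin_q\theta^\circ$. The point $v:=(a,b)$ lies on $\mathds{S}_q$: since $(p-1)q=p$, one has $\abs{a}^q+\abs{b}^q=\abs{x}^{(p-1)q}+\abs{y}^{(p-1)q}=\abs{x}^p+\abs{y}^p=1$. Moreover $xa+yb=1$ says that $v$ realises equality in the duality inequality \eqref{eq:ineqtrigpq} against $(\cos_p\theta,\sin_p\theta)$. As recalled in the excerpt, for $p\in\ointerval{1}{\infty}$ the point of $\mathds{S}_q$ attaining equality is unique and, by definition, it is $(\cos_q\theta^\circ,\sin_q\theta^\circ)$. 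Hence $\cos_q\theta^\circ=a$ and $\sin_q\theta^\circ=b$, which closes the chain of equalities.

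The step I expect to require the most care is the regularity justification rather than the algebra: one must verify that $\sin_p,\cos_p$ are genuinely differentiable at the four axis points where one coordinate vanishes, since for $1<p<2$ the factor $\abs{\cdot}^{p-1}$ is only Hölder-continuous and the area parametrisation must be shown to remain $C^1$ there. Away from these points the implicit function theorem applied to $\abs{x}^p+\abs{y}^p=1$ makes everything smooth; at the axis points I would either pass to the limit in the two relations above (the candidate derivatives $a,b$ are continuous and vanish there, so the limiting tangent is consistent) or invoke the symmetry relations $\cos_p(\theta+\pi_p/2)=-\sin_p\theta$, $\sin_p(\theta+\pi_p/2)=\cos_p\theta$ listed before the statement to transport a smooth arc onto the degenerate point.
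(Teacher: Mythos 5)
Your proof is correct, but note that the paper itself offers no proof of this statement: \cref{lemtridiff} is quoted directly from the cited references, so any derivation is necessarily a route the paper does not take. Your argument is a sound, self-contained reconstruction specialised to the $\ell^p$ case: the shoelace identity $\tfrac{1}{2}\int_0^\theta (x y' - y x')\diff\tau = \theta/2$ gives $x y' - y x' = 1$; implicit differentiation of $\abs{x}^p + \abs{y}^p = 1$ gives $a x' + b y' = 0$ with $a = \abs{x}^{p-1}\sgn(x)$, $b = \abs{y}^{p-1}\sgn(y)$; and the single algebraic fact $x a + y b = 1$ both solves the linear system and exhibits $(a, b) \in \mathds{S}_q$ (via $(p-1)q = p$) as a point realising equality in \cref{eq:ineqtrigpq}, which is unique for $p \in \ointerval{1}{\infty}$ and hence equals $(\cos_q\theta^\circ, \sin_q\theta^\circ)$. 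By contrast, the result in the cited works is proved for general convex bodies, where corners make $\theta^\circ$ multivalued and force one-sided derivative arguments; your version buys simplicity by exploiting exactly the smoothness and strict convexity of $\mathds{B}_p$ available in the stated regime. Two small improvements. First, the case split on $a = 0$ is avoidable: the $2\times 2$ system has determinant $-(x a + y b) = -1$ identically, so Cramer's rule yields $y' = a$ and $x' = -b$ uniformly, axis points included. Second, your regularity concern is the right one and your sketch closes it: any $C^1$ regular parametrisation of $\mathds{S}_p$ (e.g.\ by Euclidean angle, which is $C^1$ even at the axis points because $u \mapsto \abs{u}^p$ is $C^1$ for $p > 1$) has continuous, strictly positive areal velocity, so the reparametrisation by swept area is a $C^1$ diffeomorphism, and the two displayed relations then hold at every $\theta$ without a separate limiting argument.
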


\begin{remark}
\label{remark:regutrig}
    When $p \in \Z$, the $p$-trigonometric functions are analytic everywhere. When $p \notin \mathds{Z}$, the $p$-trigonometric functions are smooth everywhere except at points $\theta=0~\mathrm{mod}~\frac{\pi_p}{2}$, where they are only $C^{\lfloor p \rfloor}$. They are in fact analytic at $\theta\neq0~\mathrm{mod}~\frac{\pi_p}{2}$.
\end{remark}


\begin{lemma}
    \label{radiusconvsinp}
    Given $\theta_0 \in \ointerval{-\frac{\pi_p}{4}}{\frac{\pi_p}{4}} \setminus \{0\}$, the radius of convergence of $\sin_p$ at $\theta = \theta_0$ is no less than $\abs{ \theta_0}/(p+1)$.
\end{lemma}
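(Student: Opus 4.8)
The idea is to realise $\sin_p$ as the solution of an autonomous first‑order ODE whose right‑hand side is holomorphic away from an explicit branch locus, and then to bound the radius of convergence of the solution at $\theta_0$ by controlling its analytic continuation. Since $\sin_p$ is odd, the radius of convergence at $-\theta_0$ equals that at $\theta_0$, and the bound is vacuous at $\theta_0 = 0$; I would therefore assume without loss of generality that $\theta_0 \in \ointerval{0}{\pi_p/4}$ and prove the bound $\theta_0/(p+1)$, the general statement $\abs{\theta_0}/(p+1)$ following by oddness. On $\ointerval{0}{\pi_p/2}$ both $\sin_p$ and $\cos_p$ are strictly positive, so \cref{lemtridiff} together with the identity \cref{lemtri} gives, writing $u := \sin_p$,
\[
u' = \abs{\cos_p}^{p-1}\sgn(\cos_p) = \cos_p^{\,p-1} = (1 - u^p)^{1/q},
\]
using $(p-1)/p = 1/q$. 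Thus $u$ solves $u' = f(u)$ with $f(w) = (1-w^p)^{1/q}$ and initial datum $u(\theta_0) = u_0 := \sin_p(\theta_0) \in \ointerval{0}{2^{-1/p}}$.

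Next I would complexify. The vector field $f$ is holomorphic in $w$ near $u_0$, its only obstructions to continuation being the branch point of $w\mapsto w^p$ at $w=0$ and the branch points where $w^p = 1$. Through the inverse relation $\theta(u) = \int_0^u (1-s^p)^{-1/q}\diff s$, these correspond respectively to $\theta = 0$ (where $u=0$) and to $\theta = \pi_p/2$ (where $u=1$). The radius of convergence of $u$ at $\theta_0$ is the distance in the complex $\theta$-plane from $\theta_0$ to the nearest singularity of the continued solution; since $\theta_0 \in \ointerval{0}{\pi_p/4}$ is closer to $0$ than to $\pi_p/2$, the governing obstruction is the branch point at $\theta = 0$, which sits at distance $\theta_0$. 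Expanding the inverse map,
\[
\theta(u) = \sum_{k\geq 0} b_k\, u^{kp+1}, \qquad b_k = \frac{(1/q)_k}{k!\,(kp+1)} > 0,
\]
shows that the leading non‑analytic term of $\theta$ is $\tfrac{1}{q(p+1)}\,u^{p+1}$; inverting, $\sin_p$ has a branch point of order $p+1$ at the origin. This order is exactly where the factor $p+1$ in the claimed bound originates.

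For the quantitative lower bound I would exploit the positivity of the coefficients $b_k$ as a majorant. Controlling $\theta(\cdot)$ and $\theta'(\cdot) = (1-w^p)^{-1/q}$ on the punctured disk $\abs{w - u_0} < u_0$ (which avoids $w=0$ and, for $\theta_0 < \pi_p/4$, stays away from $w^p = 1$), I would apply the quantitative inverse‑function / Lagrange estimate to bound the Taylor coefficients $a_n$ of $u(\theta)$ at $\theta_0$, aiming at $\limsup_n \abs{a_n}^{1/n} \leq (p+1)/\theta_0$, and hence radius of convergence at least $\theta_0/(p+1)$. The point of routing through the explicit series rather than a black‑box Cauchy–Kovalevskaya bound is that the constant $p+1$ can then be read off the order of the branch, instead of being an unspecified geometric constant.

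The main obstacle is precisely this last quantitative step. The branch point of $w\mapsto w^p$ at $w=0$ lies on the boundary of the natural disk of analyticity of $f$, so the majorant estimates must be applied with care to remain simultaneously away from $w=0$ and $w^p=1$ while still producing the clean constant $p+1$ rather than a worse one. In particular, the delicate part is converting the order‑$(p+1)$ branch structure at the origin into a genuine, explicitly constrained disk of analyticity around $\theta_0$; this is where the specific form $b_k = \frac{(1/q)_k}{k!(kp+1)}$ of the inverse series must be used, and it is the only place where more than soft analytic‑ODE theory is required.
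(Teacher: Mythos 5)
Your proposal is a plan rather than a proof: the decisive step is announced as an aim, not carried out. Everything is reduced to showing $\limsup_n \abs{a_n}^{1/n} \leq (p+1)/\theta_0$ for the Taylor coefficients at $\theta_0$, and you yourself flag this ``quantitative step'' as the main obstacle requiring ``more than soft analytic-ODE theory'' --- but that step \emph{is} the lemma. Moreover, two of the supporting claims are incorrect or circular. First, the assertion that the governing obstruction is the real branch point at $\theta = 0$, sitting at distance $\theta_0$, presumes that the analytic continuation of $\sin_p$ has no singularities off the real axis; ruling out nearby \emph{complex} branch points (images, under the multivalued inverse of $\theta(u)=\int_0^u(1-s^p)^{-1/q}\diff s$, of the branch locus $\{w=0\}\cup\{w^p\in\rinterval{1}{\infty}\}$) is precisely what a lower bound on the radius of convergence amounts to, so this part of the argument assumes what is to be proven --- and if it were justified, the lemma would follow trivially with the stronger bound $\theta_0$. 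Second, the disk $\abs{w-u_0}<u_0$ does \emph{not} stay away from $w^p=1$ for all $\theta_0\in\ointerval{0}{\pi_p/4}$: since $u_0=\sin_p\theta_0$ ranges up to $\sin_p(\pi_p/4)=2^{-1/p}>1/2$, for $\theta_0$ close to $\pi_p/4$ the disk contains the point $w=1$, where $\theta'(w)=(1-w^p)^{-1/q}$ blows up. Finally, the black-box tools you would invoke (Koebe-type quantitative inverse function theorems, Cauchy majorants) carry absolute constants such as $1/4$; since $u_0\leq\theta_0$ and $\abs{\theta'(u_0)}$ is close to $1$ for small $\theta_0$, a bound of the form $\tfrac{1}{4}u_0\abs{\theta'(u_0)}$ falls strictly short of $\theta_0/(p+1)$ whenever $p<3$, so the clean constant cannot be extracted from such a theorem without genuinely new estimates.

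For comparison, the paper's proof is purely real-variable and avoids complexification altogether: using \cref{lemtridiff}, it writes $\sin_p^{(n)}(\theta)$ as a finite sum of monomials $\abs{\cos_p\theta}^{(n-k)p-n}\abs{\sin_p\theta}^{kp-n+1}$, identifies the dominant monomial on $\ointerval{-\pi_p/4}{\pi_p/4}$ (where $\abs{\sin_p\theta}<\abs{\cos_p\theta}$), bounds the coefficients by $(n-2)!\,(p+1)^n$ via a Leibniz-rule count, and applies the root test; the factor $1/\abs{\theta}$ then comes from the convexity of $\tan_p\theta=\sin_p\theta/(\abs{\cos_p\theta}^{p-1}\sgn(\cos_p\theta))$, which yields $\abs{\cos_p\theta}^{p-1}/\abs{\sin_p\theta}\leq 1/\abs{\theta}$. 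In particular, the constant $p+1$ originates in that combinatorial coefficient bound, not in the order of the branch point at the origin as your heuristic suggests; to salvage the complex-analytic route you would need to produce an estimate of comparable precision, which your proposal leaves entirely open.
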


\begin{proof}
From \cref{lemtridiff}, the $n$-th derivative ($n\geq 2$) of $\sin_p$ can be written as follows:
\begin{equation*}
\sin_p^{(n)}\theta = \sum_{k=1}^{n-1}a_{n,k}\abs{\cos_p\theta}^{(n-k)p-n}\abs{\sin_p\theta}^{kp-n+1}\sigma_n,
\end{equation*}
for some $a_{n, k} \in \R$, and where
\[
\sigma_n=
\begin{cases*}
    \sgn(\sin_p\theta) & if $n$ is even,\\
    \sgn(\cos_p\theta) & if $n$ is odd.
\end{cases*}
\]

Since $\abs{\sin_p\theta} < \abs{\cos_p\theta}$ by assumption, it is easy to check that

\begin{equation}
\label{eq:maxl}
\max_{k=1,\dots,n-1}\{\abs{\cos_p\theta}^{(n-k)p-n}\abs{\sin_p\theta}^{kp-n+1}|\}=\abs{\cos_p\theta}^{(n-1)p-n}\abs{\sin_p\theta}^{p-n+1}.
\end{equation}

Next, we compute an upper bound for the coefficients $\abs{a_{n, k}}$. By the Leibniz rule, the $n$-th derivative of $\sin_p\theta$ is a priori a sum of $2^{n-2}$ terms. Among those,
the number of the terms that have a common factor $\abs{\cos_p\theta}^{(n-k)p-n}\abs{\sin_p\theta}^{kp-n+1}$ is $\binom{n-2}{k-1}$. Moreover, the coefficients in front of all these factors take the form
\[
(p-1)\prod_{i=1}^{k-1}(c_ip-i)\prod_{j=1}^{n-k-1}(d_jp-j+1),
\]
where $c_i \in \{1,\dots,i\}$ and $d_j=\{1,\dots,j\}$. Clearly, we have
\begin{align*}
    \abs{(p-1)\prod_{i=1}^{k-1}(c_ip-i)\prod_{j=1}^{n-k-1}(d_jp-j+1)} \leq{}& (p-1)\prod_{i=1}^{k-1}(ip+i)\prod_{j=1}^{n-k-1}(jp+j) \\
    ={}&(p-1)(p+1)^{n-2}(k-1)!(n-k-1)!.
\end{align*}

Therefore, we obtain an upper bound of $|a_k^n|$ as
\[
|a_{n,k}|\leq \binom{n-2}{k-1}(k-1)!(n-k-1)!(p-1)(p+1)^{n-2}\leq (n-2)!(p+1)^n.
\]
Combined with \cref{eq:maxl}, we can estimate the radius of convergence using the root test
\begin{align*}
    \sqrt[n]{\abs{\frac{1}{n!}\sin_p^{(n)}\theta}} ={}& \sqrt[n]{\abs{\frac{1}{n!}\sum_{k=1}^{n-1}a_{n,k}\abs{\cos_p\theta}^{(n-k)p-n}\abs{\sin_p\theta}^{kp-n+1}\sigma_n}} \\
    \leq{}& \sqrt[n]{\frac{(p+1)^n}{n}\abs{\cos_p\theta}^{(n-1)p-n}\abs{\sin_p\theta}^{p-n}} \\
    &\longrightarrow (p+1)\frac{\abs{\cos_p\theta}^{p-1}}{\abs{\sin_p\theta}} \leq \frac{p+1}{\abs{\theta}}.
\end{align*}
The last inequality follows from analysing 
\[
\tan_p \theta := \frac{\sin_p \theta}{\sin'_p \theta} = \frac{\sin_p \theta}{\abs{\cos_p \theta}^{p - 1}\sgn(\cos_p \theta)}.
\]
If $\theta > 0$ (the other case is similar), then performing a second derivative on $\tan_p \theta$ reveals that it is convex for $0 \leq \theta \leq \frac{\pi_p}{4}$. The tangent to $\tan_p$ at $\theta = 0$ which must lie below the $p$-tangent by convexity, is observed to be $\theta$, which concludes the proof.
\end{proof}

Although the $p$-trigonometric functions are not analytic at $\theta = 0
$, it is possible to write a convergent expansion of these functions near $\theta = 0$. Indeed, both $\sin_p$ and $\cos_p$ are solutions to the $q$-Laplace equation
\[
(\abs{u'}^{q - 2} u')' + \abs{u}^{p - 2} u = 0,
\]
with their respective initial values. In \cite{paredes-uchiyama},
it is shown that such solutions of the $q$-Laplace equation have the following convergent expansion near $\theta = 0$.

\begin{theorem}[\cite{paredes-uchiyama}, Theorem 1 and 2]
\label{lemtriexpansion}
  For $p\in\ointerval{1}{\infty}$,
  the $p$-trigonometric functions have the following expansion, converging uniformly in a neighbourhood of $\theta = 0$:
  \begin{align*}
      \sin_p\theta{}& = \theta - \frac{p-1}{p(p+1)} \abs{\theta}^{p}\theta + \frac{(p-1)(2p^2-3p-1)}{2p^2(p+1)(2p+1)} \abs{\theta}^{2 p}\theta + \sum_{k = 3}^{\infty} a_k \abs{\theta}^{k p}\theta,\\
      \cos_p\theta {}& = 1-\frac{1}{p}|\theta|^p+\frac{(p-1)^2}{2p^2(p+1)}|\theta|^{2p} + \sum_{k = 3}^{\infty} b_k \abs{\theta}^{k p},\\
      \sin_q \theta^\circ {}& = \abs{\theta}^{p - 2} \theta -\frac{(p-1)^2}{p(p+1)} \abs{\theta}^{2 p - 2} \theta- \sum_{k = 3}^{\infty} k p b_k \abs{\theta}^{k p - 2} \theta,\\
      \cos_q \theta^\circ{}& = 1-\frac{p-1}{p}|\theta|^p+\frac{(p-1)(2p^2-3p-1)}{2p^2(p+1)}|\theta|^{2p}+ \sum_{k = 3}^{\infty} (k p + 1) a_k \abs{\theta}^{k p}. \\
  \end{align*}
\end{theorem}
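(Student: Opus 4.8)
The plan is to derive the four expansions from the first-order system of \cref{lemtridiff} by passing to an auxiliary ODE that is analytic in the variable $\tau = \theta^p$. Since $\sin_p$ is odd and $\cos_p$ is even (clear from the geometric \cref{def:sinpcosp}), it suffices to work on a one-sided neighbourhood $\theta \in \ointerval{0}{\pi_p/2}$, where both $\cos_p\theta$ and $\sin_p\theta$ are positive, so that the absolute values and sign factors in \cref{lemtridiff} disappear. There the system reads $\frac{\diff}{\diff\theta}\sin_p\theta = (\cos_p\theta)^{p-1}$ and $\frac{\diff}{\diff\theta}\cos_p\theta = -(\sin_p\theta)^{p-1}$, subject to $\sin_p(0) = 0$ and $\cos_p(0) = 1$.

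First I would introduce the substitution $\tau := \theta^p$ together with $u(\tau) := \cos_p\theta$ and $v(\tau) := \sin_p\theta/\theta$. A direct computation using $\frac{\diff}{\diff\theta} = p\,\theta^{p-1}\frac{\diff}{\diff\tau}$ turns the system into
\begin{align*}
p\,u'(\tau) &= -\,v(\tau)^{p-1}, \\
p\,\tau\,v'(\tau) + v(\tau) &= u(\tau)^{p-1},
\end{align*}
with $u(0) = v(0) = 1$. The right-hand sides are analytic functions of $(u,v)$ near $(1,1)$, and the only non-elementary feature is the factor $\tau$ multiplying $v'$; that is, $\tau = 0$ is a regular singular point of the second equation whose indicial root $-1/p$ is not a nonnegative integer. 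This is exactly the structure that forces expansions in integer powers of $\tau = \theta^p$.

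Next I would solve the system by the formal ansatz $u(\tau) = \sum_{k\geq 0} b_k\,\tau^k$ and $v(\tau) = \sum_{k\geq 0} a_k\,\tau^k$ with $a_0 = b_0 = 1$. Expanding $u^{p-1}$ and $v^{p-1}$ by the generalized binomial series and matching coefficients yields the recursion $p\,k\,b_k = -[\tau^{k-1}]v^{p-1}$ and $(pk+1)\,a_k = [\tau^k]u^{p-1}$; because $pk \neq 0$ and $pk+1 \neq 0$ for every $k$, the coefficients $a_k, b_k$ are determined at each order (equivalently, the singular homogeneous solution $v \sim \tau^{-1/p}$ of the second equation is never analytic, so the analytic solution is unique). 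Solving the first orders reproduces $b_1 = -1/p$, $a_1 = -\frac{p-1}{p(p+1)}$ and the stated quadratic coefficients; undoing the substitution gives $\cos_p\theta = u(\theta^p)$ and $\sin_p\theta = \theta\,v(\theta^p)$, and replacing $\theta^{kp}$ by $\abs{\theta}^{kp}$ extends the identities to $\theta < 0$ by parity.

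The main obstacle is the convergence of these formal series, which is the analytic heart of the statement. I would establish it by a majorant argument: bounding the coefficients of the $(p-1)$-th powers $u^{p-1}, v^{p-1}$ and feeding them into the recursion so that $(a_k)$ and $(b_k)$ are dominated by a geometric sequence, thereby exhibiting a positive radius of convergence (equivalently, by invoking the convergence theory for nonlinear systems at a regular singular point). Controlling the combinatorics of the binomial coefficients of the nonlinear terms is the delicate part, entirely analogous to the coefficient estimate carried out in \cref{radiusconvsinp}. Finally, the expansions of $\sin_q\theta^\circ$ and $\cos_q\theta^\circ$ follow at once from $\cos_q\theta^\circ = \frac{\diff}{\diff\theta}\sin_p\theta$ and $\sin_q\theta^\circ = -\frac{\diff}{\diff\theta}\cos_p\theta$ (again by \cref{lemtridiff}) upon differentiating the convergent series for $\sin_p$ and $\cos_p$ term by term, which is legitimate in the interior of the disc of convergence.
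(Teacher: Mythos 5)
You should first note that the paper itself contains no proof of this statement: it is imported wholesale from \cite{paredes-uchiyama} (Theorems 1 and 2 there), where it is established for solutions of the $q$-Laplace equation. Your argument is therefore by necessity a different, self-contained route, and its algebraic core is correct. The substitution $\tau=\theta^p$, $u(\tau)=\cos_p\theta$, $v(\tau)=\sin_p\theta/\theta$ does turn the system of \cref{lemtridiff} into $p\,u'=-v^{p-1}$, $p\tau v'+v=u^{p-1}$ with $u(0)=v(0)=1$; the recursion $pk\,b_k=-[\tau^{k-1}]v^{p-1}$, $(pk+1)a_k=[\tau^k]u^{p-1}$ is well-founded (at stage $k$ one determines $b_k$ from $a_1,\dots,a_{k-1}$ and then $a_k$ from $b_1,\dots,b_k$, the term $(p-1)b_k$ appearing in $[\tau^k]u^{p-1}$); its first orders reproduce exactly the stated coefficients, including $a_2=\frac{(p-1)(2p^2-3p-1)}{2p^2(p+1)(2p+1)}$; and term-by-term differentiation yields the dual expansions with the factors $kp$ and $kp+1$ of the statement. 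The convergence step you only sketch, but the appeal is legitimate: $w\mapsto w^{p-1}$ is analytic near $w=1$, the divisors $pk$ and $pk+1$ only help, and the classical Briot--Bouquet/majorant theory for analytic systems at a regular singular point (eigenvalues $0$ and $-1/p$, neither a positive integer) applies. What your route buys is independence from the external reference and a transparent explanation of why the expansion runs in powers of $\abs{\theta}^p$; what the citation buys the authors is not having to carry out precisely this convergence analysis.

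There is, however, one genuine gap: the step ``undoing the substitution gives $\cos_p\theta=u(\theta^p)$ and $\sin_p\theta=\theta\,v(\theta^p)$.'' Your uniqueness remark covers only \emph{analytic} solutions of the singular system, but what you know a priori about the functions $u(\tau)=\cos_p\theta$ and $v(\tau)=\sin_p\theta/\theta$ is merely that they solve the system for $\tau>0$ and are continuous at $\tau=0$; nothing yet says they are analytic, so uniqueness in the analytic class does not identify them with the series you constructed. Nor can you shortcut via Picard--Lindel\"of on the original system, since for $p\in\ointerval{1}{2}$ the right-hand side $s\mapsto\abs{s}^{p-2}s$ fails to be Lipschitz exactly at $s=0=\sin_p(0)$. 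The fix is short and should be included: writing the second equation as $(\tau^{1/p}v)'=\tfrac{1}{p}\tau^{1/p-1}u^{p-1}$ and using the boundedness of $v$ gives, for any continuous solution, the integral representation $v(\tau)=\tfrac{1}{p}\tau^{-1/p}\int_0^\tau s^{1/p-1}u(s)^{p-1}\diff s$ together with $u(\tau)=1-\tfrac{1}{p}\int_0^\tau v(s)^{p-1}\diff s$; a Gronwall-type estimate on the difference of two such pairs then forces them to coincide near $\tau=0$, so the continuous solution is unique and equals the analytic one. With that inserted, your proof is complete.
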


The radius of convergence of $\theta \mapsto \sin_p \theta$, (resp. $\theta \mapsto \cos_p \theta$, $\theta \mapsto \sin_q \theta^\circ$ or $\theta \mapsto \cos_q \theta^\circ$) at $\theta = 0$ is defined as the supremum $|\theta|$ such that the corresponding series in \cref{lemtriexpansion}, which we call \textit{fractional power series}, converges. As for classical power series, a fractional power series converges uniformly within their radius of convergence. More generally, it will be enough for us to say that a function $f(x)$ has a \textit{fractional analytic expansion} at $x = 0$ if there is $s > 0$ and a function $F$, analytic at $x = 0$, such that $f(x) = F(|x|^s)$ or $f(x) = x F(|x|^s)$ for all $x$ in the radius of convergence of $F$. For additional information on fractional analytic functions and fractional derivatives, we refer the reader to \cite{JUMARIE}, for example, and the references therein.
\begin{remark}
    Shelupsky's $p$-trigonometric functions are related to the generalised $(p, q)$-trigonometric functions introduced in \cite{edmunds2012}. For arbitrary $p, q \in \ointerval{1}{\infty}$, i.e., not necessarily Hölder conjugates of each other, consider the following eigenvalue problem for the $(p, q)$-Laplacian:
    \begin{equation}
        \label{pqLaplace}
        (\abs{u'}^{p - 2} u')' + \frac{q}{p^*} \abs{u}^{q - 2} u = 0.
    \end{equation}
    The solution of \cref{pqLaplace} with initial value $u(0) = 0$ and $u'(0) = 1$ is the so-called $(p, q)$-sine function $\sin_{p, q}$, and the $(p, q)$-cosine is defined as $\cos_{p, q} = \sin'_{p, q}$. Therefore, we have
    \[
    \sin_{p}(\theta) = \sin_{p^*, p}(\theta), \text{ and } \cos_{p}(\theta) = \cos_{p, p^*}(\theta^{\circ}),
    \]
    where $\theta^\circ$ is the dual angle of $\theta$, going from the $p$-geometry to the $p^*$-geometry.
\end{remark}

\section{Geometry of the \texorpdfstring{$\ell^p$}{ellp}-Heisenberg group}
\label{sec:geometry}

\subsection{Sub-Finsler structures on the Heisenberg group}

The Heisenberg group $\mathds{H}$ is the connected and simply connected Lie group whose Lie algebra is graded, nilpotent, satisfying $\mathfrak{h} = \mathfrak{h}_1 \oplus \mathfrak{h}_2$ and such that $\mathfrak{h}_1$ is two-dimensional and generates $\mathfrak{h}$. In particular, $\mathds{H}$ is three-dimensional and fixing a basis $X, Y,$ and $Z = [X, Y]$ of $\mathfrak{h}$ induces global coordinates on $\mathds{H}$, called \emph{exponential coordinates}, through the map $(x, y, z) \mapsto \exp(x X + y Y + z Z)$. By the Campbell-Hausdorff formula, the law group in these coordinates writes as
\[
(x, y, z) \cdot (x', y', z') = (x + x', y + y', z + z' + \frac{1}{2}(x y' - x' y)).
\]
With a slight abuse of notation, the left-invariant vector fields on $\mathds{H}$ whose values at the identity are $X, Y$ and $Z$ will also be denoted by $X, Y$ and $Z$ respectively. Their expression in exponential coordinates is
\[
X = \partial_x - \frac{y}{2} \partial_z, \ Y = \partial_y + \frac{x}{2} \partial_z, \ Z = \partial_z.
\]
The distribution $\mathcal{D} := \left\{X, Y\right\}$ is left-invariant and bracket generating, that is to say $\mathrm{Lie}(\mathcal{D}_g) = \T_g(\mathds{H})$ for all $g \in \mathds{H}$.

The Heisenberg group $\mathds{H}$ has been studied extensively when equipped with its natural sub-Riemannian structure, i.e. with a scalar product on $\mathcal{D}$ that turns $X$ and $Y$ into an orthonormal basis. In this work, we are going to study the Heisenberg endowed with a \emph{sub-Finsler} structure.
Carnot groups endowed with sub-Finsler metrics have been extensively studied in connection with geometric group theory (see \cite{stoll1998,breuillard--ledonne2013,duc, tashiro2022}), with optimal control theory (see \cite{barilari2017,lok,lok2}), and more recently with isoperimetric problems (see \cite{Monti2023}).

In the case of the Heisenberg group, fixing a sub-Finsler structure generally means that we associate a norm $\|\cdot\|$ to the distribution $\mathcal{D}$. Central to the present work is the standard $p$-norm: for $v = u_1 X + u_2 Y \in \mathcal{D}$ and $p \in \interval{1}{\infty}$, the $\ell^p$-sub-Finsler metric is defined by
\begin{equation}
\label{pnorm}
\|v\|_p =
\begin{cases}
|u_1| + |u_2| & \text{ if } p = 1;\\
\left(|u_1|^p+|u_2|^p\right)^\frac{1}{p} & \text{ if } p \in \ointerval{1}{\infty};\\
\max(u_1, u_2) & \text{ if } p = \infty.
\end{cases}
\end{equation}

We will refer to the sub-Finsler manifold formed by the Heisenberg group and its $\ell^p$-sub-Finsler metric as the $\ell^p$-Heisenberg group.

Once a sub-Finsler structure with distribution $\mathcal{D}$ and norm $\|\cdot\|$ is fixed, an absolutely continuous curve $\gamma : \interval{0}{1} \to \mathds{H}$ is said to be \emph{horizontal} is $\dot{\gamma}(t) \in \mathcal{D}_{\gamma(t)}$ for almost every $t \in \interval{0}{1}$, and its length is given by
\[
\mathrm{L}(\gamma) := \int_0^1 \|\dot{\gamma}(t)\| \diff t,
\]
while the induced distance of $\mathds{H}$ is
\[
\diff(g_0,g_1) := \inf \left\{ \mathrm{L}(\gamma) \mid \gamma : \interval{0}{1} \to \mathds{H} \text{ horizontal}, \gamma(0) = g_0, \gamma(1) = g_1 \right\}.
\]
By the Chow–Rashevskii theorem, this distance is well-defined and finite since $\mathcal{D}$ is bracket-generating.
Notice that the length of a horizontal curve is invariant up to reparametrization.
Therefore, we sometimes require a horizontal curve to have a constant speed i.e. $\|\dot{\gamma}\|\equiv \mathrm{L}(\gamma)$.

\begin{remark}
Note that the $\ell^p$-Heisenberg groups are bi-Lipschitz equivalent to each other, so they all have the same Hausdorff dimension of 4.
\end{remark}

\subsection{Pontryagin's Maximum Principle and Hamilton's Equations}
\label{subsec:PMP}
The metric geometry of a sub-Finsler Heisenberg group is governed by the study of sub-Finsler minimising geodesics, i.e., curves that minimise the length
between two points $g_0$ and $g_1$, which are found by solving the following minimisation problem:
\begin{equation}
    \label{lengthminproblem}
    \left\{
    \begin{aligned}
    &\dot{\gamma}(t) = u_1(t) X(\gamma(t)) + u_2(t) Y(\gamma(t)) \\
    &\mathrm{L}(\gamma) \to \min \\
    &\gamma(0) = g_0, \ \gamma(1) = g_1
    \end{aligned}.
    \right.
\end{equation}

Instead of minimising the length between two points as in Problem \cref{lengthminproblem}, 
it is possible to alternatively look to minimise the energy.
The energy of a horizontal curve $\gamma : \interval{0}{1} \to \mathds{H}$ is defined by
\[
\mathrm{E}(\gamma) := \frac{1}{2} \int_0^1 \|\dot{\gamma}(t)\|^2 \diff t.
\]
The corresponding minimising problem is then the following:
\begin{equation}
    \label{energyminproblem}
    \left\{
    \begin{aligned}
    &\dot{\gamma}(t) = u_1(t) X(\gamma(t)) + u_2(t) Y(\gamma(t)) \\
    &\mathrm{E}(\gamma) \to \min \\
    &\gamma(0) = g_0, \ \gamma(1) = g_1
    \end{aligned}.
    \right.
\end{equation}

The equivalence between the minimisation problems \cref{lengthminproblem} and \cref{energyminproblem} is proven as in the sub-Riemannian case, see \cite[Lemma 3.64.]{agrachevbook2020}. By using the Cauchy--Schwarz inequality, it can be shown that a horizontal curve $\gamma : \interval{0}{1} \to \mathds{H}$ is a minimiser of the energy functional $\mathrm{E}$ if and only if it is a constant speed minimiser of the length functional $\mathrm{L}$.

\begin{remark}
Equivalently, the minimisation problem \cref{lengthminproblem} can also be rewritten as the following time-optimal control problem for a horizontal curve $\gamma:[0,T]\to \mathds{H}$:

\begin{equation}
    \label{convexminproblem}
    \left\{
    \begin{aligned}
    &\dot{\gamma}(t) = u_1(t) X(\gamma(t)) + u_2(t) Y(\gamma(t)) \in  (L_{\gamma(t)})_\ast  B \\
    &T \to \min \\
    &\gamma(0) = g_0, \ \gamma(T) = g_1
    \end{aligned}.
    \right.
\end{equation}
where $L_{P}$ is the left-translation, and $B\subset \mathcal{D}_e$ is a compact convex neighbourhood of $0$ (with other technical assumptions, see \cite[Main Assumption]{lok2}).
This minimising problem gives length minimisers for the non-reversible sub-Finsler Heisenberg group whose unit ball is $B$.
In this setting, the exponential map cannot be defined in general. However, length minimisers on non-strictly convex sub-Finsler structure, such as $\ell^1$- or $\ell^\infty$ structures, can still be computed. 

\end{remark}

We apply Pontryagin's maximum principle to obtain necessary conditions that a minimiser of \cref{energyminproblem} must satisfy. We refer the reader to \cite{agrachevsachkovbook2004}, for example, for a reference on Pontryagin's maximum principle.


For $\nu \in \mathds{R}$, the Hamitonian $\mathcal{H}^\nu : \T^*(\mathds{H}) \times \mathds{R}^2 \to \mathds{R}$ is defined as
\[
\mathcal{H}^\nu(\lambda, u) = u_1 h_X(\lambda) + u_2 h_Y(\lambda) + \frac{\nu}{2} \|u_1 X(\pi(\lambda)) + u_2 Y(\pi(\lambda))\|^2,
\]
where $\pi: \T^\ast(\mathds{H}) \to \mathds{H}$ is the canonical projection and  for a vector field $V$ on $\mathds{H}$, we have denoted by $h_V$ the function on $\T^*(\mathds{H})$ defined by $h_V(\lambda) := \langle \lambda, V(\pi(\lambda)) \rangle$. Note that $h_X$, $h_Y$, and $h_Z$ form a system of coordinates on each fibers of $\T^*(\mathds{H})$.

For a fixed control $u \in \mathds{R}^2$, the map $\lambda \mapsto \mathcal{H}^\nu(\lambda, u)$ is smooth and the Hamiltonian vector field $\overrightarrow{\mathcal{H}}^\nu(\cdot, u) : \T^*(\mathds{H}) \to \T^*(\mathds{H})$ is then the unique vector field on $\T^*(\mathds{H})$ satisfying 
\[
\sigma(\cdot, \overrightarrow{\mathcal{H}}^\nu(\lambda, u)) = \diff_{\lambda} \mathcal{H}^\nu(\cdot, u),
\]
where $\sigma$ denotes the canonical symplectic form of $\T^*(\mathds{H})$.

\begin{theorem}[{Pontryagin's Maximum Principle}]
    \label{PMP}
    Let $(u(t), \gamma(t))$ be a solution to the minimisation problem \cref{energyminproblem}. Then there exists an absolutely continuous curve $\lambda : \interval{0}{1} \to \T^*(\mathds{H})$ satisfying $\pi(\lambda(t)) = \gamma(t)$, and a number $\nu \in \{0, -1\}$, such that for almost all $t \in \interval{0}{1}$
    \begin{enumerate}[label=\normalfont(\roman*)]
    \item $\lambda(t) \neq 0$ if $\nu = 0$;
    \item $\dot{\lambda}(t) = \overrightarrow{\mathcal{H}}^\nu(\lambda(t), u(t))$;
    \item $\mathcal{H}^\nu(\lambda(t), u(t)) = \max_{u \in \mathds{R}^2} \mathcal{H}^\nu(\lambda(t), u)$.
    \end{enumerate}
\end{theorem}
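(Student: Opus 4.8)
The plan is to treat the energy-minimisation problem \cref{energyminproblem} as a standard optimal control problem — control-affine dynamics $\dot{\gamma} = u_1 X(\gamma) + u_2 Y(\gamma)$ with smooth vector fields $X, Y$, unconstrained controls $u \in \mathds{R}^2$, and the smooth running cost $\frac{1}{2}\|u_1 X + u_2 Y\|^2$ — and to obtain the three conditions as a direct application of the general Pontryagin Maximum Principle for problems with a Lagrangian cost (see \cite{agrachevsachkovbook2004}). No feature special to $\mathds{H}$ is needed beyond smoothness of the data.

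First I would invoke the abstract statement: for a minimiser $(u(\cdot), \gamma(\cdot))$ there exist a lift $\lambda : \interval{0}{1} \to \T^*(\mathds{H})$ of $\gamma$ and a multiplier $\nu \leq 0$, with $(\lambda, \nu)$ nontrivial, such that the pair solves the Hamiltonian system of the control-dependent Hamiltonian $\mathcal{H}^\nu$ defined above the theorem and satisfies pointwise maximality along the trajectory. Rescaling the covector--multiplier pair normalises $\nu$ to $\{0, -1\}$: the value $\nu = 0$ gives abnormal extremals and $\nu = -1$ normal ones. Nontriviality of $(\lambda, \nu)$ then reads exactly as condition (i), since when $\nu = 0$ the covector $\lambda(t)$ cannot vanish.

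Matching the abstract data to the concrete objects finishes the translation. Writing $\mathcal{H}^\nu$ in the fibre coordinates $h_X, h_Y, h_Z$ recovers precisely the displayed Hamiltonian, so that the flow $\dot{\lambda}(t) = \overrightarrow{\mathcal{H}}^\nu(\lambda(t), u(t))$ — with $\overrightarrow{\mathcal{H}}^\nu$ the symplectic gradient of $\lambda \mapsto \mathcal{H}^\nu(\lambda, u)$ for the canonical form $\sigma$ — is condition (ii), and the pointwise maximisation over $u \in \mathds{R}^2$ is condition (iii). Because the cost is coercive when $\nu = -1$ and the controls are free, the maximum in (iii) is attained; in the strictly convex range $p \in \ointerval{1}{\infty}$ it moreover determines $u$ as a smooth function of $\lambda$, which is what will later permit eliminating the control to define the exponential map.

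The genuine content — and the main obstacle — lies entirely in the general theorem and is not specific to $\mathds{H}$: it is the classical needle-variation argument producing the adjoint covector and the separating hyperplane that yields maximality. One perturbs the optimal control by spike variations, propagates the induced first-order endpoint variations through the linearised flow to assemble the cone of attainable directions, and uses optimality to separate the cost-decreasing direction from this cone; transporting the separating covector backwards along the flow yields the curve $\lambda$. In our setting, verifying the hypotheses of this argument reduces to the smoothness of $X, Y$ and of the cost together with the admissibility of free controls, after which conditions (i)--(iii) follow verbatim.
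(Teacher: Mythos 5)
Your proposal is correct and follows exactly the paper's route: the paper gives no independent proof of \cref{PMP}, but simply states it as the specialisation of the general Pontryagin Maximum Principle of \cite{agrachevsachkovbook2004} to the energy-minimisation problem \cref{energyminproblem}, with the multiplier normalised to $\nu \in \{0,-1\}$ and nontriviality of $(\lambda,\nu)$ giving condition (i). Your additional sketch of the needle-variation argument behind the general theorem is accurate but goes beyond what the paper records.
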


We will say that $\lambda : \interval{0}{1} \to \T^*(\mathds{H})$ is a normal (resp. abnormal) extremal trajectory if it satisfies the conclusions of \cref{PMP} with $\nu = -1$ (resp. $\nu = 0$). The sub-Finsler Heisenberg group does not admit any non-trivial abnormal extremal, exactly as in the sub-Riemannian case. This fact does not depend on the cost functional, but only on the distribution $\mathcal{D}$.




We now turn to the study of normal extremals. From now on, we will focus on the $\ell^p$-Heisenberg group, that is $\mathds{H}$ equipped with the sub-Finsler norm $\|\cdot\|_p$ with $p \in \interval{1}{\infty}$. In fact, we will first restrict ourselves to the case $p \in \ointerval{1}{\infty}$. Here, Shelupsky's $p$-trigonometric described in \cref{sec2-3} functions will be prominent.

We can obtain an explicit expression for the maximised Hamiltonian.

\begin{proposition}
    \label{prop:maxHp}
    If we denote by $H : \T^*(\mathds{H}) \to \mathds{R}$ the maximised Hamiltonian from part (iii) of \cref{PMP} with $\nu = -1$, we have that
    \[
    H(\lambda) := \max_{u \in \mathds{R}^2}\mathcal{H}^{\nu}(\lambda, u) = \frac{1}{2} (|h_X(\lambda)|^q+ |h_Y(\lambda)|^q)^{2/q}.
    \]
    Furthermore, for a given $\lambda \in \T^*(\mathds{H})$, the maximum above is attained by taking
    \[
    u_1 = r\cos_p(\theta^\circ), \text{ and } u_2 = r  \sin_p(\theta^\circ),
    \]
    where $(r, \theta)$ are the $\ell^q$-polar coordinates of $(h_X(\lambda), h_Y(\lambda))$.
\end{proposition}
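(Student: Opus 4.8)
The plan is to recognise $H(\lambda)$ as a Legendre-type transform and to compute it by splitting the maximisation into a radial and an angular part. Fix $\lambda \in \T^*(\mathds{H})$ and abbreviate $a := h_X(\lambda)$ and $b := h_Y(\lambda)$, so that, with $\nu = -1$ and the norm \cref{pnorm},
\[
\mathcal{H}^{-1}(\lambda, u) = a u_1 + b u_2 - \tfrac{1}{2}\left(|u_1|^p + |u_2|^p\right)^{2/p}.
\]
As the sum of a linear functional and the strictly concave, coercive function $-\tfrac12\|\cdot\|_p^2$ (strict concavity using $p \in \ointerval{1}{\infty}$), this map admits a unique maximiser, so $H(\lambda)$ is well defined. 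Conceptually, the expected value $\tfrac12\|(a,b)\|_q^2$ is exactly the Fenchel conjugate of $\tfrac12\|\cdot\|_p^2$, the dual norm of $\|\cdot\|_p$ being $\|\cdot\|_q$; the $p$-trigonometric bookkeeping below only serves to make the optimiser explicit.

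First I would pass to polar coordinates adapted to the two norms. Writing $u = \rho\,\omega$ with $\rho \geq 0$ and $\omega \in \mathds{S}_p$, and parametrising $\omega = (\cos_p\psi, \sin_p\psi)$ (legitimate by the identity \cref{lemtri}), gives
\[
\mathcal{H}^{-1}(\lambda, \rho\,\omega) = \rho\left(a\cos_p\psi + b\sin_p\psi\right) - \tfrac12\rho^2 .
\]
Next, I express $(a,b)$ in $\ell^q$-polar coordinates $(r,\theta)$, that is $a = r\cos_q\theta$ and $b = r\sin_q\theta$ with $r = \|(a,b)\|_q = (|a|^q+|b|^q)^{1/q}$, so that the angular factor becomes $r\left(\cos_p\psi\cos_q\theta + \sin_p\psi\sin_q\theta\right)$. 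Applying the duality inequality \cref{eq:ineqtrigpq} with the $p$- and $q$-variables taken to be $\psi$ and $\theta$ respectively bounds this above by $r$, with equality attained precisely when $\psi$ is the dual angle $\theta^\circ$ of $\theta$ (read from the $q$-geometry to the $p$-geometry). The existence and uniqueness of such a $\theta^\circ$ for $p \in \ointerval{1}{\infty}$ is exactly the single-valuedness of the dual-angle map established in the discussion following \cref{eq:ineqtrigpq}. Hence the optimal direction is $\omega^\ast = (\cos_p\theta^\circ, \sin_p\theta^\circ)$ and the angular maximum equals $r$.

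It then only remains to maximise $\rho \mapsto \rho r - \tfrac12\rho^2$ over $\rho \geq 0$, which is achieved at $\rho = r$ and yields
\[
H(\lambda) = \tfrac12 r^2 = \tfrac12\left(|h_X(\lambda)|^q + |h_Y(\lambda)|^q\right)^{2/q},
\]
with maximiser $u = r\,\omega^\ast = (r\cos_p\theta^\circ, r\sin_p\theta^\circ)$, as claimed. The only genuinely delicate point is the identification of the maximising direction: one must check that equality in \cref{eq:ineqtrigpq} occurs exactly at the dual angle, and that the relevant dual angle is the one associated to the $\ell^q$-polar angle $\theta$ of $(h_X,h_Y)$ — a matter of tracking which of $p,q$ plays which role in the inequality and of using that the dual map is essentially involutive between the two geometries. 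Everything else is the elementary concave optimisation above, and the single-valuedness of $\theta\mapsto\theta^\circ$ for $p\in\ointerval{1}{\infty}$ guarantees that the maximiser is unique, consistently with the strict concavity noted at the start.
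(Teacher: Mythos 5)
Your proposal is correct and follows essentially the same route as the paper's proof: both parametrise $u$ in $\ell^p$-polar coordinates and $(h_X(\lambda), h_Y(\lambda))$ in $\ell^q$-polar coordinates, bound the angular part by the duality inequality \cref{eq:ineqtrigpq} with equality exactly at the dual angle $\theta^\circ$, and then maximise the elementary quadratic $\rho \mapsto \rho r - \tfrac{1}{2}\rho^2$ in the radial variable. The Fenchel-conjugate framing and the strict-concavity/uniqueness remarks you add are harmless extra polish but do not change the argument.
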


\begin{proof}
    Let $\lambda \in \T^*(\mathds{H})$, $u \in \mathds{R}^2$, and introduce $A, B > 0$, $ \phi \in \rinterval{0}{2 \pi_p}$, and $\theta \in \rinterval{0}{2 \pi_q}$ by setting
    \[
    u_1 = A \cos_p(\phi), \ u_2 = A \sin_p(\phi), \ h_X(\lambda) = B \cos_q(\theta), \ h_Y(\lambda) = B \sin_q(\theta).
    \]
    Then, by \cref{lemtri} we find that
    \begin{align*}
        \mathcal{H}^\nu(\lambda, u) ={}& u_1 h_X(\lambda) + u_2 h_Y(\lambda) - \frac{1}{2} (|u_1|^p + |u_2|^p)^{2/p} \\
        ={}& A B (\cos_p(\phi) \cos_q(\theta) + \sin_p(\phi) \sin_q(\theta)) - \frac{1}{2} A^2 \leq A B - \frac{1}{2} A^2,
    \end{align*}
    with equality if and only if $\phi = \theta^\circ$. The function $A \mapsto A B - \frac{1}{2} A^2$ attains its unique maximum at $A = B$, which means that
    \begin{align*}
        H(\lambda) :={}& \max_{u \in \mathds{R}^2} \mathcal{H}^\nu(\lambda, u) = \frac{1}{2} B^{2} = \frac{1}{2} (|B \cos_q(\theta^\circ)|^q+ |B \sin_q(\theta^\circ)|^q)^{2/q} \\
        ={}& \frac{1}{2} (|\langle \lambda, X(\pi(\lambda)) \rangle|^q+ |\langle \lambda, Y(\pi(\lambda)) \rangle|^q)^{2/q}.
    \end{align*}
\end{proof}
\subsection{Exponential map of the \texorpdfstring{$\ell^p$}{ellp}-Heisenberg group}

We are going to write the expression for geodesics of the $\ell^p$-Heisenberg group, when $p \in \ointerval{1}{\infty}$, with the help of Shelupsky's $p$-trigonometric functions introduced in \cref{def:sinpcosp}. 
When $\lambda(t)$ is a normal extremal associated with a solution $(u(t), \gamma(t))$ of the minimisation problem, we will denote $h_X(t) := h_X(\lambda(t))$ (similarly for $h_Y(t)$, $h_Z(t)$, etc.).
In view of part (ii) of \cref{PMP}, the following identities must hold:
\begin{equation}
\label{hameqh}
\begin{aligned}
    \dot{h}_X ={}& \{\mathcal{H}^\nu(\cdot, u), h_X\} = u_2 \{h_Y, h_X\} = -u_2 h_Z,\\
    \dot{h}_Y ={}& \{\mathcal{H}^\nu(\cdot, u), h_Y\} = u_1 \{h_X, h_Y\} = u_1 h_Z,\\
    \dot{h}_Z ={}& \{\mathcal{H}^\nu(\cdot, u), h_Z\} = 0,
\end{aligned}
\end{equation}
where $\{\cdot, \cdot\}$ is the canonical Poisson bracket of $\T^\star(\mathds{H})$.
The last equation implies that $h_Z(t) = w$ is constant, while arguing as in the proof of \cref{prop:maxHp} shows that the maximum principle, i.e., part (iii) of \cref{PMP}, forces the control $u(t)$ to satisfy
\[
u_1 = r \cos_p(\theta^\circ), \ u_2 =  r \sin_p(\theta^\circ),
\]
where $(r, \theta)$ are the $\ell^q$-polar coordinates of $h_X$ and $h_Y$:
\[
h_X = r \cos_q(\theta), \ h_Y = r \sin_q(\theta).
\]
In particular, we have
\[
\dot{h}_X = \dot{r} \cos_q(\theta) - r \sin_p(\theta^\circ) \dot{\theta}, \ \dot{h}_Y = \dot{r} \sin_q(\theta) + r \cos_p(\theta^\circ) \dot{\theta}.
\]
Using these identities, we can write \cref{hameqh} as
\begin{align*}
    \dot{r} ={}& \dot{h}_X \cos_p(\theta^\circ) + \dot{h}_Y \sin_p(\theta^\circ) \\
    ={}& \dot{h}_X \abs{\cos_q(\theta)}^{q - 1} \sgn \cos_q(\theta) + \dot{h}_Y \abs{\sin_q(\theta)}^{q - 1} \sgn \sin_q(\theta) \\
    ={}& - u_2 w \cos_p(\theta^\circ) + u_1 w \sin_p(\theta^\circ) \\
    ={}& r w (\sin_p(\theta^\circ) \cos_p(\theta^\circ) - \cos_p(\theta^\circ) \sin_p(\theta^\circ)) = 0,
\end{align*}
and
\begin{align*}
    \dot{\theta} ={}& \frac{1}{r}(\dot{h}_Y \cos_q(\theta) - \dot{h}_X \sin_q(\theta)) = \frac{w}{r}(u_1 \cos_q(\theta) + u_2 \sin_q(\theta)) \\
    ={}& \frac{w}{r}(u_1 \cos_q(\theta) + u_2 \sin_q(\theta)) = w (\cos_p(\theta^\circ) \cos_q(\theta) + \sin_p(\theta^\circ) \sin_q(\theta)) = w .
\end{align*}

That yields
\begin{equation}
    \label{hXhY}
    h_X(t) = r \cos_q(w t + \theta), \  h_{Y}(t) = r \sin_q(w t + \theta).
\end{equation}

From there, we can find the projection of the normal extremal in the exponential coordinates of $\mathds{H}$. By left invariance, it is enough to consider those that start from the identity of $\mathds{H}$. 

\begin{proposition}
    \label{geodH}
    Let $\gamma : \interval{0}{1} \to \mathds{H}$ the projection of a normal extremal $\lambda(t)$ starting from the identity, then in exponential coordinates $\gamma(t) = (x(t), y(t), z(t))$ where
\begin{equation}
    \label{eq:x(t)y(t)z(t)}
    \left\{
    \begin{aligned}
    x(t) ={}& \frac{r}{w}\Big(\sin_q(w t + \theta)-\sin_q (\theta) \Big), \\
    y(t) ={}& -\frac{r}{w}\Big(\cos_q(w t + \theta) - \cos_q (\theta) \Big),\\
    z(t) ={}& \frac{r^2}{2 w^2}\Big(wt+\cos_q(w t + \theta) \sin_q (\theta) -\sin_q(w t + \theta) \cos_q (\theta) \Big)
    \end{aligned},
    \right.
\end{equation}
for some $r \geq 0$, $w \in \mathds{R}$ and $\theta \in \rinterval{0}{2 \pi_q}$,
where at $w = 0$ the equations above are understood to be
    \begin{equation}
    \label{eq:x(t)y(t)z(t)atw=0}
    \left\{
    \begin{aligned}
    x(t) ={}& (r\cos_p\theta^\circ)t, \\
    y(t) ={}& (r\sin_p\theta^\circ)t,\\
    z(t) ={}& 0,    \end{aligned}
    \right.
\end{equation}
which is the continuous extension of \cref{eq:x(t)y(t)z(t)} as $w \to 0$.
\end{proposition}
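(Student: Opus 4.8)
The plan is to verify the proposed formulas directly by differentiation, exploiting the structure already extracted from Pontryagin's maximum principle. Writing $\gamma(t) = (x(t), y(t), z(t))$ and using $\dot\gamma = u_1 X(\gamma) + u_2 Y(\gamma)$ together with the coordinate expressions $X = \partial_x - \frac{y}{2}\partial_z$ and $Y = \partial_y + \frac{x}{2}\partial_z$, the control system becomes
\[
\dot x = u_1, \qquad \dot y = u_2, \qquad \dot z = \tfrac{1}{2}(x u_2 - y u_1).
\]
From \cref{hameqh} and \cref{hXhY} we already know that along a normal extremal $h_Z \equiv w$, that $(h_X, h_Y) = (r\cos_q(wt+\theta), r\sin_q(wt+\theta))$, and that the maximality condition forces $u_1 = r\cos_p((wt+\theta)^\circ)$ and $u_2 = r\sin_p((wt+\theta)^\circ)$. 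Everything below is then an integration of this system with the initial condition $\gamma(0) = e$, i.e.\ $x(0)=y(0)=z(0)=0$.

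Assume first $w\neq 0$. For the horizontal components I would differentiate the proposed $x(t)$ and $y(t)$ and invoke the dual form of \cref{lemtridiff} (obtained by exchanging the roles of the Hölder conjugates $p$ and $q$), namely $\frac{\diff}{\diff\phi}\sin_q\phi = \cos_p(\phi^\circ)$ and $\frac{\diff}{\diff\phi}\cos_q\phi = -\sin_p(\phi^\circ)$ --- the very identities already used in the excerpt to compute $\dot r$ and $\dot\theta$. With $\phi = wt+\theta$, the chain rule gives $\dot x = r\cos_p(\phi^\circ) = u_1$ and $\dot y = r\sin_p(\phi^\circ) = u_2$, and one reads off $x(0)=y(0)=0$ directly.

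The vertical component is the crux. Differentiating the proposed $z(t)$ and again using the dual differential identities reduces $\dot z$ to $\frac{r^2}{2w}\big[1 - \cos_q(\theta)\cos_p(\phi^\circ) - \sin_q(\theta)\sin_p(\phi^\circ)\big]$; on the other hand, substituting the formulas for $x, y, u_1, u_2$ into $\frac{1}{2}(x u_2 - y u_1)$ produces $\frac{r^2}{2w}\big[\cos_q(\phi)\cos_p(\phi^\circ) + \sin_q(\phi)\sin_p(\phi^\circ) - \cos_q(\theta)\cos_p(\phi^\circ) - \sin_q(\theta)\sin_p(\phi^\circ)\big]$. The two agree precisely because of the equality case of the duality relation \cref{eq:ineqtrigpq}: since $\phi^\circ$ is by definition the dual angle of $\phi$, one has $\cos_q(\phi)\cos_p(\phi^\circ) + \sin_q(\phi)\sin_p(\phi^\circ) = 1$. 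This is the main obstacle --- it is the only place where the sharp duality (rather than the mere inequality) is needed, and it must be applied at the correct angle $\phi$, not at $\theta$. The initial condition $z(0)=0$ is immediate.

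Finally, for $w = 0$ the controls are constant, $u_1 = r\cos_p(\theta^\circ)$ and $u_2 = r\sin_p(\theta^\circ)$, so direct integration of $\dot x = u_1$, $\dot y = u_2$ gives the claimed linear expressions, while $\dot z = \frac{1}{2}(x u_2 - y u_1) = \frac{t}{2}(u_1 u_2 - u_2 u_1) = 0$ forces $z \equiv 0$. To justify the wording ``continuously extended'', I would also check that these are the $w\to 0$ limits of the $w\neq 0$ formulas: for $x, y$ the difference quotient $\frac{1}{w}(\sin_q(wt+\theta)-\sin_q\theta)$ tends to $t\,\frac{\diff}{\diff\phi}\sin_q(\phi)\big|_{\phi=\theta} = t\cos_p(\theta^\circ)$, and for $z$ a second-order Taylor expansion at $w=0$ shows that the bracket $wt + \cos_q(\phi)\sin_q(\theta) - \sin_q(\phi)\cos_q(\theta)$ is $O(w^2)$, its $O(w)$ term cancelling by the same duality equality used above, so that $z = \frac{r^2}{2w^2}O(w^2)$ stays bounded and, consistently with the direct computation, tends to $0$.
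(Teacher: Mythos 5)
Your treatment of the case $w \neq 0$ is correct, and it is essentially the paper's own proof run in reverse: the paper integrates $\dot\gamma = \tfrac{1}{w}\big(\dot h_Y\, X - \dot h_X\, Y\big)$ using \cref{hameqh} and \cref{hXhY}, whereas you differentiate the claimed formulas and match them against the controls. Both computations rest on the same two ingredients — the dual derivative identities $\frac{\diff}{\diff\phi}\sin_q\phi = \cos_p(\phi^\circ)$, $\frac{\diff}{\diff\phi}\cos_q\phi = -\sin_p(\phi^\circ)$, and the equality case of \cref{eq:ineqtrigpq} applied at the running angle $\phi = wt+\theta$ — so this part is sound and faithful to the paper. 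The identification of the solution for the $w = 0$ system is likewise fine.

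The genuine gap is in the continuous-extension claim for the $z$-component. Writing $g(w) := wt + \cos_q(wt+\theta)\sin_q\theta - \sin_q(wt+\theta)\cos_q\theta$, you establish $g(0)=0$ and $g'(0)=0$ (by duality) and conclude $g(w)=O(w^2)$, hence that $z = \frac{r^2}{2w^2}g(w)$ ``stays bounded and \ldots tends to $0$''. Boundedness is all that $O(w^2)$ gives: if $g''(0)\neq 0$, then $z \to \frac{r^2}{4}g''(0) \neq 0$ and the $w\neq 0$ formulas would \emph{not} extend continuously to \cref{eq:x(t)y(t)z(t)atw=0}. The missing step — precisely the one the second half of the paper's proof is devoted to — is that the second derivative also vanishes. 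Away from corner points one computes, using $\frac{\diff}{\diff\phi}\sin_p(\phi^\circ) = (q-1)\abs{\sin_q\phi}^{q-2}\cos_p(\phi^\circ)$ and $\frac{\diff}{\diff\phi}\cos_p(\phi^\circ) = -(q-1)\abs{\cos_q\phi}^{q-2}\sin_p(\phi^\circ)$, that
\[
g''(0) = -t^2(q-1)\big(\cos_p(\theta^\circ)\sin_p(\theta^\circ) - \sin_p(\theta^\circ)\cos_p(\theta^\circ)\big) = 0,
\]
so $g(w)=o(w^2)$ and $z\to 0$. Two further points are glossed over: (i) when $p>2$ (i.e. $q<2$) the $q$-trigonometric functions are only $C^1$ at $\frac{\pi_q}{2}\Z$, so the existence of a second-order expansion of $g$ at $w=0$ is not automatic and must be verified for this particular combination — the paper flags exactly this subtlety; (ii) appealing to ``consistency with the direct computation'' is not a substitute: the fact that the $w=0$ Cauchy problem has $z\equiv 0$ does not by itself imply that the $w\neq 0$ formulas converge to it. That inference would require invoking continuous dependence of solutions of the control system on the parameter $w$, which your argument never actually does.
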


\begin{remark}
    The parameter $r$ vanishes if and only if $\gamma$ describes a trivial curve.
\end{remark}

\begin{proof}
    From \cref{hameqh}, we know that
    \begin{align*}
        \dot{\gamma}(t) ={}& u_1(t) X(\gamma(t)) + u_2(t) Y(\gamma(t)) = \frac{1}{w} \Big(\dot{h}_Y(t) X(\gamma(t)) - \dot{h}_X(t) Y(\gamma(t)) \Big) \\
        ={}& \frac{1}{w} \Big(\dot{h}_Y(t) \partial_x - \dot{h}_X(t) \partial_y - \frac{1}{2} \left( y(t) \dot{h}_Y(t) + x(t) \dot{h}_X(t) \right) \partial_z \Big).
    \end{align*}
    Considering \cref{hXhY} and integrating this differential equation with the initial condition $\gamma(0) = (x(0), y(0), z(0)) = 0$ gives the expression \cref{eq:x(t)y(t)z(t)}.
    
   Next we shall see the continuity at $(r,\theta,0)$ i.e. the limit $w\to 0$ with fixed $t\in(0,1]$.
   Since $q$-trigonometric function is $C^1$ (and smooth if $\theta\notin\frac{\pi_q}{2}\Z$),
    we can compute the limit of $x(t)$ and $y(t)$ by considering the derivative with respect to $w$ at $(r,\theta,0)$.
    To prove $z(t)\to 0$ we need to consider the second derivative.
    A priori the $q$-trigonometric function may not be $C^2$,
    however the function
    \[wt+\cos_q(w t + \theta) \sin_q (\theta) -\sin_q(w t + \theta) \cos_q (\theta)\]
    is twice differentiable at $w=0$ for all $\theta\in[0,2\pi_q]$,
    and its first and second derivative are $0$.
    Therefore $z(t)\to 0$ as $w\to 0$.
\end{proof}

\begin{figure}
    \centering
	\captionsetup[subfigure]{justification=centering}
	\subcaptionbox{$p<2$}{\scalebox{0.65}{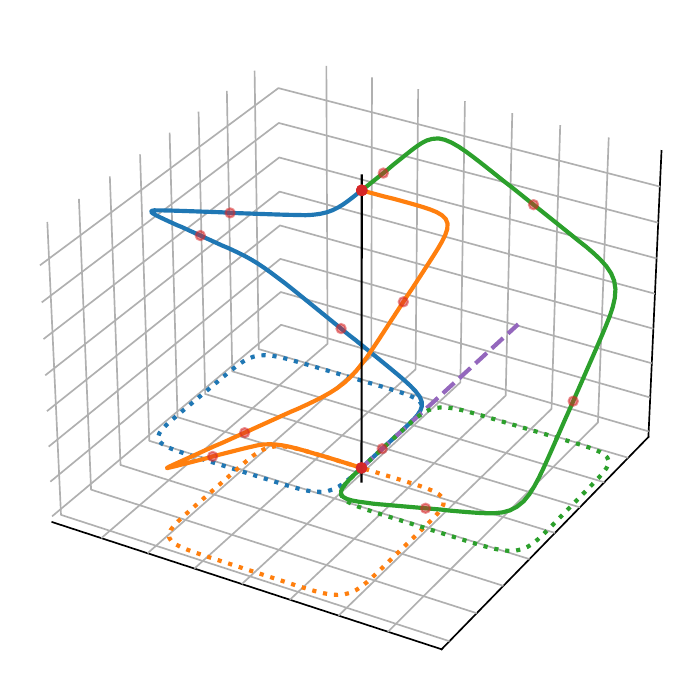}}\hspace{1em}%
	\subcaptionbox{$p>2$}{\scalebox{0.65}{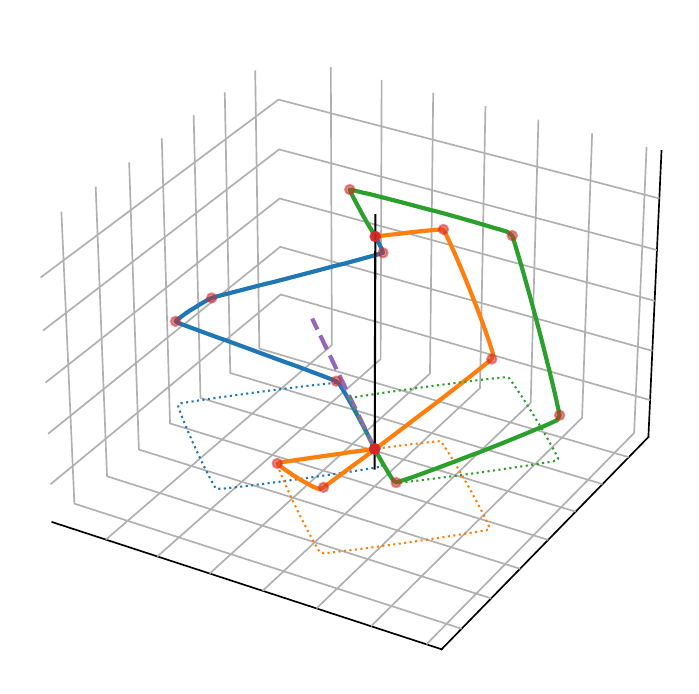}}\hspace{1em}%
	\caption{Some geodesics of the $\ell^p$-Heisenberg group from the origin. The dotted curves represents the projections of the geodesics onto the plane, while the red points are where the geodesics lose smoothness.}
    \label{fig:geodesicslp}
\end{figure}

\begin{figure}
    \centering
	\includegraphics[width=0.8\textwidth]{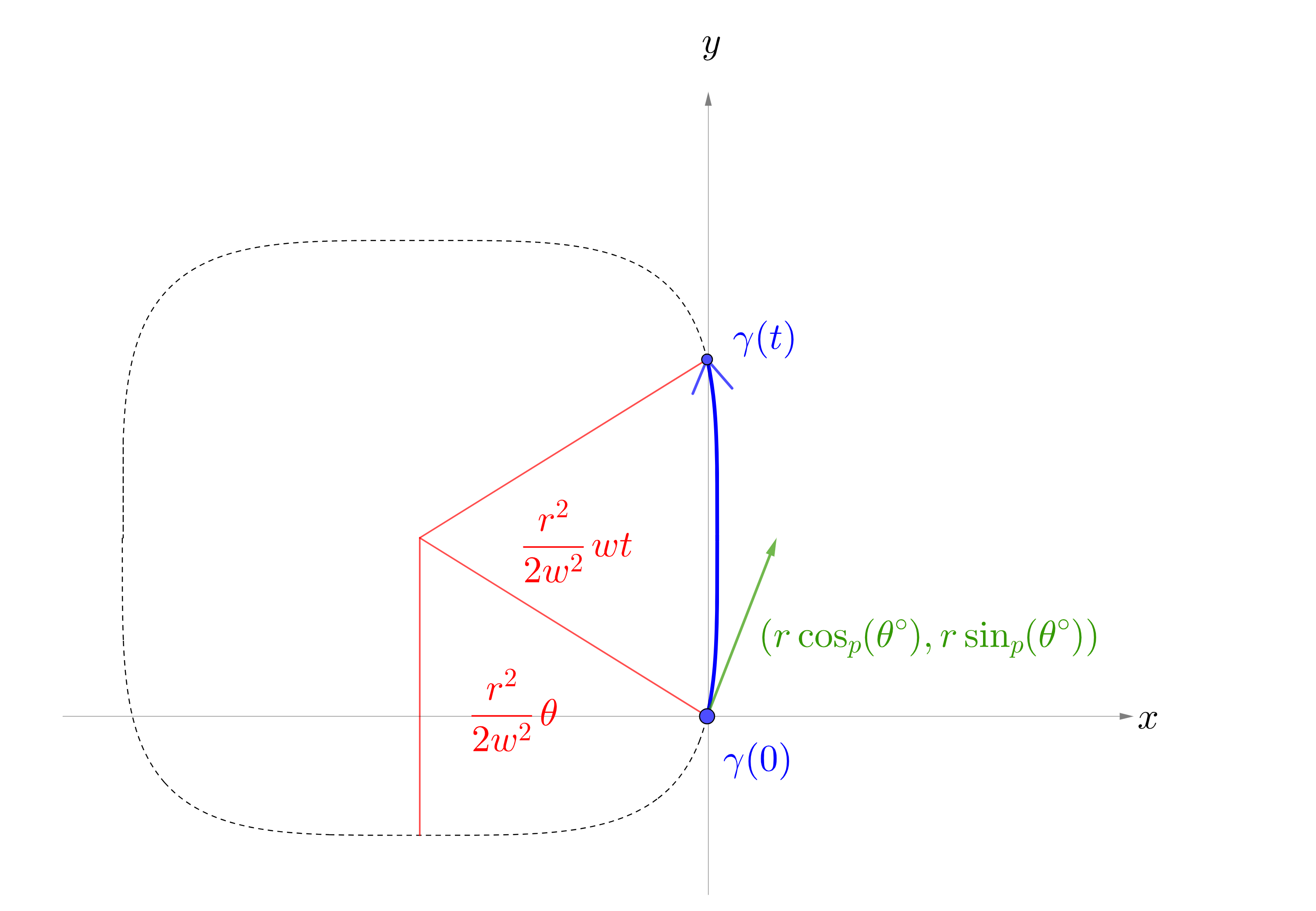}
	\caption{The projection of a geodesic $t \mapsto \gamma(t) := \exp^t_e(r,\theta,w)$ of the $\ell^p$-Heisenberg group is an $\ell^q$-arc. Here the red denotes the area, the blue does the geodesic, and the green does the initial vector.}
    \label{fig:geodesicsproj}
\end{figure}

The second derivative of a geodesic with respect to the time $t$ is explicitly written by
\[\begin{cases}
    \ddot{x}(t)=-rw(q-1)|\cos_q(wt+\theta)\sin_q(wt+\theta)|^{q-2}\sin_q(wt+\theta),\\
    \ddot{y}(t)=rw(q-1)|\cos_q(wt+\theta)\sin_q(wt+\theta)|^{q-2}\cos_q(wt+\theta),\\
    \ddot{z}(t)=\frac{-r^2(q-1)}{2}|\cos_q(wt+\theta)\sin_q(wt+\theta)|^{q-2} \\
    \qquad \qquad \qquad \qquad \times \left(\cos_q(wt+\theta)\sin_q\theta-\sin_q(wt+\theta)\cos_q\theta\right)
\end{cases}\]
The singularity appears when $wt+\theta\in\frac{\pi_q}{2} \mathds{Z}$.
If $p>2$ i.e. $q<2$,
then the second derivative $\ddot{x}(t)$ (or $\ddot{y}(t)$) and $\ddot{z}(t)$ diverges around $wt+\theta\in\frac{\pi_q}
{2} \mathds{Z}$ with $w\neq 0$.
On the other hand,
if $p<2$ i.e. $q>2$,
then all the second derivative converges to $0$ as $wt+\theta$ goes to $\frac{\pi_q}{2}\Z$.
In other words,
the curvature of a geodesic is $0$ at this point.
This type of singularity also appears when we consider the derivative of the Jacobian of the exponential map in \cref{sec:derivativejac}.
We illustrate a few geodesics of the $\ell^p$-Heisenberg group and its singular points in \cref{fig:geodesicslp}.

With \cref{geodH}, a notion of exponential map can be defined similarly to what is done in (sub-)Riemannian geometry.

\begin{definition}
    For $t \in \mathds{R}$, the exponential map (at time $t$) from the identity element of the $\ell^p$-Heisenberg group is the map
    \[
    \exp_e^t : \T_e^*(\mathds{H}) \cong \rinterval{0}{\infty} \times \rinterval{0}{2 \pi_q} \times \mathds{R} \to \mathds{H} : (r,\theta, w) \mapsto (x(t), y(t), z(t)),
    \]
    where $x(t)$, $y(t)$, and $z(t)$ are given by \cref{eq:x(t)y(t)z(t)}. The identification between $\T_e^*(\mathds{H})$ and $\rinterval{0}{\infty} \times \rinterval{0}{2 \pi_q} \times \mathds{R}$ is made via the $\ell^q$-cylindrical coordinates.
\end{definition}

\begin{remark}
    We denote by $\exp_e$ the map $\exp_e^1$. Similarly to the construction of this section, we can define the exponential $\exp_g^t$ from any point $g \in \mathds{H}$ from the same minimisation problem \cref{energyminproblem}. However, the exponential map $\exp_g^t$ can be obtained from $\exp_e^t$ by left-translation so that we can safely work with $\exp_e^t$ only, without loss of generality.
\end{remark}

It can be seen from \cref{eq:x(t)y(t)z(t)} that the exponential map enjoys the following homogeneity property:
\begin{equation}
    \label{exphomog}
    \exp_e^t(r, w, \theta) = \exp_e^{c t}(r/c, \theta, w/c), \ \ \text{ for all } c \neq 0.
\end{equation}
In particular, 
$\exp_e(t \lambda_0) = \exp_e(t r, \theta, t w) = \exp_e^t(r, \theta, w) = \exp_e^t(\lambda_0)$. Moreover, the exponential map also satisfies the reflection property:
\begin{equation}
    \label{exprefl}
    \exp_e^t(r,  - \theta, -w ) = (- x(t), y(t), -z(t)), 
\end{equation}
as well as the rotation properties:
\begin{equation}
\label{exprot}
\begin{gathered}
\exp_e^t(r, \theta + \tfrac{\pi_q}{2}, w) = (-y(t), x(t), z(t)), \ \exp_e^t(r,\theta + \pi_q,w) = (-x(t), -y(t), z(t)), \\
\exp_e^t(r,  \theta + \tfrac{3 \pi_q}{2},w) = (y(t), -x(t), z(t)).
\end{gathered}
\end{equation}
\cref{fig:geodesicsproj} can also help understand geometrically the symmetries \cref{exphomog}, \cref{exprefl} and \cref{exprot}.





From Pontryagin's maximum principle, we can conclude that if $\gamma(t)$ is a length-minimiser parametrised by constant speed starting from $e$, then $\gamma(t) = \exp^t(r, \theta, w)$ for some $(r,\theta, w) \in \T^*(\mathds{H})$. 

If $\gamma(t) = (x(t), y(t), z(t))$ is a horizontal curve ($t \in \interval{0}{1}$) from the origin, then 
\[
z(t) = \frac{1}{2}\int_0^t (x(s) \dot{y}(s) - \dot{x}(s) y(s)) \diff s,
\]
and $z(1)$ corresponds to the (signed) area swept by $(x(t),y(t))$ in $\R^2$. Furthermore, $\gamma$ is a length minimiser between $\gamma(0)$ and $\gamma(1)$ if and only if the curve $(x(t), y(t))$ has minimal $\ell^p$-length in $\R^2$ among all the paths in $\R^2$ joining $(0, 0)$ to $(x(1), y(1))$. To find a length minimiser between $e$ and $(x_1, y_1, z_1) \in \mathds{H}$, one must therefore solve an isoperimetric problem: find a path $(x(t), y(t))$ from $(0, 0)$ to $(x_1, y_1)$ of minimal $\ell^p$-length that sweep an area $z_1$. 

Busemann proved the existence and uniqueness to such isoperimetric problems. In \cite{busemann1947}, the classical isoperimetric inequality in the Euclidean plane was extended to a arbitrary normed planes. The curve of a fixed area that minimises length is not the unit circle, but the dual unit circle. In our case (see also \cite[Section 4.]{noskov2008}), this means that if $(x_1, y_1) = (0, 0)$, then $(x(t), y(t))$ must be obtained by the $\ell^q$-arc sweeping an area of $z_1$, i.e. by dilating and translating $\mathds{S}_q$ in order to achieve an area of $z_1$ (see \cref{fig:geodesicsproj}). There are infinitely many ways to accomplish that. If $(x_1, y_1) \neq (0, 0)$, then $(x(t), y(t))$ is uniquely obtained as a subpath of a dilated and translated $\mathds{S}_q$ that achieves an area of $z_1$. Therefore, every two points in the $\ell^p$-Heisenberg group can be joined by a length minimiser: there is a unique one if the two points don't have the same $z$-coordinate, otherwise there are infinitely many length minimisers between them.

\begin{remark}
    Busemann's solution to the Finsler isoperimetric problem in the plane could be used to obtain the shape of length-minimisers and cut point,
    however,
    it does not directly give the constant-speed parametrisation of those curves. Applying Pontryagin's Maximum Principle to the squared energy instead gives the right parametrisation of the extremals. Here, we recall the equivalence between finding length minimisers and the $\ell^p$-isoperimetric problem in order to obtain the cut time easily.
\end{remark}

The values of $r$, $w$, and $\theta$ such that the corresponding curve $\interval{0}{1} \to \mathds{H} : t \mapsto \exp_e^t(r, \theta, w)$ is the unique (up to reparametrisation) length-minimiser between its endpoints are easily found from the considerations above. The subset of $\T^*(\mathds{H})$ for which this property is satisfied is often called the \textit{cotangent injectivity domain} of the exponential map. This domain corresponds to determining the \textit{optimal synthesis} of the $\ell^p$-Heisenberg, the set of all geodesics together with their cut time:
\[
t_{\mathrm{cut}}[r, \theta, w] := \sup\{ T > 0 \mid \gamma : \interval{0}{T} \to \mathds{H} : t \mapsto \exp_e^t(r, \theta,w) \text{ is length minimising} \}.
\]

\begin{proposition}
    \label{cotinjdom}
    The cut time for the $\ell^p$-Heisenberg group is given by
    \[
    t_{\mathrm{cut}}[r, \theta,w] = \frac{2 \pi_q}{|w|},
    \]
    where by convention $\frac{2\pi_q}{0}=+\infty$.
    Furthermore, the cotangent injectivity domain from $e$ is the set
    \[
    D := \left\{ (r, \theta, w) \in \T^*_e(\mathds{H}) \mid r > 0, \text{and } |w| < 2 \pi_q \right\} \subseteq \T^*_e(\mathds{H}).
    \]
    The map $\exp_e : D \to \exp_e(D)$ is a homeomorphism and $\mathds{H} \setminus \exp_e(D) = \{ (x, y, z) \in \mathds{H} \mid x = y = 0\}$ is a negligible set.
    In particular,
    the $\ell^p$-Heisenberg group has a negligible cut locus, in the sense of \cref{def:negligiblecutlocus}, when $p \in \ointerval{1}{\infty}$.
\end{proposition}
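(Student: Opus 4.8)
The plan is to read off the cut time from the isoperimetric characterisation of geodesics recalled just above the statement, together with Busemann's theorem, by identifying the moment at which the planar projection of a geodesic closes up into a full loop. Fixing a nontrivial covector $(r,\theta,w)$ with $w \neq 0$, I would observe from \cref{eq:x(t)y(t)z(t)} that the projection $t \mapsto (x(t),y(t))$ is, up to the translation $(-\tfrac{r}{w}\sin_q\theta,\ \tfrac{r}{w}\cos_q\theta)$, the curve $\tfrac{r}{w}(\sin_q(wt+\theta),\ -\cos_q(wt+\theta))$, i.e. an arc of a dilated copy of $\mathds{S}_q$ of radius $r/|w|$ traversed with $q$-angular speed $|w|$. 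By $2\pi_q$-periodicity of the $q$-trigonometric functions, this arc is a proper sub-arc for $t < \tfrac{2\pi_q}{|w|}$ and closes into exactly one full loop at $t = \tfrac{2\pi_q}{|w|}$, where $(x(t),y(t))$ returns to the origin. Since $z(t)$ is the signed area swept, and since a geodesic is length-minimising precisely when its projection minimises $\ell^p$-length for its enclosed signed area, Busemann's existence-and-uniqueness theorem identifies the proper sub-arcs (distinct endpoints) as the unique minimisers, the closed loop as a non-unique minimiser, and a multiply-wrapped arc as never optimal; hence $t_{\mathrm{cut}}[r,\theta,w] = \tfrac{2\pi_q}{|w|}$. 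For $w = 0$ the projection is a straight segment (\cref{eq:x(t)y(t)z(t)atw=0}), the unique $\ell^p$-minimiser between its endpoints for all $t$, giving $t_{\mathrm{cut}} = +\infty = \tfrac{2\pi_q}{0}$.

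Next I would translate this into the injectivity domain. The curve $t \mapsto \exp_e^t(r,\theta,w)$ is \emph{the} unique minimiser on $\interval{0}{1}$ exactly when $1 < t_{\mathrm{cut}}[r,\theta,w]$, that is when $|w| < 2\pi_q$ (with $r > 0$ excluding the trivial curve), which is precisely $D$. To describe the image, any target $(x_1,y_1,z_1)$ with $(x_1,y_1) \neq (0,0)$ is joined to $e$ by a unique minimiser by Busemann, hence by a unique $(r,\theta,w) \in D$, so $\exp_e(D) \supseteq \mathds{H} \setminus \{x=y=0\}$; conversely, the full-loop geodesics ($|w| = 2\pi_q$) land exactly on the punctured $z$-axis, which is therefore the complement. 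As $\{x=y=0\}$ is a line in $\mathds{H} \cong \mathds{R}^3$, it is Lebesgue-negligible.

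Finally, for the homeomorphism I would note that $\exp_e$ is continuous on $D$ (the $q$-trigonometric functions are continuous and the $w\to 0$ limit is continuous by \cref{geodH}) and injective (distinct points of $D$ parametrise distinct minimising geodesics, which by uniqueness reach distinct off-axis endpoints). Viewing $D$ as an open subset of the $3$-manifold $\rinterval{0}{\infty} \times (\mathds{R}/2\pi_q\mathds{Z}) \times \mathds{R}$ and the image as an open subset of $\mathds{H} \cong \mathds{R}^3$, invariance of domain makes this continuous injection an open map, hence a homeomorphism onto its image. The negligible-cut-locus claim then follows by taking $\mathcal{C}(e) = \{x=y=0\}$ and $\Phi_e(y,t) = \exp_e^t(\exp_e^{-1}(y))$, which is continuous on $\mathds{H} \setminus \mathcal{C}(e)$, and propagating by left-invariance to obtain \cref{def:negligiblecutlocus}.

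I expect the main obstacle to be the careful justification of the cut time: confirming that a proper sub-arc remains \emph{uniquely} minimising all the way up to the full loop (so that the cut time is not attained earlier), that the closed loop is still minimising but non-unique, and that minimality genuinely fails beyond it. This rests entirely on the sharp existence-and-uniqueness statement for the normed isoperimetric problem from Busemann's work; once that is in hand, the injectivity domain, the image computation, and the homeomorphism via invariance of domain are essentially bookkeeping.
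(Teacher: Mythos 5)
Your proposal is correct and takes essentially the same route as the paper: the paper states this proposition without a formal proof, presenting it as an immediate consequence of the preceding discussion (Pontryagin's principle giving all minimisers as exponential images, plus Busemann's isoperimetric theorem identifying geodesic projections as arcs of dilated and translated copies of $\mathds{S}_q$, uniquely minimising precisely until they close into a full loop), which is exactly the argument you spell out. The extra details you supply --- invariance of domain for the homeomorphism, and the explicit map $\Phi_e(y,t) = \exp_e^t(\exp_e^{-1}(y))$ for the negligible cut locus --- are natural completions of what the paper leaves implicit.
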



If $t > 0$, the exponential map $\exp_e^t$ is a homeomorphism from
\[
D^t := \left\{ (r, \theta, w) \in \T^*_e(\mathds{H}) \mid r > 0, \text{and } \abs{wt} < 2 \pi_q \right\}
\]
onto its image, by \cref{cotinjdom}. We can improve the regularity of $\exp_e^t$ by removing a negligible set of initial covectors corresponding to the loss of smoothness in \cref{eq:x(t)y(t)z(t)} due to \cref{def:sinpcosp}. Define the sets
\[
\mathcal{S}_0:=\left\{(r,\theta,0)\in D^t \mid  \theta\in \frac{\pi_q}{2}\Z\right\}, \ \ \mathcal{S}^t_1:=\left\{(r,\theta,w)\in D^t \mid wt+\theta\in\frac{\pi_q}{2}\Z,w\neq 0\right\},
\]
and
\[
\mathcal{R}^t:=D^t\setminus(\mathcal{S}_0\cup\mathcal{S}_1^t).
\]

The image of $\mathcal{S}_0$ by $\exp^t$ is the points on the horizontal lines of direction $\theta\in\frac{\pi_q}{2}\Z$,
and that of $\mathcal{S}_1^t$ is the points at the corner of a geodesic $\gamma(t)=\exp_e^t(r,\theta,w)$.
On these singular sets and complementary regular points,
the exponential map has the following regularity.
\begin{lemma}
    For $t>0$,
    the exponential map $\exp_e^t:D^t\to \mathds{H}$ has the following regularity:
    \begin{enumerate}[label=\normalfont(\roman*)]
    \item For all $q \in \ointerval{1}{\infty}$, the exponential map $\exp_e^t$ is analytic on $\mathcal{R}^t$,
    \item If $q \notin \Z$, then $\exp_e^t$ is $C^{\lfloor q \rfloor}$ on $D^t \setminus \mathcal{S}_0 = \mathcal{R}^t \cup \mathcal{S}_1^t$, 
    \item If $q \notin \Z$, then $\exp_e^t$ is $C^{\lfloor q - 1 \rfloor}$ on $D^t = \mathcal{R}^t \cup \mathcal{S}_1^t \cup \mathcal{S}_0$.
    \end{enumerate}
    If $q \in \Z$, then $\exp_e^t$ is analytic on $D^t$.
\end{lemma}
\begin{proof}
    Recall from \cref{remark:regutrig} that when $q \notin \mathds{Z}$, the map $\theta \mapsto \sin_q(\theta)$ (resp. $\theta \mapsto \sin_p(\theta^\circ)$) is analytic everywhere except at $\theta \in \frac{\pi_q}{2} \mathds{Z}$ where it is $C^{\lfloor q \rfloor}$ (resp. $C^{\lfloor q - 1 \rfloor}$). 
    
    When $q \in \mathds{Z}$, the $p$-trigonometric functions are analytic everywhere. In (i), the set $\mathcal{R}^t$ does not contain any point at which the $p$-trigonometric functions lose analycity. In (ii), the parameter $w$ does not vanish and so the expression \cref{eq:x(t)y(t)z(t)} is only $C^{\lfloor q \rfloor}$ at $w t + \theta \in \frac{\pi_q}{2}\mathds{Z}$. In (iii), we are also allowing for $w = 0$, and thus \cref{eq:x(t)y(t)z(t)atw=0} is $C^{\lfloor q - 1\rfloor}$ when $\theta \in \frac{\pi_q}{2}\mathds{Z}$.
\end{proof}
    In the following sections,
    these singular sets play a central role.


\section{Jacobian of the exponential map}

\label{sec:jacobian}

By $\mathcal{J}^t(r, \theta,w)$, we will denote the Jacobian determinant of the exponential map at the identity, i.e.,
\[
\mathcal{J}^t(r, \theta,w) := \det \mathrm{Jac}(\exp_e^t)(r, \theta,w).
\]
The discussion of the previous section shows that for a given $t \in (0,1]$, the exponential map $\exp_e^t$ is a diffeomorphism on $\mathcal{R}^t$, and thus $\mathcal{J}^t$ is smooth on the same domain.
On the singular set $\mathcal{S}_1^t$,
the exponential map $\exp^t_e$ is at least $C^1$.
Therefore its Jacobian $\J^t$ is well defined, but its derivative has singularities when $p>2$. On the singular point $\mathcal{S}_0$,
the exponential map is only continuous,
hence the Jacobian may not be well-defined.
In this section, we shall consider the following three cases, which are sorted by regularity; the Jacobian on $\mathcal{R}^t\cup\mathcal{S}_1^t$ in \cref{sec:4.1}, on $\mathcal{S}_0$ in \cref{sec:4.2}, and its differential on $\mathcal{S}_0\cup\mathcal{S}_1^t$ in \cref{sec:derivativejac}.

\subsection{Jacobian at regular points}
\label{sec:4.1}

If $(r,\theta,w)\in \mathcal{R}^t\cup\mathcal{S}_1^t$,
the exponential map $\exp_e^t$ is at least $C^1$,
thus its Jacobian $\J^t$ is continuous.
The homogeneity and symmetry properties \cref{exphomog}, \cref{exprefl}, and \cref{exprot} implies that
\begin{equation}
\label{jacsymmetry}
\begin{gathered}
\mathcal{J}^t(r, \theta, w) = \frac{1}{c^2} \mathcal{J}^{c t}(r/c,  \theta,w/c), \ \mathcal{J}^t(r, -\theta,-w) = \mathcal{J}^{t}(r, \theta, w),\\
\mathcal{J}^t(r, \theta + n \tfrac{\pi_q}{2},w) = \mathcal{J}^{t}(r, \theta,w), \text{ for } c \neq 0 \text{ and } n \in \mathds{Z}.
\end{gathered}
\end{equation}
We shall also write $\mathcal{J}$ instead of $\mathcal{J}^1$ for simplicity (resp. $D$ for $D^1$). We can obtain an expression for the Jacobian by performing a simple computation involving Shelupsky's trigonometric functions.

\begin{lemma}
    \label{regularityF}
    For all $(r, \theta,w) \in \mathcal{R}^t\cup\mathcal{S}_1^t$ with $w \neq 0$, we have
    \begin{equation}
        \label{DetJacpolar}
        \begin{aligned}
        \mathcal{J}^t(r, \theta,w)= \frac{r^3 t}{w^4} \Big[ 2 &- \big( \sin_q(w t + \theta) \sin_p(\theta^\circ) + \cos_q(w t + \theta) \cos_p(\theta^\circ) \big)\\
        {}&- \big( \sin_p(w t + \theta)^\circ \sin_q(\theta) + \cos_p(w t + \theta)^\circ \cos_q(\theta) \big) \\
        {}& - w t \big( \sin_p(w t + \theta)^\circ \cos_p(\theta^\circ) - \cos_p(w t + \theta)^\circ \sin_p(\theta^\circ)\big) \Big],
    \end{aligned}
    \end{equation}
    and $\mathcal{J}^t(r, \theta,0)$ for $\theta \notin \frac{\pi_q}{2}\Z$ is obtained by smoothly extending \cref{DetJacpolar} as $w\to 0:$
    \begin{equation}\label{jacobianwis0}
    \J^t ( r , \theta , 0 ) = \frac{ (q-1)^2 r^3 t^5}{12} \left| \cos_q \theta \sin_q \theta \right|^{ 2 q - 4 }.
    \end{equation}
\end{lemma}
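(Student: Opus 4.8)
The plan is to compute the $3\times3$ Jacobian of $\exp_e^t$ in the coordinates $(r,\theta,w)$ directly from \cref{geodH} and then to collapse the determinant using the duality between the $p$- and $q$-trigonometric functions. By the scaling relation \cref{jacsymmetry} (equivalently \cref{exphomog}) one has $\mathcal{J}^t(r,\theta,w)=t^2\mathcal{J}^1(rt,\theta,wt)$, so it suffices to treat $t=1$ and restore $t$ at the end; this removes the chain-rule factor $t$ and lightens the bookkeeping. Write $\phi:=wt+\theta$ and abbreviate $S=\sin_q\phi$, $C=\cos_q\phi$, $s=\sin_q\theta$, $c=\cos_q\theta$, together with the dual quantities $\widehat S=\sin_p(\phi^\circ)$, $\widehat C=\cos_p(\phi^\circ)$, $s^\ast=\sin_p(\theta^\circ)$, $c^\ast=\cos_p(\theta^\circ)$. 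The tools I will use constantly are the Pythagorean relation \cref{lemtri} for the $q$-functions, the equality case of \cref{eq:ineqtrigpq}, namely $C\,\widehat C+S\,\widehat S=1$ and $c\,c^\ast+s\,s^\ast=1$, and the differentiation rules of \cref{lemtridiff} in the form $\tfrac{d}{d\phi}\sin_q\phi=\widehat C$ and $\tfrac{d}{d\phi}\cos_q\phi=-\widehat S$.

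First I would write down the three columns of the matrix. Since $x,y$ are homogeneous of degree one and $z$ of degree two in $r$ (visible in \cref{eq:x(t)y(t)z(t)}), Euler's identity gives the $\partial_r$-column for free as $\tfrac1r(x,y,2z)$. The $\partial_\theta$- and $\partial_w$-columns are obtained by differentiating \cref{eq:x(t)y(t)z(t)} with $\partial_\theta\phi=1$ and $\partial_w\phi=t$; for instance $\partial_\theta x=\tfrac rw(\widehat C-c^\ast)$ and $\partial_w x=-\tfrac{r}{w^2}(S-s)+\tfrac{rt}{w}\widehat C$, and similarly for $y$ and $z$.

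The determinant is the main obstacle. A degree count shows that every monomial of the expansion carries exactly $r^3$ and that the powers of $w$ assemble into an overall $w^{-4}$, so one factors out $\tfrac{r^3}{w^4}$ (the single $t$ then reappears through the scaling above). The remaining work is purely algebraic: the $2\times2$ minors produce a large number of products of $p$- and $q$-functions, and one must repeatedly contract these using $C\widehat C+S\widehat S=1$, $c\,c^\ast+s\,s^\ast=1$ and \cref{lemtri} until only the three combinations $S s^\ast+C c^\ast$, $\widehat S s+\widehat C c$ and $wt(\widehat S c^\ast-\widehat C s^\ast)$ survive, giving \cref{DetJacpolar}. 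I expect the careful matching of each minor to the correct duality contraction, keeping track of the signs carried by $\sgn(\cos_q\phi)$ and $\sgn(\sin_q\phi)$, to be the delicate part.

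Finally, for the value at $w=0$ with $\theta\notin\tfrac{\pi_q}2\Z$ I would first argue removability and then extract a single coefficient. Because $\sin_q,\cos_q$ are analytic at $\phi=\theta$, the apparent $w^{-1}$ and $w^{-2}$ singularities in \cref{eq:x(t)y(t)z(t)} are removable (as already seen in the proof of \cref{geodH}), so $\exp_e^t$ — and hence $\mathcal{J}^t$ — is analytic near $(r,\theta,0)$; in particular the bracket $B(h)$ of \cref{DetJacpolar}, viewed as a function of $h:=wt$, satisfies $B(h)=O(h^4)$. Directly, $B(0)=B'(0)=0$ follow at once from the two duality equalities above, and $B''(0)=B'''(0)=0$ by the same mechanism. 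It then remains to compute $B^{(4)}(0)/4!$ by Taylor-expanding $S,C,\widehat S,\widehat C$ about $\phi=\theta$; the second derivatives $\tfrac{d^2}{d\phi^2}\sin_q\phi=-(q-1)|C|^{q-2}\widehat S$ and $\tfrac{d^2}{d\phi^2}\cos_q\phi=-(q-1)|S|^{q-2}\widehat C$ are what introduce the factor $(q-1)^2$ and, after using $|c|^q+|s|^q=1$ to merge the terms of differing homogeneity, the weight $|\cos_q\theta\,\sin_q\theta|^{2q-4}$. Restoring the prefactor $\tfrac{r^3 t}{w^4}$ turns the $h^4$-term into $\tfrac{(q-1)^2 r^3 t^5}{12}|\cos_q\theta\,\sin_q\theta|^{2q-4}$, which is \cref{jacobianwis0}. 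The genuinely delicate point here is the fourfold cancellation $B(h)=O(h^4)$ together with the combinatorics of the order-$h^4$ coefficient.
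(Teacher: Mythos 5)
Your proposal is correct and takes essentially the same approach as the paper: the identity \cref{DetJacpolar} is there, too, treated as a direct ``straightforward and long'' determinant computation (which the paper omits, and which you outline in the same way), and the extension \cref{jacobianwis0} is likewise obtained by showing that the reduced Jacobian vanishes to fourth order in $w$ at $w=0$ and then extracting the fourth-order coefficient $\partial_w^4\J_R(\theta,0)=2(q-1)^2\abs{\cos_q\theta\sin_q\theta}^{2q-4}$ --- the paper does this via an inductive formula for $\partial_w^k\J_R(\theta,0)$ followed by L'H\^opital, which is exactly the Taylor-coefficient extraction you perform. Your removable-singularity/analyticity argument for the fourfold vanishing $B(h)=O(h^4)$ is a minor, valid variant of the paper's explicit check that $\partial_w\J_R(\theta,0)=\partial_w^2\J_R(\theta,0)=\partial_w^3\J_R(\theta,0)=0$.
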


In the identity \cref{DetJacpolar}, it is worth recalling that the functions $\sin_p(x^\circ)$ and $\cos_p(x^\circ)$ have explicit forms given by:
\[
\sin_p(x^\circ) := \abs{\sin_q(x)}^{q-1} \sgn(\sin_q(x)), \ \cos_p(x^\circ) := \abs{\cos_q(x)}^{q-1} \sgn(\cos_q(x)).
\]

For its proof,
we introduce the reduced Jacobian function $\J_R : \R^2 \to \mathds{R}$ by setting
\begin{equation}
        \label{Fwtheta}
        \begin{aligned}
        \J_R(\theta,w) :={} 2& - \big( \sin_q(w + \theta) \sin_p(\theta^\circ) + \cos_q(w + \theta) \cos_p(\theta^\circ) \big)\\
        {}&- \big( \sin_p(w + \theta)^\circ \sin_q(\theta) + \cos_p(w + \theta)^\circ \cos_q(\theta) \big) \\
        {}& - w \big( \sin_p(w + \theta)^\circ \cos_p(\theta^\circ) - \cos_p(w + \theta)^\circ \sin_p(\theta^\circ) \big).
    \end{aligned}
\end{equation}
In other words, we define $\J_R(\theta,w)$ so that
\[
\mathcal{J}^t(r, \theta,w) = \frac{r^3 t}{w^4} \J_R(\theta,w t)
\]
holds for all $(r, \theta, w) \in D^t$. This definition of $\J_R$ allows us to simplify the expression for $\mathcal{J}^t$ in terms of $\J_R$, which will be useful in the remainder of the work.

\begin{remark}
When $p=2$ (and thus $q = 2$), the expression \cref{DetJacpolar} simplifies to
\[
\mathcal{J}^t(r, \theta,w) = \frac{r^3 t}{w^4} \left(\sin\left(\frac{w t}{2}\right) - \frac{w t}{2} \cos\left(\frac{w t}{2}\right)\right)\sin\left(\frac{w t}{2}\right),
\]
by the standard trigonometric addition formulas. This coincides with the well-known computations in the sub-Riemannian Heisenberg group, see \cite[Proposition 1.12.]{juillet2009}.  
\end{remark}

\begin{proof}
The case $w\neq 0$ is a straightforward and long computation, so we omit the detail.
We will compute the limit to $w\to 0$ by using L'Hospital's rule.
    By induction, we obtain that
    \begin{align}
    \label{formuladiffJR}
        \begin{split}
        \partial_w^k\J_R(\theta,0)=(k-1)(\cos_q^{(k)}\theta\sin_q^{(1)}\theta-&\sin_q^{(k)} \theta \cos_q^{(1)}\theta)\\
        &+\cos_q^{(k+1)}\theta\sin_q\theta-\sin_q^{(k+1)}\theta\cos_q\theta.\\
        \end{split}
    \end{align}
    Here, $\sin_q^{(k)}\theta$ and $\cos_q^{(k)}\theta$ denote the $k$-th derivatives of $\sin_q\theta$ and $\cos_q\theta$, respectively. Using \cref{lemtridiff}, we can evaluate the derivatives of $\J_R(\theta, w)$ with respect to $w$ at $w=0$. We find that $\partial_w\J_R(\theta,0) = \partial_w^2\J_R(\theta,0) = \partial_w^3\J_R(\theta,0) = 0$, while
    \[
    \partial_w^4\J_R(\theta,0) = 2 (q-1)^2 \abs{\cos_q(\theta)\sin_q(\theta)}^{2q-4}.
    \]
    The conclusion follows by L'Hospital's rule.
\end{proof}

\begin{remark}
\label{remark:JRcontinuous}
    The function $\mathcal{J}_R$ is continuous everywhere, and it is smooth on any open domain where (the projection of) the singular sets $\mathcal{S}_{0} \cup \mathcal{S}_{1}^1$ have been removed.
\end{remark}

It will be useful for the rest of this work to write down the derivative of $\J_R$ with respect to $w$:
\begin{equation}
        \label{dwFwtheta}
        \begin{aligned}
        \partial_w \J_R(\theta,w) :=& (q - 1) \abs{\cos_q(w + \theta)\sin_q(w + \theta)}^{q - 2} \\
        & \qquad \times  \Big[ \sin_q(w + \theta) \cos_q(\theta) - \cos_q(w + \theta)\sin_q(\theta) \\
        & \qquad \qquad - w \big( \sin_q(w + \theta) \sin_p(\theta^\circ) + \cos_q(w + \theta) \cos_p(\theta^\circ) \big)\Big].
    \end{aligned}
\end{equation}

On the region $\mathcal{R}^t\cup\mathcal{S}_1^t$,
we are going to check that the Jacobian $\mathcal{J}^t$ is positive by studying the sign of the reduced Jacobian $\J_R$.

\begin{lemma}
    \label{positivityJac}
    For every $(\theta, w) \in \interval{0}{2\pi_q}\times \ointerval{0}{2 \pi_q} $, we have $\mathcal{J}_R(\theta, w) > 0$. Furthermore, $\J_R(\theta, w) = 0$ if and only if $w \in 2 \pi_q \mathds{Z}$.
\end{lemma}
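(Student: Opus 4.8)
The plan is to read off from \eqref{Fwtheta} the decomposition $\mathcal{J}_R(\theta,w) = 2 - A - B - wC$, where, writing $\alpha := w+\theta$,
\[
A := \sin_q(\alpha)\sin_p(\theta^\circ)+\cos_q(\alpha)\cos_p(\theta^\circ),\qquad B := \sin_p(\alpha^\circ)\sin_q(\theta)+\cos_p(\alpha^\circ)\cos_q(\theta),
\]
\[
C := \sin_p(\alpha^\circ)\cos_p(\theta^\circ)-\cos_p(\alpha^\circ)\sin_p(\theta^\circ).
\]
By the duality inequality \eqref{eq:ineqtrigpq} both $A\le 1$ and $B\le 1$, with equality precisely when $\alpha\equiv\theta \pmod{2\pi_q}$, that is when $w\in 2\pi_q\mathds{Z}$; here one uses that $\beta\mapsto\beta^\circ$ is a strictly monotone bijection whose bidual is the identity. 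Hence $2-A-B\ge 0$, strictly on $(0,2\pi_q)$. This settles the ``if'' part of the zero characterisation: at $w\in 2\pi_q\mathds{Z}$ the $2\pi_q$-periodicity of the $q$-trigonometric functions gives $A=B=1$ and $C=0$, so $\mathcal{J}_R=0$.

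I would next fix the sign of the cross term. Since $(\cos_p(\theta^\circ),\sin_p(\theta^\circ))$ and $(\cos_p(\alpha^\circ),\sin_p(\alpha^\circ))$ lie on the convex curve $\mathds{S}_p$, which is swept out counter-clockwise as the dual angle increases, $C$ is (up to a positive factor) their Euclidean cross product; as $\beta\mapsto\beta^\circ$ maps $[0,2\pi_q)$ monotonically onto $[0,2\pi_p)$ with $\pi_q\mapsto\pi_p$, this yields $C>0$ on $(0,\pi_q)$ and $C<0$ on $(\pi_q,2\pi_q)$, with $C(\pi_q)=0$ by the antipodal relation at $\alpha=\theta+\pi_q$. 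The same convexity argument applied to $\mathds{S}_q$ shows $D:=\sin_q(\alpha)\cos_q(\theta)-\cos_q(\alpha)\sin_q(\theta)>0$ on $(0,\pi_q)$. The regime $[\pi_q,2\pi_q)$ is then immediate: there $2-A-B>0$ while $-wC\ge 0$, so $\mathcal{J}_R>0$.

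The remaining regime $(0,\pi_q)$ is the crux, and here I would use the derivative \eqref{dwFwtheta}, namely $\partial_w\mathcal{J}_R=(q-1)\lvert\cos_q(\alpha)\sin_q(\alpha)\rvert^{q-2}\,g(w)$ with $g:=D-wA$. As the prefactor is non-negative, it is enough to prove $g>0$ on $(0,\pi_q)$: then $\mathcal{J}_R$ is strictly increasing there and $\mathcal{J}_R(w)>\mathcal{J}_R(0)=0$. Since $A'=-C$ by \cref{lemtridiff} (in its $q$-form), $A$ decreases strictly from $1$ to $-1$ on $(0,\pi_q)$ and vanishes at a single point $w_0$. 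On $[w_0,\pi_q)$ we have $A\le 0$, so $g=D-wA\ge D>0$. On $(0,w_0)$, where $A>0$, the inequality $g>0$ is equivalent to $\phi(w):=D(w)/A(w)>w$; using $D'=B$ and $A'=-C$ one finds $\phi(0)=0$ and $\phi'(0)=1$, so it suffices to show that $\phi$ is convex on $(0,w_0)$.

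The main obstacle is precisely this convexity of the generalised tangent $\phi=D/A$. It is the two-angle analogue of the convexity of $\tan_p$ proved inside \cref{radiusconvsinp} — indeed for $\theta=0$ it collapses to $\phi=\sin_q/\cos_q$ — but differentiating now produces the singular factors $\lvert\cos_q(\alpha)\rvert^{q-2}$ and $\lvert\sin_q(\alpha)\rvert^{q-2}$ arising from $\tfrac{\diff}{\diff w}\cos_p(\alpha^\circ)$ and $\tfrac{\diff}{\diff w}\sin_p(\alpha^\circ)$. I would compute $\phi''$ in terms of $A,B,C$ and their first derivatives and establish its non-negativity using the duality bounds $A,B\le 1$ and the signs of $C$ and $D$ obtained above; the low regularity at $\alpha\in\tfrac{\pi_q}{2}\mathds{Z}$ is harmless, since it occurs at isolated values of $w$ and $\mathcal{J}_R$ is continuous, so monotonicity is preserved. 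Assembling the three regimes gives $\mathcal{J}_R>0$ on $(0,2\pi_q)$, and with the first paragraph this proves $\mathcal{J}_R=0$ if and only if $w\in 2\pi_q\mathds{Z}$.
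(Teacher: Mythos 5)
Your proposal tracks the paper's own proof step for step until the very last stage: the decomposition $\J_R = 2 - A - B - wC$, the use of the duality inequality \cref{eq:ineqtrigpq} to handle $2-A-B$ and the equality case, the sign of the cross term $C$ on $(0,\pi_q)$ and $(\pi_q,2\pi_q)$, the immediate conclusion on $[\pi_q,2\pi_q)$, and the reduction to proving $g = D - wA > 0$ on $(0,\pi_q)$ are all exactly what the paper does (your $g$ is precisely the paper's $\J_R'$ from \cref{dwFwtheta}). The genuine gap is in the one step where you depart from the paper: on $(0,w_0)$ you reduce everything to the convexity of $\phi = D/A$, which you never prove and yourself call ``the main obstacle''. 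Moreover, the route you sketch for it would not work as stated: differentiating $\phi$ twice produces, through $B'$ and $C'$, the singular factors $(q-1)\abs{\sin_q(w+\theta)}^{q-2}$ and $(q-1)\abs{\cos_q(w+\theta)}^{q-2}$, which have indefinite sign, are unbounded when $q<2$, and are not controlled by the bounds $A,B\le 1$ or by the signs of $C$ and $D$. What saves the computation is an exact cancellation, not an inequality, and the proposal does not identify it.

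The cancellation is the following, and it closes your gap while avoiding $\phi''$ altogether. Expanding $AB+CD$ and using the equality case of duality, $\cos_q\beta\,\cos_p(\beta^\circ)+\sin_q\beta\,\sin_p(\beta^\circ)=1$, together with \cref{lemtri} in its $q$-form, one obtains the Wronskian-type identity
\begin{equation*}
AB + CD \;=\; \sin_q(w+\theta)\,\sin_p\big((w+\theta)^\circ\big) + \cos_q(w+\theta)\,\cos_p\big((w+\theta)^\circ\big) \;=\; \abs{\sin_q(w+\theta)}^{q} + \abs{\cos_q(w+\theta)}^{q} \;=\; 1 .
\end{equation*}
Hence, using $A'=-C$ and $D'=B$ (which only require the $C^1$ formulas of \cref{lemtridiff}, so no regularity issue arises),
\begin{equation*}
\phi' \;=\; \frac{A D' - D A'}{A^2} \;=\; \frac{AB+CD}{A^2} \;=\; \frac{1}{A^2} .
\end{equation*}
On $(0,w_0)$ one has $0<A<1$ strictly, since $A=1$ forces $w\in 2\pi_q\mathds{Z}$; therefore $\phi'>1$, and integrating from $\phi(0)=0$ gives $\phi(w)>w$, i.e.\ $g>0$, with no convexity needed (equivalently, $\phi''=-2A'/A^3=2C/A^3>0$, which confirms your claim). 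Once this identity is supplied, your one-variable argument is complete and is arguably cleaner than the paper's, which instead proves $\J_R'>0$ by a two-variable argument: it shows $(\partial_w-\partial_\theta)\J_R'>0$ via another scalar-product sign argument and then checks the boundary values $\J_R'(\theta,0)=0$ and $\J_R'(\tfrac{\pi_q}{2},w)=\sin_q w - w\cos_q w\ge 0$, the latter by comparing areas of $\ell^q$- and $\ell^\infty$-fans. As written, however, your proposal stops exactly where the real work lies, so it is not yet a proof.
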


\begin{proof}
By the symmetries \cref{jacsymmetry},
we can assume $\theta\in[0,\frac{\pi_q}{2}]$.
We express \cref{Fwtheta} in a more compact notation as $\J_R(\theta,w) = \J_{R,1}(\theta, w) + w \J_{R,2}(\theta, w)$.

By the duality inequality \cref{eq:ineqtrigpq}, it is clear that $\J_{R,1}(\theta, w)$ is non-negative for every $(\theta, w)$, and this term vanishes if and only if $w + \theta = \theta \operatorname{mod} 2 \pi_q$, that is, $w \in 2 \pi_q \mathds{Z}$.

Geometrically, the term $\J_{R,2}(\theta, w)$ can be interpreted (see \cref{scalarproduct}) as the scalar product of the two vectors given by
\[
(\cos_p\theta^\circ,\sin_p\theta^\circ)
~~\text{and}~~
(-\sin_p(w+\theta)^\circ,\cos_p(w+\theta)^\circ).
\]
\begin{figure}
  \centering
  \includegraphics[width=0.8\textwidth]{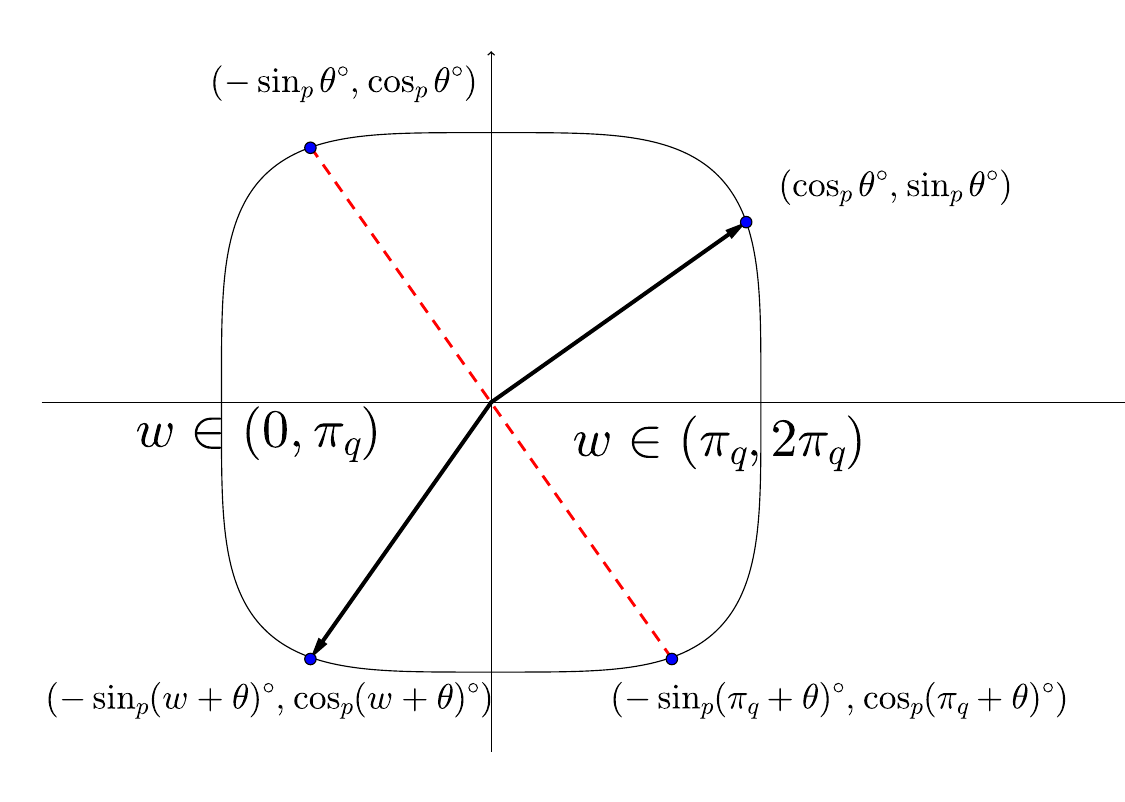}
  \caption{The positivity of $\J_{R,2}$ for $w\in(\pi_q,2\pi_q)$}
  \label{scalarproduct}
\end{figure}
As such, we know that $\J_{R,2}(\theta, w) < 0$ if $w \in \ointerval{0}{\pi_q}$, $\J_{R,2}(\theta, w) > 0$ if $w \in \ointerval{\pi_q}{2 \pi_q}$ and $\J_{R,2}(\theta, w) = 0$ if $w \in \pi_q \mathds{Z}$. Therefore, $\J_R(\theta, w) > 0$ for $(\theta, w) \in \interval{0}{\frac{\pi_q}{2}} \times \ointerval{\pi_q}{2 \pi_q}$.

To prove the positivity of $\J_R$ for $(\theta, w) \in \interval{0}{\frac{\pi_q}{2}} \times \ointerval{0}{\pi_q}$, we now need to consider the partial derivative of $\J_R$ with respect to $w$, already seen in \cref{dwFwtheta}, which we express as $\partial_w \J_R(\theta, w) = (q - 1) \abs{\cos_q(w + \theta)\sin_q(w + \theta)}^{q - 2} \K(\theta, w)$.

The directional derivative of $\K$ towards $w-\theta$ is
\[
(\partial_w - \partial_\theta) \K(\theta, w) = (q-1)w\abs{\cos_q\theta\sin_q\theta}^{q-2}\left(\sin_q(w+\theta)\cos_q\theta-\cos_q(w+\theta)\sin_q\theta\right).
\]
Using the same reasoning as before and considering the scalar product of the two vectors:
\[
(\cos_q\theta,\sin_q\theta)
~~\text{and}~~
(\sin_q(w+\theta),-\cos_q(w+\theta)),
\]
we can conclude that $(\partial_w - \partial_\theta) \K(\theta, w)$ is positive for $w \in \ointerval{0}{\pi_q}$ and $\theta \in \interval{0}{\frac{\pi_q}{2}}$.

Therefore, to complete the proof, we need to verify that $\K(\theta, w)$ is non-negative for $w = 0$ and $\theta \in \interval{0}{\frac{\pi_q}{2}}$, as well as for $\theta = \frac{\pi_q}{2}$ and $w \in \ointerval{0}{\pi_q}$. We find that
\[
\K(\tfrac{\pi_q}{2}, w)=\sin_qw-w\cos_qw, \text{ and } \K(\theta, 0) = 0.
\]

By considering the scalar product of the two vectors
\[(\cos_qw,\sin_qw)~~\text{and}~~(-w,1),\]
we can show that $\mathcal{K}(\frac{\pi_q}{2},w)\geq 0$, see \cref{proofpositivityfigure}. If $\cos_q w \leq 0$, there is nothing to prove so we only consider $\cos_q w \geq 0$.
\begin{figure}
\centering
\includegraphics[width=0.8\textwidth]{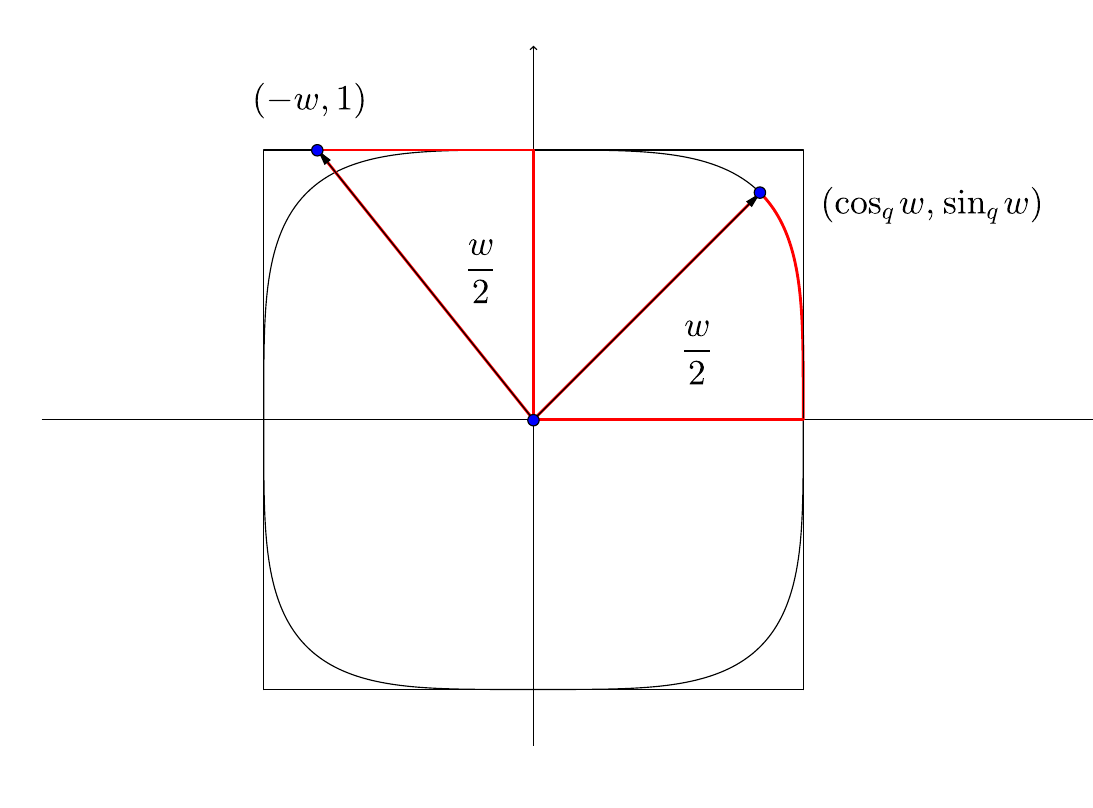}
\caption{The positivity of $\sin_qw-w\cos_qw$.}
\label{proofpositivityfigure}
\end{figure}
Indeed, recall that the area of the $\ell^q$-fan spanned by $(0,0),(1,0),(\cos_qw,\sin_qw)$ is $\frac{w}{2}$, as shown in \cref{def:sinpcosp}. On the other hand, $(-w,1)$ is the point such that the area of the $\ell^{\infty}$-fan spanned by $(0,0),(0,1),(-w,1)$ is also $\frac{w}{2}$. It is clear that the $\ell^{\infty}$ ball includes the $\ell^q$ ball, so the $\ell^2$-angle between $(\cos_qw,\sin_qw)$ and $(-w,1)$ must be less than $\frac{\pi_2}{2}$. This shows that $\K(\frac{\pi_q}{2},w)\geq 0$ for all $w \in \ointerval{0}{\pi_q}$.

In conclusion, since $(\partial_w-\partial_\theta)\K(\theta,w)$ is positive, it implies that $\K(\theta,w)$ is positive for $(\theta, w) \in \interval{0}{\frac{\pi_q}{2}} \times \ointerval{0}{\pi_q}$. Therefore, $\partial_w\J_R(\theta,w)$ is positive on the same domain, and checking that $\J_R(\theta,0) = 0$ shows that $\J_R > 0$ when $w \notin 2 \pi_q \Z$.

To complete the proof, it suffices to observe $\J_R(\theta, w) = 0$ when $w \in 2 \pi_q \Z$ by simply substituting these values into \cref{Fwtheta}.
\end{proof}

\begin{remark}
The derivative $\partial_w\J_R(\theta,w)$ may be discontinuous and infinite on the singular set $\{w+\theta\in\frac{\pi_q}{2}\Z\}$, however,
the monotonicity arguments still follow by the continuity of $\J_R$.
\end{remark}

\begin{proposition}[Positivity of Jacobian on $\mathcal{R}^t\cup\mathcal{S}_1^t$]\label{prop:jacobianpositive}
\label{prop:positivityJt}
For $t > 0$ and  $(r,\theta,w)\in \mathcal{R}^t\cup\mathcal{S}_1^t$,
$\J^t(r,\theta,w)>0$.
\end{proposition}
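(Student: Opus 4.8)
The plan is to read off the statement from \cref{positivityJac}, which already carries all the analytic content; what remains is to transport the positivity of the reduced Jacobian $\J_R$ to the full Jacobian $\J^t$ and to handle the degenerate direction $w=0$ by hand. The whole argument rests on the factorisation
\[
\J^t(r,\theta,w)=\frac{r^3 t}{w^4}\,\J_R(\theta,wt),
\]
valid for $w\neq 0$, combined with the homogeneity and symmetry relations in \cref{jacsymmetry}.

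For $w\neq 0$ I would argue as follows. Since $(r,\theta,w)\in D^t$ with $r>0$ and $t>0$, the prefactor $r^3 t/w^4$ is strictly positive, so $\J^t$ and $\J_R(\theta,wt)$ share the same sign. Using the reflection symmetry $\J^t(r,-\theta,-w)=\J^t(r,\theta,w)$ together with the $\theta$-periodicity of \cref{jacsymmetry}, I may assume $w>0$ and $\theta\in\rinterval{0}{2\pi_q}$; then $wt\in\ointerval{0}{2\pi_q}$ on $D^t$, so \cref{positivityJac} yields $\J_R(\theta,wt)>0$ and hence $\J^t(r,\theta,w)>0$. The same lemma records that $\J_R(\theta,\,\cdot\,)$ vanishes precisely on $2\pi_q\Z$, so after the reduction to $w>0$ the only zero in the closed range $\linterval{0}{2\pi_q}$ is $wt=2\pi_q$, which is exactly the cut time $t_{\mathrm{cut}}[r,\theta,w]=2\pi_q/\abs{w}$ of \cref{cotinjdom}.

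The direction $w=0$ must be treated separately because the factorisation degenerates into a $0/0$ expression; here I would invoke the smooth extension \cref{jacobianwis0},
\[
\J^t(r,\theta,0)=\frac{(q-1)^2 r^3 t^5}{12}\,\abs{\cos_q\theta\,\sin_q\theta}^{2q-4}.
\]
Since the points with $w=0$ and $\theta\in\frac{\pi_q}{2}\Z$ constitute exactly the excluded set $\mathcal{S}_0$, on $\mathcal{R}^t\cup\mathcal{S}_1^t$ we always have $\theta\notin\frac{\pi_q}{2}\Z$, so $\cos_q\theta\,\sin_q\theta\neq 0$ and the right-hand side is strictly positive. Combining the two cases gives $\J^t\geq 0$ on $\mathcal{R}^t\cup\mathcal{S}_1^t$, strictly positive except as the cut time is approached.

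There is no genuine obstacle beyond \cref{positivityJac} itself; the only points demanding care are bookkeeping. First, I should verify that reflection and $\theta$-periodicity from \cref{jacsymmetry} legitimately bring an arbitrary $(\theta,w)$ with $w\neq 0$ into the domain $\interval{0}{2\pi_q}\times\ointerval{0}{2\pi_q}$ on which \cref{positivityJac} is stated. Second, since $wt=2\pi_q$ lies on the boundary of $D^t$ rather than in its interior, the equality clause should be read as referring to the continuous extension of $\J^t$ to $\overline{D^t}$, that is, to the behaviour of the Jacobian as the cut locus is reached.
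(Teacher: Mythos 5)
Your proposal is correct and follows essentially the same route as the paper: the factorisation $\J^t(r,\theta,w)=\frac{r^3t}{w^4}\J_R(\theta,wt)$ combined with \cref{positivityJac} for $w\neq 0$, and the explicit formula \cref{jacobianwis0} for the $w=0$ case (which the paper handles in the text immediately preceding \cref{positivityJac}). If anything, you are slightly more careful than the paper's own two-line proof, since you justify the reduction to $w>0$ via the symmetries \cref{jacsymmetry} and flag that $wt=2\pi_q$ sits on the boundary of $D^t$, two points the paper glosses over.
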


\begin{proof}

    Recall that $\J^t(r,\theta,w)=\frac{r^3t}{w^4}\J_R(\theta,wt)$,
    where $\theta\in[0,2\pi_q]$ and $wt\in(0,2\pi_q)$.
    Therefore by Lemma \ref{positivityJac},
    $\J^t$ is positive when $w \neq 0$.
    When $w = 0$, we must have $\theta \notin \frac{\pi_q}{2}\Z$ by definition of $\mathcal{R}^t\cup\mathcal{S}_1^t$ and the positivity then follows from \cref{jacobianwis0}.
\end{proof}

\subsection{Jacobian at singular points}
\label{sec:4.2}

Next we consider the singular points $\mathcal{S}_0$.
Since the exponential map is not differentiable on $\mathcal{S}_0  \subseteq D^t$ for $p>2$,
the Jacobian $\J^t$ itself has singularities.
The singular set $\mathcal{S}_0$ is independent of $t$,
we restrict ourselves to $t=1$.
The computation of limits of $\J_R$ as $(\theta, w)$ approaches the singular set $\mathcal{S}_0$ is achieved by considering asymptotic estimates of $\J_R$.
By the symmetry of the Jacobian,
we can assume $\theta=0$.
We will compute the asymptotic of $\J_R(\theta,w)$ at $(0,0)$.
\begin{lemma}
    \label{asymptoticFat00}
    Suppose that both $|w+\theta|$ and $|\theta|$ are strictly smaller than the radii of convergence at $\theta = 0$ of the $q$-trigonometric functions $\sin_q(\theta)$ and $\cos_q(\theta)$.
    Then, the function $\J_R$ has the uniformly convergent series representation
    \begin{align*}
        \mathcal{J}_R(\theta, w) ={}& \frac{q - 1}{q^2 (q + 1)} \big(\abs{w + \theta}^{2 q} + \abs{\theta}^{2 q}\big) + \frac{2 (q - 1)^2}{q^2} \abs{w + \theta}^q \abs{\theta}^q \\
        & \qquad - \frac{q - 1}{q + 1} \left(\abs{w + \theta}^{q}\abs{\theta}^{q - 2} + |w+\theta|^{q-2}|\theta|^q \right)(w + \theta)\theta + E(\theta,w),
    \end{align*}
    where the remainder term $E$ is the infinite series
    \begin{equation}
        \label{asymptoticFat00remainder}
        E(\theta,w)=\sum_{\alpha,\beta,\gamma,\delta}c_{\alpha,\beta,\gamma,\delta}|w+\theta|^\alpha|\theta|^\beta(w+\theta)^\gamma\theta^\delta
    \end{equation}
    with $\alpha+\beta+\gamma+\delta\geq 3q$ and $c_{\alpha,\beta,\gamma,\delta}\in\R$.
\end{lemma}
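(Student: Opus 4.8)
The plan is to substitute the convergent series of \cref{lemtriexpansion} into the definition \cref{Fwtheta} of $\J_R$ and to regroup the resulting monomials by total homogeneity degree. Throughout I would write $u := w + \theta$ and $v := \theta$, so that $w = u - v$, and use the duality identities $\sin_p(x^\circ) = \abs{\sin_q x}^{q-1}\sgn(\sin_q x)$ and $\cos_p(x^\circ) = \abs{\cos_q x}^{q-1}\sgn(\cos_q x)$ to express every factor of \cref{Fwtheta} through $\sin_q$ and $\cos_q$ evaluated at $u$ or $v$. Applying \cref{lemtriexpansion} with the roles of $p$ and $q$ interchanged yields uniformly convergent expansions of $\sin_q(x)$ and $\cos_q(x)$ near $x=0$; the expansions of $\sin_p(x^\circ)$ and $\cos_p(x^\circ)$ then follow either from the dual series already in \cref{lemtriexpansion} or by raising the $\sin_q$, $\cos_q$ series to the power $q-1$. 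The latter composition is legitimate near $x=0$: writing $\sin_q(x) = x\,h(\abs{x}^q)$ with $h$ analytic and $h(0)=1$, one has $\sin_p(x^\circ) = \abs{x}^{q-2}x\,h(\abs{x}^q)^{q-1}$, and $h^{q-1}$ is again a convergent series in $\abs{x}^q$, so uniform convergence is preserved.

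The structural observation that keeps the computation finite is the following grading modulo $q$: in the variable $x$ the series of $\sin_q$ carries only exponents $\equiv 1$, that of $\sin_p(x^\circ)$ only exponents $\equiv -1$, and those of $\cos_q$ and $\cos_p(x^\circ)$ only exponents $\equiv 0$, all $\bmod\ q$. Each of the five products composing $\J_R$ pairs a function of $u$ with a function of $v$ (the last two additionally carrying the degree-one factor $w = u-v$), and the congruences combine so that every monomial $\abs{u}^\alpha\abs{v}^\beta u^\gamma v^\delta$ produced has total degree $\alpha+\beta+\gamma+\delta \in q\,\Z_{\geq 0}$. Thus the full expansion splits into homogeneous pieces of degree $0, q, 2q, \dots$. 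The degree-$0$ piece is $2-1-1 = 0$. For the degree-$q$ piece, the two $\cos\cdot\cos$ products contribute $\abs{u}^q + \abs{v}^q$, while the two $\sin\cdot\sin$ products together with the degree-$q$ part of $-w\,B_3$, where $B_3 := \sin_p(u^\circ)\cos_p(v^\circ) - \cos_p(u^\circ)\sin_p(v^\circ)$, contribute $-\abs{u}^q-\abs{v}^q$ once the mixed terms $u\abs{v}^{q-2}v$ and $v\abs{u}^{q-2}u$ cancel. Hence the degree-$q$ piece vanishes; this cancellation is the conceptual heart of the argument and explains why the leading order is $2q$, consistently with $\J_R(\theta,0) \equiv 0$ from \cref{positivityJac}.

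The degree-$2q$ piece is then obtained by keeping from each factor the terms of its two lowest orders and multiplying out. The monomials of type $\abs{u}^{2q-2}uv$ and $\abs{v}^{2q-2}uv$ cancel between the $\sin\cdot\sin$ products and $-w\,B_3$, and the surviving coefficients, after simplification with the explicit constants of \cref{lemtriexpansion}, evaluate to
\[
\frac{q-1}{q^2(q+1)}, \qquad \frac{2(q-1)^2}{q^2}, \qquad -\frac{q-1}{q+1}
\]
in front of $\abs{u}^{2q}+\abs{v}^{2q}$, of $\abs{u}^q\abs{v}^q$, and of $\big(\abs{u}^q\abs{v}^{q-2}+\abs{u}^{q-2}\abs{v}^q\big)uv$ respectively, which is precisely the displayed formula once $u$ and $v$ are rewritten as $w+\theta$ and $\theta$. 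Every remaining monomial has total degree $\geq 3q$ and is absorbed into $E(\theta,w)$ of \cref{asymptoticFat00remainder}; its series converges uniformly on the stated neighbourhood since it arises from finitely many products of the uniformly convergent constituent series.

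I expect the only genuinely delicate point to be the algebraic bookkeeping of the degree-$q$ and degree-$2q$ pieces — in particular verifying the degree-$q$ cancellation and checking that the five contributions to each degree-$2q$ monomial collapse into the compact coefficients above. The grading observation is what makes this a finite, controlled computation rather than an intractable one; once it is in place, the degree counting for the remainder and the uniform convergence follow immediately.
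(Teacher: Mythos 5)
Your proposal is correct and takes essentially the same approach as the paper, whose entire proof is the one-line instruction to substitute the expansions of \cref{lemtriexpansion} into the expression \cref{Fwtheta} for $\mathcal{J}_R$. Your mod-$q$ grading, the degree-$q$ cancellation, and the degree-$2q$ coefficients $\tfrac{q-1}{q^2(q+1)}$, $\tfrac{2(q-1)^2}{q^2}$, $-\tfrac{q-1}{q+1}$ all check out against the stated formula; you have simply made explicit the bookkeeping the paper leaves to the reader.
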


\begin{proof}
    The proof of this result involves substituting the expansions found in \cref{lemtriexpansion} into the expression \cref{Fwtheta} for $\mathcal{J}_R$. 
\end{proof}

To estimate the limit of $\J$,
\cref{asymptoticFat00} is not enough.
Indeed,
since $\J=\frac{r^3t}{w^4}\J_R(\theta,w)$,
we need to consider the limit of $\frac{\J_R(\theta,w)}{w^4}$ as $(\theta,w)\to (0,0)$.
If a sequence of points $(\theta,w)$ converges to $(0,0)$ with $|w|=o(|\theta|)$,
then we cannot control the error term $\frac{E_3}{w^4}=O\left(\frac{|\theta|^{3q}}{w^4}\right)$.

It is quite hard to compute the higher order terms of the fractional Taylor series of $q$-trigonometric functions.
Hence, we will use the Taylor series expansion of $\J_R$ at $(\theta,0)$.
In the following lemma,
we can see that the radius of convergence of the Taylor expansion around $(\theta,0)$ can cover the above cases.

\begin{lemma}\label{lem:case1}
    For $\theta\in \left(-\frac{\pi_q}{4},\frac{\pi_q}{4}\right) \setminus \{0\}$,
    the radius of convergence of the Taylor series at $(\theta,0)$
    \[
    \J_R(\theta,w) = \frac{w^4}{12} (q-1)^2 \abs{\cos_q(\theta)\sin_q(\theta)}^{2q-4} + \sum_{k=5}^\infty\frac{1}{k!}\partial_w^k\J_R(\theta,0) w^k,
    \]
    is at least $|\theta|/(q+1)$.
\end{lemma}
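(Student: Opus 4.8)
The plan is to fix $\theta\in\ointerval{-\frac{\pi_q}{4}}{\frac{\pi_q}{4}}$ and regard $w\mapsto\J_R(\theta,w)$ as a one-variable function, estimating the radius of convergence of its Taylor series at $w=0$ by a root test applied to the coefficients $\frac{1}{k!}\partial_w^k\J_R(\theta,0)$. This is the exact analogue of the argument already carried out for $\sin_p$ in \cref{radiusconvsinp}, now transposed from $p$ to $q$. The case $\theta=0$ is vacuous, since the asserted bound $\abs{\theta}/(q+1)$ is then $0$; so we may assume $\theta\neq0$, in which case $\theta\notin\frac{\pi_q}{2}\Z$ and the $q$-trigonometric functions entering \cref{Fwtheta} are genuinely analytic at the base point $w+\theta=\theta$.

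The key algebraic input is the closed formula \cref{formuladiffJR} obtained in the proof of \cref{regularityF}, which expresses
\[
\partial_w^k\J_R(\theta,0)=(k-1)\big(\cos_q^{(k)}\theta\,\sin_q'\theta-\sin_q^{(k)}\theta\,\cos_q'\theta\big)+\cos_q^{(k+1)}\theta\,\sin_q\theta-\sin_q^{(k+1)}\theta\,\cos_q\theta
\]
purely in terms of the $k$-th and $(k+1)$-th derivatives of $\sin_q$ and $\cos_q$ at $\theta$, the quantities $\sin_q\theta,\cos_q\theta,\sin_q'\theta,\cos_q'\theta$ entering only as fixed ($k$-independent) constants. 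I would then bound $\abs{\sin_q^{(n)}\theta}$ and $\abs{\cos_q^{(n)}\theta}$ exactly as in \cref{radiusconvsinp}: writing each $n$-th derivative as a sum $\sum_{l}a_{n,l}\abs{\cos_q\theta}^{(n-l)q-n}\abs{\sin_q\theta}^{lq-n+1}$, estimating $\abs{a_{n,l}}\leq(n-2)!(q+1)^n$, and using $\abs{\sin_q\theta}<\abs{\cos_q\theta}$ (valid because $\abs{\theta}<\frac{\pi_q}{4}$) to single out the dominant summand.

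Substituting these bounds into the formula above, only upper bounds are needed, so any cancellation in \cref{formuladiffJR} can only help. The dominant contribution comes from the $(k+1)$-th derivative terms $\cos_q^{(k+1)}\theta\,\sin_q\theta-\sin_q^{(k+1)}\theta\,\cos_q\theta$, while the factor $(k-1)$ and the passage from order $k+1$ to order $k$ of differentiation contribute only subexponential factors whose $k$-th roots tend to $1$. The root test then yields
\[
\limsup_{k\to\infty}\sqrt[k]{\abs{\tfrac{1}{k!}\partial_w^k\J_R(\theta,0)}}\leq(q+1)\,\frac{\abs{\cos_q\theta}^{q-1}}{\abs{\sin_q\theta}},
\]
so the radius of convergence is at least the reciprocal of the right-hand side. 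The argument is closed by the same convexity observation as in \cref{radiusconvsinp}: since $\tan_q\theta:=\sin_q\theta/(\abs{\cos_q\theta}^{q-1}\sgn\cos_q\theta)$ is convex on $\rinterval{0}{\frac{\pi_q}{4}}$ with tangent line at the origin equal to the identity, one has $\abs{\theta}\leq\abs{\sin_q\theta}/\abs{\cos_q\theta}^{q-1}$, whence $(q+1)\abs{\cos_q\theta}^{q-1}/\abs{\sin_q\theta}\leq(q+1)/\abs{\theta}$ and the radius of convergence is at least $\abs{\theta}/(q+1)$.

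The main obstacle I anticipate is bookkeeping rather than conceptual: one must verify that all $k$-dependent prefactors — the $(k-1)$, the shift $k\mapsto k+1$ in the order of differentiation, and the binomial factor hidden in the derivative bounds — drop out in the limit of the $k$-th root, and that the maximal summand identified for $\sin_q^{(n)}$ and $\cos_q^{(n)}$ is unaffected by multiplication by the fixed constants $\sin_q\theta,\cos_q\theta$, so that the exponential growth rate is genuinely $(q+1)\abs{\cos_q\theta}^{q-1}/\abs{\sin_q\theta}$ and no larger. Because only a lower bound on the radius of convergence is required, it suffices to control every term from above, which keeps the estimates uniform and spares us from computing the coefficients $a_{n,l}$ explicitly.
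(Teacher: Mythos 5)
Your proposal is correct and is essentially the paper's own argument, only written out in full: the paper's proof likewise reduces, via \cref{formuladiffJR}, the radius of convergence of $w \mapsto \J_R(\theta,w)$ at $w=0$ to that of the $q$-trigonometric functions at $\theta$, and then simply invokes the bound of \cref{radiusconvsinp} (transposed from $p$ to $q$). The bookkeeping you spell out --- the subexponential prefactors $(k-1)$ and the shift $k \mapsto k+1$ washing out in the root test, and the $\tan_q$-convexity step giving $\abs{\theta} \leq \abs{\sin_q\theta}/\abs{\cos_q\theta}^{q-1}$ --- is exactly the content the paper leaves implicit in that citation.
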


\begin{proof}

  From the formula \cref{formuladiffJR},
  the radius of convergence of $w \mapsto \J_R(\theta, w)$ is equal to that of $q$-trigonometric functions,
  for which a bound is given in \cref{radiusconvsinp}.
\end{proof}

With these expansions, we are able to compute some limits of $\mathcal{J}(r, \theta, w)$ as points approach $\mathcal{S}_0$. The techniques used in the proof will also be used later on in \cref{sec:geoddim}, when dealing with the geodesic dimension.

\begin{proposition}\label{prop:jacobianons0}
For $(r_0,\theta_0,0)\in \mathcal{S}_0$,
we have
\[\lim_{(r,\theta,w)\to(r_0,\theta_0,0)}\J(r,\theta,w)=\begin{cases}
    \frac{r_0^3}{12} & (p=2),\\
    0 & (p<2).
\end{cases}\]
If $p>2$,
then for almost all directions $(u,v)\in \mathds{S}
^1 \subseteq \R^2$,
we have
\[\liminf_{s\to 0}\J(r_0,\theta_0+su,sv)=+\infty.\]
\end{proposition}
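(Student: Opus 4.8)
The plan is to play the two series representations of the reduced Jacobian against each other: the fractional expansion at the origin (\cref{asymptoticFat00}) when $w$ is not much smaller than $\theta$, and the integer Taylor expansion at $(\theta,0)$ (\cref{lem:case1}) when $\theta$ dominates $w$. By the symmetries \cref{jacsymmetry} it suffices to treat $\theta_0=0$, and since $\mathcal{S}_0$ is independent of $t$ we take $t=1$, so that $\J(r,\theta,w)=\frac{r^3}{w^4}\J_R(\theta,w)$ with $r\to r_0$. The case $p=2$ is immediate: then $q=2$, and the explicit expression recorded in the remark after \cref{regularityF} shows that $\J_R$ depends on $w$ alone, with $\J_R(\theta,w)/w^4\to\frac1{12}$; hence the limit is $r_0^3/12$.

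For $p\in\ointerval{1}{2}$, so $q>2$, I would split a punctured neighbourhood of $(0,0)$ along the threshold $2(q+1)\abs{w}=\abs{\theta}$. Where $2(q+1)\abs{w}\geq\abs{\theta}$, both $\abs{\theta}$ and $\abs{w+\theta}$ are $O(\abs{w})$, so \cref{asymptoticFat00} gives $\J_R(\theta,w)=O(\abs{w}^{2q})$ and therefore $\J=O(\abs{w}^{2q-4})\to0$ because $2q-4>0$. Where $2(q+1)\abs{w}<\abs{\theta}$, the point $w$ lies within half the radius of convergence $\abs{\theta}/(q+1)$ of \cref{lem:case1}, so I would expand in $w$ there: the leading coefficient $\frac{(q-1)^2}{12}\abs{\cos_q\theta\sin_q\theta}^{2q-4}$ tends to $0$, and the tail divided by $w^4$ is controlled by the coefficient estimates discussed below. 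Both regimes yield limit $0$.

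For $p\in\ointerval{2}{\infty}$, so $q<2$, I would probe $\J$ along the rays $(\theta,w)=(su,sv)$. The reflection symmetry $\J(r,-\theta,-w)=\J(r,\theta,w)$ makes $s\mapsto\J(r,su,sv)$ even, so it is enough to let $s\to0^+$. Feeding the ray into \cref{asymptoticFat00}, every leading monomial is homogeneous of degree $2q$ while the remainder $E$ is $O(s^{3q})$, whence $\J_R(su,sv)=s^{2q}g(u,v)+O(s^{3q})$, with $g(u,v)$ the degree-$2q$ homogeneous leading part; consequently $\J(r,su,sv)=\frac{r^3}{v^4}\big(s^{2q-4}g(u,v)+O(s^{3q-4})\big)$. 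Since $2q-4<0$ and $s^{3q-4}=s^q s^{2q-4}$ is of strictly lower order, $\liminf_{s\to0^+}\J=+\infty$ whenever $v\neq0$ and $g(u,v)>0$. Finally $g>0$ almost everywhere: $g\geq0$ because $\J_R>0$ for $0<w<2\pi_q$ by \cref{positivityJac}; $g$ is real-analytic on each of the finitely many arcs of $\mathds{S}_2$ cut out by $\{u=0\}\cup\{v=0\}\cup\{u+v=0\}$; and on each arc $g$ is not identically zero, since its endpoint limits $g\to\frac{q-1}{q^2(q+1)}\abs{v}^{2q}$ (as $u\to0$) and $g\to\frac{q-1}{q^2(q+1)}\abs{u}^{2q}$ (as $u+v\to0$) are strictly positive, and every arc has such an endpoint. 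Hence $\{g=0\}\cup\{v=0\}$ is $\mathds{S}_2$-negligible, which gives the claim.

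The main obstacle is the tail estimate in the $\theta$-dominant regime when $p<2$. The bound from \cref{radiusconvsinp}, applied naively to the formula \cref{formuladiffJR} for $a_k(\theta):=\frac1{k!}\partial_w^k\J_R(\theta,0)$, only yields $\abs{a_k(\theta)}\lesssim C(q)^k\abs{\theta}^{q-k}$, which cannot overcome the factor $w^{-4}$ once $q<4$. The resolution is that the $\abs{\theta}^{q-k}$ contributions in \cref{formuladiffJR} cancel identically — this is forced by the genuine order $\abs{\theta}^{2q}$ of the Jacobian, already visible in the leading coefficient $\abs{\cos_q\theta\sin_q\theta}^{2q-4}$ — leaving the sharp bound $\abs{a_k(\theta)}\lesssim C(q)^k\abs{\theta}^{2q-k}$; equivalently, $\J_R(\theta,w)=\abs{\theta}^{2q}g(1,w/\theta)(1+o(1))$ uniformly for bounded slope $w/\theta$, with $g(1,\rho)/\rho^4$ bounded as $\rho\to0$. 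Granting this, summing the geometric tail gives $\J_R(\theta,w)/w^4\leq C\abs{\theta}^{2q-4}\to0$ throughout the regime, completing the $p<2$ argument. For $p>2$ the only delicate point is the almost-everywhere positivity of $g$, settled by the analyticity argument above.
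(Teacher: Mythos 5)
Your proposal is correct in substance, and for the central case $p>2$ it is a genuine streamlining of the paper's argument. The paper proves the divergence by splitting a neighbourhood of the origin into three regimes ($\epsilon_q|w|\le|\theta|$, $|w|\le|\theta|\le\epsilon_q|w|$, $|\theta|\le|w|$), producing in each a one-variable function of the ratio ($P_1,P_2,P_3$ in \cref{series:case1}, \cref{series:case2}, \cref{series:case3}) and counting its zeros, which for $P_3$ requires a separate discussion of ``fractional'' analyticity. You observe that the statement only concerns limits along rays, on which the ratio $\theta/w$ is constant, so a single application of \cref{asymptoticFat00} suffices: the error $E=O(s^{3q})$ is automatically negligible against $s^{2q}g(u,v)$, and the regime $|w|=o(|\theta|)$ that forces the paper into its Case~1 simply never occurs along a ray with $v\neq0$. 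Your a.e.-positivity argument for $g$ is the paper's zero-counting idea transplanted to arcs of $\mathds{S}_2$, and it is correct; in particular you rightly avoid the endpoints $(\pm1,0)$, where $g$ does vanish (consistently with $\J_R(\theta,0)\equiv0$), by noting that every arc has an endpoint on $\{u=0\}\cup\{u+v=0\}$ where the limit of $g$ equals $\frac{q-1}{q^2(q+1)}>0$; and even if zeros accumulated at $(\pm1,0)$, a countable zero set is still negligible. What this buys is the elimination of the paper's Case~1 (hence of \cref{lem:case1}) and of the fractional-analyticity digression for $p>2$.

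The weak point is your $p<2$ argument, and it is partly self-inflicted, because the paper disposes of $p\le2$ in one line: for $q>2$ the exponential map is $C^1$ near $\mathcal{S}_0$ (regularity lemma at the end of \cref{sec:geometry}), so $\J$ is continuous there and the limit is read off from \cref{jacobianwis0}, which vanishes since $2q-4>0$. If you insist on the series route, the step you yourself flag as ``the main obstacle'' is a genuine gap as written: the cancellation of the $|\theta|^{-k}$ and $|\theta|^{q-k}$ contributions to $\frac{1}{k!}\partial_w^k\J_R(\theta,0)$ is \emph{not} ``forced by'' the order of the leading $k=4$ coefficient --- the size of one Taylor coefficient constrains nothing about the others. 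The cancellation is true, but it is established in the paper by explicit computation: in \cref{series:case1} one verifies $c^1_{k,0}=c^1_{k,1}=0$ for all $k\ge4$, so that each coefficient's fractional expansion starts at $l=2$, i.e.\ at $|\theta|^{2q-k}$, which is exactly the sharp bound you need. Either import that computation, or make your heuristic rigorous by comparing the two convergent expansions at fixed ratio $w/\theta$ and invoking uniqueness of the (fractional) power-series coefficients; as stated, the justification does not stand. Two smaller remarks: your threshold $2(q+1)|w|=|\theta|$ is safely inside the convergence radius guaranteed by \cref{lem:case1}, which is the right way to set up that regime; and for $p=2$ note that the closed formula in the remark after \cref{regularityF}, as printed, differs from \cref{DetJacpolar} by a factor of $4$ --- the value $\J_R/w^4\to\frac{1}{12}$ you use is the one consistent with \cref{jacobianwis0} and with $\J_R=2-2\cos w-w\sin w$.
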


Computing the full limits when $p>2$ would require a more detailed analysis, which we do not pursue here.

\begin{proof}
If $p\leq 2$,
then the Jacobian is continuous on these points and the limit is given by the limit $\theta\to \theta_0$ of \cref{jacobianwis0}.

From now on, we will thus assume that $p>2$.
Since $\J(r,\theta,w)=r^3\frac{\J_R(\theta,w)}{w^4}$,
we consider the limit of $\frac{\J_R(\theta,w)}{
w^4}$ as $(\theta,w)\to (0,0)$.
    Fixing a constant $\epsilon_q\in\ointerval{0}{q+1}$, we can classify points around $(0,0)$ into the following three cases.
    \begin{enumerate}[label=\normalfont\arabic*)]
    \item $|\epsilon_qw|\leq |\theta|$;
    \item $|w|\leq |\theta|\leq |\epsilon_qw|$;
    \item $|\theta|\leq |w|$.
\end{enumerate}
Up to a subsequence, we only need to consider the limit $(\theta,w)\to (0,0)$ in these three cases respectively.\smallskip

\noindent\textbf{Case 1} ($|\epsilon_qw|\leq |\theta|$)\textbf{.}\smallskip

In this case,
we can apply \cref{lem:case1}.
In addition,
suppose that $|\theta|$ is smaller than the radius of convergence of the $q$-trigonometric functions at $0$.
Combining \cref{lemtriexpansion} and \cref{formuladiffJR},
we can write
\begin{equation}
   \label{eq:expansiontheta1}
   \partial_w^k\J_R(\theta,0)=\sum_{l=2}^{\infty}c_{k,l}^1|\theta|^{lq-k}\tau_k, 
\end{equation}
where
\[\tau_k=\begin{cases}
   1 & (k:~\text{even}),\\
     \sgn(\theta) & (k:~\text{odd}),
\end{cases}\]
and where $c_{k,l}^1$ are constants explicitly given by
\begin{align*}
c_{k,l}^1=\sum_{n=0}^la_nb_{l-n}\{&(k-1)((l-n)q)_k(nq+1)-(k-1)(nq+1)_k(l-n)q\\
&+((l-n)q)_{k+1}-(nq+1)_{k+1}\}.
\end{align*}
In the formula above, we have used the descending product notation
\[
(\alpha)_k:=\alpha(\alpha-1)\cdots(\alpha-k+1).
\]
Notice that a priori the series start from $l=0$.
However, it can easily be checked that $c_{k,0}^1=c_{k,1}^1=0$ for all $k\geq 4$,
and $c_{4,2}^1=2(q-1)^2>0$.

For fixed $\theta$ and $w$ satisfying $|\epsilon_qw|\leq |\theta|$, we can substitute \cref{eq:expansiontheta1} in \cref{lem:case1} to obtain
\[
\J_R(\theta,w)=\sum_{k=4}^{\infty}\sum_{l=2}^{\infty}\frac{c_{k,l}^1}{k!}w^{k}|\theta|^{lq-k}\tau_k.
\]
We note that the double series is absolutely convergent in $k$ and in $l$ by Tonelli's theorem. Consequently,
we can exchange the sums and we get
\begin{equation}
\label{series:case1}
\begin{aligned}
\J_R(\theta,w)={}&w^4|\theta|^{2q-4}\sum_{k=4}^{\infty}\frac{c_{k,2}^1}{k!}\left(\frac{w}{\theta}\right)^{k-4}+\sum_{k=4}^{\infty}\sum_{l=3}^{\infty}\frac{c_{k,l}^1}{k!}w^{k}|\theta|^{lq-k}\tau_k\\
=:{}&w^4|\theta|^{2q-4}P_1\left(\frac{w}{\theta}\right)+E_1(\theta,w),
\end{aligned}
\end{equation}
where $P_1 : \interval{-\frac{1}{\epsilon_q}}{\frac{1}{\epsilon_q}} \to \R$ is a real analytic function,
and $E_1$ is an error term of order at most $w^4|\theta|^{3q-4}$.

By the positivity of Jacobian on regular points seen in \cref{prop:positivityJt},
the principal term is non-negative on this domain.
Moreover,
since $P_1$ is an analytic function on a closed interval,
there are (if they exist at all) finitely many zero points.
In particular,
the analytic function $P_1\left(\frac{w}{\theta}\right)$ is positive almost everywhere.

By the compactness of the domain $\interval{-\frac{1}{\epsilon_q}}{\frac{1}{\epsilon_q}}$,
we can assume that $P_1\left(\frac{w}{\theta}\right)$ converges to some non-negative number.
If the limit is positive,
we have
\[\frac{\J_R(\theta,w)}{w^4}\sim |\theta|^{2q-4}P_1\left(\frac{w}{\theta}\right)\to +\infty.\]
The Jacobian diverges to $+\infty$ almost surely in the sense of the statement,
since there are only finitely many zero points of $P_1$.\smallskip

\noindent\textbf{Case 2} ($|w|\leq |\theta|\leq |\epsilon_qw|$)\textbf{.}\smallskip

In this case, we apply \cref{asymptoticFat00}.
Combined with the binomial theorem
\[
|w+\theta|^\alpha=|\theta|^{\alpha}\sum_{k=0}^\infty\binom{\alpha}{k}\left(\frac{w}{\theta}\right)^k,
\]
we have
\begin{equation}\label{series:case2}
    \begin{aligned}
        \J_R(\theta,w)={}&w^4|\theta|^{2q-4}\sum_{k=4}^{\infty}c_k^2\left(\frac{w}{\theta}\right)^{k-4}+E(\theta,w)\\
        =:{}&w^4|\theta|^{2q-4}P_2\left(\frac{w}{\theta}\right)+E(\theta,w),
    \end{aligned}
\end{equation}
where the constants $c_k^2$ are explicitly written by
\[
    c_k^2= \frac{q-1}{q^2(q+1)}\left[\binom{2 q}{k}+\binom{0}{k}\right]+\frac{2(q-1)^2}{q^2}\binom{q}{k} -\frac{q-1}{q+1}\left[\binom{q+1}{k}+ \binom{q-1}{k}\right],
\]
where the function $P_2:[-1,1]\to\R$ is real analytic, and $E$ is given in \eqref{asymptoticFat00remainder}.
A priori the series starts from $k=0$,
however we can check that $c_k^2=0$ for $k=0,1,2,3$ and $c_4^2=2(q-1)^2 >0$.
Since $|\theta|\leq \epsilon_q|w|$,
the error term $E(\theta,w)$ is of order at most $|w|^{3q}$.
The rest of the arguments follow in the same way as in Case 1.\smallskip

\noindent\textbf{Case 3} ($|\theta|\leq |w|$)\textbf{.}\smallskip

We apply again \cref{asymptoticFat00}, together with the binomial series theorem
\[|w+\theta|^\alpha=|w|^{\alpha}\sum_{k=0}^\infty\binom{\alpha}{k}\left(\frac{\theta}{w}\right)^k.\]
Therefore, we have the following series representation
\begin{equation}
    \label{series:case3}
    \begin{aligned}
        \J_R(\theta,w)={}&|w|^{2q}\sum_{k=0}^{\infty}\Bigg[c_{k,0}^3+c_{k,2q}^3\left|\frac{\theta}{w}\right|^{2q}+c_{k,q}^3\left|\frac{\theta}{w}\right|\\
        &\hspace{60pt}+c_{k,q-1}^3\left|\frac{\theta}{w}\right|^{q-2}\frac{\theta}{w}+c_{k,q+1}^3\left|\frac{\theta}{w}\right|^{q}\frac{\theta}{w}\Bigg]\left(\frac{\theta}{w}\right)^k +E(\theta,w) \\
        =:{}&|w|^{2q}P_3\left(\frac{\theta}{w}\right)+E(\theta,w),
    \end{aligned}
\end{equation}
where $c_{k,\alpha}^3$ are constants given by
\begin{align*}
    &c_{k,0}^3=\frac{q-1}{q^2(q+1)}\binom{2 q}{k}, ~~~~ c_{k,2q}^3=\frac{q-1}{q^2(q+1)}\binom{0}{k},~~ ~~c_{k,q}^3=\frac{2(q-1)^2}{q^2}\binom{q}{k},\\
    &c_{k,q-1}^3=-\frac{q+1}{q-1}\binom{q+1}{k},~~~~c_{k,q+1}^3=-\frac{q+1}{q-1}\binom{q-1}{k},
\end{align*}
where the function $P_3:[-1,1]\to\R$ is a finite sum of fractional analytic functions (see the discussion after \cref{lemtriexpansion}),
and $E$ is given in \eqref{asymptoticFat00remainder}. Since $\J_R$ is nonnegative by \cref{positivityJac}, the function $P_3$ is also nonnegative on $\interval{-1}{1}$ (considering $|w|$ small enough). 
The only difference from cases 1 and 2 is that the role of analytic function of the leading term is replaced with $P_3$, which is a sum of fractional analytic functions. 
In cases 1 and 2,
we used the fact that the number of zero points of an analytic function on a closed interval is finite. 
We now explain that the same property holds also for $P_3$.
Indeed, $P_3$ is analytic (in the classical sense) on $\interval{-1}{1} \setminus \{0\}$,
so we only need to check that $0$ is not the accumulation point of the zero points of $P_3$.
Since the set of exponents
\[\{k,k+q-1,k+q,k+q+1,k+2q\}_{k=0,1,\dots}\]
is discrete in $\R$, we can rearrange it to be an increasing set of exponents and the desired property follows by adapting the proof of
the classical identity theorem for analytic functions.
This observation shows that the number of zero points of $P_3$ is finite.
The rest of the proof follows in the same way with cases 1 and 2.
\end{proof}

\subsection{Derivative of Jacobian at singular points}\label{sec:derivativejac}

As was observed in the previous section,
the Jacobian $\J^t$ is continuous and positive on the singular points $\mathcal{S}_1^t$. If $p>2$,
the singularity appears when we consider its derivative (more precisely, the derivative of the reduced Jacobian).

\begin{lemma}\label{regularitydF}
Fix $t\in(0,1]$ and $p>2$.
  Then there is a point $(\theta_0,w_0t)\in \{(\theta,wt)\mid wt+\theta\in\frac{\pi_q}{2}\Z,w\neq 0\}$ such that
   \[\partial_w\J_R(\theta,wt)\to+\infty\]
   as $(\theta,wt)\to (\theta_0,w_0t)$.
   
\end{lemma}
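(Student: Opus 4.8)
The plan is to read off the blow-up directly from the closed form \cref{dwFwtheta}, which I would write as a product
\[
\partial_w \J_R(\theta, w) = (q-1)\,\abs{\cos_q(w+\theta)\sin_q(w+\theta)}^{q-2}\,\J_R'(\theta, w),
\]
where $\J_R'(\theta, w)$ denotes the bracketed factor appearing in \cref{dwFwtheta}. Since $p > 2$ forces $q < 2$, the exponent $q-2$ is strictly negative, so whenever $w + \theta$ approaches a point of $\frac{\pi_q}{2}\mathds{Z}$ exactly one of $\cos_q(w+\theta), \sin_q(w+\theta)$ vanishes and the prefactor diverges to $+\infty$. The whole argument therefore reduces to exhibiting one corner on the singular set $\{wt + \theta \in \frac{\pi_q}{2}\mathds{Z},\, w \neq 0\}$ at which the continuous factor $\J_R'$ tends to a strictly positive limit; the product will then necessarily diverge to $+\infty$, and it will do so independently of the approach direction because each factor has a direction-independent limit.

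For the fixed $t > 0$ I would take $\theta_0 = \frac{\pi_q}{2} - a$ and $w_0 = a/t$, with $a \in \ointerval{0}{\frac{\pi_q}{2}}$ chosen arbitrarily, so that $w_0 t + \theta_0 = \frac{\pi_q}{2} \in \frac{\pi_q}{2}\mathds{Z}$ and $w_0 \neq 0$. As $\sin_q, \cos_q$ are continuous and $\sin_p(\cdot^\circ), \cos_p(\cdot^\circ)$ are continuous functions of the angle, $\J_R'$ is continuous, so along any approach its limit is simply $\J_R'(\theta_0, w_0 t)$. Evaluating the bracket at $wt + \theta = \frac{\pi_q}{2}$, where $\sin_q = 1$ and $\cos_q = 0$, leaves only $\cos_q(\theta_0) - (w_0 t)\sin_p(\theta_0^\circ)$; the reflection identities $\cos_q(\frac{\pi_q}{2} - a) = \sin_q(a)$ and $\sin_q(\frac{\pi_q}{2} - a) = \cos_q(a)$ then give
\[
\J_R'(\theta_0, w_0 t) = \sin_q(a) - a\,\cos_q(a)^{q-1} =: g(a).
\]

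It remains to check $g(a) > 0$ for $a \in \ointerval{0}{\frac{\pi_q}{2}}$, which is the crux of the argument: since the prefactor already diverges, I must rule out the indeterminate form $0 \cdot \infty$ by guaranteeing that the bracket does not itself vanish at the chosen corner. By \cref{lemtridiff} one has $\sin_q'(a) = \cos_q(a)^{q-1}$ on $\ointerval{0}{\frac{\pi_q}{2}}$, so $g(a) = \sin_q(a) - a\,\sin_q'(a)$ and hence $g'(a) = -a\,\sin_q''(a)$. Differentiating once more with \cref{lemtridiff} gives $\sin_q''(a) = -(q-1)\cos_q(a)^{q-2}\sin_q(a)^{q-1} < 0$, so $g'(a) = a(q-1)\cos_q(a)^{q-2}\sin_q(a)^{q-1} > 0$; combined with $g(0) = 0$ this yields $g(a) > 0$. (Equivalently, the point $(\theta_0, w_0 t)$ lies in $\interval{0}{\frac{\pi_q}{2}} \times \ointerval{0}{\pi_q}$, where the strict positivity of $\J_R'$ was already established in the proof of \cref{positivityJac}.) Putting the two factors together, $\partial_w \J_R(\theta, wt) \to (+\infty)\cdot g(a) = +\infty$ as $(\theta, wt) \to (\theta_0, w_0 t)$, which is the claim.

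The main obstacle is precisely this non-vanishing of $\J_R'$ at the corner: a careless choice of singular point — for instance one where the bracket vanishes to the same or higher order than the prefactor blows up — would give an indeterminate or even vanishing limit. Thus the selection of the corner $wt + \theta = \frac{\pi_q}{2}$ with $wt \in \ointerval{0}{\frac{\pi_q}{2}}$, together with the elementary sign estimate $g > 0$, is the substantive content; everything else is a direct reading of \cref{dwFwtheta} exploiting $q < 2$.
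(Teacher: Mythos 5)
Your proposal is correct and follows essentially the same route as the paper: both read off the product structure $\partial_w \J_R = (q-1)\abs{\cos_q(w+\theta)\sin_q(w+\theta)}^{q-2}\J_R'(\theta,w)$ from \cref{dwFwtheta}, note that the prefactor diverges to $+\infty$ on the singular set since $q-2<0$, and conclude via the strict positivity of $\J_R'$ established in the proof of \cref{positivityJac} on $\interval{0}{\frac{\pi_q}{2}}\times\ointerval{0}{\pi_q}$. Your explicit choice of corner and the self-contained verification that $g(a)=\sin_q(a)-a\cos_q(a)^{q-1}>0$ is a nice concrete instantiation, but it reproves a special case of what \cref{positivityJac} already provides, so it is a refinement of detail rather than a different argument.
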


\begin{proof}
With the change of coordinates $wt$ to $w$,
We only need to prove the case $t=1$.
   Recall that the derivative of the reduced Jacobian is written by
   \[\partial_w\J_R(\theta,w)=(q-1)|\cos_q(w+\theta)\sin_q(w+\theta)|^{q-2}\K(\theta,w),\]
   where $\K(\theta,w)$ is the continuous bounded function given in the proof of Lemma \ref{positivityJac}.
   Since $p>2$,
   the H\"older conjugate $q$ is less than $2$,
   and the first factor $|\cos_q(w+\theta)\sin_q(w+\theta)|^{q-2}$ diverges to $+\infty$.
   As we saw in the proof of \cref{positivityJac},
   the function $\K$ is positive and bounded on $[0,\frac{\pi_q}{2}] \times (0,\pi_q)$.
   This concludes the lemma.
\end{proof}

\begin{remark}
If $w > \pi_q$ or $w = 0$, then the differential of the Jacobian may either converge to a constant or diverge to $-\infty$, depending on the behavior of the function $\K$. However, we will not pursue this analysis as it is not necessary for this work.
\end{remark}

Similarly, we can observe that if $p < 2$, the derivative of the reduced Jacobian $\partial_w \J_R (\theta, wt)$ vanishes on the singular set.

    \begin{lemma}\label{regularitydF2}
        Fix $t\in(0,1]$ and $p<2$.
  Then for any point $(\theta_0,w_0t)\in \{(\theta,wt)\mid wt+\theta\in\frac{\pi_q}{2}\Z\}$, we have
   \[\partial_w\J_R(\theta,wt)\to 0\]
   as $(\theta,wt)\to (\theta_0,w_0t)$.
    \end{lemma}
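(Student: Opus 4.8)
The plan is to mirror the argument of \cref{regularitydF}, exploiting the factored form of the $w$-derivative of the reduced Jacobian. Just as there, I would first reduce to the case $t = 1$ by the change of variables $wt \mapsto w$, so that it suffices to show that $\partial_w \J_R(\theta, w) \to 0$ whenever $(\theta, w) \to (\theta_0, w_0)$ with $w_0 + \theta_0 \in \frac{\pi_q}{2}\Z$.

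The crux is the splitting recorded in \cref{dwFwtheta},
\[
\partial_w \J_R(\theta, w) = (q-1)\,\bigl|\cos_q(w+\theta)\sin_q(w+\theta)\bigr|^{q-2}\,\J_R'(\theta, w),
\]
where $\J_R'$ denotes the bracketed term. The whole content of the lemma is that, compared with the case $p > 2$, the roles of the two factors are reversed: now $q > 2$, so the exponent $q - 2$ is strictly positive and the prefactor $\bigl|\cos_q(w+\theta)\sin_q(w+\theta)\bigr|^{q-2}$ \emph{vanishes}, rather than diverges, as $w+\theta$ approaches $\frac{\pi_q}{2}\Z$ (where one of $\cos_q$, $\sin_q$ is zero).

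It then remains only to observe that $\J_R'$ stays bounded near the singular point, so that the vanishing prefactor drives the product to $0$. This is immediate, since $\J_R'(\theta, w)$ is a finite algebraic combination of $\sin_q(w+\theta)$, $\cos_q(w+\theta)$, $\sin_q\theta$, $\cos_q\theta$, $\sin_p(\theta^\circ)$ and $\cos_p(\theta^\circ)$ — all bounded in modulus by $1$ thanks to the $q$-trigonometric identity \cref{lemtri} — together with the factor $w$, which is bounded on any bounded neighbourhood of $(\theta_0, w_0)$. Hence $|\J_R'|$ is bounded there and
\[
\bigl|\partial_w \J_R(\theta, w)\bigr| \leq (q-1)\,\bigl|\cos_q(w+\theta)\sin_q(w+\theta)\bigr|^{q-2}\,\sup|\J_R'| \longrightarrow 0.
\]
I anticipate no genuine obstacle here, as the statement is essentially the mirror image of \cref{regularitydF}; the only point needing minor care is that the estimate must also cover the singular points with $w_0 = 0$ (the $\mathcal{S}_0$ points), but since $\J_R'$ is bounded there as well and $q - 2 > 0$, the same bound applies verbatim.
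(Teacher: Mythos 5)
Your proposal is correct and is essentially the argument the paper intends: the paper states \cref{regularitydF2} without a written proof, prefacing it with ``Similarly, we can observe that\dots'', meaning exactly the mirror of \cref{regularitydF} that you give — the factorisation \cref{dwFwtheta} with the prefactor $\abs{\cos_q(w+\theta)\sin_q(w+\theta)}^{q-2}$ now vanishing (since $q-2>0$ when $p<2$) against a bounded $\J_R'$. Your added remark that the bound on $\J_R'$ covers the $\mathcal{S}_0$ points as well is a correct and worthwhile precision, but does not change the approach.
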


In the case $p<2$,
we will need to study $\partial_w \J_R(\theta, w)$ in the neighbourhood of the singular point $(0,0)$. Around this point, we have a quantitative improved version of the previous lemma
\begin{equation}
\label{asymptoticdwFat00}
    \begin{aligned}
        \partial_w \mathcal{J}_R(\theta, w) ={}& \frac{2(q - 1)}{q (q + 1)} \abs{w + \theta}^{2 (q - 1)} (w + \theta) + \frac{2 (q - 1)^2}{q} \abs{w + \theta}^{q-2} (w + \theta) \abs{\theta}^q \\
        &  - \frac{q - 1}{q + 1} \left[(q+1)|w+\theta|^2+(q-1)|\theta|^2\right]|w+\theta|^{q-2}|\theta|^{q-2}\theta + \partial_w E(\theta,w).
    \end{aligned}
    \end{equation}
    This is obtained by simply taking the derivative of \cref{asymptoticFat00}, which is allowed since the convergence in \cref{asymptoticFat00} is uniform in $w$. We will use the leading term of \cref{asymptoticdwFat00} to obtain a lower bound on the curvature exponent (see the proof of \cref{thm:MCPp<2}).

\section{
Measure contraction property of the \texorpdfstring{$\ell^p$}{lp}-Heisenberg group}

We turn our attention to the validity of synthetic notions of curvature bounds in the $\ell^p$-Heisenberg group. The metric measure space $(\mathds{H}, \diff, \mathcal{L}^3)$ is formed by equipping the Heisenberg group with its $p$-sub-Finsler distance and its Haar measure, namely, the Lebesgue measure $\mathcal{L}^3$.

The main focus of this section will be on the measure contraction property $\mathsf{MCP}(K, N)$.

In the next proposition, we show that the $\mathsf{MCP}(0, N)$ is, in fact, equivalent to a specific inequality on the Jacobian determinant of the $\ell^p$-Heisenberg group. 

\begin{proposition}
    \label{MCPcharacterisation1}
    The $\ell^p$-Heisenberg group $(\mathds{H}, \diff, \mathcal{L}^3)$ satisfies the $\mathsf{MCP}(0, N)$ for $N \geq 1$ if and only if
    \begin{equation}
        \label{MCPJacobian}
        \J_R(\theta,w t) \geq t^{N - 1} \J_R(\theta,w),
    \end{equation}
    for all $(t, \theta,w) \in \interval{0}{1}\times \ointerval{0}{\tfrac{\pi_q}{2}} \times \ointerval{0}{2 \pi_q}  \setminus \left\{ (t, \theta,w) \mid w t + \theta \in \tfrac{\pi_q}{2} \mathds{Z} \right\}$.
\end{proposition}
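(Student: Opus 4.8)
The plan is to pass from the measure-theoretic formulation of the $\mathsf{MCP}(0,N)$ to the pointwise inequality \cref{MCPJacobian} through the exponential map and the area formula, and then to shrink the range of the parameters using the symmetries of the Jacobian. Since $p\in\ointerval{1}{\infty}$, \cref{cotinjdom} guarantees that $(\mathds{H},\diff,\mathcal{L}^3)$ has negligible cut locus, so by the discussion following \cref{MCPnegligible} the condition $\mathsf{MCP}(0,N)$ is equivalent to $\mathcal{L}^3(\Omega_t)\geq t^N\mathcal{L}^3(\Omega)$ for every Borel set $\Omega$ with $0<\mathcal{L}^3(\Omega)<+\infty$, every $t\in\interval{0}{1}$, and every base point. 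By left-invariance of both $\diff$ and $\mathcal{L}^3$, it suffices to take the base point to be the identity $e$.

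First I would fix $t\in\ointerval{0}{1}$ and set $\tilde\Omega:=(\exp_e)^{-1}(\Omega)\cap D$, so that $\Omega=\exp_e(\tilde\Omega)$ and, up to an $\mathcal{L}^3$-negligible set, $\Omega_t=\exp_e^t(\tilde\Omega)$; here I use that $D\subseteq D^t$ for $t\leq 1$, so that $\exp_e^t$ is a homeomorphism on $\tilde\Omega$ by \cref{cotinjdom}. On the open full-measure set $\mathcal{R}^t$ the map $\exp_e^t$ is a diffeomorphism with Jacobian determinant $\J^t=\frac{r^3 t}{w^4}\J_R(\theta,wt)$ by \cref{regularityF}, and by \cref{prop:positivityJt} this determinant is non-negative. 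The area formula then yields $\mathcal{L}^3(\Omega)=\int_{\tilde\Omega}\J\,\diff\mathcal{L}^3$ and $\mathcal{L}^3(\Omega_t)=\int_{\tilde\Omega}\J^t\,\diff\mathcal{L}^3$, so that $\mathsf{MCP}(0,N)$ becomes $\int_{\tilde\Omega}\J^t\geq t^N\int_{\tilde\Omega}\J$ for every admissible $\tilde\Omega\subseteq D$. A standard localisation argument (testing against the set where $\J^t-t^N\J$ is negative) shows this is equivalent to the pointwise bound $\J^t(r,\theta,w)\geq t^N\J(r,\theta,w)$ for $\mathcal{L}^3$-a.e.\ $(r,\theta,w)\in D$. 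Substituting the formula for $\J^t$ and $\J$ and cancelling the strictly positive factor $r^3 t/w^4$ (valid since $r>0$ and $w\neq 0$), this reduces to $\J_R(\theta,wt)\geq t^{N-1}\J_R(\theta,w)$ for a.e.\ $(\theta,w)$.

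To restrict to the fundamental domain $\ointerval{0}{\pi_q/2}\times\ointerval{0}{2\pi_q}$, I would invoke the symmetries \cref{jacsymmetry}, which translate into $\J_R$ being $\tfrac{\pi_q}{2}$-periodic in $\theta$ and invariant under $(\theta,w)\mapsto(-\theta,-w)$; together these recover every $(\theta,w)$ in the cotangent domain from one in the fundamental domain, so the inequality on the fundamental domain is equivalent to the inequality everywhere. Finally, since $\J_R$ is continuous on all of $\R^2$, the function $(t,\theta,w)\mapsto\J_R(\theta,wt)-t^{N-1}\J_R(\theta,w)$ is continuous; being non-negative on a dense set (the a.e.\ points), it is non-negative everywhere on the stated domain, giving the \enquote{only if} direction, while conversely the pointwise inequality on that domain immediately yields the a.e.\ inequality needed to run the argument backwards. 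This establishes both implications.

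The main obstacle is the justification of the area formula across the singular sets $\mathcal{S}_0$ and $\mathcal{S}_1^t$, where $\exp_e^t$ fails to be smooth and is, for $p>2$, only continuous on $\mathcal{S}_0$. The point to verify is that $\mathcal{S}_0\cup\mathcal{S}_1^t$ is $\mathcal{L}^3$-negligible in $D^t$ and that its image under the homeomorphism $\exp_e^t$ is $\mathcal{L}^3$-negligible in $\mathds{H}$, since this image lies on finitely many lower-dimensional loci (horizontal lines for $\mathcal{S}_0$ and the corner surfaces $\{wt+\theta\in\tfrac{\pi_q}{2}\mathds{Z}\}$ for $\mathcal{S}_1^t$). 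Granting this, all integral identities may be computed on $\mathcal{R}^t$ alone, where $\exp_e^t$ is a genuine diffeomorphism and \cref{prop:positivityJt} rules out any orientation or cancellation issues, and the remaining steps are routine.
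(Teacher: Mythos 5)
Your proposal is correct and follows essentially the same route as the paper: reduce $\mathsf{MCP}(0,N)$ to the integral inequality through the exponential map and the change-of-variables formula (dealing with the singular sets $\mathcal{S}_0\cup\mathcal{S}_1^t$ by negligibility), then pass to the fundamental domain via the symmetries \cref{jacsymmetry}. The only real difference is how the pointwise inequality is extracted from the integral one: you test against the set where $\J^t-t^N\J$ would be negative and upgrade from a.e.\ to everywhere by continuity of $\J_R$, whereas the paper applies Lebesgue's differentiation theorem to small balls around $\exp_e(r,\theta,w)$; both are standard and equally valid.
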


\begin{proof}
    Inequality \cref{def:MCP} with $K = 0$ reduces to
    \begin{equation}
    \label{MCPK=0}
    \mathfrak{m}(\Omega_t) \geq t^N \mathfrak{m}(\Omega),
    \end{equation}
    for all $g_0 \in \mathds{H}$, all $\Omega \subseteq \mathds{H}$, and all $t \in \interval{0}{1}$, where $\Omega_t$ denotes the $t$-intermediate set of $\Omega$ from $g_0$. Since $(\mathds{H}, \diff, \mathcal{L}^3)$ is invariant by left-translation, it is enough to consider $t$-intermediate sets from the identity, i.e. $g_0 = e$.
    
    We first show that \cref{MCPJacobian} implies \cref{MCPK=0}. From the symmetry properties of the Jacobian determinant \cref{jacsymmetry}, the inequality \cref{MCPJacobian} is equivalent to
    \[
    \abs{\mathcal{J}^t(r, \theta,w)} \geq t^N \abs{\mathcal{J}(r, \theta,w)},
    \]
    for all $(t, r, \theta,w) \in \interval{0}{1} \times \R_{>0}\times \ointerval{0}{2 \pi_q} \times \ointerval{-2 \pi_q}{2 \pi_q} \setminus \left\{ (t, r, \theta,w) \mid w t + \theta \in \tfrac{\pi_q}{2} \mathds{Z} \right\}=:\mathcal{D}$.

    By \cref{cotinjdom}, the measure of a Borel set $\Omega \subseteq \mathds{H}$ coincides with the measure of $\tilde{\Omega} := \Omega \setminus \exp_e^1(\mathcal{R}^1) = \Omega \setminus (\{ x = y = 0\} \cup \exp_e^1(\mathcal{S}_0) \cup \exp^1_e(\mathcal{S}_1^1))$. Moreover, the set $\tilde{\Omega}$ is equal to $\exp_e(A)$ for some $A \subseteq D = D^1$. Fix $t \in \interval{0}{1}$ and write
    \[
    \tilde{A} := A \setminus \exp_e^t(\mathcal{R}^t) = A \setminus \left\{(r,  \theta,w) \mid w t + \theta \in \tfrac{\pi_q}{2} \mathds{Z}\right\}
    \] 
    Then by a change of variables, we find that
    \begin{align*}
        \mathfrak{m}(\Omega_t) ={}& \mathfrak{m}(\tilde{\Omega}_t) = \mathfrak{m}(\exp_e^t(\tilde{A})) = \int_{\exp_e^t(\tilde{A})} \diff \mathfrak{m} = \int_{\tilde{A}} \diff (\exp_e^t)^* \mathfrak{m} = \int_{\tilde{A}} \abs{\mathcal{J}^t(r, \theta,w)}  \diff r \diff \theta \diff w \\
        \geq{}& t^N \int_{\tilde{A}} \abs{\mathcal{J}(r, \theta, w)}  \diff r \diff \theta \diff w = t^N \mathfrak{m}(\exp_e^1(\tilde{A})) = t^N \mathfrak{m}(\tilde{\Omega}) = t^N \mathfrak{m}(\Omega).
     \end{align*}
    
    We now prove the converse implication. For any $(t,r,\theta,w) \in \mathcal{D}$, we find by Lebesgue's differentiability theorem and using the inequality \cref{MCPK=0} that for $g_1 := \exp_e(r, \theta, w)$,
    \[
    \mathcal{J}^t(r, \theta, w) = \lim_{\epsilon \to 0} \frac{\mathfrak{m}(\exp_e^t(B(g_1, \epsilon)))}{\mathfrak{m}(B(g_1, \epsilon))} 
    \geq t^N \lim_{\epsilon \to 0} \frac{\mathfrak{m}(\exp_e^1(B(g_1, \epsilon))}{\mathfrak{m}(B(g_1, \epsilon))}
    = t^N \mathcal{J}(r, \theta, w).
    \]
\end{proof}

We now wish to express the inequality \cref{MCPJacobian} characterising the measure-contraction property as a differential inequality.

\begin{proposition}\label{proposition:mcp}
    The $\ell^p$-Heisenberg group $(\mathds{H}, \diff, \mathcal{L}^3)$ satisfies the $\mathsf{MCP}(0, N)$ for $N \geq 1$ if and only if for all $(\theta, w) \in   \ointerval{0}{\frac{\pi_q}{2}} \times \ointerval{0}{2 \pi_q}\setminus \{w + \theta \in \frac{\pi_q}{2} \mathds{Z}\}$, it holds
    \begin{equation}
        \label{MCPdifJac}
        N_p(\theta, w) := 1 + \frac{w \partial_w \J_R(\theta,w)}{\J_R(\theta,w)} \leq N.
    \end{equation}
\end{proposition}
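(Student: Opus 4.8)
The plan is to start from the functional reformulation already established in \cref{MCPcharacterisation1}, namely that $\mathsf{MCP}(0,N)$ is equivalent to the scaling inequality $\J_R(\theta, wt) \geq t^{N-1}\J_R(\theta, w)$ for all admissible $(t,\theta,w)$, and to convert it into the pointwise inequality \cref{MCPdifJac} by a logarithmic change of variables. First I would fix $\theta \in (0, \tfrac{\pi_q}{2})$ and, using the positivity of $\J_R$ on $(0, 2\pi_q)$ guaranteed by \cref{positivityJac}, introduce
\[
g_\theta(w) := \log \J_R(\theta, w) - (N-1)\log w, \qquad w \in (0, 2\pi_q).
\]
Setting $a := wt$ and $b := w$, the pair $(a,b)$ ranges over all $0 < a \leq b < 2\pi_q$ as $t$ runs through $(0,1]$ and $w$ through $(0,2\pi_q)$, and taking logarithms turns the scaling inequality into precisely $g_\theta(a) \geq g_\theta(b)$. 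Hence, for each $\theta$, the scaling inequality holds for all $t$ and $w$ if and only if $g_\theta$ is non-increasing on $(0, 2\pi_q)$.

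Next I would differentiate. On the complement of the singular set $\{w + \theta \in \tfrac{\pi_q}{2}\mathds{Z}\}$ the map $\J_R(\theta, \cdot)$ is smooth, so $g_\theta$ is differentiable there with
\[
g_\theta'(w) = \frac{\partial_w \J_R(\theta, w)}{\J_R(\theta, w)} - \frac{N-1}{w}.
\]
Multiplying by $w > 0$ shows that $g_\theta'(w) \leq 0$ is equivalent to $1 + \frac{w\,\partial_w\J_R(\theta,w)}{\J_R(\theta,w)} \leq N$, that is, to $N(\theta, w) \leq N$. This already yields one direction: if $\mathsf{MCP}(0,N)$ holds, then $g_\theta$ is non-increasing, so $g_\theta' \leq 0$ at every point where it is defined, which is exactly the assertion of \cref{MCPdifJac}.

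The converse is where the only real care is needed, and it is the step I expect to be the main obstacle: one must upgrade the pointwise sign condition $g_\theta' \leq 0$, which is only known off the singular set, to genuine monotonicity of $g_\theta$. The key observation is that $\J_R(\theta,\cdot)$, and hence $g_\theta$, is continuous on all of $(0, 2\pi_q)$, while for fixed $\theta$ the singular set $\{w : w + \theta \in \tfrac{\pi_q}{2}\mathds{Z}\}$ reduces to finitely many values $w_1 < \dots < w_k$. On each open subinterval $(w_i, w_{i+1})$ the function $g_\theta$ is differentiable with $g_\theta' \leq 0$, hence non-increasing, and by continuity it remains non-increasing on the closure $[w_i, w_{i+1}]$; gluing these estimates across the shared endpoints gives that $g_\theta$ is non-increasing on the whole interval. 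Continuity of $\J_R$ is exactly what legitimises this gluing, which matters because at the singular points the derivative $\partial_w \J_R$ may blow up (see \cref{regularitydF}). Tracing the resulting chain of equivalences back through \cref{MCPcharacterisation1} then completes the proof.
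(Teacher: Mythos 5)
Your proposal is correct and follows essentially the same route as the paper: both start from the scaling inequality of \cref{MCPcharacterisation1}, convert it into the pointwise differential inequality \cref{MCPdifJac} via the logarithm of $\J_R$, and overcome the failure of differentiability on $\{w + \theta \in \tfrac{\pi_q}{2}\Z\}$ by exploiting the continuity of $\J_R$ to glue the estimate across the singular points. Your monotonicity formulation in terms of $g_\theta(w) = \log\J_R(\theta,w) - (N-1)\log w$ is simply a cleaner packaging of the paper's Gr\"onwall-type integral argument within each connected component and its explicit limiting procedure across components; the mathematical content is identical.
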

    
\begin{proof}
    We are going to argue from \cref{MCPJacobian} to \cref{MCPdifJac}, and carefully justify that each steps are actual logical equivalences. We wish to reduce \cref{MCPJacobian} to a differential inequality, which is basically based on Grönwall's lemma. However, as $\J_R$ is not always differentiable at points $(\theta,w)$ such that $w + \theta \in \tfrac{\pi_q}{2} \mathds{Z}$ (more specifically this happens when $p > 2$), we need to proceed cautiously. 
    
    Let $(\theta,w)$ be in one of the five disjoint open connected components of
    \[
     \left(0,\frac{\pi_q}{2}\right)\times\ointerval{0}{2 \pi_q} \setminus \left\{w + \theta \in \frac{\pi_q}{2} \mathds{Z}\right\}=:\bigcup_{k=1}^5 C_k,
    \]
    that we denote by $C_k$, $k = 1, \dots, 5$ (see \cref{fig:my_label}). The map $z \mapsto \J_R(\theta,z)$ is smooth for $z \in \interval{w t}{w}$ if $(\theta,wt)$ lies in the same connected component same as $(\theta,w)$, that is if
    \[
    wt  \in I_{\theta,w} := \left( \max\left(0, k \frac{\pi_q}{2} - \theta\right),w\right).
    \] 
    By using the fundamental theorem of calculus and the monotonicity of the logarithm,
    we can write the following.
    \begin{equation}
        \label{MCPgronwall}
        \begin{aligned}
        &\J_R(\theta,wt)\geq t^{N-1}\J_R(\theta,w) && \text{ for all }(\theta,w)\in \bigcup_{k=1}^5 C_k\text{ and }t\in[0,1]\\
        \Rightarrow \ &\J_R(\theta,wt)\geq t^{N-1}\J_R(\theta,w) && \text{ for all }k,~(\theta,w)\in C_k\text{ and }wt\in I_{\theta,w}\\
        \Leftrightarrow \ &\log \J_R(\theta,w)-\log \J_R(\theta,wt)\leq -(N-1)\log t && \text{ for all }k,~(\theta,w)\in C_k\text{ and }wt\in I_{\theta,w}\\
        \Leftrightarrow \ &
        \int^w_{w t} \frac{\diff }{\diff z} \log \J_R( \theta,z) \diff z \leq (N - 1) \int^w_{w t} \frac{\diff}{\diff z} \log z \diff z && \text{ for all }k,~(\theta,w)\in C_k\text{ and }wt\in I_{\theta,w}\\
        \Leftrightarrow \ &
    \frac{\partial_z \J_R(\theta,z)}{\J_R(\theta,z)} \leq (N - 1) \frac{1}{z}, &&\text{ for all } k,~(\theta,w)\in C_k\text{ and } z \in I_{\theta,w}\\
    \Leftrightarrow \ &
    \frac{\partial_z \J_R(\theta,z)}{\J_R(\theta,z)} \leq (N - 1) \frac{1}{z} &&\text{ for all }k,~(\theta,z)\in C_k.
        \end{aligned}
    \end{equation}
    
    Since the choice of $k$ is arbitrary, we obtain
    \begin{equation}\label{ineq:eachCk}
    \frac{\partial_z \J_R(\theta,z)}{\J_R(\theta,z)} \leq (N - 1) \frac{1}{z} ~~\text{for all}~~(\theta,z)\in \bigcup _{k=1}^5C_k,
    \end{equation}
    and one implication \cref{MCPJacobian} $\Rightarrow$ \cref{MCPdifJac} holds.

    For the converse implication, we need to make sure that if \cref{ineq:eachCk} holds for all $(\theta,w) \in \bigcup_{k=1}^5C_k$, then \cref{MCPJacobian} also holds for all $(\theta,w)\in \bigcup_{k}C_k$ and $t\in[0,1]$.
    Note that the implication in the second line of \cref{MCPgronwall} is in fact an equivalence if $t\in[0,1]$ is such that $(\theta,w)$ and $(\theta, w t)$ are both in the same connected component $C_k$.
    Therefore, we only need to consider the case where $(\theta,w)$ and $(\theta,wt)$ are in different connected components.
    Let us assume without loss of generality that $t\in[0,1]$ and $(\theta, w) \in \bigcup_{k=1}^5C_k$ is such that $(\theta, w t) \in C_2$ and $(\theta, w) \in C_3$. Then, since $\J_R$ is continuous everywhere on $\R^2$ (see \cref{remark:JRcontinuous}), we have that
    \begin{align}
    \label{eq:continequ}
    \begin{split}
        \frac{\J_R(\theta,wt)}{\J_R( \theta,w)} ={}& \lim_{\substack{w_1 \to (\pi_q - \theta)^- \\ w_2 \to (\pi_q - \theta)^+}} \frac{\J_R( \theta, wt)}{\J_R(\theta, w_1)} \frac{\J_R(\theta, w_2)}{\J_R( \theta,w)} \\
        ={}& \lim_{\substack{w_1 \to (\pi_q - \theta)^- \\ w_2 \to (\pi_q - \theta)^+}} \frac{\J_R( \theta, w_1 (w t/w_1))}{\J_R( \theta, w_1)} \frac{\J_R(\theta,w(w_2/w))}{\J_R(\theta,w)} \\
        \geq{}& \lim_{\substack{w_1 \to (\pi_q - \theta)^- \\ w_2 \to (\pi_q - \theta)^+}} \left(\frac{w t}{w_1}\right)^{N-1} \left(\frac{w_2}{w}\right)^{N-1}  = t^{N-1}.
    \end{split}
    \end{align}
    The inequality in the last line follows from the fact that $( \theta, w_1 (w t/w_1))$ and $(\theta, w_1)$ (resp. $(\theta,w(w_2/w) )$ and $( \theta,w)$) are in the same connected component $C_2$ (resp. $C_3$).
\begin{figure}
    \centering
    \includegraphics[width=12cm]{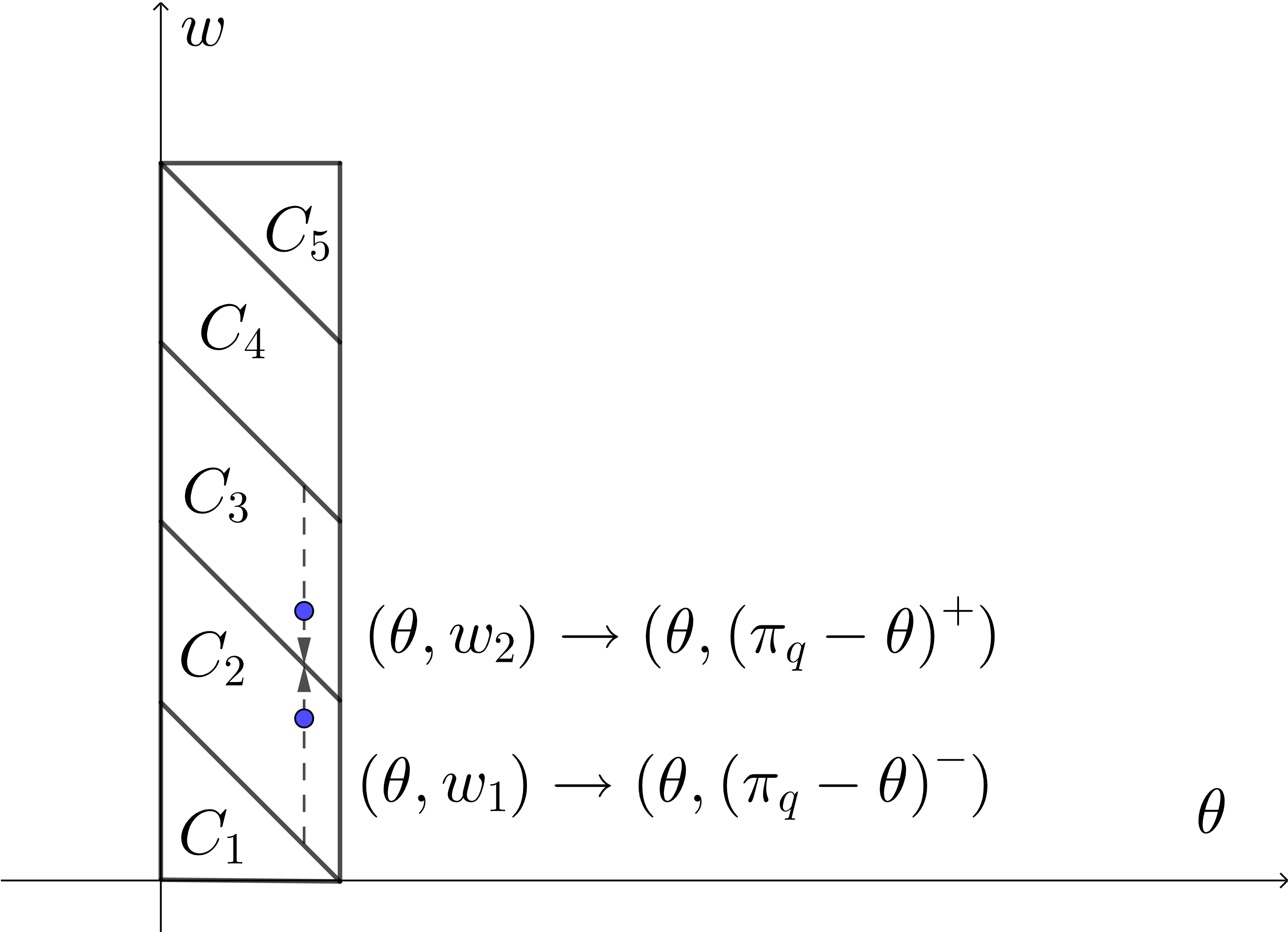}
    \caption{Depiction of the connected components $C_k$ for $k = 1, \dots, 5$ appearing the proof of \cref{proposition:mcp}. As $(\theta, w_1)$ (resp. $(\theta, w_2)$)  tends to the boundary between $C_2$ and $C_3$ from $C_2$ (resp. $C_3$), the continuity of $\J_R$ on  $\mathds{R}^2$ ensures the validity of \cref{eq:continequ}.
    }
    \label{fig:my_label}
\end{figure} 
\end{proof}

\begin{remark}
Based on the definitions introduced in this section, one can readily observe that
\[
N_p(\theta, w) = 1+\frac{w\partial_w\J_R(\theta, w)}{\J_R(\theta, w)} = \frac{\partial_t \J^t(r, \theta, w)}{\J^t(r, \theta, w)}\Big|_{t = 1}.
\]
Rewritten in this form, the relationship between the curvature exponent and the regularity of the geodesics becomes clearer. The curvature exponent may attain an arbitrarily large value if there exists a geodesic along which the differential $\partial_t\J^t$ diverges or the Jacobian $\J^t$ becomes zero. We have observed that, due to the nature of the regularity of the geodesics, there exists a covector for which $\partial_t\J^t$ reaches infinity when $p>2$, and $\J^t$ becomes zero when $p<2$. To determine whether the measure contraction property holds, we will estimate the supremum of $N_p(\theta, w)$ around these covectors.
\end{remark}

The characterisation in \cref{proposition:mcp} will now be used to prove the $\mathsf{MCP}$ for the $\ell^p$-Heisenberg group when $1 < p \leq 2$, and disprove it when $p > 2$.

\begin{theorem}\label{thm:MCPp>2}
     For $p > 2$,
     the $\ell^p$-Heisenberg group $(\mathds{H}, \diff, \mathcal{L}^3)$ does not satisfy the $\mathsf{MCP}(K, N)$ for any $K \in \mathds{R}$ and any $N \geq 1$.
\end{theorem}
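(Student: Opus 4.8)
The plan is first to reduce the statement to the single curvature value $K=0$, and then to contradict the differential characterisation of $\mathsf{MCP}(0,N)$ obtained in \cref{proposition:mcp} by exploiting the blow-up of $\partial_w\J_R$ at the corners of the geodesics recorded in \cref{regularitydF}.

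For the reduction, note that if $K\geq 0$ then $\mathsf{MCP}(K,N)$ already implies $\mathsf{MCP}(0,N)$ by the monotonicity property \cite[Lemma~2.4]{ohta2007} recalled above (take $K'=0\leq K$ and $N'=N$). For $K<0$ I would use the Heisenberg dilations $\delta_\alpha(x,y,z)=(\alpha x,\alpha y,\alpha^2 z)$, $\alpha>0$, which are homotheties of the $\ell^p$-sub-Finsler distance, $\diff(\delta_\alpha g,\delta_\alpha h)=\alpha\,\diff(g,h)$, and which scale the Lebesgue measure by the constant factor $\alpha^4$. Since the measure contraction property is unaffected by multiplying $\mathfrak m$ by a positive constant, $\delta_\alpha$ is a measure-scaling isometry, so $(\mathds H,\alpha\diff,\mathcal L^3)$ and $(\mathds H,\diff,\mathcal L^3)$ satisfy exactly the same measure contraction properties. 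On the other hand, rescaling the distance by $\alpha$ converts $\mathsf{MCP}(\tilde K,N)$ into $\mathsf{MCP}(\alpha^2\tilde K,N)$, because $\mathsf{s}_{\tilde K}(\alpha s)=\alpha\,\mathsf{s}_{\alpha^2\tilde K}(s)$. Combining these two observations shows that $(\mathds H,\diff,\mathcal L^3)$ satisfies $\mathsf{MCP}(K,N)$ if and only if it satisfies $\mathsf{MCP}(\alpha^2 K,N)$ for every $\alpha>0$; letting $\alpha\to 0^+$ and passing to the limit in the defining inequality (the distortion coefficients $\mathsf{s}_{\alpha^2 K}$ converge locally uniformly to the $K=0$ ones) yields $\mathsf{MCP}(0,N)$. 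Hence in all cases it suffices to prove that $(\mathds H,\diff,\mathcal L^3)$ fails $\mathsf{MCP}(0,N)$ for every $N\geq 1$.

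To establish this failure I would invoke \cref{proposition:mcp}, according to which $\mathsf{MCP}(0,N)$ holds precisely when $N(\theta,w)=1+w\,\partial_w\J_R(\theta,w)/\J_R(\theta,w)\leq N$ on the regular region. I claim $\sup N(\theta,w)=+\infty$. Indeed, since $p>2$, \cref{regularitydF} provides a corner covector $(\theta_0,w_0)$ with $w_0+\theta_0\in\tfrac{\pi_q}{2}\mathds Z$ and $0<w_0<\pi_q$, lying in $[0,\tfrac{\pi_q}{2}]\times\ointerval{0}{\pi_q}$, such that $\partial_w\J_R(\theta,w)\to+\infty$ as $(\theta,w)$ tends to $(\theta_0,w_0)$ through regular points. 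At such a corner $w_0\notin 2\pi_q\mathds Z$, so $\J_R(\theta_0,w_0)>0$ by \cref{positivityJac}; by continuity of $\J_R$ the denominator stays bounded away from $0$ in a neighbourhood, while $w\to w_0>0$. Consequently $N(\theta,w)\to+\infty$ along this approach, so \cref{MCPdifJac} cannot hold for any finite $N$, and $\mathsf{MCP}(0,N)$ fails for every $N\geq 1$.

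The main obstacle is really the bookkeeping of the reduction rather than the core mechanism, which is already packaged in \cref{regularitydF}: I must justify the limit $\alpha\to 0^+$ in the measure contraction inequality, and make sure that the approach to the singular point stays inside the regular region $\ointerval{0}{\frac{\pi_q}{2}}\times\ointerval{0}{2\pi_q}$ on which \cref{proposition:mcp} applies. It is also essential to choose the corner inside the band $w\in\ointerval{0}{\pi_q}$, where the divergence of $\partial_w\J_R$ carries the sign $+\infty$; for $w>\pi_q$ the derivative may instead tend to $-\infty$, which would not violate the upper bound in \cref{MCPdifJac}.
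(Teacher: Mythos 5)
Your proposal is correct and follows essentially the same route as the paper: the reduction to $K=0$ is exactly the scaling/monotonicity argument the paper borrows from the beginning of the proof of \cref{thm:MCPp<2}, and the failure of $\mathsf{MCP}(0,N)$ is obtained, as in the paper, by combining \cref{proposition:mcp} with the positivity of $\J_R$ from \cref{positivityJac} and the blow-up $\partial_w\J_R\to+\infty$ at a corner from \cref{regularitydF}. If anything, you spell out details the paper leaves implicit, notably the need to pick the corner in the band $w\in\ointerval{0}{\pi_q}$ where the divergence has the sign $+\infty$.
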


\begin{proof}
 Using the same argument as in the beginning of the proof of \cref{thm:MCPp<2}, it is enough to show that $\mathsf{MCP}(0, N)$ does not hold for any $N \geq 1$.
The failure of $\mathsf{MCP}(0,N)$ for a finite $N$ is a consequence of 
\[
    \limsup_{(\theta, w) \to (\theta_0, w_0)} 1 + \frac{\partial_w \J_R(\theta, w)}{\J_R(\theta, w)} = +\infty,
    \]
and \cref{proposition:mcp}, which follows from
\cref{positivityJac}, \cref{regularitydF}.
\end{proof}

Geometrically speaking, this means that when $p > 2$, the $t$-geodesic homothety shrinks faster than any polynomials around the $C^1$-corner of the sub-Finsler geodesics.

The case of $p < 2$ is addressed in the following result.
\begin{theorem}
    \label{thm:MCPp<2}
    When $p \in \linterval{1}{2}$, the $\ell^p$-Heisenberg group $(\mathds{H}, \diff, \mathcal{L}^3)$ has the $\mathsf{MCP}(K, N)$ if and only if $K \leq 0$ and $N \geq N_p$, where $N_p < +\infty$ is the curvature exponent for which the lower bound $N_p > 2 q + 1$ holds if $p \neq 2$, and $N_2 = 5$.
\end{theorem}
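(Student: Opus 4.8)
The plan is to reduce the two-parameter statement to a single supremum and then analyse that supremum near the critical ``horizontal'' geodesic. First I would dispose of the role of $K$: since $(\mathds{H},\diff,\mathcal{L}^3)$ is noncompact, Bonnet--Myers (\cite[Theorem 4.3]{ohta2007}) rules out $\mathsf{MCP}(K,N)$ for $K>0$, while the self-similarity of the group under the dilations $\delta_\lambda(x,y,z)=(\lambda x,\lambda y,\lambda^2 z)$---which rescale $\diff$ by $\lambda$---identifies $\mathsf{MCP}(K,N)$ on $(\mathds{H},\diff)$ with $\mathsf{MCP}(K\lambda^{-2},N)$ on the isometric copy $(\mathds{H},\lambda\diff)$. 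Letting $\lambda\to\infty$ and passing to the limit in the defining inequality yields $\mathsf{MCP}(0,N)$, so that $\mathsf{MCP}(K,N)$ with $K\le 0$ is equivalent to $\mathsf{MCP}(0,N)$ (the reverse implication being \cite[Lemma 2.4]{ohta2007}). By \cref{proposition:mcp}, $\mathsf{MCP}(0,N)$ holds iff $N\ge N_p:=\sup N(\theta,w)$ over the admissible domain, where $N(\theta,w)=1+w\,\partial_w\J_R(\theta,w)/\J_R(\theta,w)$; hence $N_p$ is the curvature exponent, and it remains to prove $N_p<\infty$, that $N_2=5$, and that $N_p>2q+1$ when $p\ne 2$.

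For finiteness I would bound $N(\theta,w)$ on the whole admissible domain. It is continuous on the regular region, so only the boundary and singular approaches can cause trouble. Near $w=2\pi_q$ the Jacobian vanishes (\cref{positivityJac}) but its $w$-derivative does not blow up positively, so $N$ stays bounded above (indeed $N\to-\infty$); near the singular lines $\{w+\theta\in\frac{\pi_q}{2}\mathds{Z}\}$ one has $\partial_w\J_R\to 0$ by \cref{regularitydF2} while $\J_R>0$, so $N\to 1$; and as $w\to 0$ with $\theta$ fixed, \cref{jacobianwis0} gives $N\to 5$. The only delicate regime is the corner $(\theta,w)\to(0,0)$, which I would handle with the three-case series expansions already produced in the proof of \cref{prop:jacobianons0}: there $\J_R$ equals $w^4|\theta|^{2q-4}$ or $|w|^{2q}$ times a positive (fractional-)analytic profile, so that $w\,\partial_w\log\J_R$ equals $4$ or $2q$ plus a bounded correction. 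Together these estimates give $N_p<\infty$.

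The heart of the matter is the strict lower bound $N_p>2q+1$ for $p\ne2$. The value $2q+1$ is the limit of $N$ along the horizontal line $\theta=0$: there \cref{Fwtheta} collapses to $\J_R(0,w)=2-\cos_q w-\cos_p(w^\circ)-w\sin_p(w^\circ)$, a series in $w^{kq}$ with leading term $\tfrac{q-1}{q^2(q+1)}w^{2q}$, whence $N(0,w)=2q+1+c_p\,w^q+o(w^q)$, the coefficient $c_p$ being fixed by the $w^{3q}$-term of $\J_R(0,w)$. I would compute that term from the full $p$-trigonometric expansion (\cref{lemtriexpansion} and \cite{paredes-uchiyama}) and show $c_p>0$ for all $p\ne 2$, with $c_2=0$. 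Then $N(0,w)>2q+1$ for small $w$, and since $\J_R$ is continuous and positive up to $\theta=0$, the same strict inequality persists for $\theta>0$ small, forcing $N_p>2q+1$. I expect this sign computation to be the main obstacle: it requires the third-order coefficients $a_3,b_3$ of the $p$-trig series together with a nontrivial algebraic simplification establishing positivity uniformly in $q$. A straight-ray analysis does not suffice, since the directional limits $N\to\nu(\theta/w)$ in fact \emph{decrease} away from $\theta=0$, so the excess over $2q+1$ is genuinely a subleading ($w^q$) effect along the horizontal line rather than a ray limit.

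Finally, the case $p=2$ is degenerate and serves as a consistency check: then $q=2$, the expression \cref{DetJacpolar} is independent of $\theta$, and $N(\theta,w)$ becomes a function of $w$ alone whose supremum is the classical value $N_2=5$ of \cite{juillet2009}. This matches $c_2=0$ and $2q+1=5$, and since $2q+1>5$ for $p<2$ it confirms that for $p\ne 2$ the curvature exponent is controlled by the horizontal-line corner rather than by the generic $w\to0$ limit.
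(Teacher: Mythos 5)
Your treatment of $K$ (Bonnet--Myers plus dilations), the reduction to $N_p=\sup N(\theta,w)$ via \cref{proposition:mcp}, and the finiteness discussion all track the paper. But the heart of your argument fails: the sign claim you defer as ``the main obstacle'' is not just unproven, it is false throughout the range $p\in\linterval{1}{2}$ of the theorem. Your formula $\J_R(0,w)=2-\cos_q w-\cos_p(w^\circ)-w\sin_p(w^\circ)$ is correct, and the coefficient $c_p$ can be computed in closed form: writing $\cos_q w=1+\sum_k b_kw^{kq}$ and $\sin_q w=w\bigl(1+\sum_k a_kw^{kq}\bigr)$ and using $\sin_q'=\abs{\cos_q}^{q-1}\sgn(\cos_q)$, $\cos_q'=-\abs{\sin_q}^{q-1}\sgn(\sin_q)$ to determine $a_3,b_3$, one finds
\begin{equation*}
\J_R(0,w)=\frac{q-1}{q^2(q+1)}\,w^{2q}-\frac{(q-1)(3q^2-4q-3)}{q^2(q+1)^2(2q+1)}\,w^{3q}+O(w^{4q}),
\end{equation*}
hence
\begin{equation*}
N(0,w)=2q+1-\frac{q(3q^2-4q-3)}{(q+1)(2q+1)}\,w^{q}+o(w^{q}).
\end{equation*}
Since $3q^2-4q-3>0$ for every $q\geq 2$, the correction is strictly \emph{negative} for all $p\in\linterval{1}{2}$: on the horizontal line, $N(0,w)$ approaches $2q+1$ from below, so no strict bound $N_p>2q+1$ can be extracted from the axis. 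Even your consistency check is wrong: at $p=q=2$ one has $\J_R(0,w)=2-2\cos w-w\sin w=\tfrac{w^4}{12}-\tfrac{w^6}{180}+\cdots$, so $N(0,w)=5-\tfrac{2}{15}w^2+O(w^4)$ and $c_2=-\tfrac{2}{15}\neq 0$.

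The correct mechanism --- and the one the paper uses --- is precisely the opposite of your closing assertion: the excess over $2q+1$ is a ray-limit effect, not an on-axis subleading effect. Along rays $w=s\theta$ the principal terms of \cref{asymptoticFat00} and \cref{asymptoticdwFat00} give $N(\theta,s\theta)\to\nu(s):=1+sP'(s)/P(s)$ with $P$ as in the paper's proof; one has $\nu(s)\to 2q+1$ as $s\to-\infty$, and the computation
\begin{equation*}
\partial_s\!\left(\frac{sP'(s)}{P(s)}\right)=\frac{2q\abs{1+s}^{4q+2}+\cdots}{(\cdots)^2}>0
\quad\text{for $s$ sufficiently negative}
\end{equation*}
shows the approach is from strictly above, so any finite $s_0$ in that regime gives $N_p\geq\nu(s_0)>2q+1$. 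In other words, the supremum near the corner is realized just off the axis, in the sector where $\theta$ and $w$ have opposite signs (your claim that the directional limits ``decrease away from $\theta=0$'' is wrong on that side), while on the axis itself $N$ dips below $2q+1$. As written, your proposal fails at exactly the step on which the whole theorem rests.
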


\begin{proof}
    The $\ell^p$-Heisenberg group can't satisfy the $\mathsf{MCP}(K, N)$ for $K > 0$ and $N > 1$, since spaces that do are bounded by Bonnet-Myers theorem (see \cite[Theorem 4.3]{ohta2007}). The measure contraction property $\mathsf{MCP}(0, N)$ implies the $\mathsf{MCP}(K, N)$ for all $K < 0$ by \cite[Lemma 2.4]{ohta2007}. Conversely, if it satisfies the $\mathsf{MCP}(K, N)$ with $K < 0$, then it will also satisfy the $\mathsf{MCP}(0, N)$. Indeed, should the metric measure space $(\mathds{H}, \diff, \mathcal{L}^3)$ satisfy the $\mathsf{MCP}(K, N)$ with $K < 0$, the scaled space $(\mathds{H}, \epsilon^{-1} \diff, \epsilon^{-4} \mathcal{L}^3)$ would satisfy the $\mathsf{MCP}(\epsilon^2 K, N)$ for all $\epsilon > 0$ by \cite[Lemma 2.4]{ohta2007}. Since $(\mathds{H}, \diff, \mathcal{L}^3)$ and $(\mathds{H}, \epsilon^{-1} \diff, \epsilon^{-4} \mathcal{L}^3)$ are isomorphic as metric measure spaces through the dilations of the Heisenberg group $\delta_\epsilon(x, y, z) := (\epsilon x, \epsilon y, \epsilon^2 z)$, it follows that $(\mathds{H}, \diff, \mathcal{L}^3)$ satisfies the $\mathsf{MCP}(\epsilon^2 K, N)$ for all $\epsilon > 0$, and thus also the $\mathsf{MCP}(0, N)$ by taking the limit $\epsilon \to 0^+$ in \cref{def:MCP}.

    It therefore remains to prove that the curvature exponent of $(\mathds{H}, \diff, \mathcal{L}^3)$ is finite and bounded from below by $2 q + 1$. From \cref{proposition:mcp} (and the symmetry of the Jacobian), it follows that the curvature exponent is given by
    \begin{equation}
        \label{eq:Np}
        N_p := \sup\left\{ 1 + \frac{w \partial_w \J_R(\theta, w)}{\J_R(\theta, w)} \ \Big| \ (\theta, w) \in \left(-\frac{\pi_q}{4},\frac{\pi_q}{4}\right) \times \ointerval{0}{2 \pi_q} \setminus \left\{w + \theta \in \frac{\pi_q}{2} \mathds{Z}\right\}  \right\}.
    \end{equation}
    When $p \leq 2$, the function $(\theta, w) \mapsto N_p(\theta, w)$ can be extended continuously to $\left(-\frac{\pi_q}{4},\frac{\pi_q}{4}\right) \times \ointerval{0}{2 \pi_q}$ by \cref{regularitydF}. We therefore only need to show that
    \begin{equation*}
        \limsup_{( \theta,w) \to ( \theta_0,0)} 1 + \frac{w \partial_w \J_R(\theta,w)}{\J_R(\theta,w)} < +\infty, \text{ for all } \theta_0 \in \left(-\frac{\pi_q}{4},\frac{\pi_q}{4}\right).
    \end{equation*}

    If $\theta_0\neq 0$,
    then \cref{lem:case1} 
    implies that 
    \[
    \lim_{(\theta, w) \to (\theta_0, 0)}1 + \frac{w \partial_w \J_R(\theta,w)}{\J_R(\theta,w)} = 5.
    \]

    Next we check that the limit superior of $N_p(\theta, w)$ as $( \theta,w) \to (0, 0)$ is finite and bounded from below by $2q+1$. We can assume, without loss of generality, that the ratio $w/\theta$ converges to $s \in \mathds{R} \cup \pm \infty$. Then, by using the principal term of \cref{asymptoticFat00} and \cref{asymptoticdwFat00}, and for $s \neq 0, \pm \infty$ (the case $s = 0, +\infty$ or $-\infty$ will be recovered as a limit), we have that
    \[
    N_p(\theta, w) \to 1 + \frac{s \partial_s P(s)}{P(s)}, \text{ as } (\theta, w) \to (0, 0),
    \]
    where $P(s)$ is the function defined by
    \[
    P(s) := \abs{1+s}^{2q} - q^2 \abs{1 + s}^{q}(1+s) + 2 (q^2 - 1) \abs{1+  s}^q - q^2 \abs{1 + s}^{q - 2}(1 + s) + 1,
    \]
    and thus also
    \begin{align*}
        N_p ={}& \sup\Big\{ N_p(\theta, w) \ \Big| \ (\theta, w) \in \left(-\frac{\pi_q}{4},\frac{\pi_q}{4}\right) \times \ointerval{0}{2 \pi_q} \setminus \left\{w + \theta \in \frac{\pi_q}{2} \mathds{Z}\right\}  \Big\}\\
        &\geq \limsup_{(\theta, w) \to (0, 0)} N_p(\theta, w) = \sup \left\{ 1 + \frac{s \partial_s P(s)}{P(s)} \ \mid s \in \R \setminus \{0\}\right\}.
    \end{align*}
    The graph of the map $s \mapsto 1 + \frac{s \partial_s P(s)}{P(s)}$ is illustrated in \cref{fig:graphsdPP}.

    \begin{figure}
    \centering
    {\scalebox{0.6}{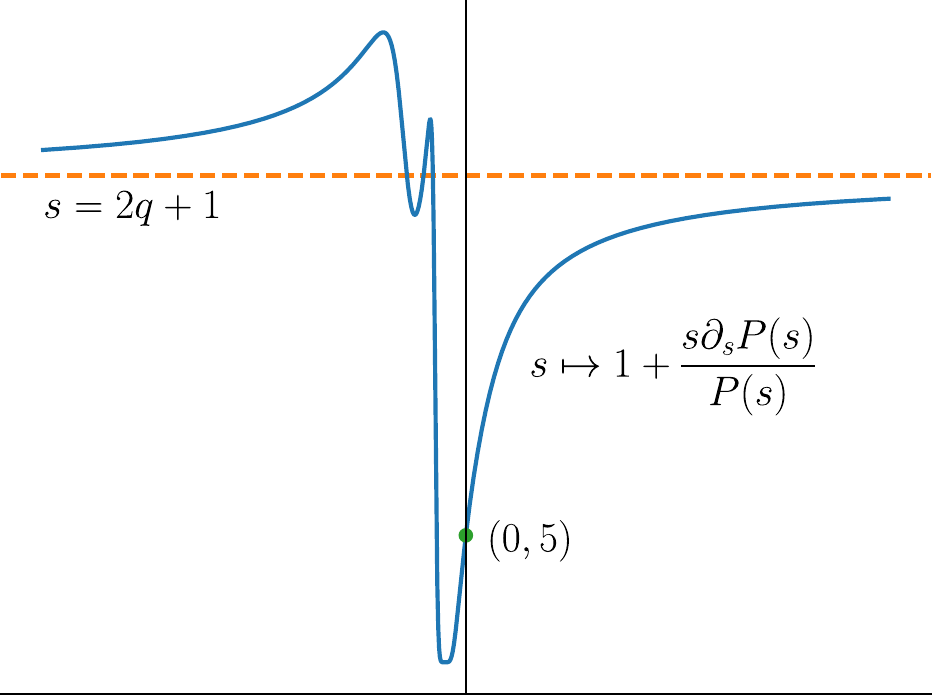}}
    \caption{The graph of $s \mapsto 1 + \frac{s \partial_s P(s)}{P(s)}$.}
    \label{fig:graphsdPP}
    \end{figure}

    We claim that $P(s)$ vanishes if and only if $s = 0$. Indeed, if $1 + s < 0$, then it is easy to see that all the terms of $P(s)$ are positive. The differential of $P(s)$ is
    \begin{align*}
    \partial_s P(s) :={}& 2 q \abs{1 + s}^{q - 2} \Big[\abs{1 + s}^q (1 + s) - 1 - (q + 1) s -\frac{q (q + 1)}{2} s^2 \Big] \\
    ={}& 2 q (1 + s)^{q - 2} \Big[(1 + s)^{q + 1} - 1 - (q + 1) s -\frac{q (q + 1)}{2} s^2 \Big], && \text{if } 1 + s > 0.
    \end{align*}
    Notice that $1 + (q + 1) s + \frac{q(q + 1)}{2} s^2$ is the first three term of the binomial expansion of $(1+s)^{q+1}$. Therefore we can see that the content in the bracket of $\partial_sP(s)$ is concave on $\ointerval{-1}{\infty}$, and $P(s)$ has a unique zero point at $s=0$. The following limits are easily obtained:
    \[
    \lim_{s\to 0}1+\frac{s\partial_sP(s)}{P(s)}=5, \ \lim_{s\to+\infty}1+\frac{s\partial_sP(s)}{P(s)}=\lim_{s\to-\infty}1+\frac{s\partial_sP(s)}{P(s)}=2q+1.
    \]
    The map $s \mapsto 1 + \frac{s \partial_s P(s)}{P(s)}$ is thus continuous for all $s \in \mathds{R}$ and bounded at infinity. 
    
    To establish that $N_p > 2q + 1$ when $p < 2$, it is sufficient to observe, following a lengthy computation, that
    \[
    \partial_s \left(\frac{s \partial_s P(s)}{P(s)}\right) = \frac{2 q \abs{1 + s}^{4 q + 2} + \cdots \text{ finitely many lower order terms} \cdots}{(\cdots)^2}.
    \]
    This expression is therefore strictly positive for $s$ small enough, which means that $1 + \frac{s \partial_s P(s)}{P(s)}$ converges to $2 q + 1$ as $s$ approaches $- \infty$ by values strictly greater than $2 q + 1$.
\end{proof}

\begin{remark}
    If $p=2$,
    then $P(s)=s^4$ and the ratio $1+\frac{s\partial_sP(s)}{P(s)}\equiv 5$. This shows that $N_2 \geq 5$. The equality is proven in \cite{juillet2009}.
\end{remark}

    From \cref{fig:graphsdPP}, one may be tempted to conjecture that the maximum of the function $s \mapsto 1 + s \partial_s P(s)/P(s)$ is the curvature exponent. 
    Graphically it appears otherwise.
    Assume that the curvature exponent $N_p^*$ is the the maximum of the function $1+\frac{\partial_s P(s)}{P(s)}$.
    Then by \cref{proposition:mcp},
    the function $(N_p^*-1)\J_R(\theta,w)-\partial_w\J_R(\theta,w)$ should be non-negative everywhere. However, for every $p < 2$, we are always able to find numerically a value of $\theta$ such that the graph of $w \mapsto (N_p^*-1)\J_R(\theta,w)-\partial_w\J_R(\theta,w)$ is negative, see \cref{fig:logderneg}. 
    This implies that the maximum of the function $1+\frac{\partial_sP(s)}{P(s)}$ is not the curvature exponent,
    and the optimal $N$ in the $\mathsf{MCP}(0, N)$ is to be found in the interior of the domain of the exponential map.

\begin{figure}
    \centering
	\captionsetup[subfigure]{justification=centering}
	\subcaptionbox{3D representation of the graph of the function $(w, \theta) \mapsto (N_p^*-1)\J_R(\theta,w)-\partial_w\J_R(\theta,w)$.}{\scalebox{0.465}{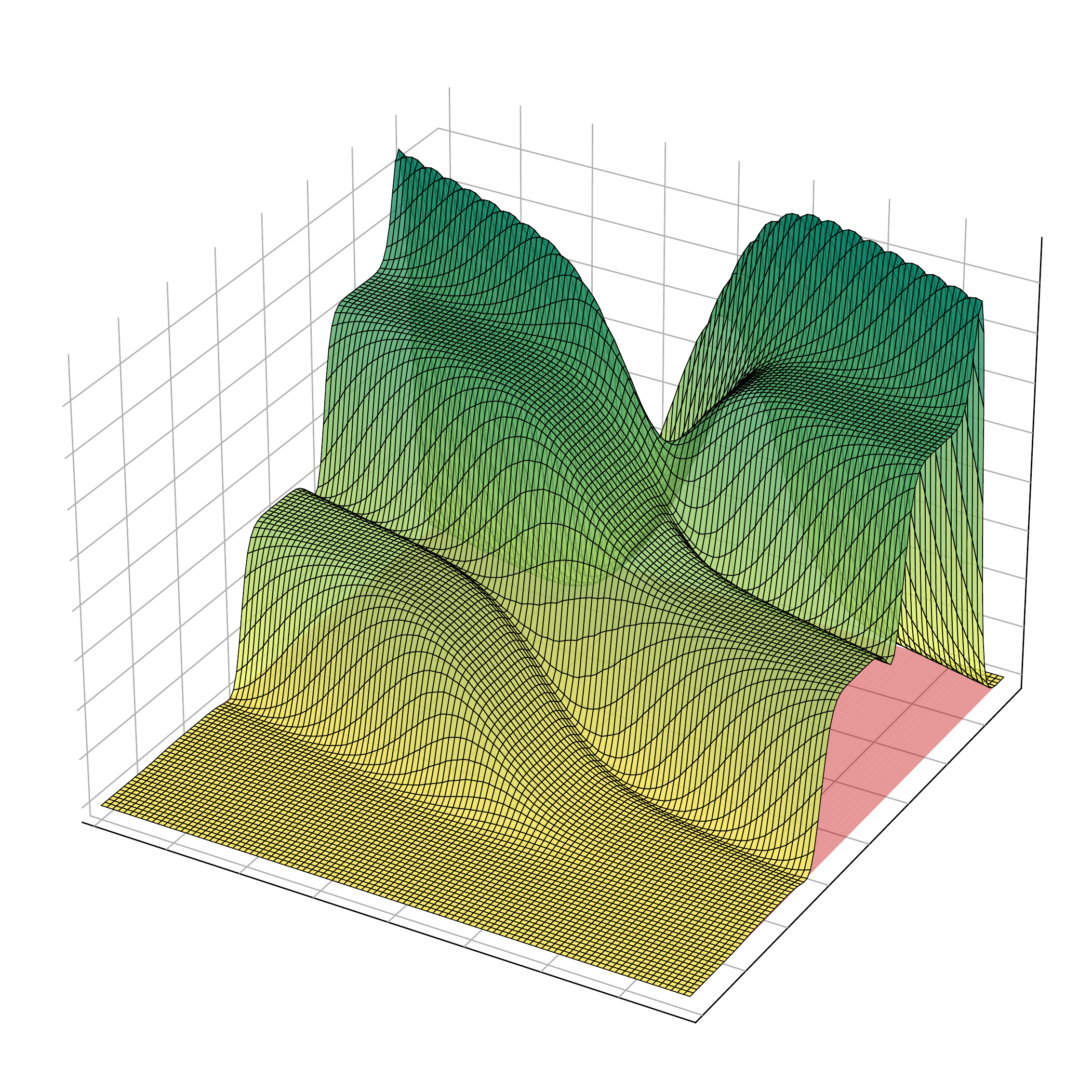}}\hspace{1em}%
	\subcaptionbox{Depiction of a section of the graph above, that is to say, a 2D representation of $w \mapsto (N_p^*-1)\J_R(\theta,w)-\partial_w\J_R(\theta,w)$ for a fixed $\theta$.}{\scalebox{0.465}{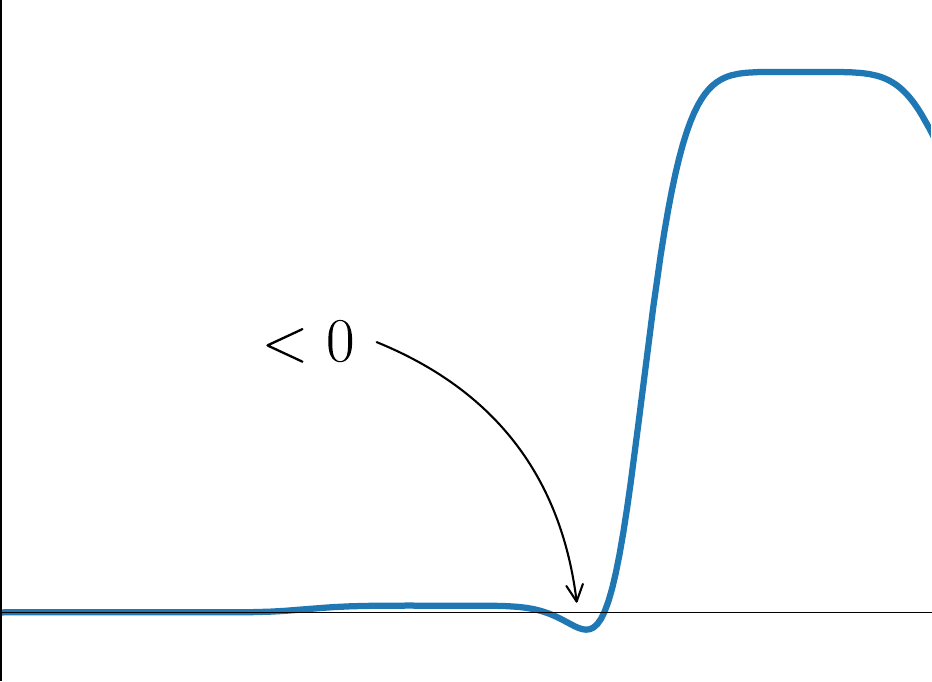}}\hspace{1em}%
	\caption{The maximum $N_p^*$ of the function depicted in \cref{fig:graphsdPP} does not seem to be the curvature exponent.}
    \label{fig:logderneg}
\end{figure}

\section{Geodesic dimension of the \texorpdfstring{$\ell^p$}{lp}-Heisenberg group}
\label{sec:geoddim}

Since the $\ell^p$-Heisenberg group is left-invariant, the geodesic dimension as introduced in \cref{def:geod_dim} is given by 
\begin{equation}
    \label{Ngeoidentity}
    N_{\mathrm{geo}}=\inf\{s>0 \mid C_s=+\infty\}=\sup\{s>0\mid C_s=0\},
\end{equation}
where
\begin{equation}
    \label{Csidentity}
    C_s := \sup\left\{\limsup_{t\to 0^+}\frac{1}{t^s}\frac{\mathfrak{m}(\Omega_t)}{\mathfrak{m}(\Omega)} \Big| \ \Omega \text{ Borel, bounded, } \mathfrak{m}(\Omega) \in \ointerval{0}{\infty}\right\},
\end{equation}
and $\Omega_t$ is the $t$-geodesic homothety of $\Omega$ from the identity.

We first show that the geodesic dimension can be can be determined by solely considering the case where $\Omega$ is the image of a cube.

\begin{proposition}
    \label{Cscube}
    In Expression \cref{Ngeoidentity} characterising $N_{\mathrm{geo}}$, the supremum defining $C_s$ in \cref{Csidentity} can be taken over the sets of the form $\Omega = \exp_e(A)$ with
    \begin{equation}
        \label{cube}
        A = \ointerval{0}{R} \times \interval{-\Theta}{\Theta} \times \interval{-W}{W},
    \end{equation}
for some $R > 0$, $\Theta \in \interval{0}{2 \pi_q}$ and $W \in \interval{0}{2\pi_q}$.
\end{proposition}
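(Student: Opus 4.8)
The plan is to transport the entire computation to the cotangent space via the exponential map and to exploit the fact that the relevant Jacobian ratio is radially homogeneous. By \cref{cotinjdom} and the change of variables used in the proof of \cref{MCPcharacterisation1}, for $\Omega = \exp_e(A)$ with $A \subseteq D$ one has $\mathfrak{m}(\Omega_t) = \int_A \abs{\mathcal{J}^t}\,\diff r\,\diff\theta\,\diff w$ and $\mathfrak{m}(\Omega) = \int_A \abs{\mathcal{J}}\,\diff r\,\diff\theta\,\diff w$. Since $\mathcal{J}^t(r,\theta,w) = \tfrac{r^3 t}{w^4}\mathcal{J}_R(\theta,wt)$ by \cref{regularityF}, the integrand ratio $\phi_t(\theta,w):=\abs{\mathcal{J}^t}/\abs{\mathcal{J}} = t\,\abs{\mathcal{J}_R(\theta,wt)}/\abs{\mathcal{J}_R(\theta,w)}$ is independent of $r$; thus $\mathfrak{m}(\Omega_t)/\mathfrak{m}(\Omega)$ is a weighted average over $(\theta,w)$ of the $r$-free quantity $\phi_t$, with $r$ entering only through the weight $\int r^3\,\diff r$. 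Writing $L(\Omega) := \limsup_{t\to 0^+} t^{-s}\,\mathfrak{m}(\Omega_t)/\mathfrak{m}(\Omega)$, so that $C_s = \sup_\Omega L(\Omega)$ as in \cref{Csidentity}, the inequality $C_s \geq \sup_{\text{cube}} L$ is trivial (cubes are Borel), and the whole content is the reverse bound $C_s \leq \sup_{\text{cube}} L$.

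To prove this, I would first localize in $(\theta,w)$ by means of the mediant inequality: for any finite Borel partition $A = \bigsqcup_{j} A_j$ one has $\tfrac{\mathfrak{m}(\Omega_t)}{\mathfrak{m}(\Omega)} = \tfrac{\sum_j \mathfrak{m}(\Omega_{j,t})}{\sum_j \mathfrak{m}(\Omega_j)} \leq \max_j \tfrac{\mathfrak{m}(\Omega_{j,t})}{\mathfrak{m}(\Omega_j)}$, and since a finite maximum commutes with $\limsup_{t\to 0^+}$ this yields $L(\Omega)\leq\max_j L(\Omega_j)$. Intersecting a bounded $A$ with a fine grid then shows that the supremum defining $C_s$ is unchanged if restricted to sets contained in arbitrarily small rectangles in $(\theta,w)$, the radial section being freely taken to be $\ointerval{0}{R}$ because $\phi_t$ does not see $r$. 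The symmetries \cref{jacsymmetry} of $\mathcal{J}^t$ (invariance under $\theta\mapsto\theta+\tfrac{\pi_q}{2}$ and under $(\theta,w)\mapsto(-\theta,-w)$) then let me translate any such rectangle onto the singular direction and symmetrize it in $w$, producing a rectangle of the form $\interval{-\Theta}{\Theta}\times\interval{-W}{W}$ centered at the origin; combined with the radial normalization this is exactly the family $A = \ointerval{0}{R}\times\interval{-\Theta}{\Theta}\times\interval{-W}{W}$ with $\Theta,W \in \interval{0}{2\pi_q}$.

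The remaining, and main, obstacle is to upgrade ``Borel subset of a small centered rectangle'' to ``the centered rectangle itself'': a subset could concentrate where $\phi_t$ is largest, so I must control the oscillation of $\phi_t$ across shrinking rectangles as $t\to 0^+$. Away from the singular set this is harmless, since $\mathcal{J}_R$ is real-analytic there (\cref{regularityF}) and $\phi_t$ is essentially constant on small rectangles, so the ratio-of-integrals $\int_{A_j}\abs{\mathcal{J}^t}/\int_{A_j}\abs{\mathcal{J}}$ is squeezed between $\inf$ and $\sup$ of $\phi_t$ over the rectangle, both converging to the same value. The delicate case is a rectangle abutting $\mathcal{S}_0$, where $\mathcal{J}_R$ vanishes to high order and, for $p>2$, its $w$-derivative blows up (\cref{regularitydF}); here I would invoke the uniformly convergent expansions of $\mathcal{J}_R$ near $(0,0)$ from \cref{asymptoticFat00} together with the Taylor control at $(\theta,0)$ of \cref{lem:case1}, following the same three-regime case analysis ($\abs{\varepsilon_q w}\leq\abs{\theta}$, $\abs{w}\leq\abs{\theta}\leq\abs{\varepsilon_q w}$, $\abs{\theta}\leq\abs{w}$) as in the proof of \cref{prop:jacobianons0}. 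These expansions show that $\phi_t$ restricted to a small centered rectangle behaves like $t^{1-s}$ times a homogeneous profile in $w/\theta$ (or $\theta/w$), so that the ratio-of-integrals over any subset and over the full centered rectangle share the same $\limsup$ exponent up to a factor tending to $1$. This pins $L(\Omega_j)$ to the value attained on a centered cube and completes the reduction $C_s\leq\sup_{\text{cube}}L$.
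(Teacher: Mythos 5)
Your proposal misses the one-line observation that powers the paper's actual proof: if $\Omega \subseteq \Omega'$ then every geodesic from $e$ to $\Omega$ is a geodesic from $e$ to $\Omega'$, so $\Omega_t \subseteq \Omega'_t$ and $\mathfrak{m}(\Omega_t) \leq \mathfrak{m}(\Omega'_t)$. Since any bounded Borel set is (up to the negligible $z$-axis, by \cref{cotinjdom}) contained in $\exp_e(A)$ for a cube $A$ of the form \cref{cube}, one gets $\limsup_{t\to 0^+} t^{-s}\mathfrak{m}(\Omega_t) \leq \limsup_{t\to 0^+} t^{-s}\mathfrak{m}(\exp_e(A)_t)$; the normalisations $\mathfrak{m}(\Omega)$ and $\mathfrak{m}(\exp_e(A))$ are fixed positive constants, and constants are irrelevant for deciding whether $C_s$ is $0$ or $+\infty$, which is all that enters $N_{\mathrm{geo}}$ through \cref{Ngeoidentity}. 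That is the entire proof. In particular, the proposition does not require the exact inequality $C_s \leq C_s^{\mathrm{cube}}$, only that the two suprema vanish (resp.\ are infinite) for the same values of $s$; by insisting on the exact inequality you made the problem harder than it is, and then had to invent machinery to close it.

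Two steps of that machinery are genuinely wrong. First, the symmetries \cref{jacsymmetry} are discrete --- translation of $\theta$ by multiples of $\tfrac{\pi_q}{2}$ and the joint reflection $(\theta,w)\mapsto(-\theta,-w)$ --- so they cannot move an arbitrary small rectangle of your grid onto one centred at the origin, and ``symmetrising in $w$'' alone is not a symmetry of $\mathcal{J}$. Second, and fatally, your closing claim that a Borel subset of a small centred rectangle and the full rectangle ``share the same $\limsup$ exponent up to a factor tending to $1$'' is false. Take $p>3$ and the subset obtained by restricting $\theta$ to $\interval{\delta}{\Theta}$: by \cref{geoddimcase0} its homothety measure decays like $t^{5}$, whereas the full centred cube decays like $t^{2q+2}$ with $2q+2<5$ (\cref{omegatasymptotics}), so the two normalised ratios differ by a factor $t^{3-2q}\to 0$, not $1$. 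This is precisely the competition between the regular region and the singular direction $\mathcal{S}_0$ that makes \cref{thm:geodim} nontrivial; on any rectangle meeting $\mathcal{S}_0$ the oscillation of your $\phi_t$ is unbounded as $t\to 0^+$, so no squeeze between its infimum and supremum can hold there. The statement that is true --- and suffices --- is the one-sided bound: a subset's normalised ratio is at most a constant multiple of the full rectangle's, which is again just monotonicity under inclusion, not oscillation control.
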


\begin{proof}
    Denote by $C_s^{\mathrm{cube}}$ the same supremum as in \cref{Csidentity}, except that it is taken over $\Omega = \exp_e(A)$ where $A$ is a cube of the form \cref{cube}. Clearly, $C_s^{\mathrm{cube}} = +\infty$ implies that $C_s = +\infty$ (resp. $C_s = 0$ implies that $C_s^{\mathrm{cube}} = 0$).
    
    A bounded subset $\Omega$ is, up to removing the (negligible) $z$-axis, contained in the image of such a cube $A$, by \cref{cotinjdom}. Thus, we have
    \[
    \limsup_{t\to 0^+}\frac{1}{t^s}\mathfrak{m}(\exp_e(A)_t) \geq \limsup_{t\to 0^+}\frac{1}{t^s}\mathfrak{m}(\Omega_t).
    \]
    In particular, $C_s = +\infty$ implies that $C_s^{\mathrm{cube}} = +\infty$ (resp. $C_s^{\mathrm{cube}} = 0$ implies that $C_s = 0$), and
    \[
    N_{\mathrm{geo}}=\inf\{s>0 \mid C_s^{\mathrm{cube}}=+\infty\}=\sup\{s>0\mid C_s^{\mathrm{cube}}=0\}.
    \]
\end{proof}

\begin{remark}
    We can in fact assume that $\Theta < \frac{\pi_q}{2}$ by the symmetries of the Jacobian stated in \cref{jacsymmetry}.
\end{remark} 

Let $t \in \ointerval{0}{1}$, $R > 0$, $\Theta \in \interval{0}{\frac{\pi_q}{2}}$, $W \in \interval{0}{2\pi_q}$, $A$ as in \cref{cube}, and $\Omega = \exp_e(A)$. Since the Jacobian of the exponential map is non-negative by \cref{positivityJac},
we can apply Tonelli's theorem to obtain
\begin{align}
    \label{measureOmegat}
    \begin{split}
    \m(\Omega_t)&=\int_A\J^t\diff r\diff \theta\diff w\\
    &=\int_{-W}^W\int_{-\Theta}^{\Theta}\int_0^R\frac{tr^3}{w^4}\J_R(\theta,wt)\diff r\diff \theta\diff w\\
    &=\frac{t^4 R^4}{4}\int_{-W t}^{W t}\int_{-\Theta}^{\Theta}\frac{1}{w^4}\J_R(\theta,w)\diff \theta\diff w.
    \end{split}
\end{align}

We therefore observe that the asymptotic behaviour of $\mathfrak{m}(\Omega_t)$ as $t \to 0^+$ will correspond to the integral asymptotic of $\frac{1}{w^4}\J_R(\theta,w)$ over the domain $A_t := \interval{-\Theta}{\Theta} \times \interval{-W t}{W t}$. Let $\delta > 0$ be a number chosen smaller than the radius of convergence of the $q$-trigonometric functions at $0$, and $\epsilon_q \in \ointerval{1}{q + 1}$ given in the proof of \cref{prop:jacobianons0}. The symmetries of the Jacobian \cref{jacsymmetry} allow us once more to narrow down our study to the asymptotic behavior of the following four cases, which are illustrated in \cref{fig:domaingeodsim}:
\begin{enumerate}[label=\normalfont\arabic*)]
\setcounter{enumi}{-1}
    \item $A_{t}^0 = \interval{\delta}{\Theta} \times \interval{-W t}{W t}$;
    \item $A_{t}^1 = \{ (\theta, w) \mid -W t \leq w \leq W t, \epsilon_q \abs{w} \leq \theta \leq \delta \}$;
    \item $A_{t}^2 = \{ (\theta, w) \mid -W t \leq w \leq W t, \abs{w} \leq \theta \leq \epsilon_q \abs{w}\}$;
    \item $A_{t}^3 = \{ (\theta, w) \mid 0 \leq w \leq W t, \abs{w} \leq \theta \leq \epsilon_q \abs{w}\}$.
\end{enumerate}

\begin{figure}
    \centering
    {\scalebox{0.6}{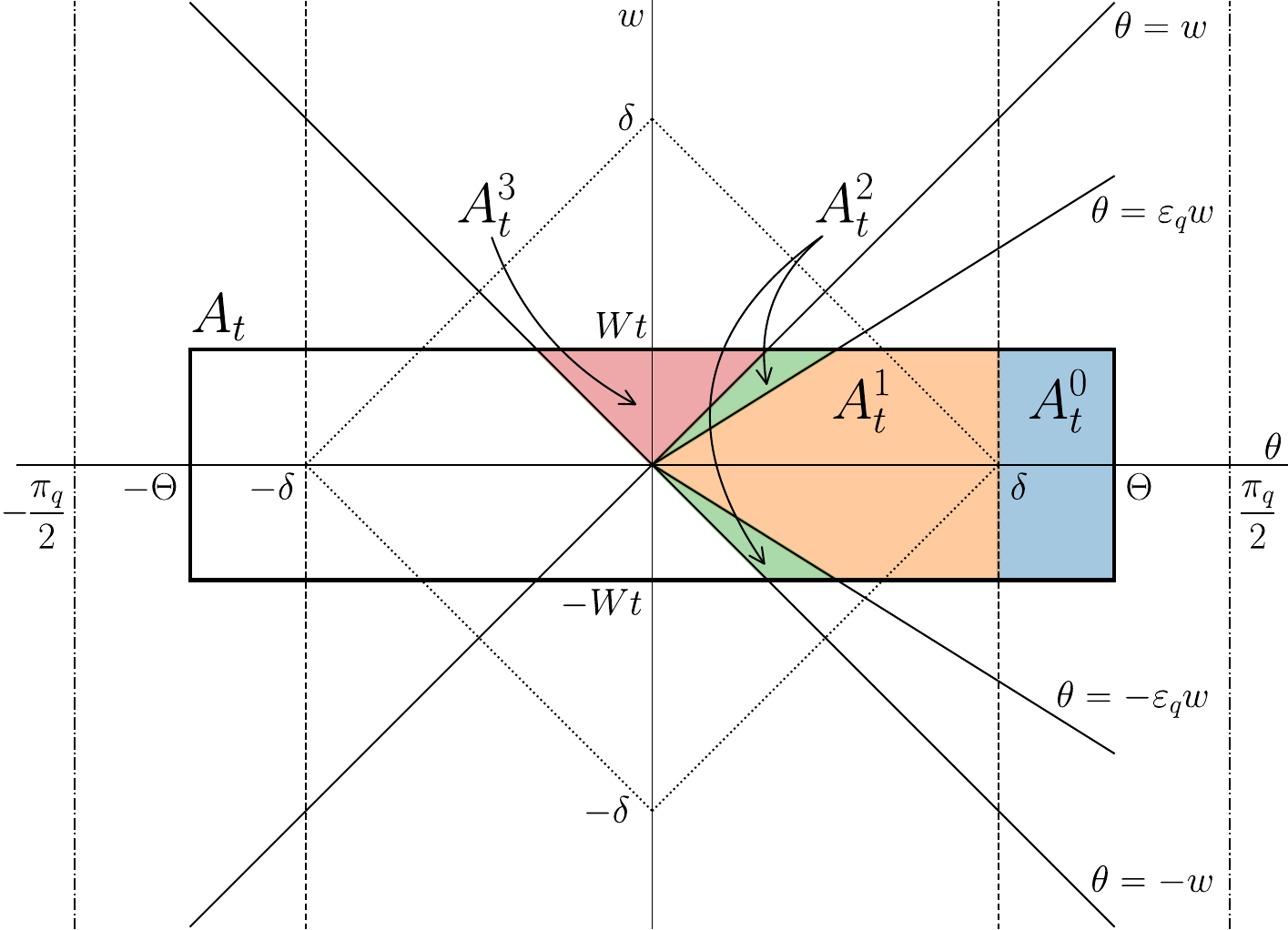}}
    \caption{The subdivisions of the domain $A_t$. The dotted square represents the domain for which the series representation of \cref{asymptoticFat00} holds. }
    \label{fig:domaingeodsim}
\end{figure}

In the following four lemmas, we are going to obtain the asymptotics, as $t \to 0^+$, corresponding to the sets $A_{t}^0$, $A_{t}^1$, $A_{t}^2$, and $A_{t}^3$ respectively. For simplicity, we will often use the notation $d_k := 1 + (-1)^k$.
Recall that we write $f(t) \sim g(t)$ (as $t \to 0^+$) if there exists $C \neq 0$ such that $f(t) = g(t)(C+o(1))$ (as $t \to 0^+$). 
\begin{lemma}[Integral asymptotic over the domain $A_t^0$]
\label{geoddimcase0}
    \[
    \int_{-W t}^{W t} \int_\delta^{\Theta} \frac{1}{w^4} \J_R(\theta, w) \diff \theta \diff w \underset{\ \ t \to 0^+}{\sim} t.
    \]
\end{lemma}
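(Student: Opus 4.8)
The plan is to exploit the fact that on the domain $A_t^0$ the angle $\theta$ is bounded away from the singular set $\frac{\pi_q}{2}\mathds{Z}$. By the remark following \cref{Cscube} we may assume $\Theta < \frac{\pi_q}{2}$, so $\theta$ ranges over the compact interval $\interval{\delta}{\Theta} \subset \ointerval{0}{\frac{\pi_q}{2}}$; and for $|w|$ small enough, $w + \theta$ also remains inside $\ointerval{0}{\frac{\pi_q}{2}}$. On this region the reduced Jacobian $\mathcal{J}_R(\theta, w)$ is jointly smooth in $(\theta, w)$, so the only delicate feature of the integrand is the singular prefactor $w^{-4}$.

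First I would show that $\frac{1}{w^4}\mathcal{J}_R(\theta, w)$ extends to a continuous function on $\interval{\delta}{\Theta} \times \interval{-W t_0}{W t_0}$ for some small $t_0 > 0$. Using the Taylor expansion at $(\theta, 0)$ from \cref{lem:case1}, whose radius of convergence is at least $\delta/(q+1)$ uniformly over $\theta \geq \delta$ (and hence covers $|w| \leq W t$ once $t$ is small), together with the vanishing of $\partial_w^k \mathcal{J}_R(\theta, 0)$ for $k \leq 3$ and the value $\partial_w^4 \mathcal{J}_R(\theta, 0) = 2 (q-1)^2 \abs{\cos_q \theta \sin_q \theta}^{2q - 4}$ computed in the proof of \cref{regularityF}, one obtains that, as $w \to 0$,
\[
\frac{1}{w^4}\mathcal{J}_R(\theta, w) \longrightarrow \frac{(q-1)^2}{12}\, \abs{\cos_q \theta \sin_q \theta}^{2q - 4},
\]
and this convergence is uniform in $\theta \in \interval{\delta}{\Theta}$, since the remainder of the series is controlled uniformly by the radius-of-convergence bound.

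Then I would set $g(w) := \int_\delta^\Theta \frac{1}{w^4}\mathcal{J}_R(\theta, w)\diff\theta$, which by the previous step is continuous at $w = 0$ with
\[
g(0) = \frac{(q-1)^2}{12}\int_\delta^\Theta \abs{\cos_q \theta \sin_q \theta}^{2q - 4}\diff\theta =: G > 0,
\]
the positivity being clear since the integrand is strictly positive on $\ointerval{\delta}{\Theta}$ (recall $\delta < \Theta$). Integrating $g$ over the symmetric interval $\interval{-W t}{W t}$ of length $2 W t$ and using continuity of $g$ at $0$ yields
\[
\int_{-W t}^{W t} g(w)\diff w = 2 W t\, g(0)\,(1 + o(1)) = 2 W G\, t\,(1 + o(1)),
\]
which is precisely the asserted asymptotic $\sim t$, with nonzero constant $C = 2 W G$.

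The main obstacle, and really the only nontrivial point, is making the convergence $\frac{1}{w^4}\mathcal{J}_R(\theta, w) \to \frac{(q-1)^2}{12}\abs{\cos_q\theta\sin_q\theta}^{2q-4}$ uniform in $\theta$, so that the limit may be passed inside the $\theta$-integral and $g$ is genuinely continuous at $0$. This uniformity is guaranteed by the compactness of $\interval{\delta}{\Theta}$ together with the uniform lower bound $\delta/(q+1)$ on the radius of convergence from \cref{lem:case1}, which prevents the neglected higher-order terms from degenerating as $\theta \to \delta$. Everything else reduces to the elementary fact that integrating a function continuous at $0$ over a shrinking symmetric interval of length $2 W t$ produces a leading term proportional to $t$.
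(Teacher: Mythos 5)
Your proposal is correct and is essentially the paper's own argument: both proofs rest on the Taylor expansion of $\mathcal{J}_R$ at $(\theta,0)$ from \cref{lem:case1} to show that $\frac{1}{w^4}\mathcal{J}_R(\theta,w)$ is bounded near $w=0$ with the strictly positive limit $\frac{(q-1)^2}{12}\abs{\cos_q\theta\sin_q\theta}^{2q-4}$ uniformly for $\theta\in\interval{\delta}{\Theta}$, and then extract the factor $t$ from the shrinking interval $\interval{-Wt}{Wt}$. The paper packages the last step as the rescaling $w\mapsto wt$ followed by the bounded convergence theorem, while you phrase it as continuity of the inner integral $g$ at $w=0$; this is a purely cosmetic difference.
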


\begin{proof}
Since the integrand is bounded on the domain $A_t^0$, the bounded convergence theorem and \cref{lem:case1} implies that
\begin{align*}
    \lim_{t \to 0} \frac{1}{t} \int_{-W t}^{W t} \int_\delta^{\Theta} \frac{1}{w^4} \J_R(\theta, w) \diff \theta \diff w ={}& \lim_{t \to 0} \int_{-W}^{W} \int_\delta^{\Theta} \frac{1}{(w t)^4} \J_R(\theta, w t) \diff \theta \diff w \\
    ={}& \int_{-W}^{W} \int_\delta^{\Theta} \lim_{t \to 0} \frac{1}{(w t)^4} \J_R(\theta, w t) \diff \theta \diff w \\
    ={}& \int_{-W}^W\int_{\delta}^{\Theta}\abs{\cos_q\theta\sin_q\theta}^{2q-4}\diff \theta \diff w > 0.
\end{align*}
\end{proof}

\begin{lemma}[Integral asymptotic over the domain $A_t^1$]
\label{geoddimcase1}
    \[
    \int_{-W t}^{W t} \int_{\epsilon_q \abs{w}}^{\delta} \frac{1}{w^4} \J_R(\theta, w) \diff \theta \diff w \underset{\ \ t \to 0^+}{\sim}
    \begin{cases*}
    t & if  $p < 3$  \\
  -  t \log(t) & if $p = 3$ \\
    t^{2 q - 2} & if $p > 3$.
    \end{cases*}
    \]
\end{lemma}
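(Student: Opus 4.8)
The plan is to convert this two-dimensional integral into a one-variable integral in the scaling ratio $s = w/\theta$, exploiting the fact that on $A_t^1$ one has $\epsilon_q|w| \le \theta$, i.e. $|w/\theta| \le 1/\epsilon_q$, so the Case 1 expansion \cref{series:case1} from the proof of \cref{prop:jacobianons0} is valid throughout the domain. I would first record it in the form
\[
\frac{1}{w^4}\J_R(\theta,w) = |\theta|^{2q-4}P_1(w/\theta) + \frac{E_1(\theta,w)}{w^4},
\]
where $P_1$ is real analytic on $[-1/\epsilon_q,1/\epsilon_q]$, nonnegative by \cref{prop:positivityJt}, with $P_1(0) = \tfrac{(q-1)^2}{12} > 0$. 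The first technical step is to pin down the error: bounding each monomial $w^{k-4}|\theta|^{lq-k}$ (with $k \ge 4$, $l \ge 3$) by $|w| \le \theta/\epsilon_q$ and $|\theta| < \delta < 1$ gives the uniform estimate $|E_1/w^4| \le C|\theta|^{3q-4}$, and since $3q-3 > 0$ the function $\theta \mapsto |\theta|^{3q-4}$ is integrable up to $0$, so $\int\!\!\int_{A_t^1}|E_1/w^4| = O(t)$. Hence the error is always of order $t$, and everything hinges on the principal term.

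For $p < 3$ (equivalently $2q - 4 > -1$) I would avoid any explicit integration and argue by dominated convergence after the rescaling $w = t\sigma$: the rescaled integrand tends pointwise to $\tfrac{(q-1)^2}{12}|\cos_q\theta\sin_q\theta|^{2q-4}$ by \cref{jacobianwis0}/\cref{lem:case1}, it is dominated by the integrable majorant $C|\theta|^{2q-4}$ (valid since $\delta$ is small and $2q-4 > -1$), and the limit is a strictly positive multiple of $t$.

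For $p \ge 3$ I would instead perform the substitution $s = w/\theta$ (Jacobian $\theta$), which turns the principal term into
\[
\frac{1}{2q-2}\int_{|s|\le 1/\epsilon_q} P_1(s)\,\min\!\big(\delta,\tfrac{Wt}{|s|}\big)^{2q-2}\,ds,
\]
and then split the $s$-integral at $|s| = Wt/\delta$. The inner region $|s| < Wt/\delta$ contributes only $O(t)$, whereas the outer region yields $\tfrac{(Wt)^{2q-2}}{2q-2}\int_{Wt/\delta \le |s| \le 1/\epsilon_q}\tfrac{P_1(s)}{|s|^{2q-2}}\,ds$, whose size is governed by the singularity of $|s|^{-(2q-2)}$ at the origin weighed against $P_1(0) > 0$. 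When $2q-2 < 1$ (i.e. $p > 3$) this integral converges to a finite positive constant, producing order $t^{2q-2}$; when $2q-2 = 1$ (i.e. $p = 3$) it diverges like $-2P_1(0)\log t$, producing order $-t\log t$. In both cases the error term and the inner region are of strictly lower order, so these are the true leading asymptotics.

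The main obstacle I expect is securing the uniformity needed to make the splitting rigorous: I must confirm that \cref{series:case1} converges uniformly right up to the boundary $|s| = 1/\epsilon_q$ and that the tail genuinely obeys $|E_1/w^4| \le C|\theta|^{3q-4}$ uniformly there, so that its integral can be discarded against $-t\log t$ and $t^{2q-2}$ (for $p = 3$ and $p > 3$) and merely merges harmlessly into the order-$t$ term (for $p < 3$). Once the uniformity and the positivity of $P_1$ near $s = 0$ are in hand, the three regimes drop out of the elementary estimate of $\int_{Wt/\delta}|s|^{-(2q-2)}\,ds$, and the sign conventions match the stated $-t\log t$ with a positive constant.
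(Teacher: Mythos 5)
Your proposal is correct, but it takes a genuinely different route from the paper's. The paper integrates the full double series of \cref{series:case1} term by term over $A_t^1$, which produces two competing families of powers, $t^{k-3}$ (from the $\delta$-side of the domain) and $t^{lq-2}$ (from the $\epsilon_q\abs{w}$-side), and then identifies the coefficients of the extremal powers $t$ (fix $k=4$, sum over $l$) and $t^{2q-2}$ (fix $l=2$, sum over $k$) as the positive integrals of $\tfrac{(q-1)^2}{12}\abs{\cos_q\theta\sin_q\theta}^{2q-4}$ and of $\abs{\theta}^{2q-4}P_1(w/\theta)$ respectively; the comparison of $1$ with $2q-2$ then decides the winner, and $p=3$ is dispatched ``analogously''. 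You instead split off the principal term $\abs{\theta}^{2q-4}P_1(w/\theta)$ once, check that the error is uniformly $O(t)$ (using $3q-4>-1$), and then treat $p<3$ by dominated convergence after the rescaling $w=t\sigma$ --- exactly the mechanism the paper itself uses for $A_t^0$ in \cref{geoddimcase0}, available on $A_t^1$ precisely because $\abs{\theta}^{2q-4}$ is integrable when $p<3$ --- and treat $p\geq 3$ by the one-dimensional reduction $s=w/\theta$, where the elementary integral $\tfrac{1}{2q-2}\int P_1(s)\min(\delta,Wt/\abs{s})^{2q-2}\diff s$, split at $\abs{s}=Wt/\delta$, exhibits the orders $t^{2q-2}$ and $-t\log t$. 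What your route buys: no term-by-term integration of a double series to justify; the case $p=3$ is handled on the same footing as $p>3$ rather than by analogy; and you only need $P_1\geq 0$ together with $P_1(0)=\tfrac{(q-1)^2}{12}>0$, rather than the almost-everywhere positivity of $P_1$ via finiteness of zeros of analytic functions. What the paper's route buys: both competing coefficients appear explicitly and simultaneously in a single formula, which makes the $t$-versus-$t^{2q-2}$ competition (and hence the transition at $p=3$ in \cref{thm:geodim}) visible at a glance, and which also records the full list of subleading powers.

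One remark on the obstacle you flag: it is real, but it is shared with the paper rather than introduced by your argument. Both proofs need the expansion of \cref{lem:case1}/\cref{series:case1} to converge uniformly on all of $A_t^1$, i.e.\ up to $\abs{w/\theta}=1/\epsilon_q$, whereas \cref{radiusconvsinp} only guarantees a radius of convergence of $\abs{\theta}/(q+1)$; covering the whole region therefore hinges on the precise relation imposed between $\epsilon_q$ and $q+1$, a point the paper passes over silently in the proofs of \cref{prop:jacobianons0} and \cref{geoddimcase1}. Since your decomposition consumes exactly the same data (boundedness of $P_1$ on the closed interval and the uniform bound $\abs{E_1/w^4}\leq C\abs{\theta}^{3q-4}$), resolving this issue for the paper resolves it for your proof as well.
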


\begin{proof}

We assume that $p \neq 3$ since the case $p = 3$ is established in the same way. 

Inside the domain $A_1^t$, we can use \cref{lem:case1} and write $\partial^k_w \J_R(\theta, 0)$ as a uniformly convergent series by expanding the $q$-trigonometric functions $\sin_q$ and $\cos_q$. The coefficient $\epsilon_q \in \ointerval{1}{q + 1}$ is chosen precisely so that  \cref{radiusconvsinp} can be used, as was done with the series \cref{series:case1} in the proof of \cref{prop:jacobianons0} (Case 1). We obtain
\begin{align*}
    \int_{-W t}^{W t} \int_{\epsilon_q \abs{w}}^{\delta} \frac{1}{w^4}& \J_R(\theta, w) \diff \theta \diff w= \int_{-W t}^{W t} \int_{\epsilon_q \abs{w}}^{\delta} \sum_{k = 4}^{\infty} \sum_{l = 2}^\infty  \frac{c_{k, l}}{k!} \theta^{l q - k} w^{k - 4} \diff \theta \diff w \\
    &= \sum_{k = 4}^{\infty} \sum_{l = 2}^\infty \frac{c_{k, l}}{k!(l q - k + 1)} \int_{-W t}^{W t} \left( \delta^{l q - k + 1} - (\epsilon_q \abs{w})^{l q - k + 1} \right) w^{k - 4} \diff w \\
    &= \sum_{k = 4}^{\infty} \sum_{l = 2}^\infty \frac{c_{k, l}d_k}{k!(l q - k + 1)} \left( \frac{\delta^{l q - k + 1} W^{k - 3}}{k - 3} t^{k - 3} - \frac{\epsilon_q^{l q - k + 1} W^{l q - 2}}{l q - 2} t^{l q - 2} \right).
\end{align*}

The coefficient in front of the term in $t$ is obtained, in the integration above, by fixing $k = 4$ and summing over all $l$:
\begin{align*}
    \sum_{l = 2}^\infty \frac{c_{k, l}d_k}{k!(l q - k + 1)} \frac{\delta^{l q - k + 1} W^{k - 3}}{k - 3} t ={}& \int_{-W t}^{W t} \int_{\epsilon_q \abs{w}}^{\delta} \sum_{l=2}^\infty \frac{c_{4,l}^1}{4!}\theta^{lq-4} \\
    ={}& \int_{-W t}^{W t} \int_{\epsilon_q \abs{w}}^{\delta} \frac{(q-1)^2}{12}|\cos_q\theta\sin_q\theta|^{2q-4} > 0.
\end{align*}
The coefficient in front of the term in $t^{2 q - 2}$, on the other hand, is given by fixing $l = 2$ and summing over all $k$:
\begin{align*}
    - \sum_{k = 4}^{\infty} \frac{c_{k, 2}d_k}{k!(2 q - k + 1)} \frac{\epsilon_q^{2 q - k + 1} W^{2 q - 2}}{l q - 2} t^{2 q - 2} ={}& \int_{-W t}^{W t} \int_{\epsilon_q \abs{w}}^{\delta} \sum_{k = 4}^{\infty}  \frac{c_{k, 2}}{k!} \theta^{2 q - k} w^{k - 4} \diff \theta \diff w \\
    ={}& \int_{-W t}^{W t} \int_{\epsilon_q \abs{w}}^{\delta} \abs{\theta}^{2 q - 4} P_1\left(\frac{w}{\theta}\right) \diff \theta \diff w > 0,
\end{align*}
since we have seen that $P_1$ is positive almost everywhere in the proof of \cref{prop:jacobianons0}.

The leading term will be the one in $t$ if $2 q - 2 > 1$, that is $p < 3$, and the one in $t^{2 q - 2}$ if $2 q - 2 < 1$, that is to say $p > 3$. The case $p = 3$ is treated analogously, where a leading term in $t \log(t)$ appears by integration.
\end{proof}

\begin{lemma}[Integral asymptotic over the domain $A_t^2$]
\label{geoddimcase2}
    \[
    \int_{-W t}^{W t} \int_{\abs{w}}^{\epsilon_q \abs{w}} \frac{1}{w^4} \J_R(\theta, w) \diff \theta \diff w \underset{\ \ t \to 0^+}{\sim}
    \begin{cases*}
    t^{2 q - 2} & if  $p \neq 3$  \\
  -  t \log(t) & if $p = 3$.
    \end{cases*}
    \]
\end{lemma}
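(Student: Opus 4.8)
The plan is to repeat the analysis of the proof of \cref{geoddimcase1}, now adapted to the region $A_t^2$, on which $\abs{w} \leq \theta \leq \epsilon_q \abs{w}$. Here the ratio $w/\theta$ stays inside $\interval{-1}{1}$, so the binomial expansion underlying \cref{series:case2} is available, and I would start from the representation established in Case 2 of the proof of \cref{prop:jacobianons0}, namely
\begin{equation*}
\frac{1}{w^4}\J_R(\theta,w) = \abs{\theta}^{2q-4} P_2\!\left(\frac{w}{\theta}\right) + \frac{E(\theta,w)}{w^4}, \qquad P_2(s) = \sum_{k\geq 4} c_k^2\, s^{k-4},
\end{equation*}
where $P_2$ is real analytic on $\interval{-1}{1}$ with leading coefficient $c_4^2 = 2(q-1)^2 > 0$, and where $E$ is of order at most $\abs{w}^{3q}$ throughout $A_t^2$ (since $\abs{\theta} \leq \epsilon_q\abs{w}$ there). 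Because the series converges uniformly on the relevant range of $w/\theta$, I would be free to integrate it term by term.

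First I would treat the principal part. Writing $\abs{\theta}^{2q-4}P_2(w/\theta) = \sum_{k\geq 4} c_k^2\, w^{k-4}\theta^{2q-k}$ (valid since $\theta > 0$ on $A_t^2$) and integrating first in $\theta$ over $\interval{\abs{w}}{\epsilon_q\abs{w}}$ and then in $w$ over $\interval{-Wt}{Wt}$, each term with $2q-k+1\neq 0$ contributes
\begin{equation*}
\int_{-Wt}^{Wt} c_k^2\, w^{k-4} \left( \int_{\abs{w}}^{\epsilon_q\abs{w}} \theta^{2q-k}\diff\theta \right) \diff w = c_k^2\, \frac{\epsilon_q^{2q-k+1}-1}{2q-k+1}\, \frac{d_k}{2q-2}\, W^{2q-2}\, t^{2q-2},
\end{equation*}
using $\int_{-Wt}^{Wt} w^{k-4}\abs{w}^{2q-k+1}\diff w = d_k (Wt)^{2q-2}/(2q-2)$ together with $q > 1$ (which guarantees $2q-2>0$ and convergence of the $w$-integral at the origin). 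Thus, away from the critical exponent, every term scales like $t^{2q-2}$; summing over $k$ yields a coefficient which is finite and, by the positivity of $P_2$ almost everywhere established in \cref{prop:jacobianons0} (equivalently the positivity of the Jacobian in \cref{positivityJac}), strictly positive. Hence the principal part is $\sim t^{2q-2}$.

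Next I would dispose of the remainder: since each monomial appearing in the expansion \cref{asymptoticFat00} of $E$ has total degree at least $3q$, the rescaling $w = t w'$, $\theta = t\theta'$ shows $\int_{A_t^2} \abs{E}/w^4 \lesssim t^{3q-2}$, which is negligible compared with $t^{2q-2}$ because $q > 1$. Together with the previous step this establishes the asymptotic $t^{2q-2}$ for $p \neq 3$.

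The delicate point, and the main obstacle, is the borderline value $p = 3$, i.e. $q = 3/2$, where $2q-2 = 1$ and simultaneously the exponent $2q-k+1$ vanishes exactly at the leading index $k = 4$; the inner $\theta$-integral of the dominant term then degenerates into a $\int \theta^{-1}\diff\theta$, and the coincidence of the competing powers forces a logarithmic correction, to be handled by exactly the same bookkeeping as in the $p=3$ subcase of \cref{geoddimcase1}, yielding the stated $-t\log t$. The two genuinely technical issues I anticipate are (i) justifying the term-by-term integration right up to the singular boundary $\theta = \abs{w}$, where $w/\theta = \pm 1$ sits at the radius of convergence of the binomial series, and (ii) carefully tracking the borderline exponents at $p = 3$ so as to extract the correct logarithmic term rather than a spurious finite constant.
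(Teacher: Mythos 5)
Your treatment of the case $p \neq 3$ is correct and is essentially the paper's own proof: the same splitting of $\frac{1}{w^4}\J_R$ into the principal part $\abs{\theta}^{2q-4}P_2(w/\theta)$ and the remainder $E/w^4$ taken from Case 2 of the proof of \cref{prop:jacobianons0}, the same term-by-term integration producing the order $t^{2q-2}$ with a coefficient that is strictly positive by the almost-everywhere positivity of $P_2$, and the same estimate $t^{3q-2}$ for the remainder. Your anticipated difficulty (i) is also not serious: the binomial series involved have positive exponents $2q,\,q,\,q\pm 1$, hence converge absolutely at $w/\theta=\pm 1$, so uniform convergence up to the boundary of the cone is automatic.

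The genuine gap is the case $p=3$, and it is not a matter of bookkeeping: the mechanism you invoke does not exist on $A_t^2$. When $q=3/2$, the degenerate $k=4$ term has inner integral $\int_{\abs{w}}^{\epsilon_q\abs{w}}\theta^{-1}\diff\theta=\log\epsilon_q$, a constant independent of $t$ and of $w$, because \emph{both} endpoints of the $\theta$-integration are proportional to $\abs{w}$; this term therefore contributes $2c_4^2\log(\epsilon_q)Wt\sim t=t^{2q-2}$, the same order as every other term, and no $\log t$ can appear. This is structural: $A_t^2$ is the truncation at $\abs{w}\leq Wt$ of the scale-invariant cone $\{\abs{w}\leq\theta\leq\epsilon_q\abs{w}\}$, and writing $\J_R=H+E$ with $H$ the explicit part of \cref{asymptoticFat00}, which is positively homogeneous of degree $2q$, the substitution $\theta=\abs{w}s$ with $s\in\interval{1}{\epsilon_q}$ gives the exact identity
\[
\int_{-Wt}^{Wt}\int_{\abs{w}}^{\epsilon_q\abs{w}}\frac{H(\theta,w)}{w^4}\diff\theta\diff w
=\left(\int_1^{\epsilon_q}\bigl(H(s,1)+H(s,-1)\bigr)\diff s\right)\frac{(Wt)^{2q-2}}{2q-2}
\]
for every $q>1$, including $q=3/2$, with strictly positive prefactor. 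Hence carrying out your plan correctly yields $\sim t^{2q-2}$ for \emph{all} $p\in\ointerval{1}{\infty}$; since $t=o(-t\log t)$, the integral over $A_t^2$ is in particular not $\sim -t\log(t)$ at $p=3$ in the paper's sense of $\sim$ (nonzero limiting ratio). The logarithm at $p=3$ is generated only on $A_t^1$ (\cref{geoddimcase1}), where $\theta$ runs from $\epsilon_q\abs{w}$ up to the \emph{fixed} height $\delta$, so that $\int_{\epsilon_q\abs{w}}^{\delta}\theta^{-1}\diff\theta=\log\bigl(\delta/(\epsilon_q\abs{w})\bigr)\sim-\log t$; importing ``the same bookkeeping'' from there to $A_t^2$ is precisely the step that fails. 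You should also know that the paper's own proof of \cref{geoddimcase2} is silent on $p=3$ (its displayed sum divides by $2q-k+1$, which vanishes at $k=4$ when $q=3/2$), so the $p=3$ clause of the statement appears inaccurate as written; this is harmless downstream, because in \cref{omegatasymptotics} the $A_t^1$ contribution $-t\log t$ dominates the $A_t^2$ contribution in either case.
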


\begin{proof}

Inside the domain $A_t^2$, we can use \cref{asymptoticFat00} and write
\begin{equation}
    \label{geoddimexpcase2}
    \frac{1}{w^4} \J_R(\theta, w) = |\theta|^{2q-4}P_2\left(\frac{w}{\theta}\right)+\frac{1}{w^4}E(\theta,w),
\end{equation}
as was done with the series representation \cref{series:case2} in the proof of \cref{prop:jacobianons0} (Case 2). Recall that there it was shown that $P_2$ is an analytic function that is positive almost everywhere, and that $E(\theta, w)$ is given in \cref{asymptoticFat00remainder}.

Integrating the principal term gives
\begin{align*}
    0 < \int_{-W t}^{W t} \int_{\abs{w}}^{\epsilon_q \abs{w}} |\theta|^{2q-4} & P_2\left(\frac{w}{\theta}\right) \diff \theta \diff w = \int_{-W t}^{W t} \int_{\abs{w}}^{\epsilon_q \abs{w}} \sum_{k = 4}^{\infty} c_k \theta^{2 q - k} w^{k - 4} \diff \theta \diff w \\
    ={}& \sum_{k = 4}^{\infty} \int_{-W t}^{W t} \frac{c_k}{2 q - k + 1}(\epsilon_q^{2 q - k + 1} - 1) \abs{w}^{2 q - k + 1} w^{k - 4}  \diff w \\
    ={}& \left(\sum_{k = 4}^{\infty} \frac{c_k d_k}{(2 q - k + 1)(2 q - 2)}(\epsilon_q^{2 q - k + 1} - 1) W^{2 q - 2}\right) t^{2 q - 2} \underset{\ \ t \to 0^+}{\sim} t^{2 q - 2}.
\end{align*}

It remains to shows that the second term of \cref{geoddimexpcase2}, that is to say the error term, has order greater than $t^{2 q - 2}$. Inside the domain $A_t^2$, it holds $\abs{w}\leq \abs{\theta} \leq \epsilon_q \abs{w}$, and which helps, together with the binomial series theorem, in obtaining an upper bound on the term $E(\theta,w)$ as
\[
\abs{E(\theta,w)}\leq\sum_{l=3}^{\infty}\tilde{c}_l|w|^{lq},
\]
where $\tilde{c}_l$ are positive constants and the series is uniformly convergent.

Therefore, integrating the error term yields
\begin{align*}
    \abs{\int_{-W t}^{W t} \int_{\abs{w}}^{\epsilon_q \abs{w}} \frac{1}{w^4}E(\theta,w) \diff \theta \diff w} \leq{}& \int_{-W t}^{W t} \int_{\abs{w}}^{\epsilon_q \abs{w}} \sum_{l=3}^{\infty}\tilde{c}_l|w|^{lq - 4} \diff \theta \diff w \\
    ={}& \sum_{l=3}^{\infty} \int_{-W t}^{W t} (\epsilon_q - 1) \tilde{c}_l|w|^{lq - 3} \diff w \\
    ={}& \sum_{l=3}^{\infty} \frac{2 (\epsilon_q - 1) \tilde{c}_l W^{l q - 2}}{l q - 2}t^{lq - 2} \underset{\ \ t \to 0^+}{\sim} t^{3 q - 2}.
\end{align*}
\end{proof}

\begin{lemma}[Integral asymptotic over the domain $A_t^3$]
\label{geoddimcase3}
    \[
    \int_{-W t}^{W t} \int_{\abs{\theta}}^{W t} \frac{1}{w^4} \J_R(\theta, w) \diff w \diff \theta \underset{\ \ t \to 0^+}{\sim} \begin{cases*}
    t^{2 q - 2} & if $p \neq 3$, \\
    - t \log(t) & if $p = 3$. \\
    \end{cases*}
    \]
\end{lemma}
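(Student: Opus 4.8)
The plan is to follow the same strategy as in the proofs of \cref{geoddimcase1} and \cref{geoddimcase2}, to which this lemma is the natural counterpart for the region where $w$ dominates $\theta$. On the domain of integration $\{(\theta,w) \mid |\theta| \leq w \leq Wt\}$, which for $t$ small enough is contained in the polydisc where the series of \cref{asymptoticFat00} converges and on which $|\theta| \leq |w|$, I would use the binomial expansion $|w+\theta|^\alpha = |w|^\alpha \sum_{k} \binom{\alpha}{k}(\theta/w)^k$ exactly as in Case 3 of the proof of \cref{prop:jacobianons0}. This gives the uniformly convergent representation from \cref{series:case3},
\[
\frac{1}{w^4}\J_R(\theta,w) = |w|^{2q-4}\,P_3\!\left(\frac{\theta}{w}\right) + \frac{1}{w^4}E(\theta,w),
\]
where $P_3$ is the fractional-analytic profile, positive almost everywhere, and $E$ collects the terms of total degree at least $3q$.

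First I would integrate the principal term. Since $\J_R \geq 0$ by \cref{positivityJac}, Tonelli's theorem justifies integrating the series term by term. A typical monomial $|w|^{2q-4}|\theta/w|^a(\theta/w)^b$ integrates over $w \in \interval{|\theta|}{Wt}$ to a contribution from the endpoint $w = Wt$ and one from $w = |\theta|$; performing the outer integration in $\theta \in \interval{-Wt}{Wt}$ and keeping track of the total homogeneity, one checks that both pieces are of order $t^{2q-2}$ as soon as the exponent $2q-4$ of the leading profile differs from $-1$, i.e.\ as soon as $p \neq 3$. The positivity of $P_3$ guarantees that the coefficient does not vanish, so the principal term is of order $t^{2q-2}$.

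The delicate case — and the step I expect to be the main obstacle — is the transition $p = 3$, where $q = 3/2$ and the homogeneity exponent $2q - 4$ equals $-1$. There the inner integral of the resonant monomial degenerates into $\int_{|\theta|}^{Wt} w^{-1}\diff w = \log(Wt/|\theta|)$, and resolving this logarithmic resonance before integrating in $\theta$ is what promotes the leading order to $-t\log t$. Concretely, in the term-by-term expansion the denominators governing the inner integration vanish at the resonant index when $q = 3/2$, producing a $0/0$ that has to be unfolded into the logarithm; this is the same mechanism that appears at $p = 3$ in \cref{geoddimcase1} and \cref{geoddimcase2}, and I would treat it by passing to the limit $q \to 3/2$ (equivalently $p \to 3$) in the generic expansion.

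Finally, I would bound the remainder. On $\{|\theta| \leq w \leq Wt\}$ the inequality $|\theta| \leq w$ together with the binomial series gives $|E(\theta,w)| \lesssim w^{3q}$, so that $\frac{1}{w^4}|E|$ integrates to order $t^{3q-2}$. Since $3q - 2 > 2q - 2$ for $q > 0$, and $t^{3q-2}$ is also of higher order than $t\log t$ at $p = 3$, the remainder is negligible. Combining the principal-term asymptotics with this estimate yields the claimed dichotomy.
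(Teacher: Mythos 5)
For $p \neq 3$, your argument is correct and is essentially the paper's own proof: the same representation $\frac{1}{w^4}\J_R(\theta,w) = \abs{w}^{2q-4}P_3(\theta/w) + \frac{1}{w^4}E(\theta,w)$ from \cref{series:case3}, term-by-term integration of the principal part with the almost-everywhere positivity of $P_3$ supplying a non-vanishing coefficient, and the bound $\abs{E(\theta,w)} \lesssim \abs{w}^{3q}$ on $\{\abs{\theta}\leq\abs{w}\}$ giving a negligible $O(t^{3q-2})$ remainder. (Two small inaccuracies: term-by-term integration is justified by uniform convergence, not Tonelli, since the monomials are not all of one sign; and a monomial of $P_3$ with exponent $c$ resonates when $2q-3-c=0$, not only when $2q-4=-1$, so resonances also occur at other values of $p$, e.g.\ $p=q=2$ --- harmlessly, as it turns out.)

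The genuine gap is your treatment of $p=3$. The mechanism you describe does not produce a logarithm: the inner integral of the resonant monomial is indeed $\log(Wt/\abs{\theta})$, but on this domain the outer variable $\theta$ itself only ranges over $\interval{-Wt}{Wt}$, and $\int_{-Wt}^{Wt}\log(Wt/\abs{\theta})\diff\theta = 2Wt$, which is of order $t = t^{2q-2}$, not $-t\log t$. The resonance is in fact removable: for a monomial $c_a\abs{w}^{2q-4-a}\abs{\theta}^a$, the iterated integral equals $\frac{2c_a(Wt)^{2q-2}}{(a+1)(2q-2)}$ whether or not $2q-3-a$ vanishes. More structurally, the principal part of $\J_R$ in \cref{asymptoticFat00} is exactly homogeneous of degree $2q$, and the domain of this lemma is the image of a fixed region under $(\sigma,\omega)\mapsto(Wt\sigma,Wt\omega)$, so the principal term equals a finite positive constant times $(Wt)^{2q-2}$ for every $q>1$, including $q=3/2$; no logarithm can come out of it, and the error term is $O(t^{3q-2})$. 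A logarithm at $p=3$ requires one variable to range over a $t$-independent interval, which is exactly what happens on the domain $A_t^1$ of \cref{geoddimcase1}, where $\theta$ runs up to the fixed $\delta$ and $\int_{\epsilon_q\abs{w}}^{\delta}\theta^{2q-4}\diff\theta \sim -\log\abs{w}$ when $q=3/2$. So your $p=3$ step would fail if carried out --- and, pushed to its conclusion, it shows the asymptotic on this domain is $\sim t^{2q-2}$ for all $p$. To be fair, the paper's own proof merely asserts that $p=3$ "is treated analogously" without details, and its claimed $-t\log t$ here is open to the same objection; this does not damage \cref{omegatasymptotics}, whose logarithm at $p=3$ is supplied by the $A_t^1$ contribution, which dominates the order-$t$ contributions of the remaining domains.
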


\begin{proof}

We assume again $p \neq 3$. Inside the domain $A_t^3$, we can use \cref{asymptoticFat00} and write
    \begin{equation}
    \label{geoddimexpcase3}
    \frac{1}{w^4} \J_R(\theta, w) = |w|^{2q - 3}P_3\left(\frac{\theta}{w}\right)+ \frac{1}{w^4}E(\theta,w),
\end{equation}
as was done with the series representation \cref{series:case3} in the proof of \cref{prop:jacobianons0} (Case 3). Recall again that there it was shown that $P_3$ is an analytic function in the fractional sense that is positive almost everywhere, and that $E(\theta, w)$ is given in \cref{asymptoticFat00remainder}.

Integrating the principal term gives
\begin{align*}
    0 < \int_{-W t}^{W t}& \int_{\abs{\theta}}^{W t} |w|^{2q - 4} P_3\left(\frac{\theta}{w}\right) \diff w \diff \theta \\
     ={}& \int_{-W t}^{W t} \int_{\abs{\theta}}^{W t}  \sum_{\alpha  \in \{2q, q, 0\}}\sum_{k=0}^\infty c_{k,\alpha}^3 w^{2q-\alpha-k-4}|\theta|^{\alpha}\theta^k \diff w \diff \theta \\
    & \quad + \int_{-W t}^{W t} \int_{\abs{\theta}}^{W t}  \sum_{\beta \in \{q + 1, q - 1\}}\sum_{k=0}^\infty c_{k,\beta}^3 w^{2q-\beta-k-4}|\theta|^{\beta-1}\theta^{k+1} \diff w \diff \theta.
\end{align*}

This last term simplifies to
\begin{align*}
    \sum_{\alpha  \in \{2q, q, 0\}}&\sum_{k=0}^\infty \frac{c_{k,\alpha}^3}{2q-\alpha-k-3} \int_{-W t}^{W t} |\theta|^{\alpha}\theta^k \left( (W t)^{2q-\alpha-k-3} - \abs{\theta}^{2q-\alpha-k-3}\right) \diff \theta \\
    & + \sum_{\beta \in \{q + 1, q - 1\}}\sum_{k=0}^\infty \frac{c_{k,\beta}^3}{2q-\beta-k-3} \int_{-W t}^{W t} |\theta|^{\beta-1}\theta^{k+1} \left( (W t)^{2q-\beta-k-3} - \abs{\theta}^{2q-\beta-k-3}\right) \diff \theta \\
    ={}& \Big[\sum_{\alpha  \in \{2q, q, 0\}}\sum_{k=0}^\infty \frac{c_{k,\alpha}^3 d_k}{2q-\alpha-k-3} \left(\frac{W^{2q - 2}}{k + \alpha + 1} - \frac{W^{2q - 2}}{2 q - 2}\right) \\
    & \qquad + \sum_{\beta \in \{q + 1, q - 1\}}\sum_{k=0}^\infty \frac{c_{k,\beta}^3d_{k+1}}{2q-\beta-k-3} \left(\frac{W^{2q - 2}}{k + \beta + 1} - \frac{W^{2 q - 2}}{2 q - 2}\right) \Big] t^{2 q -2} \underset{\ \ t \to 0^+}{\sim} t^{2 q - 2}.
\end{align*}

It remains to shows that the second term of \cref{geoddimexpcase2}, that is to say the error term, has order greater than $t^{2 q - 2}$. This is done analogously to the proof of \cref{geoddimcase2}, by estimating $\abs{E(\theta,w)}$ with the binomial series theorem and using the condition $\abs{\theta} \leq \abs{w}$ that holds within $A_t^3$.

If $p = 3$, the computations are similar except that a leading term in $- t \log(t)$ appears in the above integration.
\end{proof}

With these lemmas, the asymptotics of $\mathfrak{m}(\Omega_t)$ can be obtained.

\begin{proposition}
\label{omegatasymptotics}
Let $\Omega = \exp_e(A) \subseteq \mathds{H}$, where 
\begin{equation*}
        A = \ointerval{0}{R} \times \interval{-\Theta}{\Theta} \times \interval{-W}{W},
    \end{equation*}
for some $R > 0$, $\Theta \in \interval{0}{\frac{\pi_2}{2}}$ and $W \in \interval{0}{2\pi_q}$. The asymptotic of $\m(\Omega_t)$, as $t \to 0^+$ is given as follows.
\[\m(\Omega_t)\sim
\begin{cases*}
    t^5 & if  $p < 3$  \\
    -t^5 \log(t) & if $p = 3$ \\
    t^{2 q + 2} & if $p > 3$.
    \end{cases*}
\]
\end{proposition}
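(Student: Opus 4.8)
The plan is to reduce the whole statement to the exact formula \cref{measureOmegat}, which expresses $\m(\Omega_t) = \frac{t^4 R^4}{4}\, I(t)$ where $I(t) := \int_{-Wt}^{Wt}\int_{-\Theta}^{\Theta} \frac{1}{w^4}\J_R(\theta,w)\,\diff\theta\,\diff w$. All the genuine analytic work has already been localised in the four integral-asymptotic lemmas \cref{geoddimcase0}, \cref{geoddimcase1}, \cref{geoddimcase2} and \cref{geoddimcase3}, so the remaining task is to assemble them and decide which contribution dominates as $t\to 0^+$.

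First I would justify that, up to the reflection symmetry $\J_R(\theta,w)=\J_R(-\theta,-w)$ coming from \cref{jacsymmetry} (which leaves $w^{-4}$ invariant, so that the full integrand is invariant under $(\theta,w)\mapsto(-\theta,-w)$), the domain $A_t$ decomposes up to a set of measure zero into the four regions $A_t^0,A_t^1,A_t^2,A_t^3$ together with their reflections. Concretely, for $\theta\geq 0$ the strip $\interval{0}{\Theta}\times\interval{-Wt}{Wt}$ splits according to $\theta\in\interval{\delta}{\Theta}$ (region $A_t^0$), $\epsilon_q\abs{w}\leq\theta\leq\delta$ (region $A_t^1$), $\abs{w}\leq\theta\leq\epsilon_q\abs{w}$ (region $A_t^2$) and $\theta\leq\abs{w}$, i.e.\ $w\geq\abs{\theta}$ (region $A_t^3$, matching the integration order $\int_{\abs{\theta}}^{Wt}\diff w$ of \cref{geoddimcase3}); the reflection contributes an identical copy. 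Hence $I(t)=2\big(I_0(t)+I_1(t)+I_2(t)+I_3(t)\big)$, where $I_j$ is the integral over $A_t^j$, and the four lemmas can be fed in directly.

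It then remains to compare exponents using the defining relation $\tfrac1p+\tfrac1q=1$, which yields $2q-2\gtrless 1 \iff q\gtrless\tfrac32 \iff p\lessgtr 3$. When $p<3$ one has $2q-2>1$, so as $t\to 0^+$ the order-$t$ terms (from $A_t^0$ and from $A_t^1$) dominate the order-$t^{2q-2}$ terms, giving $I(t)\sim t$ and hence $\m(\Omega_t)\sim t^5$. When $p>3$ one has $2q-2<1$, so the three contributions of order $t^{2q-2}$ (from $A_t^1,A_t^2,A_t^3$) dominate the single $t$-term from $A_t^0$, giving $I(t)\sim t^{2q-2}$ and $\m(\Omega_t)\sim t^{2q+2}$. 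At the threshold $p=3$, where $2q-2=1$, the logarithmic terms $-t\log t$ produced by $A_t^1,A_t^2,A_t^3$ beat the plain $t$-terms since $-\log t\to+\infty$, giving $I(t)\sim -t\log t$ and $\m(\Omega_t)\sim -t^5\log t$.

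The one point requiring genuine care — and the main, if modest, obstacle — is to rule out any cancellation among the dominant contributions, since the relation $\sim$ requires a \emph{nonzero} constant. This is exactly where the positivity assertions inside the four lemmas are indispensable: each leading coefficient was shown to be strictly positive, arising from integrals of $\abs{\cos_q\theta\sin_q\theta}^{2q-4}$ and of the almost-everywhere-positive profiles $P_1,P_2,P_3$. Because every dominant term carries the same sign, the contributions add rather than cancel, so the resulting constant $C\neq 0$ is genuinely nonzero in each of the three regimes, which concludes the proof.
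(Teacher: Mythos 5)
Your proposal is correct and follows essentially the same route as the paper: reduce to the exact formula \cref{measureOmegat}, split the domain into $A_t^0,A_t^1,A_t^2,A_t^3$ together with their reflections under $(\theta,w)\mapsto(-\theta,-w)$ (the paper lists the eight pieces explicitly rather than using a factor of $2$), feed in \cref{geoddimcase0}--\cref{geoddimcase3}, and compare the exponents $1$ and $2q-2$ across the thresholds $p\lessgtr 3$. Your explicit remarks on the sign-coherence of the leading coefficients (so that no cancellation can spoil the nonzero constant required by $\sim$) make precise a point the paper's proof leaves implicit, but the argument is the same.
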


\begin{proof}
    From \cref{measureOmegat}, we can write
    \begin{align*}
        \m(\Omega_t)= \frac{t^4 R^4}{4} &\int_{-W t}^{W t}\int_{-\Theta}^{\Theta}\frac{1}{w^4}\J_R(\theta,w)\diff \theta\diff w \\
        = \frac{t^4 R^4}{4}& \Big[ \underbrace{\int_{-W t}^{W t} \int_\delta^{\Theta} \frac{1}{w^4} \J_R(\theta, w) \diff \theta \diff w}_{\text{domain } A_t^0} + \underbrace{\int_{-W t}^{W t} \int_{\epsilon_q \abs{w}}^{\delta} \frac{1}{w^4} \J_R(\theta, w) \diff \theta \diff w}_{\text{domain } A_t^1} \\
        &+ \underbrace{\int_{-W t}^{W t} \int_{\abs{w}}^{\epsilon_q \abs{w}} \frac{1}{w^4} \J_R(\theta, w) \diff \theta \diff w}_{\text{domain } A_t^2} + \underbrace{\int_{-W t}^{W t} \int_{\abs{\theta}}^{W t} \frac{1}{w^4} \J_R(\theta, w) \diff w \diff \theta}_{\text{domain } A_t^3} \\
        &+ \underbrace{\int_{-W t}^{W t} \int_{-\Theta}^{-\delta} \frac{1}{w^4} \J_R(\theta, w) \diff \theta \diff w}_{\text{domain } \tilde A_t^0} + \underbrace{\int_{-W t}^{W t} \int_{-\delta}^{-\epsilon_q \abs{w}} \frac{1}{w^4} \J_R(\theta, w) \diff \theta \diff w}_{\text{domain } \tilde A_t^1} \\
        &+ \underbrace{\int_{-W t}^{W t} \int_{-\epsilon_q \abs{w}}^{-\abs{w}} \frac{1}{w^4} \J_R(\theta, w) \diff \theta \diff w}_{\text{domain } \tilde A_t^2} + \underbrace{\int_{-W t}^{W t} \int_{-W t}^{-\abs{\theta}} \frac{1}{w^4} \J_R(\theta, w) \diff w \diff \theta}_{\text{domain } \tilde A_t^3} \Big].
    \end{align*}
    The asymptotics of the integrals over the domains $A_t^0$, $A_t^1$, $A_t^2$, and $A_t^3$ are given by \cref{geoddimcase0}, \cref{geoddimcase1}, \cref{geoddimcase2}, and \cref{geoddimcase3}. Moreover, they are respectively the same as those over $\tilde A_t^0$, $\tilde A_t^1$, $\tilde A_t^2$, and $\tilde A_t^3$ by the symmetries \cref{jacsymmetry} of the Jacobian.
\end{proof}

The geodesic dimension of the $\ell^p$-Heisenberg group for $p\in(1,\infty)$ can be derived directly from \cref{omegatasymptotics}.

\begin{theorem}\label{thm:geodim}
    The geodesic dimension of the $\ell^p$-Heisenberg group is given by
    \[
    N_{\mathrm{geo}}=
    \begin{cases*}
    5 & if  $p \in \linterval{1}{3}$  \\
    2 q + 2 & if $p \in \rinterval{3}{\infty}$.
    \end{cases*}
    \]
\end{theorem}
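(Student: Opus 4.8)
The plan is to combine the reduction in \cref{Cscube} with the asymptotics in \cref{omegatasymptotics}, after which the geodesic dimension falls out purely from its definition. By left-invariance of $(\mathds{H}, \diff, \mathcal{L}^3)$ it suffices to work with homotheties based at the identity $e$, so that $N_{\mathrm{geo}} = N_{\mathrm{geo}}(e)$ and we may use the reformulation \cref{Ngeoidentity} of the geodesic dimension in terms of the quantities $C_s$.

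First I would invoke \cref{Cscube} to replace $C_s$ by $C_s^{\mathrm{cube}}$, the supremum taken only over sets $\Omega = \exp_e(A)$ with $A$ a cube as in \cref{cube}; this is legitimate precisely because \cref{Cscube} shows that the prescriptions $\inf\{s : C_s = +\infty\}$ and $\inf\{s : C_s^{\mathrm{cube}} = +\infty\}$ (and likewise the two suprema) coincide. Fixing such a cube, I would then unpack what \cref{omegatasymptotics} gives: recalling that $f \sim g$ means $f(t) = g(t)(C + o(1))$ with $C \neq 0$, we have $\m(\Omega_t) = \phi(t)\,(C_\Omega + o(1))$ as $t \to 0^+$, where $\phi(t) = t^5$ if $p < 3$, $\phi(t) = -t^5 \log t$ if $p = 3$, and $\phi(t) = t^{2q+2}$ if $p > 3$. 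Since $\m(\Omega_t) \geq 0$ and each $\phi$ is positive on a punctured right neighbourhood of $0$ (note $-\log t > 0$ for $t \in \ointerval{0}{1}$), the leading constant $C_\Omega$ is strictly positive.

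The computation of $C_s$ is then immediate. For each cube one has
\[
\frac{1}{t^s}\frac{\m(\Omega_t)}{\m(\Omega)} = \frac{\phi(t)}{t^s}\cdot\frac{C_\Omega + o(1)}{\m(\Omega)}.
\]
When $p \neq 3$ set $d := 5$ for $p < 3$ and $d := 2q+2$ for $p > 3$, so that $\phi(t)/t^s = t^{d - s} \to 0$ if $s < d$ and $\to +\infty$ if $s > d$; hence $C_s = 0$ for $s < d$ and $C_s = +\infty$ for $s > d$, giving $N_{\mathrm{geo}} = d$. When $p = 3$, $\phi(t)/t^s = -t^{5-s}\log t$ still tends to $0$ for $s < 5$ (the power dominates the logarithm) and to $+\infty$ for $s \geq 5$, so $C_s = 0$ for $s < 5$ while $C_s = +\infty$ for $s \geq 5$, and once more $\sup\{s : C_s = 0\} = \inf\{s : C_s = +\infty\} = 5$. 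Assembling the three cases reproduces exactly the claimed values of $N_{\mathrm{geo}}$, consistent with the fact that the two formulae agree at the shared endpoint $p = 3$ (where $q = 3/2$ and $2q+2 = 5$).

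I do not anticipate any genuine obstacle in this step: all the analytic difficulty has already been absorbed into \cref{omegatasymptotics} and the four integral-asymptotic lemmas preceding it. The only points requiring a little care are the strict positivity of the leading constant $C_\Omega$ (so that the $\limsup$ really jumps from $0$ to $+\infty$ across the threshold rather than remaining finite) and the borderline case $p = 3$, where the logarithmic factor grows more slowly than any power of $1/t$ and therefore leaves the critical exponent at $5$.
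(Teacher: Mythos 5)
Your proposal is correct and follows essentially the same route as the paper: reduce to cubes via \cref{Cscube}, feed in the asymptotics of \cref{omegatasymptotics}, and read off $N_{\mathrm{geo}}$ from \cref{Ngeoidentity}. The paper merely writes out the borderline case $p=3$ explicitly and notes the others follow identically, whereas you spell out all three cases (including the correct handling of the logarithmic factor and the positivity of the leading constant), so the two arguments coincide in substance.
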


\begin{proof}

We only check that the geodesic dimension of the $\ell^3$-Heisenberg group is $5$ since the other cases follow in the same way. By \cref{Cscube}, it is also enough to consider the $t$-intermediate set $\Omega_t$ from the identity of sets $\Omega$ which are the image of a cube \cref{cube} (with $\Theta < \frac{\pi_q}{2}$ by symmetry). Since $\mathfrak{m}(\Omega_t)\sim -t^5\log t$ by \cref{omegatasymptotics},
we have that
\[
\limsup_{t\to 0^+}\frac{1}{t^5}\mathfrak{m}(\Omega_t)=+\infty,~~\text{and}~~\limsup_{t\to 0^+}\frac{1}{t^{5-\epsilon}}\mathfrak{m}(\Omega_t)=0
\]
for all $\epsilon>0$.
Therefore, the geodesic dimension is $5$.
\end{proof}

\section{The \texorpdfstring{$\ell^1$}{lp}- and \texorpdfstring{$\ell^\infty$}{linfty}-Heisenberg groups}
\label{sec:l1linfty}

We will finally consider the $\ell^p$-Heisenberg group, where $p = 1$ or $p = \infty$. As the $\ell^1$-Heisenberg group and the $\ell^\infty$-Heisenberg group are isomorphic as metric measure spaces, it is sufficient to focus on the $\ell^1$-Heisenberg group.
In this case,
the Hamiltonian formalism pursued in \cref{sec:geometry} is more difficult to implement due to the fact that the $\ell^1$-sub-Finsler norm is not strictly convex. As we have seen, the dual angle $\theta^\circ$ is not always uniquely determined by $\theta$ if $p = 1$. We will adopt the approach proposed in \cite{duc} and \cite{breuillard--ledonne2013}, based on Busemann's isoperimetric inequality in the plane with an $\ell^1$-norm. In those works, from which we borrow the following result, the authors also studied the Heisenberg group equipped with polygonal sub-Finsler metrics. 

We say that a continuous path in $\R^2$ is an $\ell^\infty$-arc if it parametrises a subset of an $\ell^\infty$-sphere. There are three kinds of geodesics in the $\ell^1$-Heisenberg group.

\begin{proposition}[{\cite[Section 6.]{breuillard--ledonne2013}}]
\label{prop:geodesicsl1H}
    A horizontal curve $\gamma(t) = (x(t), y(t), z(t))$ joining the identity to $(x, y, z)$ is a minimising constant speed geodesic of the $\ell^1$-Heisenberg group if and only if, 
    \begin{enumerate}[label=\normalfont(\roman*)]
    \item when $0 \leq |z| \leq \frac{1}{2} |xy|$, the projection $(x(t), y(t))$ is a minimising constant speed geodesic for the $\ell^1$-norm on the plane, joining $(0, 0)$ to $(x, y)$ and covering a signed area $z$. These geodesics are never unique unless $\abs{z} = \frac{1}{2}\abs{x y}$;
    
    \item when $\frac{1}{2}|xy|\leq |z|\leq \max(|x|,|y|)^2-\frac{1}{2}|xy|$, the projection $(x(t), y(t))$ is an $\ell^\infty$-arc with $3$ edges, parametrised by constant speed (with respect to the $\ell^1$-length of $\R^2$), and covering a signed area $z$. These geodesics are unique;
    
    \item when $\max(|x|,|y|)^2-\frac{|xy|}{2} < |z|$, the projection $(x(t), y(t))$ is an $\ell^\infty$-arc with $4$ edges, parametrised by constant speed (with respect to the $\ell^1$-length of $\R^2$), and covering a signed area $z$. These geodesics are unique unless $x$ or $y$ vanish.
    \end{enumerate}
\end{proposition}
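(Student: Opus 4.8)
The plan is to reduce the sub-Finsler geodesic problem to a planar isoperimetric (Dido) problem in the normed plane $(\R^2, \|\cdot\|_1)$ and then solve the latter using Busemann's inequality. The starting point is the reduction already recorded in \cref{sec:geometry}: for a horizontal curve $\gamma = (x,y,z)$ issuing from the identity one has $z(t) = \frac{1}{2}\int_0^t (x\dot y - \dot x y)\diff s$, so that $z$ is exactly the signed area swept by the planar projection $(x(t), y(t))$, while the $\ell^1$-sub-Finsler length of $\gamma$ equals the $\ell^1$-length of $(x(t), y(t))$ in the plane. Consequently $\gamma$ is a minimising constant-speed geodesic joining $e$ to $(x,y,z)$ if and only if its projection is a constant-speed curve of minimal $\ell^1$-length among all planar paths from $(0,0)$ to $(x,y)$ that sweep signed area $z$. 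This is precisely the constrained problem to which the three cases refer.

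First I would invoke Busemann's theorem (\cite{busemann1947}, see also the discussion preceding this proposition): in a normed plane the arcs of fixed enclosed area and minimal length are arcs of the dual unit circle, suitably dilated and translated. For the $\ell^1$-norm the dual unit circle is $\mathds{S}_\infty$, the boundary of an axis-parallel square, so the candidate minimisers are arcs of such squares. The crucial feature, and the source of all the branching, is that $\|\cdot\|_1$ is not strictly convex: whenever the area constraint is slack the square degenerates and the minimiser is merely \emph{some} monotone staircase of length $|x|+|y|$, of which there is a whole one-parameter family.

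Next comes the case analysis, parametrising the square-arcs by their side length, equivalently by the number of edges they must traverse to realise the prescribed area. Assume $x, y \ge 0$ without loss of generality. The monotone staircase paths all have $\ell^1$-length $|x|+|y|$, which is the planar $\ell^1$-distance, and as the staircase varies their swept signed area fills exactly the interval $\interval{-\frac{1}{2} xy}{\frac{1}{2} xy}$, the extremes $\pm\frac{1}{2} xy$ being attained only by the two corner paths. This yields case (i): for $|z| \le \frac{1}{2}|xy|$ the minimiser has length $|x|+|y|$ and is non-unique, reducing to the single corner path exactly at $|z| = \frac{1}{2} |xy|$. To enclose a larger area one must overshoot and therefore pay more length; the efficient way to do so is to follow three, and then four, edges of a scaled square. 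A direct area computation shows that a $3$-edge $\ell^\infty$-arc realises signed areas up to $\max(|x|,|y|)^2 - \frac{1}{2}|xy|$ (case (ii)), and that beyond this threshold a $4$-edge arc is forced (case (iii)). In the latter two regimes the defining square is pinned down by the endpoints together with the area constraint, giving uniqueness; the sole exception is $x = 0$ or $y = 0$ in case (iii), where a reflection symmetry produces two distinct $4$-edge arcs.

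The main obstacle is the non-strict convexity of the $\ell^1$-norm, which rules out any clean Euler--Lagrange argument and forces the minimisers to be piecewise linear and generically non-unique. The delicate points are, first, justifying rigorously that no competitor with a different edge structure, or a non-convex arc, can do better --- this is exactly where Busemann's inequality, rather than a naive variational computation, is indispensable --- and second, carrying out the elementary but lengthy area bookkeeping that pins the two thresholds $\frac{1}{2}|xy|$ and $\max(|x|,|y|)^2 - \frac{1}{2}|xy|$ and settles uniqueness at the boundaries. Since the result is classical and due to \cite{breuillard--ledonne2013} (see also \cite{duc}), I would ultimately cite it rather than reproduce this bookkeeping in full.
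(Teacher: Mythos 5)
Your proposal is correct and follows essentially the same route as the paper: the paper gives no proof of this proposition at all, importing it directly from \cite[Section 6.]{breuillard--ledonne2013}, and the surrounding text motivates it by exactly the reduction you describe (the $z$-coordinate as swept signed area, Busemann's isoperimetric theorem in the normed plane, minimisers as arcs of the dual $\ell^\infty$-sphere), so your decision to sketch this reduction and then cite the reference for the case-by-case bookkeeping reproduces the paper's treatment. One harmless imprecision: in case (iii) with $x$ or $y$ vanishing, non-uniqueness is stronger than a reflection pair --- sliding the circumscribing square along the relevant axis yields a whole one-parameter family of minimising $4$-edge arcs of the same length and area --- but since the proposition only asserts failure of uniqueness there, this does not affect the conclusion.
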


Plane curves of ``staircase type'' are examples of projection of geodesics of case (i) in \cref{prop:geodesicsl1H}. On the other hand, the geodesics of case (ii) and (iii) are projected onto arcs of squares, with the sides parallel to the $x$-axis and $y$-axis. We shall denote by $H_1$, $H_2$ and $H_3$ the regions of $\mathds{H}$ associated with cases (i), (ii), and (iii), respectively.

The $\ell^1$-Heisenberg group does not have negligible cut locus in the sense of \cref{def:negligiblecutlocus}. Indeed, we can write down explicitly the set $\mathcal{C}(e)$ of points joined from the identity by multiple minimising geodesics, and observe that it clearly has a non-zero measure:
\[
\mathcal{C}(e)=\left\{(x,y,z)\in\mathds{H}\mid |z|\leq \frac{1}{2} |xy|, \text{ or else }\abs{x y} = 0 \text{ and } \abs{z} > 0 \right\}.
\]

Moreover, the $\ell^1$-Heisenberg group is branching. It suffices, for example, to consider a small open neighbourhood $\Omega$ of $(x, y, z) \in \mathrm{int}(H_2)$, where $ | x | < y$ and $z > 0$. Then, a geodesic from the identity to a point in $\Omega$ is of type (ii). More specifically, it is an $\ell^\infty$-arc with three edges: the first one going in the direction $\partial_x$, the second in the direction $\partial_y$ and the third in the direction $-\partial_x$. They all coincide at least on a portion of their first edge, before branching (see \cref{def:branching}).

Building upon this last observation, it is possible to prove that the $\ell^1$-Heisenberg group does not satisfies the $\mathsf{MCP}(K, N)$, for any $K \in \mathds{R}$ and any $N \geq 1$.

\begin{theorem}\label{thm:ell1mcp}
    The $\ell^1$-Heisenberg group equipped with the Lebesgue measure does not satisfies the $\mathsf{MCP}(K,N)$ for all $K\in\R$ and $N\geq 1$.
\end{theorem}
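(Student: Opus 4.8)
The plan is to reduce to the case $K=0$ and then exhibit a single set $\Omega$ of positive Lebesgue measure whose $t$-intermediate set collapses onto a negligible set for all small $t>0$, which is incompatible even with the weak one-sided estimate of \cref{MCPimplies1sideBM}.

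First I would dispose of the parameter $K$ exactly as in the proof of \cref{thm:MCPp<2}. If $K>0$, then $\mathsf{MCP}(K,N)$ would force $\mathds{H}$ to be bounded by the sub-Finsler Bonnet--Myers theorem \cite[Theorem 4.3]{ohta2007}, which is absurd since $\mathds{H}$ is non-compact. If $K<0$, the Heisenberg dilations $\delta_\epsilon(x,y,z)=(\epsilon x,\epsilon y,\epsilon^2 z)$ realise an isomorphism of metric measure spaces between $(\mathds{H},\diff,\mathcal{L}^3)$ and $(\mathds{H},\epsilon^{-1}\diff,\epsilon^{-4}\mathcal{L}^3)$, so $\mathsf{MCP}(K,N)$ propagates to $\mathsf{MCP}(\epsilon^2 K,N)$ for every $\epsilon>0$; letting $\epsilon\to 0^+$ yields $\mathsf{MCP}(0,N)$. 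It therefore suffices to show that $\mathsf{MCP}(0,N)$ fails for every $N\geq 1$.

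The key point is that \cref{MCPimplies1sideBM} holds \emph{without} any assumption of negligible cut locus, which is essential here since the $\ell^1$-Heisenberg group does not have one. Hence, if $\mathsf{MCP}(0,N)$ held, we would have $\mathfrak{m}(\Omega_t)\geq t^N\mathfrak{m}(\Omega)>0$ for every bounded Borel $\Omega$ with $0<\mathfrak{m}(\Omega)<\infty$ and every $t\in\ointerval{0}{1}$. I would contradict this by producing $\Omega$ with $\mathfrak{m}(\Omega_t)=0$ for small $t$. Take $\Omega$ to be a small open ball around a point $(x_0,y_0,z_0)\in\mathrm{int}(H_2)$ with $0<|x_0|<y_0$ and $z_0>0$; it has positive finite Lebesgue measure, and since such a point lies off $\mathcal{C}(e)$, \cref{prop:geodesicsl1H}, case (ii), guarantees that for every $g\in\Omega$ the minimising geodesic $\gamma_g$ from $e$ to $g$ is unique and of type (ii). Its planar projection is an $\ell^\infty$-arc with three edges, the first of which points in the direction $\partial_x$. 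Because $X=\partial_x$ at the origin, travelling horizontally in the direction $\partial_x$ from $e$ forces $y\equiv 0$ and hence $z\equiv 0$, so this first edge is a segment of the $x$-axis $L:=\{(s,0,0)\mid s\in\R\}$; any point reached while still on the first edge lies on $L$.

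The last step is to make this uniform over $\Omega$. For each endpoint $g\in\Omega$ let $\tau(g)\in\ointerval{0}{1}$ be the constant-speed time at which $\gamma_g$ leaves its first edge. From the explicit description of the three-edge $\ell^\infty$-arcs in \cref{prop:geodesicsl1H} one checks that $\tau$ depends continuously on $g$ inside $\mathrm{int}(H_2)$ and that, at $(x_0,y_0,z_0)$, the first edge has strictly positive length relative to the total length; hence, after shrinking $\Omega$, we have $\tau(g)\geq t_0$ for some $t_0>0$ and all $g\in\Omega$. Consequently, for every $t\in\ointerval{0}{t_0}$ and every $g\in\Omega$ the point $\gamma_g(t)$ lies on $L$, so $\Omega_t\subseteq L$ and $\mathfrak{m}(\Omega_t)=0$. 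This contradicts $\mathfrak{m}(\Omega_t)\geq t^N\mathfrak{m}(\Omega)>0$, and therefore $\mathsf{MCP}(0,N)$ — and with it $\mathsf{MCP}(K,N)$ for every $K$ and every $N\geq 1$ — cannot hold. I expect the main obstacle to be precisely this uniform lower bound $t_0$ on the first-edge time, which rests on a continuity argument controlling the geometry of the branching three-edge arcs of \cref{prop:geodesicsl1H} near the chosen point.
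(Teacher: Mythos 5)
Your proposal is correct and follows essentially the same route as the paper: reduce to $K=0$ via Bonnet--Myers and the Heisenberg dilations, then use \cref{MCPimplies1sideBM} together with the fact that type-(ii) geodesics of \cref{prop:geodesicsl1H} spend an initial portion of their time on the $x$-axis, so a small neighbourhood $\Omega$ of a suitable point in $\mathrm{int}(H_2)$ has $\mathfrak{m}(\Omega_t)=0$ for small $t$. The only cosmetic difference is that the paper makes the uniform first-edge time explicit (edge-length ratios in $(\tfrac14,\tfrac34)$ give $t_0=\tfrac18$) while you invoke continuity of the exit time $\tau(g)$, which amounts to the same continuity of the edge lengths $l_i(h)$.
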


\begin{proof}
As in the beginning of the proof of \cref{thm:MCPp<2}, it is enough to show that $\mathsf{MCP}(0, N)$ fails. We are going to find a Borel subset $\Omega$ of $\mathds{H}$ with $0 < \mathfrak{m}(\Omega) < +\infty$ such that $\mathfrak{m}(\Omega_t) 
= 0$ for some $t \in \ointerval{0}{1}$. By \cref{MCPimplies1sideBM}, this will disprove the measure contraction property $\mathsf{MCP}(0,N)$ for all $N\geq 1$.

As we observed previously, a point $h = (x, y, z) \in H_2$ such that $ |x| < y$ and $z > 0$ is connected to the identity by a unique geodesic $\gamma_h$. This geodesic is an $\ell^\infty$-arc consisting of three segments with directions $\partial_x$, $\partial_y$, and $-\partial_x$. We denote the lengths of these segments as $l_1(h)$, $l_2(h)$, and $l_3(h)$, respectively.

Let us fix a point $h_0 \in H_2$ with 
\[
\frac{l_1(h_0)}{l_2(h_0)}=\frac{l_3(h_0)}{l_2(h_0)}=\frac{1}{2}.
\]
Then there is a neighborhood $\Omega$ of $h_0$ in $H_2$ such that for all $h\in \Omega$,
\[\frac{l_1(h)}{l_2(h)},\frac{l_3(h)}{l_2(h)}\in\left(\frac{1}{4},\frac{3}{4}\right).\]
Since geodesics are constant speed,
we can see that for all $h\in \Omega$ and $t\in[0,\frac{1}{8}]$,
$\gamma_h(t)$ is in the $x$-axis of $\mathds{H}$.
In particular,
$\mathfrak{m}(\Omega_t)=0$ for $t\in[0,\frac{1}{8}]$.

\end{proof}

Lastly, we calculate the geodesic dimension of the $\ell^1$-Heisenberg group, which will be derived from the following observations.

Let $h = (x,y,z)\in H_1 \cap \{x,y>0\}$, and $\alpha:=x+y$. For $t\in(0,1)$, we define
\[
F_{t,\alpha} := \left\{(ut,(\alpha-u)t,v)\in \mathds{H} \mid u\in\interval{0}{\alpha}{},~|v|\leq \frac{u(\alpha-u)t^2}{2}\right\}\subseteq H_1.
\]

Note that $\alpha=\|(x,y)\|_1=\diff_1(0,h)$,
and therefore all minimising constant speed geodesic defined on $\interval{0}{1}$ joining $0$ to $h$ will pass through $F_{t,\alpha}$ at time $t$.
In other words,
\[
\left\{\gamma(t) \mid \gamma : \interval{0}{1} \to \mathds{H} \text{ minimising constant speed geodesic from $0$ to $h$}\right\} \subseteq F_{t,\alpha}.
\]
In the following lemma, we show that for $t$ sufficiently small, the inclusion above is, in fact, an equality.

\begin{figure}
    \centering
    \includegraphics[width=12cm]{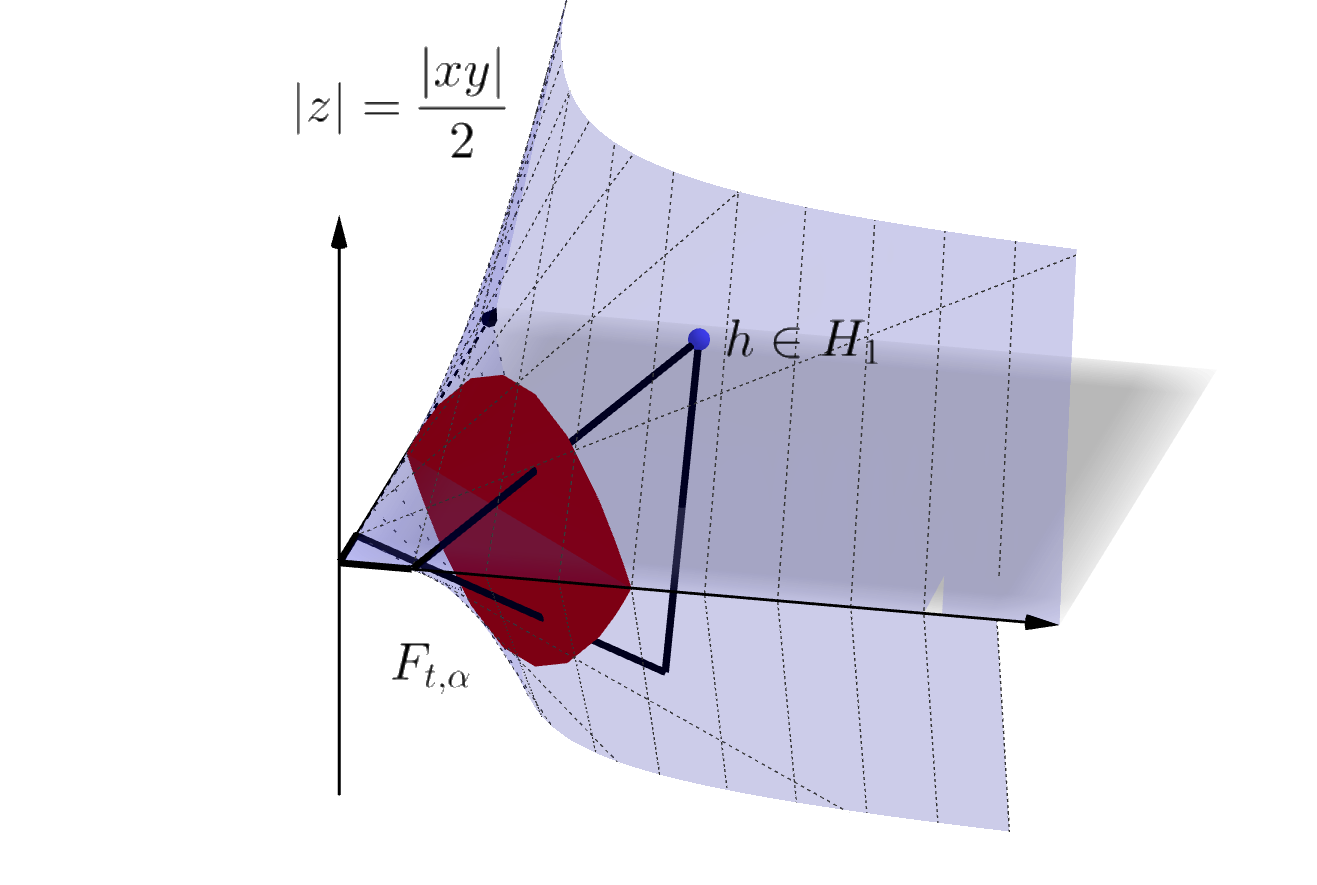}
    \caption{Geodesics pass through $F_{t,\alpha}$}
    \label{fig:Ftalpha}
\end{figure}

\begin{lemma}\label{lemma:ell1}
    For $h=(x,y,z)\in \mathrm{int}(H_1)\cap\{x,y>0\}$,
    define the number  $T=T(h)\in(0,1)$ by
    \[
    T(h) := \min\left(\frac{\min(x,y)}{\alpha},\frac{\frac{1}{2}xy-z}{x\alpha},\frac{\frac{1}{2}xy+z}{y\alpha}\right),
    \]
    where $\alpha:=x+y$. Then, for all $t\leq T$,
    \begin{equation}\label{equalityFth}
        \left\{\gamma(t) \mid \gamma : \interval{0}{1} \to \mathds{H} \text{ minimising constant speed geodesic from $0$ to $h$}\right\} = F_{t,\alpha}.
    \end{equation}
\end{lemma}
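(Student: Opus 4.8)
The reverse inclusion $\supseteq$ in the equality \eqref{equalityFth} (the inclusion $\subseteq$ being already observed before the statement) is what requires work, so I fix a point $g = (ut,(\alpha-u)t,v) \in F_{t,\alpha}$, with $u \in \interval{0}{\alpha}$ and $\abs{v} \leq \frac{u(\alpha-u)t^2}{2}$, and aim to produce a minimising constant-speed geodesic $\gamma \colon \interval{0}{1} \to \mathds{H}$ from $0$ to $h$ with $\gamma(t) = g$. Since $h \in \mathrm{int}(H_1)$, \cref{prop:geodesicsl1H}(i) reduces this to building a monotone staircase path $(x(\cdot),y(\cdot))$ in the first quadrant from $(0,0)$ to $(x,y)$, parametrised at constant $\ell^1$-speed $\alpha$, whose total swept signed area is $z$, which at time $t$ sits at $(ut,(\alpha-u)t)$ and whose area swept up to time $t$ is exactly $v$; its horizontal lift is then the desired geodesic.

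I would build this path by concatenation: a monotone staircase on $\interval{0}{t}$ from $(0,0)$ to $(ut,(\alpha-u)t)$, followed by a monotone staircase on $\interval{t}{1}$ from $(ut,(\alpha-u)t)$ to $(x,y)$. For the two pieces to glue into a single monotone staircase --- hence an $\ell^1$-minimiser of planar length $\alpha$ --- the intermediate point must lie in the rectangle $\interval{0}{x} \times \interval{0}{y}$; as $u \in \interval{0}{\alpha}$ this is guaranteed by $\alpha t \leq \min(x,y)$, i.e. $t \leq \min(x,y)/\alpha$, the first term of $T(h)$. The constant speed $\alpha$ is automatic, since the two pieces have $\ell^1$-lengths $\alpha t$ and $\alpha(1-t)$ and are run over $\interval{0}{t}$ and $\interval{t}{1}$.

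The crux is realising the prescribed areas. As the first piece ranges over all monotone staircases between its endpoints, its swept signed area fills the interval $\interval{-\frac{u(\alpha-u)t^2}{2}}{\frac{u(\alpha-u)t^2}{2}}$ (extremes at the two L-shaped corners, intermediate values by continuity), so the value $v$ is attained. For the second piece, writing its position as $(ut+\xi,(\alpha-u)t+\eta)$ with $(\xi,\eta)$ a monotone staircase from $(0,0)$ to $(a',b') := (x-ut,\,y-(\alpha-u)t)$, the area swept relative to the origin equals a fixed offset $O := \frac{1}{2}\big(ut\,b' - (\alpha-u)t\,a'\big)$ plus the relative area $\frac{1}{2}\int(\xi\dot\eta - \dot\xi\eta)$, which fills $\interval{-\frac{a'b'}{2}}{\frac{a'b'}{2}}$. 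Thus the second piece can realise any value in $\interval{O-\frac{a'b'}{2}}{O+\frac{a'b'}{2}}$, and I must check that $z-v$ lies there for every admissible $v$.

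Evaluating at the extreme values $v = \pm\frac{u(\alpha-u)t^2}{2}$ and simplifying $O \pm \frac{a'b'}{2}$ via $ut+a' = x$ and $(\alpha-u)t+b' = y$, the two inequalities $z-v \leq O+\frac{a'b'}{2}$ and $z-v \geq O-\frac{a'b'}{2}$ collapse respectively to $(\alpha-u)tx \leq \frac{xy}{2}-z$ and $uty \leq \frac{xy}{2}+z$. Since $u,\alpha-u \in \interval{0}{\alpha}$, these hold for all $u$ precisely when $t \leq \frac{xy/2 - z}{\alpha x}$ and $t \leq \frac{xy/2 + z}{\alpha y}$, the remaining two terms of $T(h)$. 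Hence for $t \leq T(h)$ the construction succeeds for every $g \in F_{t,\alpha}$, yielding $\supseteq$ and the claimed equality; that $T(h) \in \ointerval{0}{1}$ follows from $\abs{z} < \frac{xy}{2}$ and $0 < \min(x,y) < \alpha$. I expect the one genuinely delicate point to be the bookkeeping of the offset $O$: the area of the second piece is measured from the origin, not from its own starting corner, so it is not simply the area of the relative staircase; once this is pinned down, the matching with the three terms of $T(h)$ is a short computation and the rest is the elementary geometry of monotone staircases.
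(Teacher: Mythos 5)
Your proposal is correct and is essentially the paper's proof in planar clothing: the paper also concatenates a minimising piece from $0$ to $g$ with one from $g$ to $h$, packaging the gluing condition as ``$g^{-1}h \in H_1$'' via left-invariance, and your condition $z-v \in \bigl[O-\tfrac{a'b'}{2},\, O+\tfrac{a'b'}{2}\bigr]$ is literally that same inequality, since the third coordinate of $g^{-1}h$ equals $(z-v)-O$. Both arguments then verify the same two inequalities, linear in $u$ after substituting the worst-case $v$ (your $(\alpha-u)tx \leq \tfrac{xy}{2}-z$ and $uty \leq \tfrac{xy}{2}+z$ are exactly the paper's $G_2(u)\leq 0$ and $G_1(u)\geq 0$), checked at the endpoints $u=0$ and $u=\alpha$.
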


\begin{proof}
    We are going to establish the following claim, which implies the lemma:
    \begin{equation}
        \label{sublemma}
        \text{For all $t\leq T$ and all $g=(ut,(\alpha-u)t,v)\in F_{t,\alpha}$,
    $g^{-1}h$ is in $H_1$}.
    \end{equation}
    Indeed, by \cref{prop:geodesicsl1H} and left-invariance, \cref{sublemma}
    yields that there is a minimising geodesic $\gamma_2$ from $g$ to $h$ such that the projection to $xy$-plane is an $\ell^1$-geodesic.
    Moreover,
    by the definition of $F_{t,\alpha}$
    there is a minimising geodesic $\gamma_1$ from $0$ to $g$ with the same property.
    The concatenation of $\gamma_1$ and $\gamma_2$ is, after reparametrisation, a minimising constant speed geodesic defined on $\interval{0}{1}$ from $0$ to $h$.
    Since the choice of $g$ is arbitrary,
    this implies (\ref{equalityFth}).
    
    We now proceed to prove \cref{sublemma}. A direct computation shows that
        \[
        g^{-1}h=\left(x-ut,y-(\alpha-u)t,z-v+t\frac{x(\alpha-u)-yu}{2}\right).
        \]
        Therefore, we have that $g^{-1}h\in H_1$ if and only if
        \[
        \abs{z-v+t\frac{x(\alpha-u)-yu}{2}}\leq \frac{|x-ut||y-(\alpha-u)t|}{2}.
        \]
        Since $t\leq T\leq \frac{1}{\alpha}\min(x,y)$ and $u \in \interval{0}{\alpha}$,
        both $x-ut$ and $y-(\alpha-u)t$ are positive.
        Hence the desired inequality is equivalent to
        \begin{equation}\label{inequalityclaimF}
            -\frac{(x-ut)(y-(\alpha-u)t)}{2}\leq 
            z-v+t\frac{x(\alpha-u)-yu}{2}\leq 
            \frac{(x-ut)(y-(\alpha-u)t)}{2},
        \end{equation}
        for all $t\leq T$,
        $|v|\leq \frac{u(\alpha-u)t^2}{2}$ and $u\in[0,\alpha]$.
        Since $|v|\leq \frac{u(\alpha-u)t^2}{2}$, we have
        \[
        z-v+t\frac{x(\alpha-u)-yu}{2} \leq z+\frac{u(\alpha-u)t^2}{2}+t\frac{x(\alpha-u)-yu}{2},
        \]
        as well as
        \[
        z-v+t\frac{x(\alpha-u)-yu}{2} \geq z-\frac{u(\alpha-u)t^2}{2}+t\frac{x(\alpha-u)-yu}{2}.
        \]
        We set the functions $G_1(u)$ and $G_2(u)$ by
        \[
        G_1(u) := z-\frac{u(\alpha-u)t^2}{2}+t\frac{x(\alpha-u)-yu}{2}+\frac{(x-ut)(y-(\alpha-u)t)}{2}
        \]
        and
        \[
        G_2(u) := z+\frac{u(\alpha-u)t^2}{2}+t\frac{x(\alpha-u)-yu}{2}-\frac{(x-ut)(y-(\alpha-u)t)}{2}.
        \]
        Thus, the inequality \cref{inequalityclaimF} holds if $G_1(u) \geq 0$ and $G_2(u) \leq 0$, for all $u \in \interval{0}{\alpha}$. The function $G_1(u)$ (resp. $G_2(u)$) is linear in $u$, and attains its minimum at $u = \alpha$ (resp. its maximum $u = 0$). Therefore, we obtain
        \[
        G_1(u) \geq G_1(\alpha)=z+\frac{xy}{2}-ty\alpha\geq 0, \text{ and } G_2(u)\leq G_2(0)=z-\frac{xy}{2}+tx\alpha\leq 0,
        \]
        since, in the first case, $t\leq T\leq \frac{\frac{xy}{2}+z}{y\alpha}$, and since, in the second case, $t\leq T\leq \frac{\frac{xy}{2}-z}{x\alpha}$.
\end{proof}

We can now conclude. The geometric idea behind the following proof is illustrated in \cref{geoddimp1proof}.

\begin{figure}
  \centering
  \includegraphics[width=0.5\linewidth]{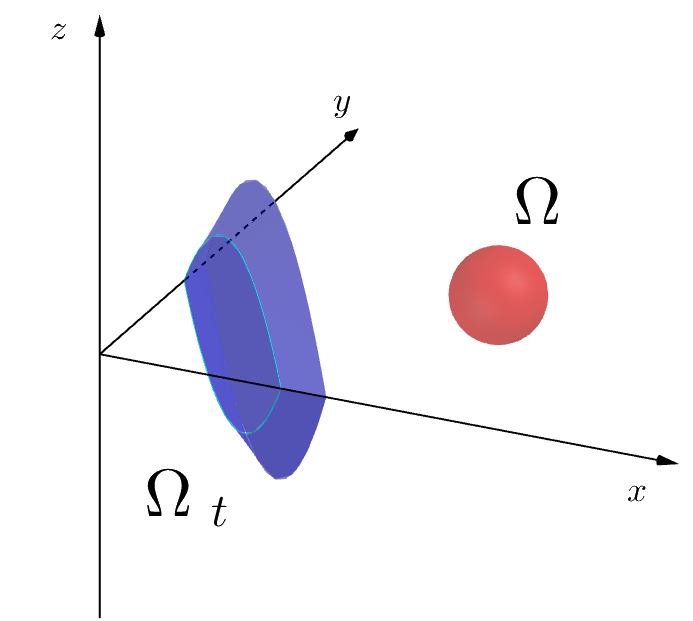}
  \caption{Any open set $\Omega$ contained in the region $H_1$ has a $t$-homothety $\Omega_t$ ``filling'' a section of $H_1$, for all $t$ small enough.}
  \label{geoddimp1proof}
\end{figure}

\begin{theorem}\label{thm:ell1geodim}
The geodesic dimension of the $\ell^1$-Heisenberg group equipped with the Lebesgue measure is $4$.
\end{theorem}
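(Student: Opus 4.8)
The plan is to prove the two bounds $N_{\mathrm{geo}} \geq 4$ and $N_{\mathrm{geo}} \leq 4$ separately. The lower bound comes for free: by \cref{theo:NcurvNgeoNH} one has $N_{\mathrm{geo}} \geq \dim_{\mathcal{H}}(\mathds{H}, \diff) = 4$, the Hausdorff dimension of every $\ell^p$-Heisenberg group being $4$. All the substance therefore lies in exhibiting a single set $\Omega$ for which $\mathfrak{m}(\Omega_t)$ decays exactly like $t^4$ as $t \to 0^+$, which forces $N_{\mathrm{geo}} \leq 4$.

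To that end I would take $\Omega$ to be a closed box $[x_1, x_2] \times [y_1, y_2] \times [z_1, z_2]$ whose closure lies in the open region $\mathrm{int}(H_1) \cap \{x, y > 0\} = \{(x,y,z) : x, y > 0, \ |z| < \tfrac{1}{2} xy\}$, chosen so that $x_1 + y_1 < x_2 + y_2$. Since $\overline{\Omega}$ is compact and contained in this open set, the function $T(\cdot)$ of \cref{lemma:ell1} is continuous and strictly positive on it, hence bounded below by some $T_0 > 0$. For every $t \leq T_0$, \cref{lemma:ell1} identifies the set of time-$t$ points of geodesics from $e$ to a given $h \in \Omega$ with $F_{t, \alpha(h)}$, where $\alpha(h) := x + y$; because $F_{t, \alpha}$ depends on $h$ only through $\alpha$, this yields the exact identity $\Omega_t = \bigcup_{\alpha \in [\alpha_1, \alpha_2]} F_{t, \alpha}$, where $\alpha_i := x_i + y_i$.

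It then remains to compute the measure of this union. Writing a point $(ut, (\alpha - u)t, v) \in F_{t, \alpha}$ with $x = ut$ and $y = (\alpha - u)t$ shows that $F_{t, \alpha} = \{(x, y, z) : x, y \geq 0, \ x + y = \alpha t, \ |z| \leq \tfrac{1}{2} xy\}$, so the union is the region $\{(x,y,z) : x, y \geq 0, \ \alpha_1 t \leq x + y \leq \alpha_2 t, \ |z| \leq \tfrac{1}{2} xy\}$. Integrating out $z$ and rescaling $x = t\xi$, $y = t\eta$ gives
\[
\mathfrak{m}(\Omega_t) = \int_{\substack{x, y \geq 0 \\ \alpha_1 t \leq x + y \leq \alpha_2 t}} xy \, \diff x \diff y = t^4 \int_{\substack{\xi, \eta \geq 0 \\ \alpha_1 \leq \xi + \eta \leq \alpha_2}} \xi\eta \, \diff \xi \diff \eta = C t^4,
\]
with $C > 0$ and valid for all $t \leq T_0$. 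Consequently $\limsup_{t \to 0^+} t^{-s} \mathfrak{m}(\Omega_t) / \mathfrak{m}(\Omega) = +\infty$ for every $s > 4$, so $C_s(e) = +\infty$ for $s > 4$ and thus $N_{\mathrm{geo}}(e) \leq 4$; left-invariance gives $N_{\mathrm{geo}} = N_{\mathrm{geo}}(e)$, and combined with the lower bound we conclude $N_{\mathrm{geo}} = 4$.

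The genuinely delicate point — already settled in \cref{lemma:ell1} — is that $\Omega_t$ \emph{equals}, rather than merely being contained in, the union of the sheets $F_{t, \alpha}$: this is exactly where the branching nature of the $\ell^1$-structure enters, since many distinct geodesics from $e$ to $h$ fan out across the two-dimensional surface $F_{t, \alpha}$. The remaining care is to keep $\Omega$ strictly inside $H_1$, so that all of its geodesics are of type (i) and the fibre description applies, and to secure the uniform threshold $T_0$; once these are in place, the measure computation and the resulting scaling $\mathfrak{m}(\Omega_t) = C t^4$ are routine.
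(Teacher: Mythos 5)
Your proof is correct and follows essentially the same route as the paper: both rest on \cref{lemma:ell1} to identify $\Omega_t$ with $\bigcup_{\alpha} F_{t,\alpha}$ for small $t$ (the paper takes a general compact $\Omega \subseteq \mathrm{int}(H_1)$, you take a box, which changes nothing), both compute $\mathfrak{m}(\Omega_t) = C t^4$ by integrating $xy$ over the rescaled trapezoid, and both get the lower bound from the Hausdorff dimension via \cref{theo:NcurvNgeoNH}. If anything, your deduction that $C_s(e) = +\infty$ for every $s > 4$ (hence $N_{\mathrm{geo}}(e) \leq 4$) is slightly more careful than the paper's assertion that the single finite positive value of $\limsup_{t \to 0^+} t^{-4}\mathfrak{m}(\Omega_t)/\mathfrak{m}(\Omega)$ gives $C_4(e) = +\infty$.
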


\begin{proof}
    Let $\Omega \subseteq \mathrm{int}(H_1)$ be a compact subset with $0 < \mathfrak{m}(\Omega) < +\infty$. We introduce
    \[
    T(\Omega):=\min\{T(h)\mid h\in \Omega\}, 
    \]
    and
    \[
    m=:\min\{x+y\mid (x,y,z)\in\Omega\}, \ M:=\max\{x+y\mid (x,y,z)\in \Omega\}.
    \]
    By the definition of $T(h)$ in \cref{lemma:ell1}, and since $\Omega$ is chosen to be compact, the constants $T(\Omega)$, $m$, and $M$ are both positive and finite. By \cref{lemma:ell1}, we find that
    for all $t\leq T(\Omega)$,
    \[
    \Omega_{t} = \bigcup_{\alpha\in[m,M]}F_{t,\alpha}=\left\{(ut,(\alpha-u)t,v)~\Big{|}~\alpha\in[m,M], u\in[0,\alpha],~|v|\leq \frac{u(\alpha-u)t^2}{2}\right\}.
    \] 
Hence, the volume of $\Omega_t$ is given by
\begin{align*}
    \mathfrak{m}(\Omega_t) ={}& \int_{\mathrm{Proj}(\Omega_t)} x y\diff x\diff y && \text{by Fubini's theorem} \\
    ={}& \int_{m}^{M} \int_0^\alpha u(\alpha-u)t^2 \abs{\det \diff \Psi(u, \alpha)} \diff u\diff \alpha && \text{by the change of variables}\\
    & && \Psi(u, \alpha) := (u t, (\alpha - u)t)\\
    ={}&t^4\int_{m}^{M} \int_0^\alpha u(\alpha-u)\diff u\diff \alpha =t^4\frac{M^4-m^4}{24}.
\end{align*}

In particular, we have that
\[
\limsup_{t\to 0^+}\frac{1}{t^4}\frac{\mathfrak{m}(\Omega_t)}{\mathfrak{m}(\Omega)} = \frac{M^4-m^4}{24 \mathfrak{m}(\Omega)} > 0,
\]
and thus from \cref{def:geod_dim}, we know that $C_4(e) = +\infty$, and $N_{geod}\leq 4$.

On the other hand, the geodesic dimension is no less than the Hausdorff dimension (see \cref{theo:NcurvNgeoNH}):
$N_{\mathrm{geo}}\geq \dim_{\mathcal{H}}\mathds{H}=4$. Therefore we have $N_{\mathrm{geo}}=4$.
\end{proof}



\begin{spacing}{0.9}
\printbibliography	
\end{spacing}

\end{document}

\bigskip

\begin{itemize}
    \item \href{https://www.springernature.com/gp/authors/campaigns/latex-author-support}{Springer Latex Template}
    \item \href{https://tex.stackexchange.com/questions/53513/hyperref-token-not-allowed}{Hyperref warning - Token not allowed in a PDF string}
\begin{verbatim}
\subsection{The classes \texorpdfstring{$\mathcal{L}(\gamma)$}{Lg}}
\end{verbatim}
    \item \href{https://www.austms.org.au/Rankings/AustMS_final_ranked.html}{Journal rankings}
    \item \href{https://dmargalit7.math.gatech.edu/tsr/Journals.pdf}{Journal rankings}
    \item \href{https://www.elsevier.com/authors/policies-and-guidelines/latex-instructions}{Elsevier Latex Instruction}
    \item \href{http://www.ams.org/publications/authors/tex/tex}{AMS Tex ressources}
    \item \href{https://www.overleaf.com/latex/templates/overleaf-keyboard-shortcuts/qykqfvmxdnjf}{Overleaf Keyboard Shortcuts}
    \item \href{https://tex.stackexchange.com/questions/365456/is-there-a-cheat-sheet-for-texstudio-keyboard-shortcuts}{TexStudio default shortcuts}
\end{itemize}